\newcommand{\acdef}[1]{\define{\acl{#1}} \textup{(\acs{#1})}\acused{#1}}	
\definecolor{CardinalRed}{HTML}{C41E3A}
\definecolor{Dartmouth}{HTML}{00693E}
\colorlet{MyRed}{CardinalRed!50!Crimson}
\colorlet{MyGreen}{Dartmouth}
\colorlet{MyBlue}{DodgerBlue}
\colorlet{MyViolet}{DarkMagenta}
\colorlet{MyLightRed}{MyRed!25}
\colorlet{MyLightGreen}{MyGreen!25}
\colorlet{MyLightBlue}{MyBlue!25}
\colorlet{PrimalColor}{MyBlue}
\colorlet{PrimalFill}{PrimalColor!25}
\colorlet{DualColor}{MyRed}
\colorlet{AlertColor}{MyRed}	
\colorlet{BadColor}{MyRed}	
\colorlet{GoodColor}{MyGreen}	
\colorlet{LinkColor}{MediumBlue}	
\colorlet{MacroColor}{CardinalRed}	
\colorlet{RevColor}{MyViolet}	
\newcommand{\afterhead}{.\;}	
\newcommand{\para}[1]{\medskip\paragraph{\textbf{#1\afterhead}}}	
\pgfplotsset{compat=1.18}
\setlist[1]{topsep=\smallskipamount,itemsep=\smallskipamount,left=\parindent}
\setlist[2]{left=0pt}
\colorlet{refkey}{MacroColor}
\colorlet{labelkey}{MacroColor}
	\def\ltx@label#1{\cref@label{#1}}	
	\def\label@in@display@noarg#1{\cref@old@label@in@display{#1}}	
\crefname{algo}{Algorithm}{Algorithms}
\crefname{assumption}{Assumption}{Assumptions}
\crefname{case}{Case}{Cases}
\crefname{cond}{Condition}{Conditions}
\crefname{noref}{}{}
\crefname{problem}{Problem}{Problems}
\DeclareRobustCommand{\crefnosort}[1]{%
  \begingroup\@cref@sortfalse\cref{#1}\endgroup
}
\theoremstyle{plain}
\newtheorem{theorem}{Theorem}	
\newtheorem{corollary}{Corollary}	
\newtheorem{lemma}{Lemma}	
\newtheorem{proposition}{Proposition}	
\newtheorem*{theorem*}{Theorem}	
\newtheorem*{corollary*}{Corollary}	
\theoremstyle{definition}
\newtheorem{definition}{Definition}	
\newtheorem{assumption}{Assumption}	
\newtheorem*{definition*}{Definition}	
\newtheorem*{assumption*}{Assumptions}	
\newtheorem*{example*}{Example}	
\theoremstyle{remark}
\newtheorem{remark}{Remark}	
\newtheorem{notation}{Notation}	
\newtheorem*{remark*}{Remark}	
\newtheorem*{notation*}{Notation}	
\def\endenv{\hfill\S}	
\numberwithin{remark}{section}	
\numberwithin{example}{section}	
\newcommand{\draft}[1]{{#1}}	
\newcommand{\define}[1]{\emph{\draft{#1}}}	
\newcommand{\revise}[1]{#1}	
\newcommand{\newmacro}[2]{\newcommand{#1}{\draft{#2}}}	
\newcommand{\newop}[2]{\DeclareMathOperator{#1}{\draft{#2}}}	
\newcommand{\newoplims}[2]{\DeclareMathOperator*{#1}{\draft{#2}}}	
\newcommand{\newsmartmacro}[2]{%
	\NewDocumentCommand{#1}
	{E{_}{{}}}
	{\draft{#2_{##1}}}
	}
\newcommand{\eps}{\varepsilon}	
\newcommand{\wilde}{\widetilde}	
\DeclarePairedDelimiter{\braces}{\{}{\}}	
\DeclarePairedDelimiter{\bracks}{[}{]}	
\DeclarePairedDelimiter{\parens}{(}{)}	
\DeclarePairedDelimiter{\of}{(}{)}	
\DeclarePairedDelimiter{\abs}{\lvert}{\rvert}	
\DeclarePairedDelimiter{\ceil}{\lceil}{\rceil}	
\DeclarePairedDelimiter{\floor}{\lfloor}{\rfloor}	
\DeclarePairedDelimiter{\pospart}{[}{]_{+}}	
\DeclarePairedDelimiter{\setof}{\{}{\}}	
\DeclarePairedDelimiterX{\setdef}[2]{\{}{\}}{#1:#2}	
\DeclarePairedDelimiterXPP{\exclude}[1]{\mathopen{}\setminus}{\{}{\}}{}{#1}	
\DeclarePairedDelimiterX{\braket}[2]{\langle}{\rangle}{#1,#2}	
\DeclarePairedDelimiterX{\internalInner}[1]{\langle}{\rangle}{#1}
\NewDocumentCommand \inner {s m g}{
	\IfBooleanTF{#1}
	{
	\internalInner*{#2\IfValueT{#3}{,#3}}
	}
	{
	\internalInner{#2\IfValueT{#3}{,#3}}
	}
}
\DeclarePairedDelimiter{\norm}{\lVert}{\rVert}	
\DeclarePairedDelimiterXPP{\dnorm}[1]{}{\lVert}{\rVert}{_{\ast}}{#1}	
\DeclarePairedDelimiterXPP{\onenorm}[1]{}{\lVert}{\rVert}{_{1}}{#1}	
\DeclarePairedDelimiterXPP{\twonorm}[1]{}{\lVert}{\rVert}{_{2}}{#1}	
\DeclarePairedDelimiterXPP{\pnorm}[1]{}{\lVert}{\rVert}{_{p}}{#1}	
\DeclarePairedDelimiterXPP{\qnorm}[1]{}{\lVert}{\rVert}{_{q}}{#1}	
\DeclarePairedDelimiterXPP{\supnorm}[1]{}{\lVert}{\rVert}{_{\infty}}{#1}	
\newcommand{\defeq}{\coloneqq}	
\newcommand{\eqdef}{\eqqcolon}	
\newcommand{\from}{\colon}	
\newcommand{\compl}[1]{#1^{\mathtt{c}}}	
\newmacro{\nat}{i}	
\newmacro{\nats}{\mathbb{N}}	
\newmacro{\N}{\nats}	
\newmacro{\integer}{r}	
\newmacro{\integers}{\mathbb{Z}}	
\newmacro{\Z}{\integers}	
\newmacro{\rational}{r}	
\newmacro{\rationals}{\mathbb{Q}}	
\newmacro{\Q}{\rationals}	
\newmacro{\real}{x}	
\newmacro{\reals}{\mathbb{R}}	
\newmacro{\R}{\reals}	
\newmacro{\complex}{z}	
\newmacro{\complexA}{z}	
\newmacro{\complexB}{w}	
\newmacro{\complexC}{z}	
\newmacro{\complexes}{\mathbb{C}}	
\newmacro{\C}{\mathbb{C}}	
\newoplims{\argmax}{arg\,max}	
\newoplims{\argmin}{arg\,min}	
\newoplims{\intersect}{\bigcap}	
\newoplims{\union}{\bigcup}	
\newop{\aff}{aff}	
\newop{\bd}{bd}	
\newop{\bigoh}{\mathcal{O}}	
\newop{\card}{card}	
\newop{\cl}{cl}	
\newop{\conv}{conv}	
\newop{\clconv}{\overline{conv}}	
\newop{\crit}{crit}	
\newop{\diag}{diag}	
\newop{\diam}{diam}	
\newop{\dist}{dist}	
\newop{\dom}{dom}	
\newop{\eig}{eig}	
\newop{\ess}{ess}	
\newop{\Hess}{Hess}	
\newop{\ind}{ind}	
\newop{\im}{im}	
\newop{\intr}{int}	
\newop{\Jac}{Jac}	
\newop{\one}{\mathds{1}}	
\newop{\proj}{proj}	
\newop{\prox}{prox}	
\newop{\rank}{rank}	
\newop{\relint}{ri}	
\newop{\sign}{sgn}	
\newop{\supp}{supp}	
\newop{\Sym}{Sym}	
\newop{\tr}{tr}	
\newop{\unif}{unif}	
\newop{\vol}{vol}	
\newcommand{\cf}{cf.\xspace}	
\newcommand{\eg}{e.g.,\xspace}	
\newcommand{\ie}{i.e.,\xspace}	
\newcommand{\viz}{viz.\xspace}	
\newcommand{\textpar}[1]{\textup(#1\textup)}	
\newcommand{\eqdot}{\,.}	
\newcommand{\alt}[1]{#1'}	
\newcommand{\altalt}[1]{#1''}	
\newmacro{\ball}{\opball}   
\newmacro{\opball}{\mathbb{B}}	
\newmacro{\clball}{\overline{\mathbb{B}}}	
\newmacro{\sphere}{\mathbb{S}^{\vdim-1}}	
\newmacro{\argdot}{\kern.5pt\cdot\kern.5pt}	
\newmacro{\dd}{\kern0pt\:d\mkern-1mu{}}	
\newmacro{\ddt}{\frac{d}{dt}}	
\newmacro{\del}{\partial}	
\newcommand{\contdiff}[1]{\mathcal{C}^{#1}}	
\newmacro{\const}{c}	
\newmacro{\constA}{a}	
\newmacro{\constB}{b}	
\newmacro{\constC}{c}	
\newmacro{\Const}{C}	
\newmacro{\ConstA}{A}   
\newmacro{\param}{\theta}	
\newmacro{\params}{\Theta}	
\newmacro{\coef}{\alpha}	
\newmacro{\coefA}{\lambda}	
\newmacro{\coefB}{\mu}	
\newmacro{\coefC}{\nu}	
\newmacro{\expA}{p}	
\newmacro{\expB}{q}	
\newmacro{\expC}{r}	
\newmacro{\precs}{\eps}		
\newmacro{\precsalt}{\delta}	
\newmacro{\infprecs}{A} 
\newmacro{\asympteq}{\asymp}
\newmacro{\func}{g} 
\newmacro{\realspace}{\R^{\vdim}}   
\newmacro{\vecspace}{\R^{\vdim}}	
\newmacro{\dspace}{\R^{\vdim}}	
\newmacro{\subspace}{\mathcal{W}}	
\newmacro{\coord}{i}	
\newmacro{\coordA}{i}	
\newmacro{\coordB}{j}	
\newmacro{\coordC}{k}	
\newmacro{\nCoords}{d}	
\newmacro{\dims}{\nCoords}	
\newmacro{\vdim}{\nCoords}	
\newmacro{\bvec}{e}	
\newmacro{\uvec}{q}	
\newmacro{\bvecs}{\mathcal{E}}	
\newmacro{\point}{x}	
\newmacro{\pointA}{\point}	
\newmacro{\pointB}{\point'}	
\newmacro{\pointalt}{\pointB}
\newmacro{\pointC}{\point''}	
\newmacro{\pointaltalt}{\pointC}
\newmacro{\points}{\vecspace} 
\newmacro{\restrpoints}{\mathcal{X}_{\textup{r}}}	
\newmacro{\base}{p}	
\newmacro{\baseA}{q}	
\newmacro{\baseB}{q}	
\newmacro{\baseC}{u}	
\newmacro{\set}{\mathcal{K}}	
\newmacro{\setA}{\set}	
\newmacro{\setB}{\alt\set}	
\newmacro{\setC}{\altalt\set}	
\newmacro{\idx}{i}
\newmacro{\idxalt}{j}
\newmacro{\idxaltalt}{l}
\newmacro{\idxaltaltalt}{k}
\newmacro{\indices}{I}
\newmacro{\indicesalt}{J}
\newmacro{\closed}{\mathcal{C}}	
\newmacro{\cpt}{\mathcal{K}}	
\newmacro{\cptalt}{\alt\cpt}	
\newmacro{\nhd}{\mathcal{U}}	
\newmacro{\nhdalt}{\W}	
\newmacro{\nhdaltalt}{\V}	
\newmacro{\nbd}{\nhd}
\newmacro{\nbdalt}{\nhdalt}
\newmacro{\nbdaltalt}{\nhdaltalt}
\newmacro{\U}{\mathcal{U}}	
\newmacro{\V}{\mathcal{V}}	
\newmacro{\W}{\mathcal{W}}	
\newmacro{\Wfwd}{\overrightarrow{\mathcal{W}}}	
\newmacro{\open}{\mathcal{U}}	
\newmacro{\openA}{\mathcal{U}}	
\newmacro{\openB}{\mathcal{V}}	
\newmacro{\mfld}{\mathcal{M}}	
\newmacro{\gmat}{g}	
\newmacro{\gdist}{\dist_{\gmat}}	
\newmacro{\tvec}{z}	
\newmacro{\form}{\omega}	
\newmacro{\radius}{r}
\newmacro{\Radius}{R}
\newmacro{\Radiusalt}{\widetilde{\Radius}}
\newmacro{\radiusalt}{\alt\radius}
\newmacro{\margin}{\delta}
\newmacro{\marginalt}{\alt\margin}
\newmacro{\Margin}{\Delta}
\newmacro{\connectedcomp}{C}
\newmacro{\plainset}{A} 
\newmacro{\interv}{A} 
\newmacro{\domain}{D}
\newmacro{\bigcpt}{\mathcal{X}}
\newsmartmacro{\bigcptalt}{\alt \bigcpt}
\newsmartmacro{\bigcptaltalt}{\alt\alt \bigcpt}
\newmacro{\cvx}{\mathcal{C}}	
\newmacro{\subd}{\partial}	
\newop{\tspace}{T}	
\newop{\tcone}{TC}	
\newop{\dcone}{\tcone^{\ast}}	
\newop{\ncone}{NC}	
\newop{\pcone}{PC}	
\newop{\hull}{\Delta}	
\newop{\minimize}{minimize}	
\newop{\Opt}{Opt}	
\newop{\Sol}{Sol}	
\newop{\gap}{Gap}	
\newop{\orcl}{\mathsf{G}}	
\newop{\err}{\mathsf{Z}}	
\newmacro{\obj}{f}	
\newmacro{\objalt}{g}	
\newmacro{\objA}{f}	
\newmacro{\objB}{g}	
\newmacro{\sobj}{F}	
\newmacro{\loss}{\ell}	
\newcommand{\sol}[1][\point]{#1^{\ast}}	
\newmacro{\gvec}{g}	
\newmacro{\gbound}{G}	
\newmacro{\oper}{A}	
\newmacro{\vecfield}{v}	
\newmacro{\vbound}{V}	
\newmacro{\lips}{L}	
\newmacro{\strong}{\mu}	
\newmacro{\smooth}{\beta}	
\newmacro{\tmplips}{L}	
\newmacro{\tmpbound}{B}	
\newmacro{\growth}{M}	
\newmacro{\regparam}{\lambda}	
\newmacro{\initset}{\mathcal{X}_0}
\newmacro{\sublevel}{\mathcal{L}}   
\newmacro{\sublevellevel}{s}
\newmacro{\vertex}{i}	
\newmacro{\vertexA}{i}	
\newmacro{\vertexB}{j}	
\newmacro{\vertexC}{k}	
\newmacro{\vertexD}{l}	
\newmacro{\nVertices}{V}	
\newmacro{\vertices}{\mathcal{V}}	
\newmacro{\verticesalt}{\mathcal{Q}}	
\newmacro{\edge}{e}	
\newmacro{\edgeA}{e}	
\newmacro{\edgeB}{\edgeA'}	
\newmacro{\edgeC}{\edgeA''}	
\newmacro{\nEdges}{E}	
\newmacro{\edges}{\mathcal{E}}	
\newmacro{\restredges}{\widetilde{\mathcal{E}}}	
\newcommand{\graphprobof}[2]{\draft{\pi}\parens*{#1,#2}}	
\newmacro{\edgeprob}{p}	
\newcommand{\edgeprobup}[1][]{\draft{\overline{\edgeprob}}^{#1}}	
\newcommand{\edgeprobdown}[1][]{\draft{\underline{\edgeprob}}^{#1}}	
\newcommand{\edgeprobof}[1]{\edgeprob(#1)}	
\newmacro{\edgeprobalt}{q}
\newmacro{\edgeprobaltup}{\overline{\edgeprobalt}}
\newmacro{\edgeprobaltdown}{\underline{\edgeprobalt}}
\newmacro{\edgeprobalts}{\mathcal{Q}}
\newmacro{\graphmain}{\mathcal{G}}
\newmacro{\graphmainalt}{\mathcal{G}'}
\newmacro{\graphFullW}{\graphmain(\vertices,\edges,\qmat)}	
\newmacro{\graphFull}{\graphmain(\vertices,\edges)}	
\newmacro{\graphpath}{\mathcal{P}}  
\newmacro{\graph}{g}
\newmacro{\graphalt}{g'}
\newmacro{\graphs}{\mathcal{G}}	
\newmacro{\adjmat}{A}	
\newmacro{\wmat}{W}	
\newmacro{\route}{\gamma}
\newmacro{\walk}{w}
\newmacro{\tree}{\mathcal{T}}
\newmacro{\treealt}{\alt\tree}
\newmacro{\trees}{\mathfrak{T}}
\newmacro{\treesalt}{\wilde\trees}
\newmacro{\forest}{\mathcal{T}}	
\newmacro{\prune}{\mathcal{S}}	
\providecommand{\wrt}{}	
\DeclarePairedDelimiterXPP{\energyof}[1]{\energy}{(}{)}{}{
\renewcommand{\wrt}{\nonscript\,\delimsize\vert\nonscript\,\mathopen{}} #1}
\DeclarePairedDelimiterXPP{\transof}[1]{\graphmain}{(}{)}{}{
\renewcommand{\wrt}{\nonscript\,\delimsize\vert\nonscript\,\mathopen{}} #1}
\DeclarePairedDelimiterXPP{\energyaltof}[1]{\energyalt}{(}{)}{}{
\renewcommand{\wrt}{\nonscript\,\delimsize\vert\nonscript\,\mathopen{}} #1}
\newmacro{\startComp}{p}	
\newmacro{\nRes}{r}	
\newop{\ex}{\mathbb{E}}	
\newop{\prob}{\mathbb{P}}	
\newop{\probalt}{\mathbb{Q}}	
\newop{\accprobalt}{\probalt^{\nComps}}
\newop{\var}{\mathbb{V}}	
\newop{\cov}{cov}	
\newop{\simplex}{\hull}	
\providecommand\given{}	
\DeclarePairedDelimiterXPP{\exof}[1]{\ex}{[}{]}{}{
\renewcommand\given{\nonscript\,\delimsize\vert\nonscript\,\mathopen{}} #1}
\DeclarePairedDelimiterXPP{\exwrt}[2]{\ex_{#1}}{[}{]}{}{
\renewcommand\given{\nonscript\:\delimsize\vert\nonscript\:\mathopen{}} #2}
\DeclarePairedDelimiterXPP{\probof}[1]{\prob}{(}{)}{}{
\renewcommand\given{\nonscript\:\delimsize\vert\nonscript\:\mathopen{}} #1}
\DeclarePairedDelimiterXPP{\probwrt}[2]{\prob_{\!#1}}{(}{)}{}{
\renewcommand\given{\nonscript\:\delimsize\vert\nonscript\:\mathopen{}} #2}
\DeclarePairedDelimiterXPP{\oneof}[1]{\one}{\{}{\}}{}{#1}	
\DeclarePairedDelimiterXPP{\varof}[1]{\var}{[}{]}{}{
\renewcommand\given{\nonscript\,\delimsize\vert\nonscript\,\mathopen{}} #1}
\DeclarePairedDelimiterXPP{\covof}[1]{\cov}{(}{)}{}{
\renewcommand\given{\nonscript\,\delimsize\vert\nonscript\,\mathopen{}} #1}
\newmacro{\event}{H}	
\newmacro{\eventA}{H}	
\newmacro{\eventB}{L}	
\newmacro{\seed}{\theta}	
\newmacro{\seeds}{\Theta}	
\newmacro{\pdist}{P}	
\newmacro{\history}{\mathcal{H}}	
\newmacro{\sample}{\omega}	
\newmacro{\samples}{\Omega}	
\newmacro{\sdim}{m}	
\newmacro{\sspace}{\R^{\sdim}}	
\newmacro{\filter}{\mathcal{F}}	
\newmacro{\probspace}{(\samples,\filter,\prob)}	
\newmacro{\mean}{\mu}	
\newmacro{\sdev}{\sigma}	
\newmacro{\variance}{\sdev^{2}}	
\newmacro{\variancealt}{s^2}
\newmacro{\covmat}{\Sigma}
\newmacro{\hessmat}{H}
\newmacro{\rv}{X}
\newmacro{\trv}{X_\Radius}
\newmacro{\gaussian}{\mathcal{N}}
\newmacro{\partition}{Z}
\newmacro{\mart}{M}
\newmacro{\martealt}{N}
\newmacro{\beforestart}{-1}	
\newmacro{\start}{0}	
\newmacro{\afterstart}{1}	
\newmacro{\running}{\start,\afterstart,\dotsc}	
\newmacro{\run}{n}	
\newmacro{\runA}{n}	
\newmacro{\runB}{k}	
\newmacro{\runC}{\ell}	
\newmacro{\nRuns}{N}	
\newmacro{\nRunsalt}{\alt\nRuns}	
\newmacro{\runs}{\mathcal{\nRuns}}	
\newmacro{\runalt}{\runB}	
\newmacro{\tstart}{0}	
\renewcommand{\time}{\draft{t}}	
\newmacro{\timeA}{t}	
\newmacro{\timeB}{s}	
\newmacro{\timealt}{\timeB}	
\newmacro{\timealtalt}{u}
\newmacro{\timeC}{\tau}	
\newmacro{\timeD}{\lambda}	
\newmacro{\horizon}{T}	
\newmacro{\horizonalt}{S}	
\newmacro{\seq}{a}	
\newmacro{\seqA}{a}	
\newmacro{\seqB}{b}	
\newmacro{\seqC}{c}	
\newmacro{\state}{x}	
\newsmartmacro{\accstate}{x^{\step}}	
\newmacro{\stateA}{x}	
\newmacro{\stateB}{z}	
\newmacro{\statealt}{\stateB}
\newmacro{\stateC}{y}	
\newmacro{\stateD}{p}	
\newmacro{\statealtalt}{\stateC}
\newmacro{\statealtaltalt}{\stateD}
\newmacro{\projstate}{p}
\newmacro{\accinducedstate}{\statealt^{\nComps}}
\newmacro{\cstate}{X}
\newmacro{\cstateA}{X}
\newmacro{\cstateB}{Z}
\newmacro{\cstatealt}{\cstateB}
\newcommand{\init}[1][\state]{\draft{#1}_{\start}}	
\newcommand{\iter}[1][\state]{\draft{#1}_{\runB}}	
\newcommand{\curr}[1][\state]{\draft{#1_{\run}}}	
\renewcommand{\next}[1][\state]{\draft{#1}_{\run+1}}	
\newmacro{\startingpoint}{\init}	
\newmacro{\mat}{M}	
\newmacro{\hmat}{H}	
\newmacro{\ones}{\mathbf{1}}	
\newmacro{\eye}{I}	
\newmacro{\identity}{\eye}	
\newmacro{\zer}{\mathbf{0}}	
\newcommand{\mg}{\succ}	
\newcommand{\mgeq}{\succcurlyeq}	
\newmacro{\eigval}{\lambda}	
\newmacro{\flowmap}{\Theta}	
\DeclarePairedDelimiterXPP{\flowof}[2]{\flowmap_{#1}}{(}{)}{}{#2}	
\newcommand{\est}[1]{\hat #1}	
\newmacro{\signal}{\est\gvec}	
\newmacro{\step}{\eta}	
\newmacro{\learn}{\eta}	
\newmacro{\tempinv}{\beta}	
\newmacro{\batch}{B}
\newmacro{\batchidx}{b}
\newmacro{\drift}{b}	
\newmacro{\noise}{u}	
\newmacro{\noisepar}{\sdev}	
\newmacro{\noisevar}{\variance}	
\newmacro{\efftime}{\tau}	
\newmacro{\error}{Z}	
\newmacro{\nSamples}{N}	
\newmacro{\datapoint}{\xi}
\newmacro{\statespace}{X}
\newmacro{\transprob}{p}
\newmacro{\transprobup}{\transprob}
\newmacro{\transprobdown}{\transprob}
\newmacro{\transerror}{a}
\newcommand{\entrtimeof}[2]{\draft{m}_{#1}(#2)}	
\newmacro{\mgf}{M}	
\DeclarePairedDelimiterXPP{\mgfof}[2]{\mgf_{#1}}{(}{)}{}{#2}	
\newmacro{\cgf}{K}	
\DeclarePairedDelimiterXPP{\cgfof}[2]{\cgf_{#1}}{(}{)}{}{#2}	
\newmacro{\ham}{\mathcal{H}}	
\DeclarePairedDelimiterXPP{\hamof}[2]{\ham_{#1}}{(}{)}{}{#2}	
\newmacro{\lag}{\mathcal{L}}	
\DeclarePairedDelimiterXPP{\lagof}[2]{\lag_{#1}}{(}{)}{}{#2}	
\newmacro{\mom}{p}
\newmacro{\pos}{q}
\newmacro{\vel}{v}
\newmacro{\velalt}{w}
\newmacro{\hamilt}{\mathcal{H}}
\newmacro{\hamiltalt}{\bar\hamilt}
\newmacro{\lagrangian}{\mathcal{L}}
\newmacro{\lagrangianalt}{\bar\lagrangian}
\newmacro{\curve}{\gamma}	
\newmacro{\revcurve}{\widetilde{\curve}}	
\DeclarePairedDelimiterXPP{\curveat}[1]{\curve}{(}{)}{}{#1}	
\DeclarePairedDelimiterXPP{\diffcurveat}[1]{\dot\curve}{(}{)}{}{#1}	
\DeclarePairedDelimiterXPP{\revcurveat}[1]{\revcurve}{(}{)}{}{#1}   
\DeclarePairedDelimiterXPP{\revdiffcurveat}[1]{\dot\revcurve}{(}{)}{}{#1}   
\newmacro{\curves}{\Gamma}
\DeclarePairedDelimiterXPP{\curvesat}[3]{\curves_{#1}}{(}{)}{}{#2;#3}	
\newmacro{\contcurves}{\contfuncs}
\DeclarePairedDelimiterXPP{\contcurvesat}[2]{\contcurves_{#1}}{(}{)}{}{#2}	
\DeclarePairedDelimiterXPP{\restrcurvesat}[1]{\contcurves_{r}}{(}{)}{}{#1}	
\newmacro{\dcurves}{\mathcal{D}}
\newmacro{\dcurve}{\xi}	
\DeclarePairedDelimiterXPP{\restrdcurvesat}[1]{\dcurves_{r}}{(}{)}{}{#1}	
\newcommand{\dcurvesat}[1]{(\vecspace)^{#1}}
\newmacro{\lint}{\cstate}	
\DeclarePairedDelimiterXPP{\lintat}[1]{\lint}{(}{)}{}{#1}	
\newmacro{\qpot}{B}	
\newmacro{\qmat}{B}	
\newmacro{\energy}{E}	
\newmacro{\energyalt}{E'}	
\newmacro{\action}{\mathcal{S}}
\newmacro{\act}{\mathcal{S}}
\DeclarePairedDelimiterXPP{\actof}[2]{\act_{#1}}{[}{]}{}{#2}	
\newmacro{\pth}{\curve}
\newmacro{\pthalt}{\varphi}
\newmacro{\pths}{\curves}
\newmacro{\dpth}{\xi}
\newmacro{\dpthalt}{\zeta}
\newmacro{\daction}{\mathcal{A}}
\newmacro{\quasipot}{V}
\newmacro{\dquasipot}{\revise{\mathcal{B}}}
\newmacro{\dquasipotup}{{\dquasipot}}
\newmacro{\dquasipotdown}{{\dquasipot}}
\newmacro{\dquasipotdownalt}{\widetilde{{\dquasipot}}}
\newmacro{\symdquasipot}{C}
\newmacro{\dquasipotalt}{\widetilde{\dquasipot}}
\newmacro{\finaldquasipot}{\revise{B}}
\newmacro{\finaldquasipotup}{{\finaldquasipot}}
\newmacro{\finaldquasipotdown}{{\finaldquasipot}}
\newmacro{\fulldquasipot}{C}
\newmacro{\fulldquasipotdown}{\underline{\fulldquasipot}} 
\newmacro{\fulldquasipotup}{\overline{\fulldquasipot}} 
\newmacro{\interpretablecostlb}{\underline{\qmat}}
\newmacro{\interpretablecostub}{\overline{\qmat}}
\newmacro{\invpot}{W}
\newmacro{\dinvpot}{\energy}
\newmacro{\logmgf}{H}
\newmacro{\contfuncs}{\mathcal{C}}
\newmacro{\map}{F}
\newmacro{\rate}{\rho}
\newmacro{\level}{s}
\newmacro{\bdquasipotup}{\dquasipotup_{\infty}}
\newmacro{\bdquasipotupalt}{C_{\infty}}
\newmacro{\distrcost}{C}
\newmacro{\distrquasipot}{W}
\newmacro{\timecost}{D}
\newmacro{\timequasipot}{E}
\newcommand{\timequasipotupexcisedfrom}[2]{{\timequasipot} \parens*{#1 \not\joins #2}}
\newcommand{\timequasipotuprelativeto}[2]{{\timequasipot} \parens*{#2 \,|\, #1}}
\newcommand{\timequasipotdownrelativeto}[2]{{\timequasipot} \parens*{#2 \,|\, #1}}
\newcommand{\timequasipotupbaseof}[2]{{\timequasipot}(#2)}
\newmacro{\eqcl}{\mathcal{K}}
\newmacro{\neqcl}{\nComps}
\newmacro{\primvar}{\alpha}
\newmacro{\primvarup}{\overline{\primvar}}
\newmacro{\primvardown}{\underline{\primvar}}
\newmacro{\bdprimvar}{\primvar_\infty}
\newmacro{\bdvar}{\sigma^2_\infty}
\newmacro{\mainvar}{\overline{\sigma}^2}
\newmacro{\bdvarup}{\overline{\sigma}^2_\infty}
\newmacro{\exponent}{s}
\newmacro{\bdpot}{\pot_\infty}
\newmacro{\potgbound}{C}
\newmacro{\pot}{U}	
\newmacro{\potup}{\overline{\pot}}  
\newmacro{\potdown}{\underline{\pot}}  
\newmacro{\varup}{\overline{\sigma}^2}  
\newmacro{\vardown}{\underline{\sigma}^2}  
\newmacro{\varerror}{\Delta}
\newmacro{\noiseassumptionset}{X}
\newmacro{\varalt}{\zeta^2}
\newmacro{\ratenhd}{\mathcal{N}}
\newmacro{\diffcurr}{\delta \curr}
\newcommand{\exittime}{\sigma^{\step}}
\newcommand{\hittime}{\tau^{\step}}
\newcommand{\acchittime}{\hittime}
\newcommand{\inducedhittime}{\widetilde{\tau}}
\newcommand{\accinducedhittime}{\widetilde{\tau}^{\nComps}}
\newcommand{\sgdhittime}{\tau}
\newmacro{\all}{\text{crit}}
\newmacro{\comp}{\mathcal{K}} 
\newmacro{\regcomp}{\mathcal{K}} 
\newmacro{\allcomps}{\comp_{\all}}
\newmacro{\allregcomps}{\regcomp_{\all}}
\newmacro{\iComp}{i}
\newmacro{\jComp}{j}
\newmacro{\kComp}{k}
\newmacro{\lComp}{l}
\newmacro{\mComp}{m}
\newmacro{\nComps}{K}
\newmacro{\compIndices}{\setof{1,\dotsc,\nComps}}
\newmacro{\sink}{\mathcal{Q}}
\newmacro{\nSinks}{N_{\text{targ}}}
\newmacro{\sinkIndices}{\setof{1,\dotsc,\nSinks}}
\newmacro{\gencomp}{\mathcal{C}}
\newmacro{\nGencomps}{N_{\all}}
\newmacro{\gencompIndices}{\setof{1,\dotsc,\nGencomps}}
\newmacro{\iGround}{0}
\newmacro{\ground}{\comp_{\iGround}}
\newmacro{\groundstates}{\sol[\indices]}
\newmacro{\asymptstables}{AS}
\newmacro{\nontrivialattract}{NTA}
\newcommand{\joins}{\leadsto}
\newop{\cost}{cost}
\newmacro{\critpoint}{p}
\newmacro{\globtime}{\tau}
\newmacro{\globmin}{\target}
\newmacro{\target}{\mathcal{Q}}
\newmacro{\logtime}{E}
\newcommand{\trajof}[2][]{\point_{#1}(#2)}	
\newcommand{\velof}[2][]{\dot\point_{#1}(#2)}	
\newmacro{\meas}{\mu}	
\newmacro{\altmeas}{\nu}	
\newmacro{\occmeas}{\mu}	
\newmacro{\borel}{\mathcal{B}}	
\newmacro{\size}{\delta}	
\newmacro{\toler}{\eps}	
\newcommand{\WA}{\WAmargincomment}
\newcommand{\half}[1][1]{{\frac{#1}{2}}}
\renewcommand{\WAdelete}[1]{}
\newmacro{\gradientbound}{16 \dims \log 6}	
\newmacro{\fn}{f}	
\newmacro{\basin}{\mathrm{Attr}}	
\newmacro{\measalt}{\widetilde{\mu}}	
\newmacro{\grad}{\nabla}
\newmacro{\errorterm}{E}
\newmacro{\errortermalt}{\widetilde{E}}
\newmacro{\plusdot}{\rotatebox[origin=c]{180}{$\dotplus$}}
\newmacro{\mathRHS}{\text{RHS}}
\NewDocumentCommand{\ourpaper}{o}{\IfValueTF{#1}{\citep[#1]{AIMM24-ICML}}{\citep{AIMM24-ICML}}}
\begin{document}


\title
[The Global Convergence Time of Stochastic Gradient Descent]	
{The Global Convergence Time of Stochastic Gradient Descent\\
in Non-Convex Landscapes:
Sharp Estimates via Large Deviations}	

\author
[W.~Azizian]
{Waïss Azizian$^{c,\ast}$}
\address{$^{c}$\,%
Corresponding author.}
\address{$^{\ast}$\,%
Univ. Grenoble Alpes, CNRS, Inria, Grenoble INP, LJK, 38000 Grenoble, France.}
\email[Corresponding author]{waiss.azizian@univ-grenoble-alpes.fr}
\author
[F.~Iutzeler]
{Franck Iutzeler$^{\sharp}$}
\address{$^{\sharp}$\,%
Institut de Mathématiques de Toulouse,  Université de Toulouse,  CNRS, UPS, 31062, Toulouse, France.}
\email{franck.iutzeler@math.univ-toulouse.fr}
\author
[J.~Malick]
{\\Jérôme Malick$^{\ast}$}
\email{jerome.malick@cnrs.fr}
\author
[P.~Mertikopoulos]
{Panayotis Mertikopoulos$^{\diamond}$}
\address{$^{\diamond}$\,%
Univ. Grenoble Alpes, CNRS, Inria, Grenoble INP, LIG, 38000 Grenoble, France.}
\email{panayotis.mertikopoulos@imag.fr}


\subjclass[2020]{
Primary 90C15, 90C26, 60F10;
secondary 90C30, 68Q32.}
\keywords{%
Global convergence time;
stochastic gradient descent;
Freidlin\textendash Wentzell theory;
non-convex problems;
large deviations.}


\newacro{LHS}{left-hand side}
\newacro{RHS}{right-hand side}
\newacro{iid}[i.i.d.]{independent and identically distributed}
\newacro{lsc}[l.s.c.]{lower semi-continuous}
\newacro{usc}[u.s.c.]{upper semi-continuous}
\newacro{rv}[r.v.]{random variable}
\newacro{wp1}[w.p.$1$]{with probability $1$}

\newacro{NE}{Nash equilibrium}
\newacroplural{NE}[NE]{Nash equilibria}

\newacro{BG}{Boltzmann\textendash Gibbs}
\newacro{FW}{Freidlin\textendash Wentzell}

\newacro{SA}{stochastic approximation}
\newacro{DAG}{directed acyclic graph}
\newacro{ERM}{empirical risk minimization}
\newacro{GD}{gradient descent}
\newacro{KL}{Kullback\textendash Leibler}
\newacro{KurdL}[K\L]{Kurdyka\textendash\L ojasiewicz}
\newacro{LD}{large deviation}
\newacro{LDP}{large deviation principle}
\newacro{CGF}{cumulant-generating function}
\newacro{LSI}{logarithmic Sobolev inequality}
\newacro{MGF}{moment-generating function}
\newacro{NGD}{noisy gradient descent}
\newacro{ODE}{ordinary differential equation}
\newacro{SG}{stochastic gradient}
\newacro{SGD}{stochastic gradient descent}
\newacro{SDE}{stochastic differential equation}
\newacro{SME}{stochastic modified equation}
\newacro{SFO}{stochastic first-order oracle}
\newacro{SGO}{stochastic gradient oracle}
\newacro{SGLD}{stochastic gradient Langevin dynamics}
\newacro{SNR}{signal-to-noise ratio}
\newacro{ML}{machine learning}

\newacro{DMFT}{dynamic mean-field theory}

\begin{abstract}
%
%
In this paper, we examine the time it takes for \ac{SGD} to reach the global minimum of a general, non-convex loss function.
We approach this question through the lens of randomly perturbed dynamical systems and large deviations theory, and we provide a tight characterization of the global convergence time of \ac{SGD} via matching upper and lower bounds.
These bounds are dominated by the most ``costly'' set of obstacles that the algorithm may need to overcome in order to reach a global minimizer from a given initialization, coupling in this way the global geometry of the underlying loss landscape with the statistics of the noise entering the process.
Finally, motivated by applications to the training of deep neural networks, we also provide a series of refinements and extensions of our analysis for loss functions with shallow local minima.
\end{abstract}

\allowdisplaybreaks	
\acresetall	

\begin{abstract}

\end{abstract}
\acresetall
\acused{iid}
\acused{LHS}
\acused{RHS}

\maketitle

\section{Introduction}
\label{sec:introduction}


Much of the success of modern machine learning applications hinges on solving non-convex problems of the form
\begin{equation}
\label{eq:opt}
\tag{Opt}
\begin{aligned}
\min\nolimits_{\point\in\vecspace}
	&\;
	\obj(\point)
\end{aligned}
\end{equation}
where $\obj\from\vecspace\to\R$ is a smooth function on $\vecspace$.
When $\vdim$ is so large as to make gradient calculations computationally prohibitive, the go-to method for solving \eqref{eq:opt} is the \acdef{SGD} algorithm
\begin{equation}
\label{eq:SGD}
\tag{SGD}
\next
	= \curr - \step \curr[\signal]
\end{equation}
where $\step>0$ is the method's \emph{step-size} \textendash\ or \emph{learning rate} \textendash\ and $\curr[\signal]$, $\run=\running$, is a computationally affordable stochastic approximation of the gradient of $\obj$ at $\curr\in\vecspace$.

The study of \eqref{eq:SGD} goes back to the seminal work of \citet{RM51} and \citet{KW52}, who introduced the method in the context of solving systems of nonlinear equations in the 1950's.
Originally, the analysis of \eqref{eq:SGD} involved a \emph{vanishing} step-size $\curr[\step]$ satisfying the ``$L^{2} - L^{1}$'' summability conditions $\sum_{\run} \curr[\step]^{2} < \infty = \sum_{\run} \curr[\step]$, and gave rise to the \acs{ODE} method of \acl{SA}.
In this context, the first convergence results for \eqref{eq:SGD} were obtained by \citet{Lju77,Lju78}, \citet{Ben99}, and \citet{BT00}, who established the almost sure convergence of the method in non-convex problems (with different regularity conditions for $\obj$).
In conjunction with the above, a parallel thread in the literature launched by \citet{Pem90} and \citet{BD96} showed that \eqref{eq:SGD} avoids saddle points with probability $1$, so, barring degeneracies, it only converges to local minimizers of $\obj$ \textendash\ see also \cite{BH95,MHKC20,HMC21,MHC24,HKKM23,KHMK22,AMPW22,SRKK+19} and references therein.

On the other hand, when \eqref{eq:SGD} is run with a \emph{constant} step-size \textendash\ the standard choice in data science and machine learning \textendash\ the situation is drastically different.
The trajectories of \eqref{eq:SGD} \emph{do not converge}, but they instead wander around the problem's state space, spending most time near the critical points of $\obj$.
This is quantified by ``criticality bounds'' of the form $\exof{\sum_{\runalt=\start}^{\run} \norm{\nabla\obj(\point_{\runalt})}^{2}} = \bigoh(\sqrt{\run})$,
which certify an output with small gradient norm, in expectation or with high probability \cite{Lan20}.
As in the vanishing step-size regime, these results are supplemented by a range of saddle-point avoidance results \cite{GHJY15,VS19} which, under certain conditions, imply that the output of \eqref{eq:SGD} is approximately second-order optimal (and hence, in most cases, a near-minimizer).

Nevertheless, all these results for \eqref{eq:SGD} are, at best, guarantees of \emph{local} minimality, not global.
When it comes to approximating the \emph{global} minimum of $\obj$, we must tackle the following fundamental question:
\begin{quote}
\centering
\itshape
How much time does it take \eqref{eq:SGD} to reach\\
the vicinity of a global minimum of $\obj$?
\end{quote}
Of course, attaining the global minimum of a non-convex function is a lofty goal, so, before examining the time required to achieve it, one must first assess the likelihood of getting there in the first place.
In this regard, \citet{AIMM24-ICML} recently showed that, in the long run,
$\curr$ is exponentially concentrated near the local minimizers of $\obj$, with the degree of concentration depending on the landscape of $\obj$ and the noise entering the process.%
\footnote{Formally, \citet{AIMM24-ICML} showed that, in the limit $\run\to\infty$, $\curr$ follows an approximate \acl{BG} distribution with temperature equal to $\step$, and energy levels determined by $\obj$ and the statistics of $\curr[\signal]$.}
In practice, this means that \eqref{eq:SGD} ultimately gets arbitrarily close to $\argmin\obj$, but before getting there, it may have spent an exponential amount of time away from $\argmin\obj$.
Consequently, any answer to the question of global convergence of \eqref{eq:SGD} must incorporate global information about the geometry of $\obj$, as well as the statistics of the stochastic gradients $\curr[\signal]$.

\para{Our contributions}

Our aim in this paper is to provide quantifiable predictions for the global convergence time of \eqref{eq:SGD}.
Building on the approach of \citet{AIMM24-ICML}, we examine this question through the lens of the \ac{FW} theory of large deviations for randomly perturbed dynamical systems in continuous time \citep{FW12}, and we employ the subsampling theory of \citet{Kif88,Kif90} as a starting point to derive a similar theory for the discrete-time setting of \eqref{eq:SGD}.
In so doing, we obtain a tight characterization for the global convergence time of \eqref{eq:SGD} which can be expressed informally as
\begin{equation}
\label{eq:globtime-informal}
\exwrt{\point}{\globtime}
	\approx e^{\logtime(\point)/\step}
\end{equation}
where
\begin{enumerate*}
[\upshape(\itshape i\hspace*{1pt}\upshape)]
\item
$\globtime$ denotes the number of iterations required to reach $\argmin\obj$ within a given accuracy;
\item
$\point$ is the initialization of \eqref{eq:SGD};
and
\item
$\logtime(\point)$ is an ``energy function'' that encodes the geometry of $\obj$ and the statistics of the noise in \eqref{eq:SGD} via a construct called the ``transition graph'' of $\obj$.
\end{enumerate*}

The precise form of the hitting time estimate \eqref{eq:globtime-informal} is described via matching upper and lower bounds in \cref{sec:results} (\cf \cref{thm:globtime}).
Subsequently, in \cref{sec:extensions}, we take an in-depth look at the impact of the loss landscape of $\obj$ on these bounds, and we link the energy $\logtime(\point)$ to the depth of the function's spurious, non-global minima.
The resulting expression provides a crisp characterization of the global convergence time of \eqref{eq:SGD} in terms of the geometry of $\obj$ \textendash\ and, more precisely, the maximum relative depth of any spurious minimizers that the process encounters on its way to $\argmin\obj$.
The details of this construction rely on an intricate array of tools and techniques from the theory of large deviations and randomly perturbed dynamical systems, so they are difficult to describe here;
for this reason, we begin by presenting a simplified version of the apparatus required to state our results in \cref{sec:warmup}.

\para{Related work}

Owing to its importance, \eqref{eq:SGD} and its variants have given rise to a vast corpus of literature which cannot be adequately surveyed here.
As we mentioned above, in the non-convex case, most of this literature concerns the criticality and saddle-point avoidance guarantees of the method, under different structural and regularity assumptions.
For our purposes, the most relevant threads in the literature revolve around
\begin{enumerate*}
[\upshape(\itshape i\hspace*{1pt}\upshape)]
\item
treating $\curr$ as a discrete-time Markov chain and examining its tails \citep{gurbuzbalaban2021heavy,hodgkinson2021multiplicative,pavasovic2023approximate};
\item
viewing it as a discrete-time approximation of a \ac{SDE} and employing tools like \ac{DMFT} to study the resulting ``diffusion approximation'' limit \citep{mignacco2020dynamical,mignacco2022effective,veiga2024stochastic};
and/or
\item
focusing on the time it takes \eqref{eq:SGD} to escape a spurious local minimum \citep{xieDiffusionTheoryDeep2021,huDiffusionApproximationNonconvex2018,moriPowerLawEscapeRate,jastrzebskiThreeFactorsInfluencing2018,gesu2019sharp,baudel2023hill}.
\end{enumerate*}
Our analysis shares the same high-level goal as these general threads \textendash\ that is, understanding the global convergence properties of \eqref{eq:SGD} in non-convex landscapes \textendash\ but we are not aware of any comparable results.
To streamline our presentation, we provide a more detailed account of this literature in \cref{app:related}.

The only thing we should highlight at this point is a range of phenomena that arise in the context of neural network training, where overparameterization and Gaussian initialization schemes can lead to global convergence \citep{allen-zhuConvergenceTheoryDeep2019,duGradientDescentFinds2019,zouStochasticGradientDescent2018a}.
Results of this kind typically require some specific structure on the underlying neural network:
a width scaling quadratically with the data \citep{oymakModerateOverparameterizationGlobal2019,nguyenGlobalConvergenceDeep2020} \textendash\ or linearly for infinite-depth networks \citep{marionImplicitRegularizationDeep2024} \textendash\ and/or initialization schemes that are attuned to the network's structure \citep{nguyenGlobalConvergenceDeep2020,liuAimingMinimizersFast2023}.
By contrast, our work takes a more holistic viewpoint as we aim to obtain results for general non-convex landscapes, without making any structural assumptions about the problem's objective or the algorithm's initialization.
To provide the necessary context, we survey the relevant literature on overparameterized neural networks in \cref{app:related}.

\section{A gentle start}
\label{sec:warmup}

Stating our results in their most general form requires some fairly involved technical apparatus, so we begin with a warm-up section intended to introduce some basic concepts and develop intuition for the sequel.
Specifically, our aim in this section is to give a high-level overview of our main results for a simple two-dimensional example which is easy to plot and visualize.
We stress that the material in this section is presented at an informal level;
the rigorous treatment is deferred to \cref{sec:results}.


\begin{figure*}[tbp]
\centering
\includegraphics[height=32ex]{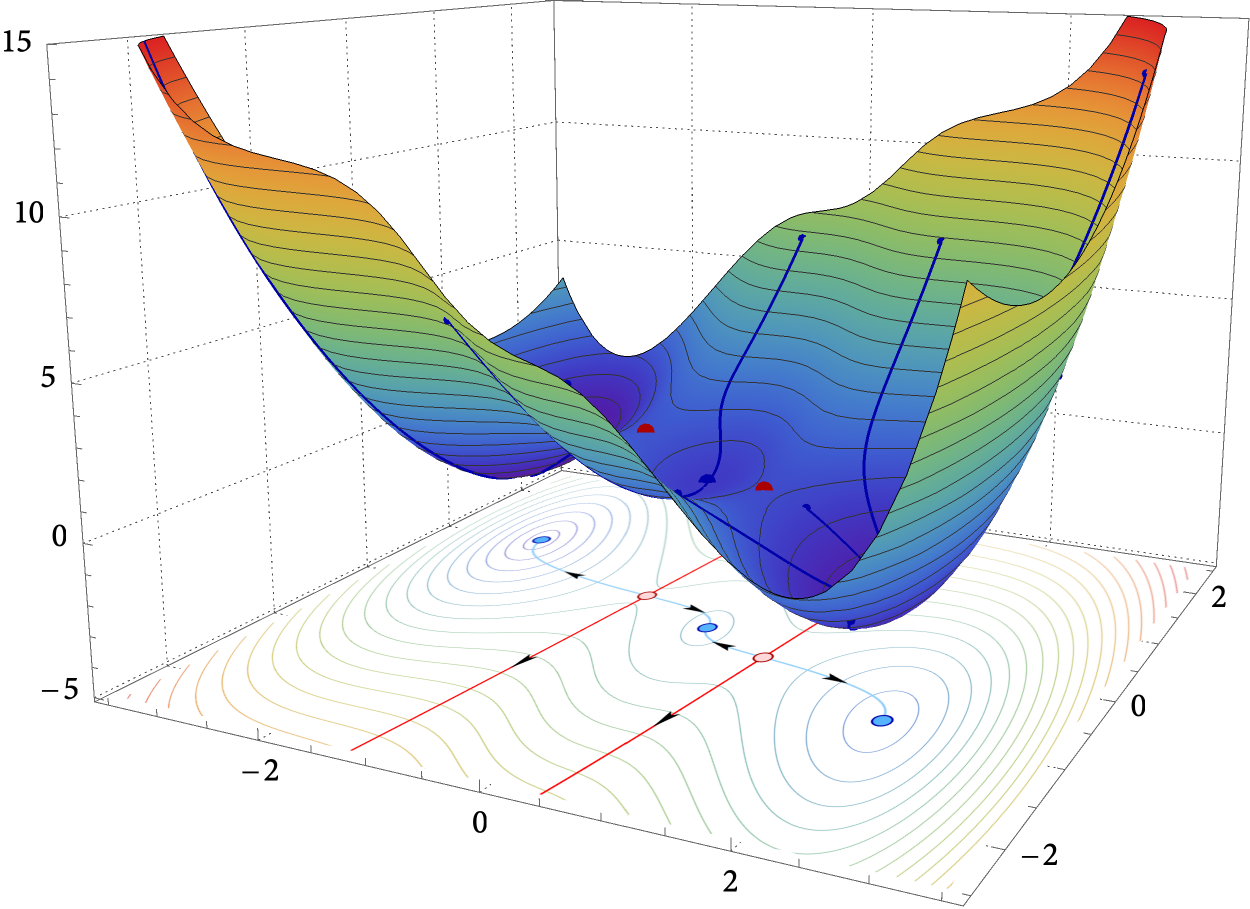}\\[2ex]
\includegraphics[height=28ex]{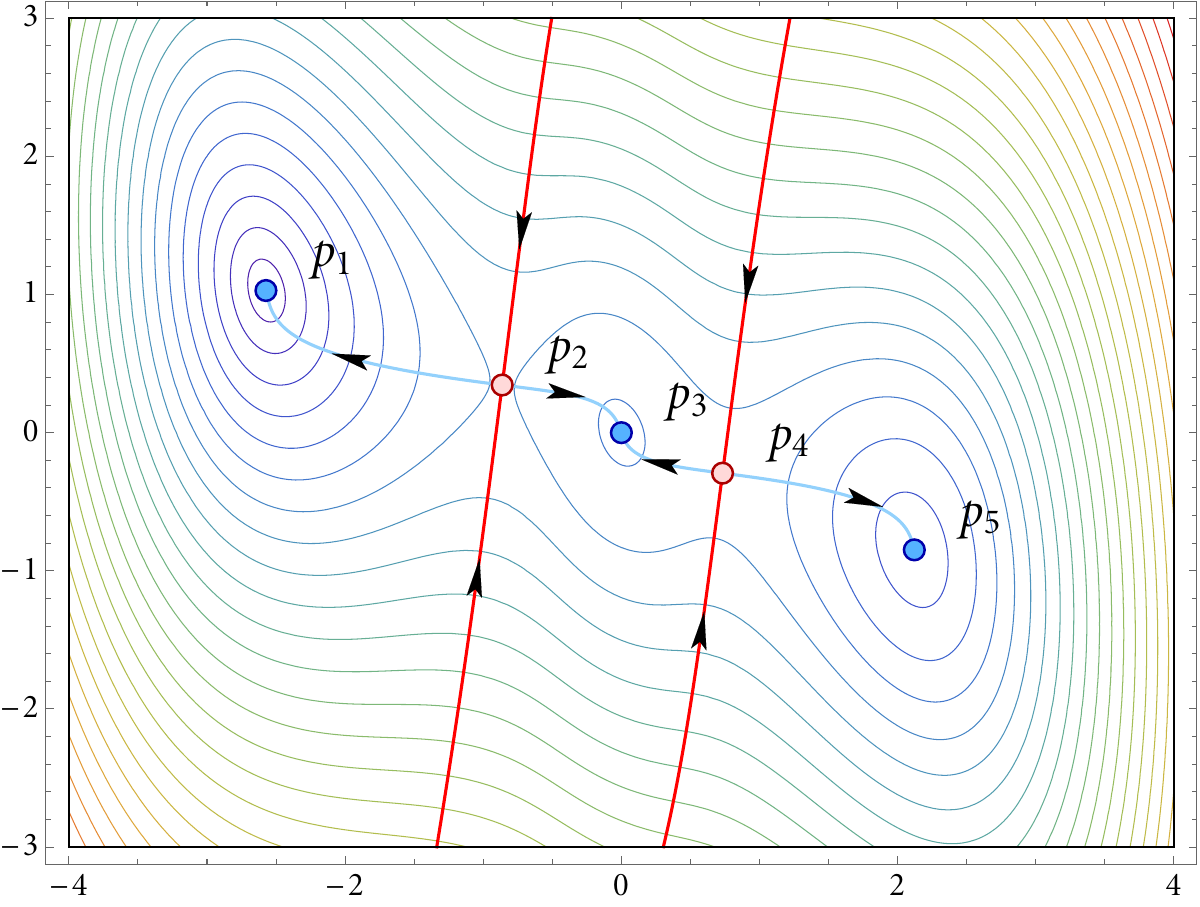}
\qquad
\includegraphics[height=28ex]{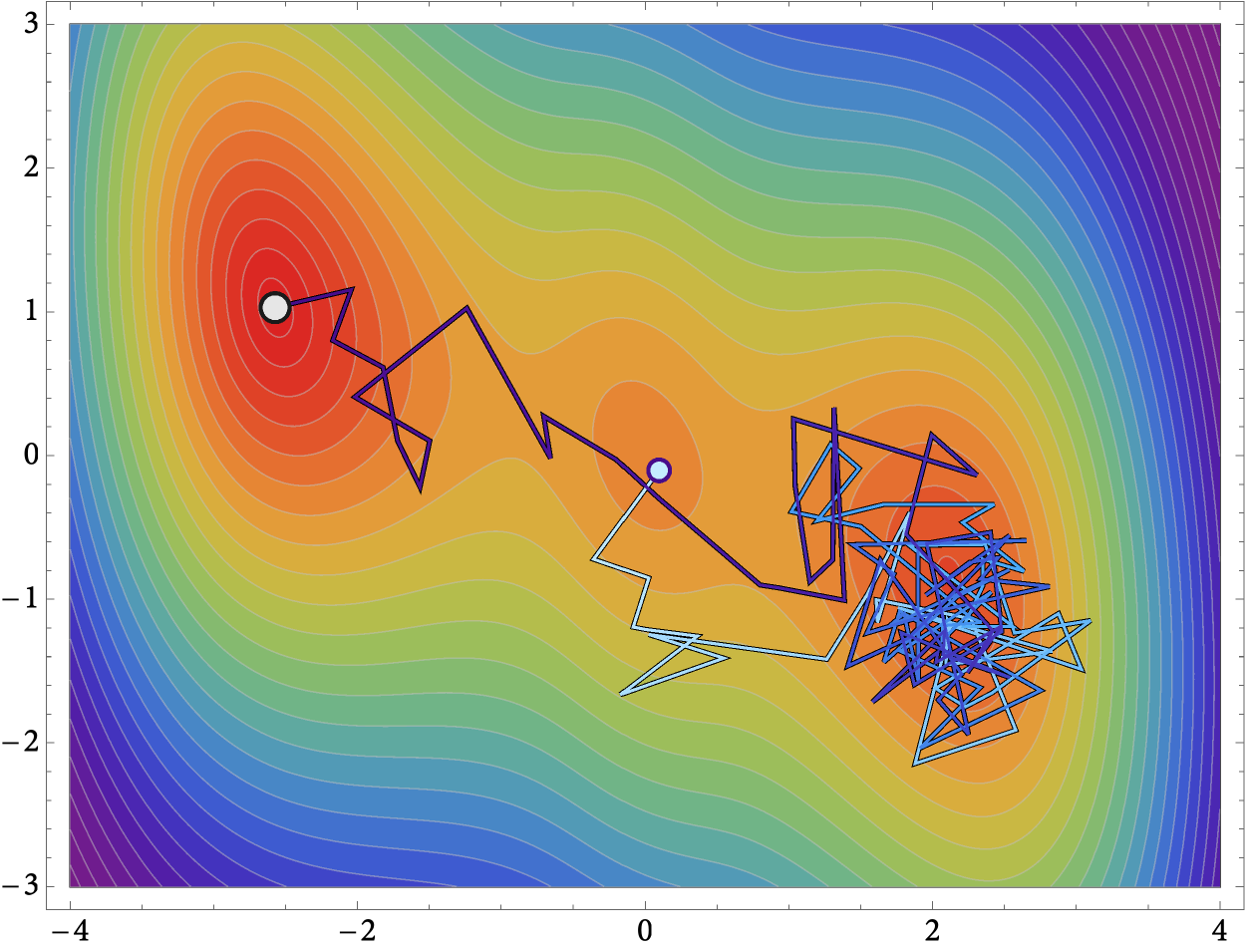}%
\caption{Visualization of the three-hump camel benchmark $\obj(\point_{1},\point_{2}) = 2\point_{1}^{6}/13 + \point_{1}^{5}/8 - 91\point_{1}^{4}/64 - 24\point_{1}^{3}/48 + 42\point_{1}^{2}/16 + 5\point_{2}^{2}/4 + \point_{1}\point_{2}$.
The figure to the left depicts the loss landscape of $\obj$ with several (deterministic) gradient flow orbits superimposed for visual convenience.
The figure in the middle highlights the $5$ critical points of $\obj$ (light blue for local minimizers, light red for saddle points):
the red curves depict the stable manifolds of the saddle points $\critpoint_{2}$ and $\critpoint_{4}$, and they partition the space into three basins of attraction, one per minimizer;
the blue curves are gradient flow orbits that realize the edges of the transition graph of $\obj$.
Finally, the figure to the right illustrates a trajectory of \eqref{eq:SGD} starting near $\critpoint_{3}	  $, and stopped when it gets within $10^{-2}$ of $\critpoint_{1}$, the global minimum of $\obj$;
color represents time, with darker hues indicating later iterations of \eqref{eq:SGD}.
The plotted trajectory gets to $\critpoint_{1}$ after first being trapped by $\critpoint_{5}$;
the sojourn time from $\critpoint_{5}$ to $\critpoint_{1}$ is the leading contribution to the global convergence time of \eqref{eq:SGD}.}
\label{fig:benchmark}
\end{figure*}


With this in mind, the example of \eqref{eq:opt} that we will work with is a modified version of the well-known ``three-hump camel'' test function, as detailed in \cref{fig:benchmark}.
This is a multimodal function with five critical points, indexed $\critpoint_{1}$ through $\critpoint_{5}$:
two are saddles ($\critpoint_{2}$ and $\critpoint_{4}$),
three are mini\-mizers ($\critpoint_{1}$, $\critpoint_{3}$, and $\critpoint_{5}$),
and the global minimum is attained at $\critpoint_{1}\!\approx\!(-2.573, 1.029)$.
To keep things simple, we will further assume that \eqref{eq:SGD} is run with stochastic gradients of the form $\curr[\signal] = \nabla\obj(\curr) + \curr[\error]$, where $\curr[\error]$ is an \ac{iid} sequence of Gaussian random vectors with covariance\;$\noisevar \eye$.
Then, fixing an initialization $\init\gets\point\in\R^{2}$, we will seek to estimate the global convergence time
\begin{equation}
\label{eq:globtime-ex}
\globtime
= \inf\setdef{\run=\running}{\norm{\curr - \critpoint_{1}} \leq \margin}
\end{equation}
for some fixed error margin $\margin > 0$ (\eg $\margin = 10^{-2}$ in \cref{fig:benchmark}).

The core of our analysis may then be summarized as follows:
\begin{enumerate}
\item
With exponentially high probability, \eqref{eq:SGD} spends most of its time near the critical points of $\obj$ (and, in particular, its minimizers).
\item
To leading order, the time that \eqref{eq:SGD} takes to get to a neighborhood of $\argmin \obj$ is determined by the chain of critical points visited by $\curr$, and by the average time required to transition from one to the next.
\end{enumerate}

The technical scaffolding required to make this intuition precise is provided by a weighted directed graph which quantifies the difficulty of \eqref{eq:SGD} making a direct transition between two critical points of $\obj$.
In the context of our example, this graph is constructed as follows:
\begin{itemize}
\item
The nodes of the graph are the critical points $\critpoint_{\iComp}$, $\iComp=1,\dotsc,5$, of $\obj$.
\item
Two 
nodes are joined by an edge if there exists a solution orbit of the gradient flow $\velof{\time} = -\nabla\obj(\trajof{\time})$ whose closure connects said points.%
\footnote{Formally, two distinct critical points $\critpoint_{\iComp}$, $\critpoint_{\jComp}$ of $\obj$ are joined by an edge if there exists a solution orbit $\trajof{\time}$ of the gradient flow of $\obj$ such that
$\{\critpoint_{i},\critpoint_{j}\} = \{\lim_{\time\to\pm\infty} \trajof{\time}\}$.}
Importantly, if $\critpoint_{\iComp}\joins\critpoint_{\jComp}$, we also have $\critpoint_{\jComp}\joins\critpoint_{\iComp}$ by default.
\item
The weight of an edge $\critpoint_{\iComp}\joins\critpoint_{\jComp}$ is given by the expression
\begin{equation}
\label{eq:cost-ex}
\qmat_{\iComp\jComp}
	= 2 \pospart{\obj(\critpoint_{\jComp}) - \obj(\critpoint_{\iComp})} / \noisevar
	\eqdot
\end{equation}
An instructive way of interpreting this expression is as follows:
if $\obj(\critpoint_{\jComp}) \leq \obj(\critpoint_{\iComp})$, the transition of \eqref{eq:SGD} from $\critpoint_{\iComp}$ to $\critpoint_{\jComp}$ is ``costless'';
otherwise, if $\obj(\critpoint_{\jComp}) > \obj(\critpoint_{\iComp})$, the noise in \eqref{eq:SGD} can still lead to an \emph{ascent} from $\critpoint_{\iComp}$ to $\critpoint_{\jComp}$, but the cost of such a transition is proportional to the potential difference $\obj(\critpoint_{\jComp}) - \obj(\critpoint_{\iComp})$, and inversely proportional to the variance of the noise in \eqref{eq:SGD}.
\end{itemize}

In our example, this construction yields the path graph below (where, for visual clarity, the height of each node corresponds to its objective value):
\begin{center}

\begin{tikzpicture}
[scale=.7,
nodestyle/.style={circle,draw=PrimalColor,fill=MyLightBlue!50, inner sep=1pt},
edgestyle/.style={<->},
>=latex]

\small

\coordinate (q1) at (0,-3/2);
\coordinate (q2) at (3,2);
\coordinate (q3) at (5,0);
\coordinate (q4) at (7,1);
\coordinate (q5) at (10,-1);

\node (q1) at (q1) [nodestyle] {$\critpoint_{1}$};
\node (q2) at (q2) [nodestyle] {$\critpoint_{2}$};
\node (q3) at (q3) [nodestyle] {$\critpoint_{3}$};
\node (q4) at (q4) [nodestyle] {$\critpoint_{4}$};
\node (q5) at (q5) [nodestyle] {$\critpoint_{5}$};

\draw [edgestyle,out=0,in=180] (q1) to (q2);
\draw [edgestyle,out=0,in=180] (q2) to (q3);
\draw [edgestyle,out=0,in=180] (q3) to (q4);
\draw [edgestyle,out=0,in=180] (q4) to (q5);

\end{tikzpicture}

\end{center}
The topology of this graph captures the fact that the entire space is partitioned into the basins of attraction of $\critpoint_{1}$, $\critpoint_{3}$ and $\critpoint_{5}$, as shown in \cref{fig:benchmark}.
Since $\critpoint_{1}$ and $\critpoint_{5}$ are separated by neighborhoods, there can be no gradient flow orbits joining ``non-successive'' critical points \textendash\ \eg $\critpoint_{1}$ to $\critpoint_{3}$ \textendash\ leading to the path graph structure depicted above.

To proceed, we fix an initial condition $\point$ for \eqref{eq:SGD}, say, in the basin of attraction of $\critpoint_{3}$.
In this case, the most likely event is that \eqref{eq:SGD} will first be attracted to $\critpoint_{3}$ on its way to the global minimum $\critpoint_{1}$, so, for simplicity, we just estimate the time it takes \eqref{eq:SGD} to reach $\critpoint_{1}$ from $\critpoint_{3}$.
However, this time cannot be determined only by the local geometry of $\obj$ along the ``direct'' transition path $\critpoint_{3} \joins \critpoint_{2} \joins \critpoint_{1}$:
with positive probability, \eqref{eq:SGD} may first jump over $\critpoint_{4}$ and be trapped in $\critpoint_{5}$. In that case, the process will first have to escape from $\critpoint_{5}$, and then follow the ``indirect'' transition path $\critpoint_{5} \joins \critpoint_{4} \joins \critpoint_{3} \joins \critpoint_{2} \joins \critpoint_{1}$.
As a result, the mean global convergence time of \eqref{eq:SGD} is not affected solely by the obstacles that lie on the most ``direct'' path from the initialization to the global minimum of $\obj$, but also by the traps that the process can fall into along every other ``indirect'' path as well.

With all this in mind, when instantiated to the example at hand, the general analysis of \cref{sec:results} yields the following expression for the mean convergence time of \eqref{eq:SGD}:
\begin{equation}
\label{eq:time-ex}
\exwrt{\point}{\globtime}
	\approx \exp \parens*{
	\frac{2\parens{\obj(\critpoint_{2}) - \obj(\critpoint_{5})}}{\step \noisevar}
	}\,.
\end{equation}
This expression shows that the global convergence time of \eqref{eq:SGD} scales exponentially with the inverse of the step-size and the variance of the noise, and it is dominated by the objective value gap that $\curr$ must clear if it is trapped by $\critpoint_{5}$.
We verify experimentally the accuracy of this derivation in \cref{fig:exp}, where we measure $\globtime$ over several runs of \eqref{eq:SGD} for different values of $\step$.


\begin{figure}[t]
\centering
\footnotesize
\includegraphics[width=30em]{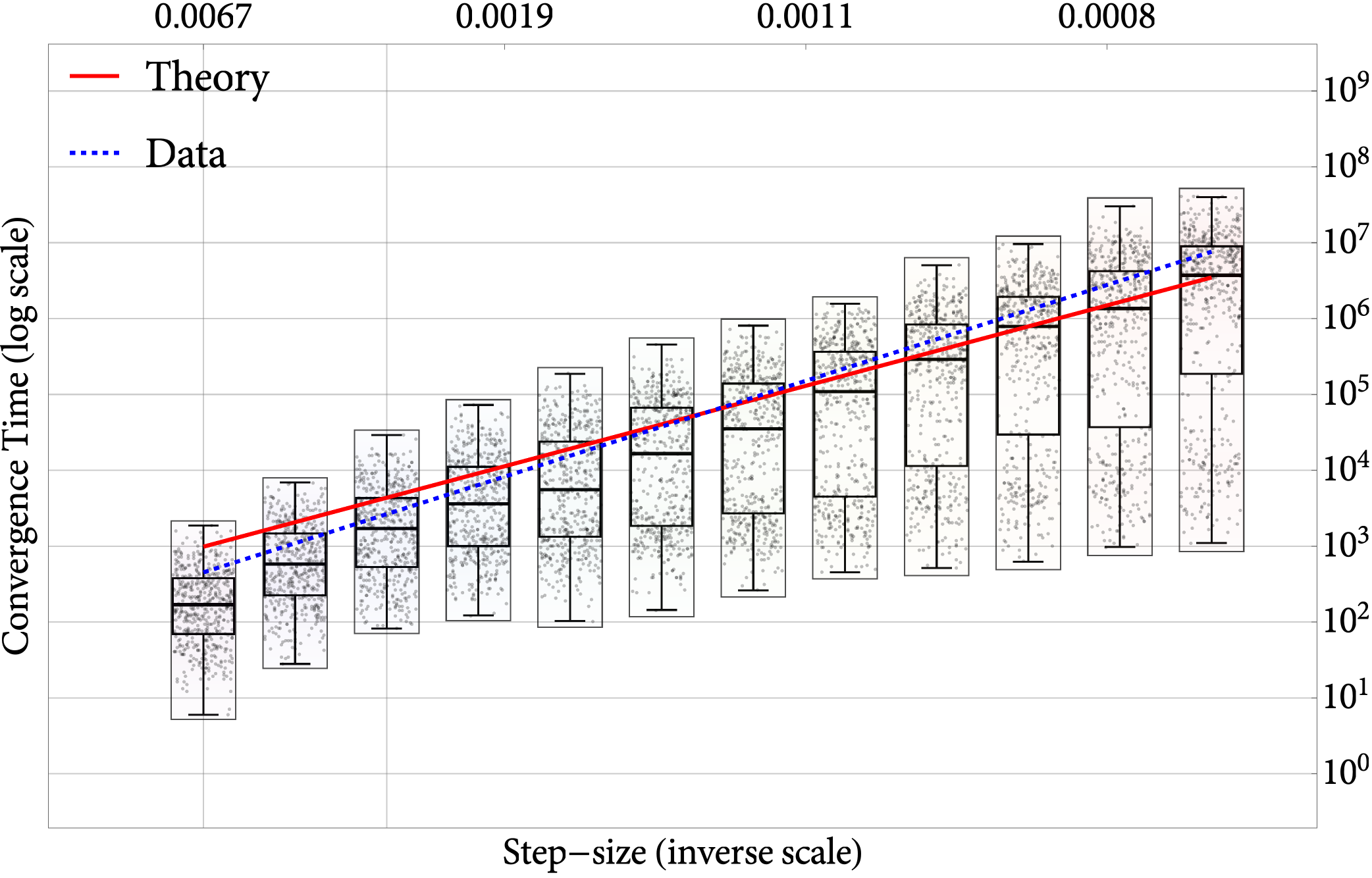}
\caption{%
Statistical analysis of the number of iterations $\globtime$ required for \eqref{eq:SGD} to reach the global minimum of the three-hump camel function of \cref{fig:benchmark}.
For each value of $\step$, we performed $500$ runs of \eqref{eq:SGD} initialized near $\critpoint_{3}$ with Gaussian noise ($\sdev = 50$), and we recorded the number of iterations required to reach the vicinity of $\critpoint_{1}$ (within $\margin = 10^{-2}$).
The resulting boxplots are displayed in log-inverse scale, with a distribution chart overlay to illustrate the full empirical distribution of $\globtime$ for the given value of $\step$.
As per \eqref{eq:time-ex}, we see that $\exof{\globtime}$ scales exponentially with $1/\step$.
To measure the goodness-of-fit, the solid ``Data'' line represents a linear regression fit for our data, while the dashed ``Theory'' line is a linear fit with slope given by \eqref{eq:time-ex}.
The two fits have $R^{2}$ values of $0.99$ and $0.97$ respectively, indicating a strong agreement between theory and experiment.
}
\label{fig:exp}
\end{figure}


To provide some more context for all this, the precise expression for the coefficient of $1/\step$ in \eqref{eq:time-ex} \textendash\ which we denote as $\logtime(\critpoint_{1}|\critpoint_{3})$ in \cref{sec:results} \textendash\ is obtained by aggregating the graph weights $\qmat_{\iComp\jComp}$ according to the formula
\begin{align}
\label{eq:logtime-ex}
\logtime(\critpoint_{1}|\critpoint_{3})
	&= \qmat_{32} + \qmat_{54} - \min\{\qmat_{34}, \qmat_{54}\}
	\notag\\
	&= \frac{2 \max \{\obj(\critpoint_{2}) - \obj(\critpoint_{3}), \obj(\critpoint_{2}) - \obj(\critpoint_{5})\}}{\noisevar}
	\eqdot\!
\end{align}
In words, the exponent $\logtime(\critpoint_{1}|\critpoint_{3})$ of \eqref{eq:time-ex} scales with the largest value gap that $\curr$ may need to clear in order to reach $\argmin\obj$.
In the specific example of \cref{fig:benchmark}, we have $\obj(\critpoint_{3}) > \obj(\critpoint_{5})$, so $\obj(\critpoint_{2}) - \obj(\critpoint_{5}) > \obj(\critpoint_{2}) - \obj(\critpoint_{3})$, and hence
\begin{equation}
\logtime(\critpoint_{1}|\critpoint_{3})
	= 2 \bracks{\obj(\critpoint_{2}) - \obj(\critpoint_{5})} / \noisevar
	\eqdot
\end{equation}
This is precisely the obstacle that \eqref{eq:SGD} must clear if it gets trapped by $\critpoint_{5}$, hence our intuition (\cf \cref{fig:benchmark}).

All this may sound paradoxical at first:
\cref{eq:time-ex} shows that the global convergence time of \eqref{eq:SGD} is not driven by the ``shortest'', \emph{least costly} chain of transitions leading to $\argmin\obj$, but by the ``longest'', \emph{most costly} one.
In this sense, $\logtime(\critpoint_{1}\,|\,\critpoint_{3})$ directly encodes all non-local information of $\obj$ that ends up affecting the global convergence time of \eqref{eq:SGD}, so it can be seen as an explicit measure of the ``hardness'' of the non-convex landscape under study:
in particular, a slight increase this gap would end up having an \define{exponential impact} on the global convergence time of \eqref{eq:SGD}, indicating the difficulty of learning even in this simple example

We find this characterization quite appealing because it allows us to quantify
the features of \eqref{eq:opt} that make it harder or easier as a global problem.
We will revisit this question several times in the sequel.

\section{Problem setup and blanket assumptions}
\label{sec:setup}

In this section, we begin our formal treatment of the global convergence time of \eqref{eq:SGD}.
As a first step, we present below our standing assumptions for \eqref{eq:opt} and \eqref{eq:SFO}:
these assumptions are not the weakest possible ones, but they lead to a more streamlined presentation;
for a more general treatment, \cf \cref{sec:extensions,app:setup}.

\para{Assumptions on the objective}

To begin, we assume throughout that $\vecspace$ is equipped with the Euclidean inner product and norm, denoted by $\inner{\argdot}{\argdot}$ and $\norm{\argdot}$ respectively.
We then make the following standing assumptions for $\obj$:

\begin{assumption}
\label{asm:obj}
The objective function $\obj\from\vecspace\to\R$ of \eqref{eq:opt} is $C^{2}$-smooth and satisfies the conditions below:
\begin{enumerate}
[left=\parindent,label=\upshape(\itshape\alph*\hspace*{.5pt}\upshape)]
\item
\label[noref]{asm:obj-coer}
\define{Coercivity:}\:
$\lim_{\norm{\point}\to\infty} \obj(\point) = \infty$.
\item
\label[noref]{asm:obj-gcoer}
\define{Gradient norm coercivity:}\:
$\lim_{\norm{\point}\to\infty} \norm{\nabla\obj(\point)} = \infty$.
\item
\label[noref]{asm:obj-smooth}
\define{Lipschitz smoothness:}\:
the gradient of $\obj$ is $\smooth$-Lipschitz continuous, \ie
\begin{equation}
\label{eq:LG}
\notag
\norm{\nabla\obj(\pointB) - \nabla\obj(\pointA)}
	\leq \smooth \norm{\pointB - \pointA}
	\quad
	\text{for all $\pointA,\pointB\in\vecspace$}
	\eqdot
\end{equation}
\item
\label[noref]{asm:obj-crit}
\define{Critical set regularity:}\:
The critical set
\begin{equation}
\label{eq:crit}
\crit\obj
	\defeq \setdef{\point\in\vecspace}{\nabla\obj(\point) = 0} \notag
\end{equation}
of $\obj$ consists of a finite number of closed, disjoint, smoothly connected components $\gencomp_{\iComp}$, $\iComp=1,\dotsc,\nGencomps$.
\end{enumerate}
\end{assumption}

\begin{remark*}
By ``smoothly connected'', we mean here that any two points in such a component can be joined by a smooth path contained therein.
In the sequel, we will refer to these components simply as the \define{critical components} of $\obj$,
and we will say that $\gencomp_{\iComp}$ is a (locally) \define{minimizing component} of $\obj$ if $\gencomp_{\iComp} = \argmin_{\point\in\nhd} \obj(\point)$ for some neighborhood $\nhd$ of $\gencomp_{\iComp}$.
\endenv
\end{remark*}

The requirements of \cref{asm:obj} are fairly mild and quite standard in the literature.
In more detail, \cref{asm:obj}\cref{asm:obj-coer} simply ensures that $\argmin\obj$ is nonempty \textendash\ otherwise, the question of global convergence may be meaningless.
Similarly, \cref{asm:obj}\cref{asm:obj-gcoer} is a stabilization condition aiming to exclude functions with a specious, near-critical behavior at infinity \textendash\ such as $\obj(\point) = \log(1+\point^{2})$.
Finally, in terms of regularity, \cref{asm:obj}\cref{asm:obj-smooth} is the go-to hypothesis for the analysis of gradient methods,
while
\cref{asm:obj}\cref{asm:obj-crit} rules out problems with pathological critical sets (such as Warsaw sine curves, Cantor staircase functions, etc.).%
\footnote{This last requirement can be replaced by positing for example that $\obj$ is semi-algebraic, \cf \citep{costeINTRODUCTIONOMINIMALGEOMETRY,vandendriesGeometricCategoriesOminimal1996,AIMM24-ICML}.}

\para{Assumptions on the gradient input}

Our second set of assumptions concerns the stochastic gradients $\curr[\signal]$ that enter \eqref{eq:SGD}.
Here we will assume that the optimizer has access to a \acdef{SFO}, that is, a black-box mechanism which returns a stochastic approximation of the gradient $\nabla\obj(\point)$ of $\obj$ at the queried point $\point\in\vecspace$.
Formally, an \ac{SFO} is a random vector of the form
\begin{equation}
\label{eq:SFO}
\tag{SFO}
\orcl(\point;\sample)
	= \nabla\obj(\point)
		+ \err(\point;\sample)
\end{equation}
where:
\begin{enumerate}
[left=\parindent,label=\upshape(\itshape\alph*\hspace*{.5pt}\upshape)]
\item
The \define{random seed} $\sample$ is drawn from a compact subset $\samples$ of $\sspace$ based on some probability measure $\prob$.
\item
The \define{error term} $\err(\point;\sample)$ captures all sources of randomness and uncertainty in \eqref{eq:SFO}.
\end{enumerate}

With all this in place, we will assume that \eqref{eq:SGD} is run with \aclp{SG} of the form
\begin{equation}
\label{eq:signal}
\tag{SG}
\curr[\signal]
	= \orcl(\curr;\curr[\sample])
\end{equation}
where
$\orcl$ is an \ac{SFO} for $\obj$
and
$\curr[\sample]\in\samples$, $\run=\running$, is a sequence of \ac{iid} random seeds as above.
This well-established model accounts for all standard implementations of \eqref{eq:SGD}, from minibatch sampling to \acl{NGD} and Langevin-type methods.
The only extra assumptions that we will make are as follows:

\begin{assumption}
\label{asm:noise}
The error term $\err\from\vecspace\times\samples\to\vecspace$ of \eqref{eq:SFO} has the following properties:
\begin{enumerate}
[left=\parindent,label=\upshape(\itshape\alph*\hspace*{.5pt}\upshape)]
\item
\label[noref]{asm:noise-proper}
\define{Properness:}\:
	$\exof{\err(\point;\sample)} = 0$ and $\covof{\err(\point;\sample)} \mg 0$ for all $\point\in\vecspace$.
\item
\label[noref]{asm:noise-growth}
\define{Smooth growth:}\:
	$\err(\point;\sample)$ is $C^{2}$-smooth and satisfies the growth bound
\begin{equation}
\label{eq:smooth-growth}
\sup_{\point,\sample} \frac{\norm{\err(\point;\sample)}}{1 + \norm{\point}}
	< \infty
	\eqdot
\end{equation}
\item
\label[noref]{asm:noise-subG}
\define{Sub-Gaussian tails:}\:
The tails of $\err$ are bounded as
\begin{equation}
\label{eq:subG}
\log\exof*{e^{\inner{\mom, \err(\point;\sample)}}}
	\leq \frac{1}{2} \mainvar \norm{\mom}^{2}
\end{equation}
for some $\overline{\sdev} > 0$ and for all $\mom \in \vecspace$.
\endenv
\end{enumerate}
\end{assumption}

\Cref{asm:noise}\cref{asm:noise-proper} is standard in the field as it ensures that the oracle $\orcl(\point;\sample)$ provides unbiased gradient estimates;
as for the ancillary covariance requirement, it serves to differentiate \eqref{eq:SGD} from deterministic versions of \acl{GD} that are run with \emph{perfect} gradients.
\cref{asm:noise}\cref{asm:noise-growth} is a regularity requirement imposing a mild limit on the growth of the noise as $\norm{\point}\to\infty$, while \cref{asm:noise}\cref{asm:noise-subG} is a widely used bound on the tails of the noise.
Importantly, even though \cref{asm:noise}\cref{asm:noise-subG} is less general than finite variance assumptions which allow for fat-tailed error distributions, it provides much finer control on the process and leads to a much cleaner presentation.
We only note here that it can be relaxed further by allowing the variance parameter $\mainvar$ to diverge to infinity as $\norm{\point}\to\infty$, in a way similar to Langevin sampling under dissipativity \citep{erdogduGlobalNonconvexOptimization2019,liSharpUniformintimeError2022,raginskyNonconvexLearningStochastic2017,yuAnalysisConstantStep2020,LM24};
this case is of particular interest for certain deep learning models, and we treat it in detail in \cref{app:setup}.

\para{An illustrative use case}

We close this section with an example intended to illustrate the range of validity of our assumptions for \eqref{eq:SGD}.
In particular, we will focus on the regularized \acf{ERM} problem
\begin{equation}
\label{eq:ERM}
\tag{ERM$_{\regparam}$}
\obj(\point)
    = \frac{1}{\nSamples} \sum_{\idx = 1}^{\nSamples} \loss(\point;\datapoint_{\idx})
    	+ \half[\regparam] \norm{\point}^{2}
\end{equation}
where
$\loss(\point;\datapoint)$ is the loss of model $\point$ against input $\datapoint$ (\eg a logistic or Savage loss),
the data points $\datapoint_{\idx}$, $\idx = 1,\dotsc,\nSamples$, comprise the problem's training set,
and
$\regparam>0$ is a regularization parameter.
For example, this setup could correspond to a linear model with non-convex losses \citep{erdogduGlobalNonconvexOptimization2019,yuAnalysisConstantStep2020}, a neural network with smooth activations and normalization layers \citep{liReconcilingModernDeep2020}, etc.
Accordingly, if we estimate the gradient of $\obj$ by sampling a random minibatch $\sample \subseteq \setof{\datapoint_{1},\dotsc,\datapoint_{\nSamples}}$ of training data (typically a small, fixed number thereof), the corresponding gradient oracle becomes
\begin{equation}
\orcl(\point;\sample)
	= \frac{1}{\abs{\sample}} \sum_{\datapoint \in \sample} \nabla\loss(\point;\datapoint)
		+ \regparam \point
	\eqdot
\end{equation}
Under standard assumptions for $\loss$ \textendash\ \eg twice continuously differentiable, Lipschitz continuous and smooth, \cf\cite{MHKC20} and references therein \textendash\ Parts \crefrange{asm:obj-coer}{asm:obj-smooth} of \cref{asm:obj} are satisfied automatically.
The resulting error term $\err(\point; \sample)$ is easily seen to be uniformly bounded, so Parts \crefrange{asm:noise-proper}{asm:noise-subG} of \cref{asm:noise} are likewise verified            \citep[Exercise~2.4]{wainwright2019high}.


\section{Analysis and results}
\label{sec:results}

We are now in a position to state our main results for the global convergence time of \eqref{eq:SGD} under the blanket assumptions presented in the previous section.
Our strategy to achieve this mirrors the general scheme outlined in \cref{sec:warmup}:
First, at a high level, once \eqref{eq:SGD} has reached a neighborhood of a critical component of $\obj$ (and, especially, a locally minimizing component), it will be trapped for some time in its vicinity.
To escape this near-critical region, \eqref{eq:SGD} may have to climb the loss surface of $\obj$, and this can only happen if \eqref{eq:SGD} takes a sufficient number of steps ``against'' the gradient flow $\dot\point = -\nabla\obj(\point)$ of $\obj$.
Thus, to understand the global convergence time of \eqref{eq:SGD}, we need to quantify these ``rare events'', namely to characterize
\begin{enumerate*}
[\upshape(\itshape i\hspace*{.5pt}\upshape)]
\item
how much time \eqref{eq:SGD} spends near a given critical component;
\item
how likely it is to transition from one such component to another;
and
\item
what are the possible chains of transitions leading to the global minimum of $\obj$.
\end{enumerate*}

To do this, we employ an approach inspired by the large deviations theory of \citet{FW12} and \citet{Kif88,Kif90} for randomly perturbed dynamical systems on compact manifolds in continuous time.
In the rest of this section, we only detail the elements of our approach that are needed to state our results in a self-contained manner, and we defer the reader to the paper's appendix for the proofs (see also \cref{app:subsec:outline-proof} for an outline of the proof).

\para{Generalities}

To fix notation, we will assume in the sequel that \eqref{eq:SGD} is initialized at some fixed $\init\gets\point\in\vecspace$, and we will write $\prob_{\!\point}$ and $\ex_{\point}$ (or sometimes $\prob_{\!\init}$ and $\ex_{\init}$) for the law of the process starting at $\init = \point$ and the induced expectation respectively.
Then, given a target tolerance $\margin>0$, our main objective will be to estimate the time required for \eqref{eq:SGD} to get within $\margin$ of $\argmin\obj$, that is, the hitting time
\begin{equation}
\label{eq:globtime}
\globtime
	= \inf\setdef{\run\in\N}{\dist(\curr,\globmin) \leq \margin}
\end{equation}
where, for notational brevity, we write
\begin{equation}
\globmin
	= \argmin\obj
\end{equation}
for the minimum set of $\obj$ (which could itself consist of several connected components).
Throughout what follows, we will refer to $\globtime$ as the \define{global convergence time} of \eqref{eq:SGD}, and our aim will be to characterize $\exwrt{\point}{\globtime}$ as a function of the algorithm's initial state $\init \gets \point\in\vecspace$.

For future use, we also define here the \define{attracting strength} of $\target$ as the maximal value of the product $\strong \Radius^2$, where $\strong$ and $\Radius$ are such that, for any $\point\in\vecspace$ with $\dist(\point,\target) \leq \Radius$,
\begin{equation}
\label{eq:attracting-strength}
\inner{\grad \obj(\point)}{\point - \sol}
	\geq \strong \norm{\point - \sol}^{2}
\end{equation}
with $\sol \in \target$ denoting a projection of $\point$ onto $\target$.
In this way, \eqref{eq:attracting-strength} can be seen as a ``setwise'' second-order optimality condition for the global minimum of $\obj$. \revise{Intuitively, the large the attracting strength of $\target$, the sharper it is.}

\para{Elements of large deviations theory}

Moving forward, the first ingredient required to state our results is the \define{Hamiltonian} of $\orcl$, defined here as
\begin{equation}
\label{eq:Hamiltonian}
\hamof{\orcl}{\point,\mom}
	\defeq \log\exwrt{\sample}{\exp(-\inner{\mom}{\orcl(\point;\sample)})}
\end{equation}
for all $\point\in\vecspace$ and all $\mom\in\dspace$.
Up to the sign in the exponent, $\hamof{\orcl}{\point,\mom}$ is simply the \acl{CGF} of the gradient oracle $\orcl$ at $\point$, so it encodes all the statistics of \eqref{eq:SGD} at $\point$.
Dually, the \define{Lagrangian} of $\orcl$ is given by the convex conjugate of $\hamof{\orcl}{\point,\mom}$ with respect to $\mom$, that is,
\begin{equation}
\label{eq:Lagrange}
\lagof{\orcl}{\point,\vel}
	\defeq \ham_{\orcl}^{\ast}(\point,\vel)
	= \max_{\mom\in\dspace}\setof{\braket{\vel}{\mom} - \hamof{\orcl}{\point,\mom}}
	\eqdot
\end{equation}
The importance of the Lagrangian \eqref{eq:Lagrange} lies in that it provides a ``pointwise'' \acf{LDP} of the form
\begin{equation}
\label{eq:LDP-point}
\probof*{\frac{1}{\run}\sum_{\runalt=\start}^{\run} \orcl(\point;\curr[\sample]) \in \borel}
	\sim  \exp\of*{-\run\inf_{\vel\in\borel}\lagof{\orcl}{\point,\vel}}
\end{equation}
for every Borel $\borel\subseteq\dspace$.
In view of this \ac{LDP}, the long-run aggregate statistics of the sum $\curr[S] = \sum_{\runalt=\start}^{\run-1} \orcl(\point;\iter[\sample])$ are fully determined by $\lag_{\orcl}$ \cite{DZ98};
however, even though the iterates $\curr = \init - \step \sum_{\runalt=\start}^{\run-1} \orcl(\iter;\iter[\sample])$ of \eqref{eq:SGD} are likewise defined as a sum of \ac{SFO} queries, obtaining a ``trajectory-wise'' \ac{LDP} for $\curr$ is far more involved.

To address this challenge, the seminal idea of \citet{FW12} was to treat the \emph{entire} trajectory $\curr$, $\run=\running$, of \eqref{eq:SGD} as a point in some infinite-dimensional space of curves, and to derive a \acl{LDP} for \eqref{eq:SGD} directly in that space.
To that end, drawing inspiration from the Lagrangian formulation of classical mechanics, the (normalized) \define{action} of $\lag_{\orcl}$ along a continuous curve $\curve\from[\tstart,\horizon]\to\vecspace$ is defined as
\begin{equation}
\label{eq:action}
\actof{\horizon}{\curve}
	= \int_{\tstart}^{\horizon} \lagof{\orcl}{\curveat{\time},\diffcurveat{\time}} \dd\time
\end{equation}
with the convention $\actof{\horizon}{\curve} = \infty$ if $\curve$ is not absolutely continuous.
In a certain sense \textendash\ which we make precise in \cref{app:LDP} \textendash\ the quantity $\actof{\horizon}{\curve}$ is a ``measure of likelihood'' for the curve $\curve$, with lower values indicating higher likelihoods.
This is the so-called ``least action principle'' of large deviations \citep{FW12,Kif90}, which we leverage below to characterize the most \textendash\ and \emph{least} \textendash\ likely transitions of \eqref{eq:SGD}.

\para{The transition graph of \eqref{eq:SGD}}

To achieve this characterization, we start by associating a certain \define{transition cost} to each pair of critical components $\gencomp_{\iComp}$, $\gencomp_{\jComp}$ of $\obj$.
We will then use these costs to quantify how likely it is to observe a given chain of transitions terminating at the global minimum of $\obj$.

To begin, following \citet{FW12}, the \define{quasi-potential} between two points $\point,\pointalt\in\vecspace$ is defined as
\begin{equation}
\label{eq:qpot-point}
\qpot(\point,\pointalt)
	\defeq \inf\setdef{\actof{\horizon}{\curve}}{\curve\in\contcurvesat{\horizon}{\point,\pointalt;\target}, \horizon\in\N}
\end{equation}
where $\contcurvesat{\horizon}{\point,\pointalt;\target}$ denotes the set of all continuous curves $\curve\from[\tstart,\horizon]\to\vecspace$ with $\curve(\tstart) = \point$, $\curve(\horizon) = \pointalt$, and $\curve(\run) \notin \target$ for all $\run=1,\dotsc,\horizon-1$.%
\footnote{We should note here that the curves that go in the definition \eqref{eq:qpot-point} of $\qpot$ are only required to avoid $\target$ at \emph{integer} times.
This is because, while a continuous curve may reach $\target$ at non-integer times, we only need to exclude integer times because we are only interested in the discrete-time process $\curr$, not its continuous-time interpolations.
We discuss this in detail in \cref{app:LDP,app:transitions}.}
By construction, $\qpot(\point,\pointalt)$ is simply the cost of the ``least action'' path joining $\point$ to $\pointalt$ in time $\horizon$ and not hitting $\target$ before that time.
As such, the induced setwise cost is given by
\begin{equation}
\label{eq:qpot-set}
\qpot(\setA,\setB)
	\defeq \inf\setdef{\qpot(\point,\pointalt)}{\point\in\setA,\pointalt\in\setB}.
\end{equation}
\ie as the action of the ``least costly'' transition from some point in $\setA$ to some point in $\setB$ which does not go through $\target$ in the meantime.
In this way, focusing on the critical components of $\obj$, we obtain the matrix of \define{transition costs}
\begin{equation}
\label{eq:qpot-mat}
\qmat_{\iComp\jComp}
	\defeq \qpot(\gencomp_{\iComp},\gencomp_{\jComp})
	\quad
	\text{for all $\iComp,\jComp = 1,\dotsc,\nGencomps$}
\end{equation}
which compactly characterizes the ``ease'' with which $\curr$ may jump from $\gencomp_{\iComp}$ to $\gencomp_{\jComp}$.

We are now in a position to construct the \define{transition graph} of \eqref{eq:SGD}, generalizing the introductory example of \cref{sec:warmup}.
Formally, this is a weighted directed graph $\graphmain \equiv \graphFullW$ with the following primitives:
\begin{enumerate}
\item
A set of vertices $\vertices$ indexed by $\iComp=1,\dotsc,\nGencomps$, that is, one vertex per critical component of $\obj$.
[To lighten notation, we will not distinguish between the index $\iComp$ and the component $\gencomp_{\iComp}$ that it labels.]
\item
A set of directed edges $\edges = \setdef{(\iComp,\jComp)}{\iComp,\jComp=1,\dotsc,\nGencomps, \iComp\neq\jComp, \qmat_{\iComp\jComp} < \infty}$.
In words, $\iComp\joins\jComp$ if and only if the (direct) transition cost from $\gencomp_{\iComp}$ to $\gencomp_{\jComp}$ is finite.
\item
Finally, to each edge $(\iComp,\jComp)\in\edges$ we associate the weight $\qmat_{\iComp\jComp}$.
\end{enumerate}

To further streamline our presentation and ensure that the minimum set of $\obj$ can be reached from any initialization of \eqref{eq:SGD}, we will make the following assumption for $\graphmain$:

\begin{assumption}
\label{asm:graph}
$\qmat_{\iComp\jComp} < \infty$ for all $\iComp,\jComp=1,\dotsc,\nGencomps$.
\end{assumption}

This assumption is purely technical and holds automatically if there is ``sufficient noise'' in the process;
for a detailed discussion, we refer the reader to \cref{app:trans:accelerated_chain}.

\para{Transition forests, energy levels, and prunings}

We now have in our arsenal most of the elements required to quantify the difficulty of reaching $\target = \argmin\obj$ from an initial state $\point\in\vecspace$.
As in the warmup setting of \cref{sec:warmup}, we begin by describing the relevant walks of \eqref{eq:SGD} on the associated transition graph $\graphmain$.

Given that we are interested in chains of transitions terminating at the target set $\target$,  we will refer to all nodes in $\target$ as \define{target nodes}, and all other nodes in $\vertices\setminus\target$ as \define{non-target nodes}.%
\footnote{In our case, the target nodes are simply the globally minimizing components of $\obj$.
To streamline notation, $\target$ will be viewed interchangeably as a set of points in $\vecspace$ or as a set of nodes in $\vertices$.}
We then define a \define{transition forest} for $\target$ to be a directed acyclic subgraph $\forest$ of $\graphmain$ such that
\begin{enumerate*}
[\upshape(\itshape i\hspace*{.5pt}\upshape)]
\item
target nodes have no outgoing edges;
and
\item
every non-target node of $\forest$ has precisely \emph{one} outgoing edge.%
\footnote{Equivalently, a transition forest for $\target$ can be seen as a spanning union of disjoint in-trees, each converging to a target node.}
\end{enumerate*}
In particular, this means that there exists a unique path from \emph{every} non-target node $\iComp\in\vertices\setminus\target$ to \emph{some} target node $\jComp\in\target$.
The \define{energy} of $\target$ is then defined to be the minimal cost over such forests, \viz
\begin{equation}
\label{eq:energy}
\energyof{\target}
	\defeq \min_{\forest\in\transof{\target}}
	\sum_{(\iComp,\jComp)\in\forest} \qmat_{\iComp\jComp}
\end{equation}
where $\transof{\target}$ denotes the set of all transition forests toward $\target$ on $\graphmain$.
\footnote{When $\target$ is a singleton (\ie $\argmin\obj$ is connected), a transition forest for $\target$ is a spanning in-tree, so our definition generalizes and extends that of \citet{AIMM24-ICML}.}
In this way, the energy of $\target$ represents the minimum aggregate cost of getting to $\target$, and it can be seen as an overall measure of how ``easy'' it is to reach $\target$.


To account for the initialization of \eqref{eq:SGD}, we will need to perform a ``pruning'' construction, whereby we will systematically delete different edges of $\graphmain$ and record the impact of this deletion on the energy of $\target$.
Formally, given a starting node $\startComp\in\vertices\setminus\target$, we let $\nRes = \abs{\vertices} - \abs{\target} - 1$ denote the number of ``residual nodes'' $\jComp\in\vertices$ that are neither starting ($\jComp\neq\startComp$) nor targets ($\jComp\notin\target$).
A \define{pruning} of $\startComp$ from $\target$ is then defined to be a directed acyclic subgraph $\prune$ of $\graphmain$ such that
\begin{enumerate}
[parsep=\smallskipamount]
\item
$\prune$ has $\nRes$ edges, at most one per non-target node $\jComp\in\vertices\setminus\target$.
\item
Target nodes have no outgoing edges.
\item
There is no path from $\startComp$ to $\target$.%
\footnote{Equivalently, a pruning of $\startComp$ from $\target$ can be seen as a spanning union of disjoint rooted in-trees, each rooted at some target node, except the one containing the starting node $\startComp$, which is rooted at a \emph{non-target} node.}
\end{enumerate}
The \define{energy required to prune $\startComp$ from $\target$} is then defined as the minimal such cost, \viz
\begin{equation}
\label{eq:energy-pruned}
\energyof{\startComp\not\joins\target}
	\defeq \min_{\prune\in\transof{\startComp\not\joins\target}} 
	\sum_{(\iComp,\jComp)\in\prune} \qmat_{\iComp\jComp}
\end{equation}
where $\graphmain(\startComp\not\joins\target)$ denotes the set of all prunings of $\startComp$ from $\target$ in $\graphmain$.
Then, to complete the picture, we define the \emph{energy of $\target$ relative to} $\startComp \in \vertices \setminus \target$ as
\begin{flalign}
\label{eq:energy-node}
\energyof{\target \wrt \startComp}
	&\defeq \energyof{\target} - \energyof{\startComp\not\joins\target}
    \\
	&= \min_{\forest\in\graphmain(\target)} 
		\sum_{(\iComp,\jComp)\in\forest} \qmat_{\iComp\jComp}
		- \min_{\prune\in\graphmain(\startComp \not\joins \target)}
		\sum_{(\iComp,\jComp)\in\prune} \qmat_{\iComp\jComp}
		\,.
		\notag
\end{flalign}
Finally, for a given initialization $\point\in\vecspace$, we set
\begin{flalign}
\label{eq:energy-point}
\energyof{\target\wrt\point}
	\defeq \max_{\startComp\in\vertices\setminus\target} \pospart*{\energyof{\target\wrt\startComp} - \qpot(\point, \startComp)}
\end{flalign}
\ie $\energyof{\target\wrt\point}$ is the highest energy of $\target$ relative to starting nodes $\startComp\in\vertices\setminus\target$ that can be reached from $\point$, modulo the cost of the initial transition $\point\joins\startComp$.
This quantity measures the difficulty of reaching $\target$ from $\point$, and it plays a central role in our estimates of the global convergence time of \eqref{eq:SGD}.

\para{The global convergence time of \eqref{eq:SGD}}

We are finally in a position to state our main estimate for the global convergence time of \eqref{eq:SGD}.

\begin{theorem}
\label{thm:globtime}
Suppose that \crefrange{asm:obj}{asm:graph} hold and \eqref{eq:SGD} is initialized at $\point\in\vecspace$.
Then, given a tolerance level $\toler > 0$,
and small enough $\margin,\step>0$, we have
\begin{equation}
\notag
\exp\parens*{\frac{\energyof{\target\wrt\point} - \toler}{\step}}
	\leq \exwrt{\point}{\globtime}
	\leq \exp\parens*{\frac{\energyof{\target\wrt\point} + \toler}{\step}}
\end{equation}
provided that the attracting strength of $\target$ is large enough for the left inequality \textpar{lower bound}.
\end{theorem}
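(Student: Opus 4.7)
The plan is to follow the Freidlin--Wentzell strategy for randomly perturbed dynamical systems, adapted to the discrete-time setting of \eqref{eq:SGD} via the subsampling approach of Kifer. The core idea is to replace the process $\curr$ by an induced Markov chain on the critical components of $\obj$, whose transition probabilities and hitting-time statistics can be read off from the large deviations rate function, and then reduce the theorem to a purely combinatorial statement about this chain.

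First I would fix, for each critical component $\gencomp_{\iComp}$, a pair of nested neighborhoods $\nhd_{\iComp}\subset\nhd_{\iComp}'$, together with a ``far'' region $\vecspace\setminus\bigcup_{\iComp}\nhd_{\iComp}'$. Using \cref{asm:obj}\cref{asm:obj-coer,asm:obj-gcoer} and the sub-Gaussian tails in \cref{asm:noise}\cref{asm:noise-subG}, one shows that $\curr$ almost surely returns to $\bigcup_{\iComp}\nhd_{\iComp}$ in a time whose moments are uniformly controlled (this is the step where coercivity of $\obj$ and of $\norm{\nabla\obj}$ really matter, since the Freidlin--Wentzell machinery requires compact-like behavior). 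I would then define the induced chain $\est{Y}_{m}$ by sampling $\curr$ at the successive entry times into $\bigcup_{\iComp}\nhd_{\iComp}$ after the most recent exit from the corresponding $\nhd_{\iComp}'$, obtaining a $\{1,\dotsc,\nGencomps\}$-valued Markov chain on the set of critical components.

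The second step is a discrete-time Freidlin--Wentzell LDP for the transition kernel $\edgeprobof{\iComp\jComp}^{\step}$ of $\est{Y}_{m}$: starting from $\nhd_{\iComp}$, the probability of next hitting $\nhd_{\jComp}$ (and not returning to $\nhd_{\iComp}$ first, nor hitting $\target$ in the meantime) satisfies
\[
\exp\parens*{-\tfrac{\qmat_{\iComp\jComp}+\toler/2}{\step}}
\leq \edgeprobof{\iComp\jComp}^{\step}
\leq \exp\parens*{-\tfrac{\qmat_{\iComp\jComp}-\toler/2}{\step}}
\]
for all small enough $\step$, uniformly in the entry point. The upper bound comes from a cover of the set of trajectories $\nhd_{\iComp}\to\nhd_{\jComp}$ by ``tubes'' of near-minimizers of the action \eqref{eq:action}, together with the pointwise LDP \eqref{eq:LDP-point} iterated along partitions of the time interval; the lower bound follows from a change-of-measure argument using the Lagrangian \eqref{eq:Lagrange} along a near-optimal connecting curve. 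Similar bounds must be shown for the mean sojourn time between consecutive induced transitions, which will be sub-exponential in $1/\step$ and therefore negligible at the scale of interest.

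Once the induced chain is in hand, the theorem reduces to an application of the Markov chain tree (Kirchhoff) formula for mean hitting times: for any starting node $\startComp\in\vertices\setminus\target$,
\[
\exwrt{\startComp}{\est{\tau}}
\;\asymp\;
\frac{\sum_{\prune\in\graphmain(\startComp\not\joins\target)}\prod_{(\iComp,\jComp)\in\prune}\edgeprobof{\iComp\jComp}^{\step}}
{\sum_{\forest\in\graphmain(\target)}\prod_{(\iComp,\jComp)\in\forest}\edgeprobof{\iComp\jComp}^{\step}}.
\]
Substituting the two-sided bounds on $\edgeprobof{\iComp\jComp}^{\step}$ and using Laplace's principle (the dominant forest/pruning is the one of minimal aggregate cost) yields $\exwrt{\startComp}{\est{\tau}}\asymp\exp(\energyof{\target\wrt\startComp}/\step)$ by the very definitions \eqref{eq:energy-pruned}--\eqref{eq:energy-node}. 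Finally, for a genuine initialization $\point\in\vecspace$, I would first bound the cost of reaching \emph{some} critical node $\startComp$ via the pointwise quasi-potential $\qpot(\point,\startComp)$; taking the maximum over $\startComp$ of the excess cost $\energyof{\target\wrt\startComp}-\qpot(\point,\startComp)$ (truncated at $0$, since $\point$ may already lie very close to $\target$) recovers exactly $\energyof{\target\wrt\point}$ as in \eqref{eq:energy-point}, completing both bounds.

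I expect the main obstacle to be the \emph{uniform} LDP for the transition kernel: the classical Freidlin--Wentzell proofs assume a compact state space and diffusion noise, whereas here we must simultaneously handle the non-compactness of $\vecspace$, the discreteness of time, and the fact that the noise only satisfies sub-Gaussian tails. The attracting-strength hypothesis in the lower bound is needed precisely at this stage, to rule out an ``accidental'' early hit of $\target$ by a single large stochastic step that bypasses the exit mechanism of the induced chain; quantitatively, condition \eqref{eq:attracting-strength} lets one exhibit a neighborhood of $\target$ in which the drift dominates the noise at the scale $\sqrt{\step}$, so that premature transitions cost at least $\strong\Radius^{2}/\step$ in exponential scale and can be absorbed into the $\toler$ slack.
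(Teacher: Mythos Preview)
Your proposal follows essentially the paper's route: build an induced chain on the critical components of the subsampled process $\accstate_{\run}=\state_{\run\lfloor1/\step\rfloor}$ (the paper further accelerates this chain by $\nComps$-step blocks so that every component is reachable in one transition), prove two-sided exponential bounds on its transition kernel via the trajectory LDP, apply the Freidlin--Wentzell tree formula \cite[Chap.~6,~Lem.~3.4]{FW98} for the mean hitting time, and handle the initialization by a first-step decomposition exactly matching \eqref{eq:energy-point}. Your reading of the attracting-strength condition is slightly inverted, though: it is not used to price an ``accidental early hit'' of $\target$ (the quasi-potential already prices all paths into $\target$), but to show that once \ac{SGD} enters a small neighborhood of $\target$ it \emph{stays} there with probability $1-e^{-\Theta(\strong\Radius^{2}/\step)}$---the danger for the lower bound is that \ac{SGD} hits $\target$ between induced-chain checkpoints and then \emph{escapes} before the visit is recorded, so the induced hitting time overshoots $\globtime$; on the complementary low-probability event, a crude geometric tail bound on the induced hitting time absorbs the residual.
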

This theorem provides matching upper and lower bounds for the global convergence time of \eqref{eq:SGD} starting at $\point\in\vecspace$, and we prove it in \cref{app:setup,app:transitions,app:finite-time} as a special case of the more general \cref{thm:ub_sgd_hitting_time,thm:lb_sgd_hitting_time}.%
\footnote{\revise{We should note here that the requirement on the attracting strength for the lower bound is purely technical;
for more details, see the proof sketch \cref{app:subsec:outline-proof}.}}
Importantly, the sandwich bounds for $\globtime$ scale exponentially with the inverse of the step-size $\step$ and show that the global convergence time of \eqref{eq:SGD} is characterized by the pruned energy $\energyof{\target\wrt\point}$.
In this sense, $\energyof{\target\wrt\point}$ characterizes the hardness of the non-convex optimization landscape for \eqref{eq:SGD} and captures the fact that the hardness of global convergence stems from the presence of \emph{spurious} local minima (that is, locally minimizing components that are not \emph{globally} minimizing).
Indeed, as we show in \cref{sec:interpretation-logtime}, the energy $\energyof{\target\wrt\point}$ of $\target$ relative to $\point$ vanishes for all $\point$ if and only if there are no spurious minima, in which case $\globtime$ becomes subexponential.

Moreover, when the initialization $\point \in \vecspace$ belongs to the basin of a specific minimizing component $\startComp$,%
\footnote{To dispel any ambiguity, we mean here a basin of attraction for the gradient flow of $\obj$.}
\cref{thm:globtime} can be restated in a sharper manner that involves directly $\energyof{\target\wrt\startComp}$ the energy of $\target$ relative to $\startComp$, instead of $\point$ (\cf \eqref{eq:energy-node}).

\begin{theorem}
\label{thm:globtime-var}
Suppose that \crefrange{asm:obj}{asm:graph} hold.
Then, given a tolerance $\toler > 0$ and an initialization $\point\in\vecspace$ that belongs to the basin of attraction of the \textpar{minimizing} component $\startComp\in\vertices$, there exist $\Margin > 0$ and an event $\event$ with $\prob_{\point}(\event) \geq 1 - e^{-\Margin/\step}$ such that, for small enough $\margin,\step>0$, we have
\begin{equation}
\label{eq:globtime-est-alt}
\notag
\exp\parens*{\frac{\energyof{\target\wrt\startComp} - \toler}{\step}}
	\leq \exwrt{\point}{\globtime \given \event}
	\leq \exp\parens*{\frac{\energyof{\target\wrt\startComp} + \toler}{\step}}
\end{equation}
provided that the attracting strength of $\target$ is large enough for the left inequality \textpar{lower bound}.
\end{theorem}

This theorem is proved in \cref{sec:basin-dependent-convergence} as a special case of the more general \cref{thm:ub_sgd_hitting_time_basin,thm:lb_sgd_hitting_time_basin}, and it shows that, conditioned on a high probability event, the global convergence time of \eqref{eq:SGD} starting at $\point\in\vecspace$ is determined by the energy $\energyof{\target\wrt\startComp}$ of $\target$ relative to the minimizing component $\startComp\in\vertices$ that attracts $\point$ under the gradient flow of $\obj$.%
\footnote{Importantly, this situation is generic:
under standard assumptions, the set of points that do not belong to such a basin has measure zero.}
\revise{Intuitively, \cref{thm:globtime-var} shows that all initial points $\point$ in the basin of $\startComp$ will move to a small neighborhood of $\startComp$ with overwhelmingly high probability and, because this transition takes relatively little time, the global convergence time from to the global minimum is approximately equal to the convergence time from $\startComp$ to $\target$.}
In the next section, we quantify the precise way that $\energyof{\target\wrt\startComp}$ depends on the geometry of the problem's loss landscape.

\section{Influence of the loss landscape}
\label{sec:extensions}

In this section, we explain how the energy $\energyof{\target\wrt\startComp}$ of \eqref{eq:energy-node}, which controls the global convergence time, depends on the noise, the depths of the spurious minima and their relative positions with respect to each other. We sketch the key elements and we refer to \cref{app:interpretable} for the full details.

The first step is to derive upper and lower bounds on the transition costs $\qmat_{\iComp \jComp}$. 
From \cref{asm:noise}\cref{asm:noise-subG}, we have a lower-bound on $\qmat_{\iComp\jComp}$ for any $\iComp$, $\jComp$ : we have $\qmat_{\iComp\jComp} \geq \interpretablecostlb_{\iComp\jComp}$ where
\begin{equation}
    \interpretablecostlb_{\iComp\jComp}\!\defeq\!
\inf_{\horizon,\curve}\setdef*{\sup_{\timealt < \time} \tfrac{2\big(\obj(\curveat{\time}) - \obj(\curveat{\timealt})\big)}{\varup}}{\curveat{0} \in \gencomp_\iComp,\, \curveat{\horizon} \in \gencomp_\jComp}\,.
\label{eq:interpretable-trans-lb}
\end{equation}
This means that the transition cost is at least equal to the maximal upward jump in the objective function that cannot be avoided when going from $\gencomp_\iComp$ to $\gencomp_{\jComp}$. To obtain the upper-bound, we need an extra assumption on the noise.

\paragraph{Extra noise assumption.}
While we only assumed a bound on the magnitude of the gradient noise (\cref{asm:noise}\cref{asm:noise-subG}), we now require in addition a bound on the ``minimal level of noise'', through the following assumption.
\begin{assumption}
\label{asm:noise-lb}
The Lagrangian of $\orcl$ is bounded as
\begin{equation}
\notag
\lagof{\orcl}{\point,\vel}
\leq 
\frac{\norm{\vel + \grad \obj(\point)}^2}{{2 \vardown}}\,\,\,\,\forall \vel \text{ s.t. } \norm{\vel} \leq 2 \norm{\grad \obj(\point)}
\end{equation}
for all $\point$ in some large enough compact set.
\end{assumption}
This assumption is satisfied by general types of noise (including finite-sum models) given a lower-bound on the variance of $\err(\point;\sample)$ and a condition on the support; see \cref{subsec:noise-assumptions}.
In particular, if $\err(\point;\sample)$ follows a (truncated) Gaussian distribution with a variance $\sigma^2$, then \cref{asm:noise-lb} is satisfied with $\vardown = (1-\precs)\sigma^2$, where $\precs$ depends on the truncation level.
In that case, the constant $\mainvar$ of \cref{asm:noise}\cref{asm:noise-subG} can be taken as $\mainvar = (1+\precs)\sigma^2$ (see \cref{subsec:noise-assumptions}). Note then that the ratio $\vardown / \varup$ is close to $1$, which will matter in the next theorem.

\paragraph{Transition between basins.}
With \cref{asm:noise-lb}, we obtain quantitative upper-bounds on the transition costs between two neighboring components.
More precisely, 
if $\gencomp_\iComp$ and $\gencomp_{\jComp}$ are such that their basins of attraction intersect,
then the cost of transitioning from $\gencomp_\iComp$ to $\gencomp_{\jComp}$ is bounded by
\begin{equation}
\qmat_{\iComp\jComp}
\leq 
\frac{2(\obj_{\iComp,\jComp} - \obj_{\iComp})}{\vardown}
\label{eq:interpretable-trans-ub}
\end{equation}
where $\obj_{\iComp,\jComp}$ is the minimum of $\obj$ over that intersection and $\obj_{\iComp}$ is the value of $\obj$ on $\gencomp_\iComp$.

With these bounds, we recover immediately the example of \cref{sec:warmup}.
When the closure of the basin of $\gencomp_{\iComp}$ intersects $\gencomp_{\jComp}$ itself, $\obj_{\iComp,\jComp}$ is simply the value of $\obj$ on $\gencomp_{\jComp}$ and \cref{eq:interpretable-trans-ub} simplifies to the same expression as in \cref{eq:cost-ex}.
The equality in \cref{eq:cost-ex} is thus obtained by combining \cref{eq:interpretable-trans-ub,eq:interpretable-trans-lb} in the Gaussian case.

\paragraph{Graph structure.}
From the bounds of $\qmat_{\iComp\jComp}$, we can get bounds on $\energyof{\target\wrt\startComp}$ as follows. We restrict the transition graph of \cref{sec:results} to neighboring components:
we consider $\graphmainalt$ with vertices $\vertices$ and edges $\vertexB \joins \vertexC$ if the closures of the basins of $\gencomp_\vertexB$ and $\gencomp_\vertexC$ intersect.
Given a path $\graphpath_\vertexB = \vertexB \joins \vertexB_1 \joins \cdots \joins \vertexB_m$ in $\graphmainalt$ that ends in $\target$, we define its cost as
\begin{equation}
    \cost(\graphpath_\vertexB) \defeq 
    \max_{\run = 0,\dots,m-1} \max
    \braces*{
    \obj_{\vertexB_\run,\vertexB_{\run+1}} - \obj_{\vertexB_\run},\obj_{\vertexB_\run,\vertexB_{\run+1}} - \obj_{\vertexB}
    }\,.
    \notag
\end{equation}
This cost involves the values of the objective function along the path from $\jComp$ to $\target$ and captures the maximum depth of a minima encountered on the path: indeed,
$\obj_{\vertexB_\run,\vertexB_{\run+1}} - \obj_{\vertexB_{\run}}$ represents the jump in the loss function when going from $\gencomp_{\vertexB_{\run}}$ to $\gencomp_{\vertexB_{\run+1}}$ while $\obj_{\vertexB_\run,\vertexB_{\run+1}} - \obj_{\vertexB}$ represents the total jump when going from $\gencomp_{\vertexB}$ to $\gencomp_{\vertexB_{\run+1}}$.

\paragraph{Result and discussion.}
With the above quantities, we can establish that the energy $\energyof{\target\wrt\startComp}$, that governs the global convergence time of \cref{thm:globtime-var}, is bounded as follows.
\begin{theorem}
\label{thm:interpretable}
Under \cref{asm:obj,asm:noise,asm:graph,asm:noise-lb}, for $\startComp \in \vertices \setminus \target$, 
\begin{equation}
    \label{eq:globtime-interpretable}
    \energyof{\target\wrt\startComp}
    \leq \max_{\jComp: \interpretablecostlb_{\startComp\jComp} \leq r} \min_{\graphpath_\jComp} \frac{2\cost(\graphpath_\jComp)}{\vardown} + \bigoh \parens*{1 - \frac{\vardown}{\varup}}
\end{equation}
for some $r > 0$ that depends on the graph structure $\graphmainalt$.
\end{theorem}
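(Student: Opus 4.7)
The strategy is to bound $\energyof{\target\wrt\startComp} = \energyof{\target} - \energyof{\startComp\not\joins\target}$ by constructing an explicit transition forest for $\target$ out of an optimal pruning $\prune^*$ for $\startComp \not\joins \target$, and then estimating the cost increment using the transition-weight bounds \cref{eq:interpretable-trans-ub,eq:interpretable-trans-lb}. Since any transition forest carries exactly one more edge than any pruning, I let $\jComp \in \vertices \setminus \target$ denote the root of $\startComp$'s subtree in $\prune^*$ and augment $\prune^*$ into a transition forest by attaching this subtree to the rest via a path $\graphpath_\jComp = (\jComp = \jComp_0 \joins \jComp_1 \joins \cdots \joins \jComp_m)$ in $\graphmainalt$ terminating in $\target$; along the way, the existing outgoing edges of the intermediate $\jComp_\run$ in $\prune^*$ are replaced by the corresponding path edges. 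This yields
\begin{equation*}
\energyof{\target\wrt\startComp}
    \leq \sum_{\run=0}^{m-1} \qmat_{\jComp_\run \jComp_{\run+1}} \;-\; \sum_{\run=1}^{m-1} \qmat^{\prune^*}_{\jComp_\run}
\end{equation*}
for every such path $\graphpath_\jComp$, where $\qmat^{\prune^*}_{\jComp_\run}$ is the weight of the outgoing edge of $\jComp_\run$ in $\prune^*$.

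To reduce the right-hand side to $2\cost(\graphpath_\jComp)/\vardown + \bigoh(1 - \vardown/\varup)$, I would combine the upper bound \cref{eq:interpretable-trans-ub} on the added edges in $\graphmainalt$ (cost at most $2(\obj_{\jComp_\run,\jComp_{\run+1}} - \obj_{\jComp_\run})/\vardown$) with the lower bound \cref{eq:interpretable-trans-lb} on the replaced edges of $\prune^*$ (cost at least $2[\obj_{\jComp_\run,\jComp_{\run+1}} - \obj_{\jComp_\run}]_+/\varup$, reflecting the unavoidable upward jump they had already paid for). Pairwise cancellation of consecutive climb-and-descent contributions along $\graphpath_\jComp$ then telescopes the expression down to the single dominant peak-crossing $\max_\run [\obj_{\jComp_\run,\jComp_{\run+1}} - \min(\obj_{\jComp_\run}, \obj_\jComp)]$, which is exactly $\cost(\graphpath_\jComp)$; the residual $1/\vardown - 1/\varup$ mismatch per edge assembles into the claimed $\bigoh(1 - \vardown/\varup)$ correction.

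Finally, the root $\jComp$ of $\startComp$'s subtree in $\prune^*$ is not arbitrary: optimality of $\prune^*$ forces its edges to have bounded weight, and applying the lower bound $\qmat_{\iComp\kComp} \geq \interpretablecostlb_{\iComp\kComp}$ along the $\startComp$-to-$\jComp$ path inside $\prune^*$ yields $\interpretablecostlb_{\startComp \jComp} \leq r$ for an $r > 0$ controlled by the largest admissible peak crossing in $\graphmainalt$. Taking the worst such $\jComp$ and the cheapest attaching path then delivers \cref{eq:globtime-interpretable}. I expect the telescoping step to be the main technical obstacle: identifying the sum of added-minus-removed edge weights with the \emph{max} expression $\cost(\graphpath_\jComp)$ requires carefully matching each newly added peak crossing with the upward jump implicitly paid for by the outgoing edge of the corresponding intermediate $\jComp_\run$ in $\prune^*$, while accounting both for the special role of the starting elevation $\obj_\jComp$ in the definition of $\cost$ and for a uniform propagation of the $\vardown/\varup$ discrepancy across the chain.
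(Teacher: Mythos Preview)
Your forest construction is valid, but the telescoping step has a genuine gap. You assert that the removed edges---the original outgoing edges of the intermediate $\jComp_\run$ in $\prune^*$---have cost at least $2[\obj_{\jComp_\run,\jComp_{\run+1}} - \obj_{\jComp_\run}]_+/\varup$. But these edges go from $\jComp_\run$ to whatever destination $\prune^*$ prescribes, which bears no relation to $\jComp_{\run+1}$ or to the attaching path $\graphpath_\jComp$; the lower bound \cref{eq:interpretable-trans-lb} on such an edge depends on the unavoidable upward jump toward that \emph{arbitrary} destination, so there is no mechanism for cancellation against the added edges. Without cancellation you are left bounding a \emph{sum} $\sum_\run 2(\obj_{\jComp_\run,\jComp_{\run+1}} - \obj_{\jComp_\run})/\vardown$ by the \emph{maximum} $2\cost(\graphpath_\jComp)/\vardown$, which fails whenever the path crosses several comparable barriers.

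The paper's manipulation is structurally different and does not rely on lower-bounding removed edges. Starting from the pruning and the root $\iComp$ of $\startComp$'s subtree, it walks along a path $(\jComp_0=\iComp,\dots,\jComp_p)$ in $\graphmainalt$ and lets $k$ be the \emph{first} index for which $\jComp_k$ already reaches $\target$ inside the pruning. Only \emph{one} new edge $\jComp_{k-1}\to\jComp_k$ is added, contributing a single term $2(\obj_{\jComp_{k-1},\jComp_k}-\obj_{\jComp_{k-1}})/\vardown$ matching the first branch of the max in $\cost$. If the pruning contains a path from $\jComp_{k-1}$ back to $\iComp$, that path is \emph{reversed} rather than replaced; the cost of reversal is controlled by a path-reversal lemma (obtained by comparing the action of a curve with that of its time-reversal under \cref{asm:noise-lb}), which yields $\qmat_{\vertexB\vertexA}-\qmat_{\vertexA\vertexB}\le 2(\obj_\vertexA-\obj_\vertexB)/\vardown$ up to an $\bigoh(1-\vardown/\varup)$ correction. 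These potential differences telescope exactly along the reversed path to $2(\obj_{\jComp_{k-1}}-\obj_\iComp)/\vardown$, and together with the single added edge give $2(\obj_{\jComp_{k-1},\jComp_k}-\obj_\iComp)/\vardown$, the second branch of $\cost$. The reversal lemma---not bounds on arbitrary deleted edges---is what converts a sum of barrier heights into a single maximum.
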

This means that $\energyof{\target\wrt\startComp}$ is bounded by the maximum depth of the minimizers that \ac{SGD} must go through in order to reach $\target$.
This involves all the paths in $\graphmainalt$ that start at a component close to $\startComp$, as measured by $\interpretablecostlb_{\iComp\jComp}$.

Consider, for instance, the example of \cref{sec:warmup}: the bound of \cref{thm:interpretable} is attained for $\critpoint_{5} \joins \critpoint_{3} \joins \critpoint_{1}$ and we recover the formula of \cref{eq:logtime-ex}.
\revise{In this case, the convergence time depends exponentially on the depths of the minima. However, different noise models
would lead to qualitatively different results: Gaussian noise with variance scaling linearly with the objective value \citep{moriPowerLawEscapeRate} would lead to a polynomial dependence on the depths of the minima. Our framework in \cref{app:interpretable} encompasses this case.
}

Importantly, in the context of neural network training, our results offer a way to connect geometric properties of the loss landscape to sharp estimates of the convergence time of \eqref{eq:SGD}.
Indeed, under certain conditions, neural networks have no spurious local minima \citep{nguyenLossSurfaceDeep2017,nguyenLossLandscapeClass2018}, in which case \cref{thm:globtime} guarantees \emph{subexponential} convergence to the global minimum of $\obj$.
More to the point, even when spurious minima of bounded depth \emph{are} present \citep{nguyenWhenAreSolutions2021},
\cref{thm:globtime-var,thm:interpretable} provide a way to translate these depth bounds directly into convergence time estimates for \eqref{eq:SGD}.
\vspace{-.4ex}


\section{Concluding remarks}
\label{sec:discussion}

Our aim was to characterize the global convergence time of \eqref{eq:SGD} in non-convex landscapes.
Our characterization involves a pair of matching lower and upper bounds
which captures the delicate interplay between 
\begin{enumerate*}
[\upshape(\itshape a\upshape)]
\item
the geometry of the loss landscape of the problem's objective function;
\item
the statistical profile of the \acl{SFO} that provides gradient input to \eqref{eq:SGD}; and
\item
the hardest set of obstacles that separate the algorithm's initialization from the function's global minimum.
\end{enumerate*}
In this sense, the characteristic exponent of our global convergence time estimate can be seen as a measure of the hardness of the non-convex minimization problem at hand \textendash\ and, importantly, it vanishes (resp.~nearly vanishes) if the function admits no spurious local minima (resp.~sufficiently shallow local minima), indicating a transition to subexponential convergence times.

\revise{A concrete take-away of our analysis and results is that the global convergence time of \eqref{eq:SGD} scales exponentially in
\begin{enumerate*}
[\upshape(\itshape a\upshape)]
\item
the inverse of the step-size;
\item
the variance of the noise;
and
\item
the depth of any spurious minima of $\obj$.
\end{enumerate*}
Among others, this has explicit ramifications for (overparameterized) neural nets:
in order to escape the exponential regime, the depth of spurious minimizers must scale logarithmically in the dimension of the problem which, in turn, indicates the depth (or width, depending on the model) that a neural net must have in order to reach a state with near-zero empirical loss in subexponential time.}

From a broader theoretical perspective, our results also provide a flexible analysis template to tackle a wide range of stochastic gradient-based algorithms, such as Adam, oscillatory step-size schedules, etc.,
as well as the study of interpolation and its influence on the global convergence landscape of \eqref{eq:SGD}.
We defer these investigations to the future.


\appendix
\crefalias{section}{appendix}
\crefalias{subsection}{appendix}

\numberwithin{equation}{section}	
\numberwithin{lemma}{section}	
\numberwithin{proposition}{section}	
\numberwithin{theorem}{section}	
\numberwithin{corollary}{section}	

\section{Related work}
\label{app:related}

\subsection{\eqref{eq:SGD} as a discrete-time Markov chain}

In a relatively recent thread in the literature, \citet{dieuleveutBridgingGapConstant2018} and \citet{yuAnalysisConstantStep2020} undertook a study of \eqref{eq:SGD} as a discrete-time Markov chain.
This allowed \citet{dieuleveutBridgingGapConstant2018,yuAnalysisConstantStep2020} to derive conditions under which \eqref{eq:SGD} is (geometrically) ergodic and, in this way, to quantify the distance to the minimizer under global growth conditions.
Building further on this perspective, \citet{gurbuzbalaban2021heavy,hodgkinson2021multiplicative} and \citet{pavasovic2023approximate} showed that, under general conditions, the asymptotic distribution of the iterates of \eqref{eq:SGD} is heavy-tailed;
As such, these results concern the probability of observing the iterates of \eqref{eq:SGD} at very large distances from the origin.
This is in contrast with our work, which focuses on the time it takes \eqref{eq:SGD} to reach a global minimizer.
These two types of results are orthogonal and complementary to each other.

\subsection{The diffusion approximation}

A classical thread in the literature \textendash\ and the foundation of Langevin sampling methods \textendash\ revolves around gradient flows perturbed by Brownian noise in continuous time;
in this case, it is possible to get en explicit expression for the invariant measure of the resulting \acf{SDE} \cite{JKO98}, or estimate it in more difficult, constrained cases \cite{Ben99,Bor08,ZMBB+17-NIPS,ZMBB+20,MerSta18,MerSta18b}.
Owing to our understanding of gradient-based \acp{SDE}, an important thread in the literature concerns the so-called \emph{\ac{SDE} approximation} of \eqref{eq:SGD}, as pioneered by \citet{liStochasticModifiedEquations2017,liStochasticModifiedEquations2019}.
Applications of this \ac{SDE} approximation include the study of the dynamics of \eqref{eq:SGD} close to manifolds of minimizers \citep{blancImplicitRegularizationDeep2020,liWhatHappensSGD2021} or \citet{yangFastConvergenceRandom2020} which quantifies the global convergence of the diffusion approximation of \eqref{eq:SGD} when the function has no spurious local minima.
When the objective function is scale-invariant \citep{liReconcilingModernDeep2020}, we can obtain further results: \citet{wangThreestageEvolutionFast} describes the convergence of the \ac{SDE} approximation to its asymptotic regime, while \citet{liWhatHappensSGD2021,liFastMixingStochastic} also quantify the convergence \eqref{eq:SGD} initialized close to minimizers.

Another line of work leverages \ac{DMFT} to study the behavior of the diffusion approximation of \eqref{eq:SGD} \citep{mignacco2020dynamical,mignacco2022effective,veiga2024stochastic}.
The \ac{DMFT}, or ``path-integral'' approach, comes from statistical physics and bears a close resemblance to the Freidlin-Wentzell theory of large deviations for \acp{SDE}.
All these results are either local or concern the asymptotic behaviour of the continuous-time approximation of \eqref{eq:SGD}. They do not provide information of the global convergence time of the actual discrete-time \eqref{eq:SGD}, since the approximation guarantees fail to hold on large enough time intervals.


\subsection{Exit times for \eqref{eq:SGD}}
In our work, we study how long it takes for \eqref{eq:SGD} to reach a global minimizer. There is vast literature that instead seeks to understand how long it takes for \eqref{eq:SGD} to exit a local minimum.
For this, these works study either the diffusion approximation \citep{xieDiffusionTheoryDeep2021,huDiffusionApproximationNonconvex2018,moriPowerLawEscapeRate,jastrzebskiThreeFactorsInfluencing2018,gesu2019sharp,baudel2023hill} or on heavy-tail versions of this diffusion \citep{nguyenFirstExitTime2019,wangEliminatingSharpMinima2022a}.
Interestingly, some of these works use elements of the continuous-time Freidlin\textendash Wentzell theory \citep{FW98},
which is also the point of departure of our paper.

\revise{Note that studying exit times it not enough to characterize the convergence time of \eqref{eq:SGD} to a global minimizer.
In fact, a simple way to see this is given by our introductory example in \cref{sec:warmup}. Indeed, in this case, the escape time of $p_3$ scales exponentially in $f(p_2) - f(p_3)$ (the depth of $p_3$) while the escape time of $p_5$ would scale exponentially in $f(p_4)  - f(p_5)$. However, the global convergence time of \eqref{eq:SGD} as determined by \cref{eq:time-ex} is greater than either of these two exponentials: indeed, our result takes into account that, when in the vicinity of $p_3$, \eqref{eq:SGD} could transition to $p_5$ instead of $p_1$ an arbitrary number of times. Hence, characterizing escape times is not enough to describe the global convergence time of \eqref{eq:SGD} in general.}

\subsection{Overparameterized neural networks}\label{sec-overparameterized-nn}

The pioneering work of \citep{allen-zhuConvergenceTheoryDeep2019,duGradientDescentFinds2019,zouStochasticGradientDescent2018a} show convergence for neural networks: overparameterization and Gaussian initialization enable convergence to a global minimum.
Among the many subsequent works, let us mention
\citep{azizanStochasticMirrorDescent2022,weinanAnalysisGradientDescent2019, zhangStabilizeDeepResNet2023, zouGlobalConvergenceTraining2020, zouImprovedAnalysisTraining2019b}.
In particular, with $N$ denoting the number of training datapoints, this type of results require either a $O(N^2)$-width with Gaussian initialization \citep{oymakModerateOverparameterizationGlobal2019,nguyenGlobalConvergenceDeep2020} or a $O(N)$-width under additional structure: infinite depth \citep{marionImplicitRegularizationDeep2024} or specific initialization \citep{nguyenGlobalConvergenceDeep2020,liuAimingMinimizersFast2023}.
Our work addresses a different question as it provides a convergence analysis on general non-convex functions, regardless of the structure of the objective or the initialization. 

A fruitful line of works studies the geometric properties of the loss landscapes of neural networks, and in particular whether they contain spurious local minima \citep{kawaguchiDeepLearningPoor2016}.
\citet{nguyenLossSurfaceDeep2017,nguyenLossLandscapeClass2018} show that there are no spurious local minima, or ``bad valley'', if the architecture possesses an hidden layer with width a least $N$.
There are many refinements: e.g.~leveraging the structure of the data \citep{venturiSpuriousValleysTwolayer2020}, considering regularization \citep{wangHiddenConvexOptimization2022}, convolutional neural networks \citep{nguyenOptimizationLandscapeExpressivity2018a} or other variations of the architecture \citep{wangHiddenConvexOptimization2022,pollaciSpuriousValleysClustering,arjevaniAnnihilationSpuriousMinima2022,petzkaNonattractingRegionsLocal2021}.
Moreover, \citet{liBenefitWidthNeural2021} shows that this requirement is tight for one-hidden layer neural networks, in some settings.
When the sizes of the hidden layers are smaller than $N$, the loss landscape does generally have spurious local minima \citep{christofOmnipresenceSpuriousLocal2023}.
Interestingly, in this case, the depth of these spurious can be explicitly bounded \citep{nguyenWhenAreSolutions2021} and made arbitrarily small with hidden layers of size only\;$\sqrt N$.

Another fundamental property of overparameterized neural networks is that they can perfectly fit the training data and therefore, the noise of \eqref{eq:SGD} vanishes at the global minimum.
In this setting, under sufficient noise assumption everywhere else,  \citet{wojtowytschStochasticGradientDescent2021}, 
shows that \eqref{eq:SGD} reaches a neighborhood of a global minimizer almost surely, gets trapped in this neighborhood and then converges to the global minimizer with a provided asymptotic rate.
Our work can be seen as a precise estimation of the time to reach the neighborhood of the global minimizer.

Also motivated by the interpolation phenomenon, \citet{islamovLossLandscapeCharacterization2024} introduces the so-called $\alpha-\beta$ condition and shows a global convergence of \eqref{eq:SGD} under this condition. Though this condition is much weaker than convexity, it only ensures that the function value of the iterates becomes less than the max of the values over all critical points.
In contrast, our work characterizes the time to convergence to global minimizer for any non-convex landscape.

\subsection{\ac{LDP} for stochastic algorithms}
Our mathematical development use large deviation results for stochastic processes; see \eg the monographs of \citet{FW98,dupuisWeakConvergenceApproach1997}.
More precisely, we rely on the large deviation result \ourpaper[Cor.~C.2], which is an application of the theory of \citet{FW98}.
Let us also mention two recent works on large deviations in optimization settings: \citep{bajovicLargeDeviationsRates2022} that studies \eqref{eq:SGD} with vanishing step-size on strongly convex functions and \citet{hultWeakConvergenceApproach2023} that considers general stochastic approximation algorithms.

\section{Large deviation analysis of \ac{SGD}}
\label{app:setup}

\revise{\subsection{Outline of proof}
    \label{app:subsec:outline-proof}
Before reviewing notations and assumptions, we provide an overview of the proof of our main results \cref{thm:globtime,thm:globtime-var}. 

Our framework is built on a \acl{LDP} for \ac{SGD} (\cref{app:LDP}): it yields precise estimations of the probability that \ac{SGD} approximately follows an arbitrary continuous-time path.
In particular, this \ac{LDP} concerns the accelerated (subsampled) \ac{SGD} process $\curr[\accstate]$, $\run=\running$, defined as
\begin{equation}
\label{eq:rescaled-alt}
\curr[\accstate] = \state_{\run \floor{1/\step}}
\end{equation}
We rely on the \ac{LDP} from \ourpaper that we restate in \cref{app:LDP}.
Note that this \ac{LDP} is obtained as a consequence of \citep[Chap.\;7]{FW98} by comparing the process $(\curr[\accstate])_{\run \geq 0}$ to its continuous-time linear interpolation $(\lintat{\time})_{\time \geq 0}$ defined by 
\begin{equation}
\label{eq:interpolation}
\lintat{\time}
	= \curr + \frac{\time - \run \step}{\step} (\next - \curr)
\end{equation}
for all $\run = \running$, and all $\time \in [\run \step, (\run + 1) \step]$. This is why the action function \cref{eq:action} is defined in terms of continuous time.

Our framework also builds on two other key results from \ourpaper. First, we need \cref{lem:est_going_back_to_crit} restated in \cref{app:lyap} that provides a control on the return time of \ac{SGD} to a neighborhood of the set of critical points. In particular, it shows that the distribution of this return time is roughly sub-exponential.
Second, we require the careful analysis done in \ourpaper that exploits the structure of the set of critical points to obtain regularity properties on \cref{eq:action} locally, see \cref{app:attractors}.

In \cref{app:transitions}, we study the transitions of \ac{SGD}, or more precisely the sequence of \cref{eq:rescaled-alt}, between connected components of critical points. In short, we characterize the probability that the iterates of \eqref{eq:rescaled-alt} transition from one component of critical points to another as well as the time it takes to do so. It is because of this new requirement compared to \ourpaper that we have to refine their estimates. For this, we introduce an induced Markov chain $(\statealt_\run)_{\run \geq 0}$ (\cref{def:induced_chain}) that lives on the set of critical components and captures the essential global convergence properties.

With the transition probabilities and times in hand, the system can now be analyzed as a finite state space Markov chain whose states are the components of the set of critical points. Leveraging the tools of \citet{FW98}, restated in \cref{app:subsec:lemmas-mc}, we obtain a characterization of the global convergence time of \cref{eq:rescaled-alt} to the set of critical points in \cref{app:subsec:hitting_time_acc_process}.

The only thing left to do is to translate the results obtained for the subsampled process of \cref{eq:rescaled-alt} back to the original process of \ac{SGD} (\cref{sec:sgd_hitting_time}). The upper-bound is immediate but the lower-bound needs some care. This is where the attracting strength defined in \cref{sec:setup} comes into play: if the attracting strength is large enough, we can show that when \ac{SGD} reaches a neighborhood of a global minimum, it will remain there enough time for the next iterate of the subsampled process of \cref{eq:rescaled-alt} to still be in that neighborhood. This in turn allows us to show the global convergence times of \ac{SGD} and the subsampled sequence roughly coincide, concluding the proof of \cref{thm:globtime}.

\Cref{sec:basin-dependent-convergence} is then dedicated to obtaining \cref{thm:globtime-var} while \cref{sec:interpretation-logtime} establishes the link between the presence of spurious local minima and a non-zero energy.

The results of \cref{sec:extensions} are then obtained in \cref{app:interpretable}.

For convenience, the main dependencies between sections of the appendix are summarized in \cref{fig:proof-structure}.
\begin{figure}[!h]
\centering
\footnotesize
\includegraphics[width=0.9\textwidth,trim={3cm 6cm 1.2cm 2.2cm},clip]{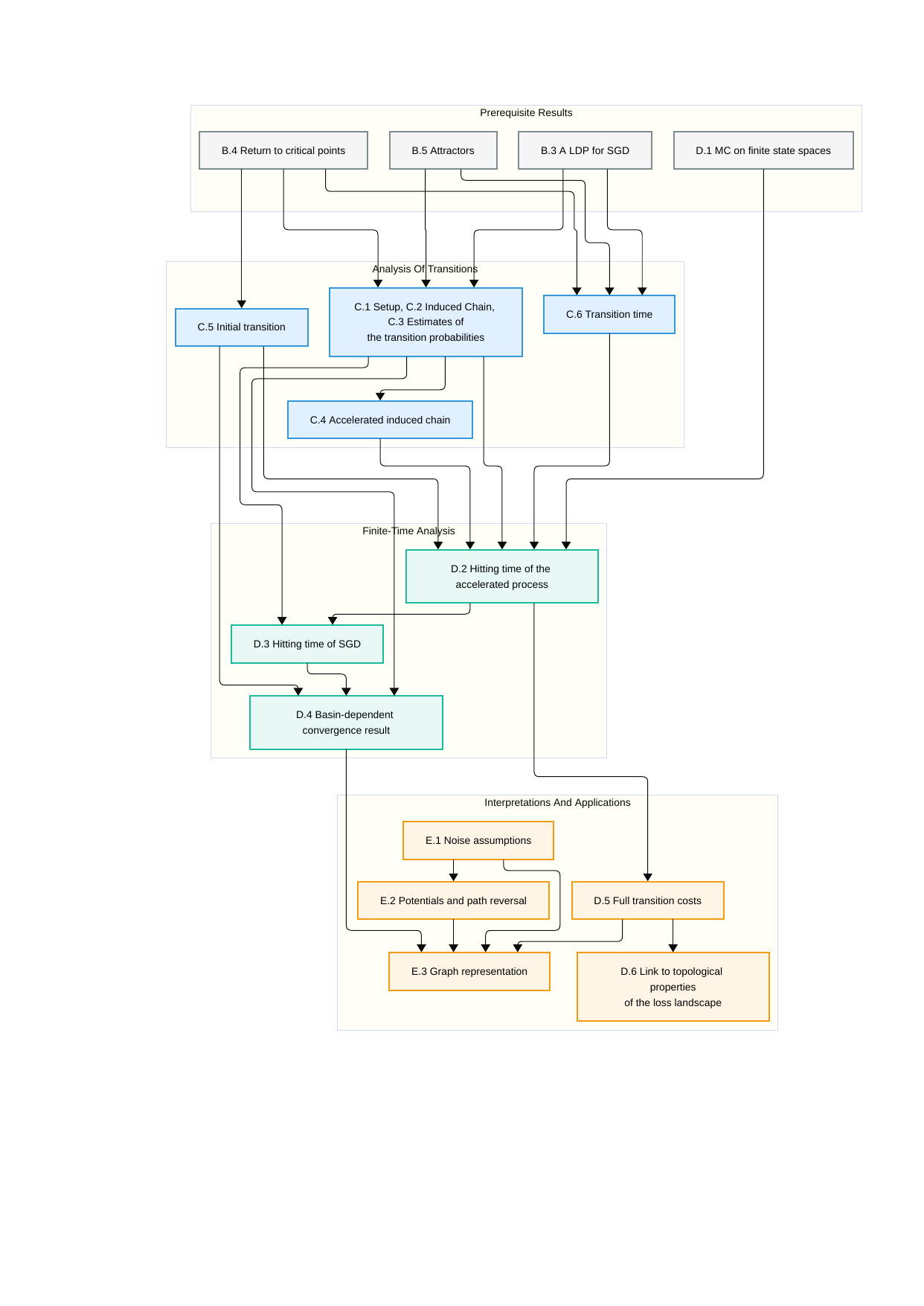}
\caption{Structure of the proof: arrows represent the main logical implications between the main parts of the proof. The group ``Prerequisite results'' gathers the tools from the literature that we rely on, mostly either from [4, 20, 21].
Our key technical contributions that lead to Thm.~1-2 are ``Analysis Of Transitions'' and ``Finite Time Analysis''.
The results of $\S 5$ come from ``Interpretation and Applications''.
}
\label{fig:proof-structure}
\end{figure}
}
\subsection{Setup and assumptions}
\label{asm:blanket}
\label{app:subsec:setup}

Before we begin our proof, we introduce here notations and we revisit and discuss our standing assumptions.
In particular, to extend the range of our results, we provide in the rest of this appendix a weaker version of the blanket assumptions of \cref{sec:setup} which will be in force throughout the appendix.

We equip $\vecspace$ with the canonical inner product $\inner{\cdot,\cdot}$ and the associated Euclidean norm $\norm{\cdot}$. We denote by $\ball(\point, \radius)$ (resp.~$\clball(\point, \radius)$) the open (resp.~closed) ball of radius $\radius$ centered at $\point$.

	{
		We also also define, for any $\plainset \subset \vecspace$,
		\begin{align}
			\nbd_\margin(\plainset) & \defeq \setdef{\state \in \points}{d(\state, \plainset) < \margin}\,.
		\end{align}
	}

    For any $\horizon > 0$, we denote by $\contfuncs([0, \horizon]) = \contfuncs([0, \horizon], \vecspace)$ the set of continuous functions from $[0, \horizon]$ to $\vecspace$.

We begin with our assumptions for the objective function $\obj$, which are a weaker version of \cref{asm:obj}.

\begin{assumption}
	\label{asm:obj-weak}
	The objective function $\obj\from\vecspace\to\R$ satisfies the following conditions:
	\begin{enumerate}
		[left=\parindent,label=\upshape(\itshape\alph*\hspace*{.5pt}\upshape)]
		\item
		      \define{Coercivity:}
		      \label[noref]{asm:obj-coer-weak}
		      $\obj(\point)\to\infty$ as $\norm{\point}\to\infty$.
		\item
		      \label[noref]{asm:obj-smooth-weak}
		      \define{Smoothness:}
		      $\obj$ is $C^{2}$-differentiable and its gradient is $\smooth$-Lipschitz continuous, namely
		      \begin{equation}
			      \label{eq:LG}
			      \norm{\nabla\obj(\pointB) - \nabla\obj(\pointA)}
			      \leq \smooth \norm{\pointB - \pointA}\qquad\text{for all $\pointA,\pointB\in\vecspace$.}
		      \end{equation}
		      
	\end{enumerate}
\end{assumption}
\begin{assumption}[Critical set regularity]
\label{asm:crit}
The critical set
\begin{equation}
    \crit(\obj) \defeq \setdef*{\point \in \vecspace}{\grad \obj(\point) = 0}
\end{equation}
of $\obj$ consists of a finite number of (compact) connected components.
Moreover, each of these components $\regcomp$ is connected by piecewise absolutely continuous paths, \ie for any $\point, \pointalt \in \regcomp$,  there exists $\pth \in \contfuncs([0, 1], \regcomp)$ such that $\pth_0 = \point$, $\pth_1 = \pointalt$ and such that it is piecewise absolutely continuous, \ie $\pth$ is differentiable almost everywhere and there exists $0 = \time_0 < \time_1 < \dots < \time_\nRuns = 1$ such that $\dot \pth$ is integrable on every closed interval of $(\time_\run, \time_{\run + 1})$ for $\run = 0, \dots, \nRuns - 1$
\end{assumption}
Note that, since connected components of a closed set are closed, \cref{asm:obj-weak} automatically ensure that the connected components of $\crit(\obj)$ are compact.

Unlike in \cref{asm:obj}, we do not require the connected components of $\crit\obj$ to be smoothly connected but only piecewise absolutely continuous. 

\begin{remark}
\label{rem:asm_def}
The path-connectedness requirement of \cref{asm:crit} is satisfied whenever the connected components of $\crit(\obj)$
are isolated critical points,
smooth manifolds,
or
finite unions of closed manifolds.
More generally, \cref{asm:crit} is satisfied whenever $\obj$ is definable \textendash\ in which case $\crit\obj$ is also definable, so each component can be connected by piecewise smooth paths \citep{costeINTRODUCTIONOMINIMALGEOMETRY,vandendriesGeometricCategoriesOminimal1996}.
The relaxation provided by \cref{asm:crit} represents the ``minimal'' set of hypotheses that are required for our analysis to go through.
\endenv
\end{remark}

Moving forward, to align our notation with standard conventions in large deviations theory, it will be more convenient to work with $-\err(\point; \sample)$ instead of $\err(\point; \sample)$ in our proofs.
To make this clear, we restate below \cref{asm:noise} in terms of the noise process
\begin{equation}
\label{eq:noise}
\noise(\point,\sample)
	= -\err(\point; \sample)\,.
\end{equation}
We also take the chance to relax the definition of the variance proxy of $\noise$, which requires the new assumption \cref{asm:SNR-weak}.

\begin{assumption}
	\label{asm:noise-weak}
	The error term $\noise\from\vecspace\times\samples\to\vecspace$  satisfies the following properties:
	\begin{enumerate}
		[left=\parindent,label=\upshape(\itshape\alph*\hspace*{.5pt}\upshape)]
		\item
		      \label[noref]{asm:noise-zero-weak}
		      \define{Properness:}
		      $\exof{\noise(\point,\sample)} = 0$ and $\covof{\noise(\point,\sample)} \mg 0$ for all $\point\in\vecspace$.

		\item
		      \label[noref]{asm:noise-growth-weak}
		      \define{Smooth growth:}
		      $\noise(\point,\sample)$ is $C^{2}$-differentiable and satisfies the growth condition
		      \begin{equation}
			      \sup_{\point,\sample} \frac{\norm{\noise(\point,\sample)}}{1 + \norm{\point}}
			      < +\infty\,.
		      \end{equation}
		\item
		      \label[noref]{asm:noise-subG-weak}
		      \label{assumption:subgaussian}
		      \define{Sub-Gaussian tails:}
		      There is $\bdvar \from \R \to (0, +\infty)$ continuous, with $\inf_{\vecspace} \bdvar > 0$, such that
		      $\noise(\point,\sample)$ satisfies
		      \begin{equation}
			      \log \ex \bracks*{e^{\inner{\mom, \noise(\point, \sample)}}}
			      \leq \frac{\bdvar(\obj(\point))}{2} \norm{\mom}^2 \qquad\text{for all $\mom \in \vecspace$}\,.
		      \end{equation}
	\end{enumerate}
\end{assumption}

\begin{assumption}
	\label{asm:SNR-weak}
	\label{assumption:coercivity_noise}
	The \acl{SNR} of $\orcl$ is bounded as
	\begin{equation}
		\label{eq:SNR}
		\liminf_{\norm{\point}\to\infty} \frac{\norm{\nabla\obj(\point)}^{2}}{\bdvar {\circ \obj(\point)}}
		> 16 \log6 \cdot \vdim \,.
	\end{equation}
	Furthermore, $\bdvar(t) = o(t^2)$ as $t \to +\infty$ and,
	$\frac{\bdvar \circ \obj(\state)}{\norm{\state}^\exponent}$ is bounded above and below at infinity for $\exponent \in [0, 2]$, \ie
	\begin{equation}
		0 < \liminf_{\norm{\state}\to +\infty} \frac{\bdvar \circ \obj(\state)}{\norm{\state}^\exponent}\quad\text{ and }\quad \limsup_{\norm{\state}\to +\infty} \frac{\bdvar \circ \obj(\state)}{\norm{\state}^\exponent} < +\infty\,.
	\end{equation}

\end{assumption}

\begin{remark}
    The key distinction between \cref{asm:noise} and \crefrange{asm:noise-weak}{asm:SNR-weak} stems from their differing requirements for the variance proxy $\bdvar$ of the noise in \eqref{eq:SGD}.
Since $\obj$ is coercive, allowing $\bdvar$ to depend on $\obj(\point)$ enables us to consider noise processes whose variance may grow unbounded as $\norm{\point}\to\infty$;
we specifically choose to express this dependence through $\obj(\point)$ rather than directly through $\point$ as it substantially simplifies both our proofs and calculations.
\revise{A version of \cref{asm:noise-lb} adapted to this generalized setting will be stated as \cref{asm:noise-potential} in \cref{subsec:noise-assumptions}.}  
\endenv
\end{remark}

%

We now introduce the notation for the target set $\target$. In the main text, we choose $\target$ to be the set of global minima of $\obj$ but this needs not be the case in general.
In the rest of the appendix, $\target$ will a union of connected components of $\crit(\obj)$.

\begin{definition}[Choice of the components of interest]
    \label{def:choice_components}
    Denote by $\sink_1,\dots,\sink_{\nSinks}$ the connected components of $\crit(\obj)$ that form the \emph{target} set.

    Denote by $\comp_1,\dots,\comp_{\nComps}$ the remaining connected components of $\crit(\obj)$ and denote by
    \begin{equation}
        (\gencomp_1, \dots, \gencomp_{\nGencomps}) \defeq (\comp_1, \dots, \comp_{\nComps}, \sink_1, \dots, \sink_{\nSinks})
    \end{equation}
    their union with $\nGencomps = \nComps + \nSinks$.
\end{definition}

Since the connected components of $\crit(\obj)$ are pairwise disjoint by definition, their compactness implies that there exists $\margin > 0$ such that $\U_\margin(\gencomp_\iComp)$, for $\iComp = 1, \dotsc, \nGencomps$, are pairwise disjoint.

In this framework, the iterates of \eqref{eq:SGD}, started at $\point \in \points$, are defined by the following recursion:
\begin{equation}
	\left\{
	\begin{aligned}
		 & \init[\state] \in \vecspace                                                                                                           \\
		 & \next = {\curr - \step \grad \obj(\curr) + \step \curr[\noise]}\,, \quad \text{ where } \curr[\noise] = \noise(\curr, \curr[\sample])
	\end{aligned}
	\right.
	\label{eq:SGD-app}
\end{equation}
where $(\curr[\sample])_{\run \geq \start}$ is a sequence of random variables in $\sspace$. We will denote by $\prob_\point$ the law of the sequence $(\curr[\sample])_{\run \geq \start}$ when the initial point is $\point$ and by $\ex_\point$ the expectation with respect to $\prob_\point$.

\cref{asm:obj-weak,asm:noise-weak} imply the following growth condition, that we assume holds with the same constant for the sake of simplicity.
There is $\growth > 0$ such that, for all $\point \in \vecspace$, $\sample \in \samples$,
\begin{equation}
	\norm{ \grad \obj(\point)} \leq \growth ( 1 + \norm{\point}) \quad \text{ and } \quad \norm{\noise(\point, \sample)} \leq \growth ( 1 + \norm{\point})\,.
\end{equation}

\subsection{Hamiltonian and Lagrangian}

Following the notation of \ourpaper,
we introduce the cumulant generating functions of the noise $\noise(\point, \sample)$ and of the drift $- \grad \obj(\point) + \noise(\point, \sample)$, that we denote by $\hamiltalt$, $\hamilt$ to avoid confusion. We also define their convex conjugates, $\lagrangianalt$, $\lagrangian$.
\begin{definition}[Hamiltonian and Lagrangian]
	Define, for $\point \in \vecspace$, $\vel \in \dspace$,
	\begin{subequations}
	\begin{align}
		\hamiltalt(\point, \vel)     & = \log \ex \bracks*{\exp \parens*{\inner{\vel, \noise(\point, \sample)}}}                                     \\
		\hamilt(\point, \vel)        & = -\inner{\grad \obj (\point), \vel} + \hamiltalt(\point, \vel)                                               \\
		\lagrangianalt(\point, \vel) & = \hamiltalt(\point, \cdot)^*(\vel)                                                                           \\
		\lagrangian(\point, \vel)    & = \hamilt(\point, \cdot)^*(\vel) = \lagrangianalt(\point, \vel + \grad \obj(\point))\,. \label{eq:lagrangian}
	\end{align}
	\end{subequations}
\end{definition}
{$\lagrangian$ is thus equal to the Lagrangian $\lagof{\orcl}{\cdot,\cdot}$.}

We restate \ourpaper[Lem.~B.1] that provides basic properties of the Hamiltonian and Lagrangian functions.
\begin{lemma}[{Properties of $\hamilt$ and $\lagrangian$, \ourpaper[Lem.~B.1]}]
	\label{lem:basic_prop_h_l}
	\leavevmode
	\begin{enumerate}
		\item $\hamilt$ is $\contdiff{2}$ and $\hamilt(\point, \cdot)$ is convex for any $\point \in \vecspace$.
		\item $\lagrangian(\point, \cdot)$ is convex for any $\point \in \vecspace$, $\lagrangian$ is \ac{lsc} on $\vecspace \times \dspace$.
		\item For any $\point \in \vecspace$, $\vel \in \dspace$, $\hamilt(\point, \vel) \leq  2 \growth( 1 + \norm{\point}) \norm{\vel}^2$ and
		      $\dom \lagrangian(\point, \cdot) \subset \clball (0, 2\growth(1 + \norm{\point}))$.
		\item For any $\point \in \vecspace$, $\vel \in \dspace$, $\lagrangian(\point, \vel) \geq 0$ and $\lagrangian(\point, \vel) = 0 \iff \vel = \grad \obj(\point)$.
	\end{enumerate}
\end{lemma}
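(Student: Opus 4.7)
The plan is to verify the four items in sequence, leveraging standard convex-analytic facts about log-MGFs and their Legendre transforms, together with the structural hypotheses on $\noise$ from \cref{asm:noise-weak}. For item (1), convexity of $\hamiltalt(\point, \cdot)$ as a log-MGF is the classical consequence of H\"older's inequality, and $\hamilt(\point, \cdot)$ inherits convexity since it differs from $\hamiltalt(\point, \cdot)$ only by an affine term. For the joint $C^{2}$-regularity, I would differentiate under the integral in $\hamiltalt(\point, \vel) = \log \ex[\exp(\inner{\vel, \noise(\point, \sample)})]$: integrable dominating envelopes for the first two derivatives are supplied by the $C^{2}$-smoothness of $\noise$ together with the almost-sure growth bound $\norm{\noise(\point, \sample)} \leq \growth(1 + \norm{\point})$ of \cref{asm:noise-weak}\cref{asm:noise-growth-weak}, and the chain rule applied to the outer logarithm preserves $C^{2}$. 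Adding $-\inner{\grad\obj(\point), \vel}$, which is $C^{2}$ in $\point$ by \cref{asm:obj-weak}\cref{asm:obj-smooth-weak}, concludes item (1).

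Item (2) is then immediate: $\lagrangian(\point, \cdot) = \hamilt(\point, \cdot)^{\ast}$ is the Fenchel conjugate of a proper convex function, so it is convex and lower semicontinuous for every fixed $\point$, and joint lower semicontinuity on $\vecspace \times \dspace$ follows from writing $\lagrangian(\point, \vel) = \sup_{\mom}\{\inner{\mom, \vel} - \hamilt(\point, \mom)\}$ as a pointwise supremum of jointly continuous functions. For item (3), the deterministic bound $\norm{\noise(\point, \sample)} \leq \growth(1+\norm{\point})$ yields $\hamiltalt(\point, \vel) \leq \growth(1+\norm{\point}) \norm{\vel}$ via the elementary Cauchy--Schwarz estimate under the expectation, and combining with $\norm{\grad \obj(\point)} \leq \growth(1+\norm{\point})$ gives the stated bound on $\hamilt$. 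The domain inclusion $\dom \lagrangian(\point, \cdot) \subset \clball(0, 2\growth(1+\norm{\point}))$ then follows by duality: if $\norm{\vel}$ exceeds $2\growth(1+\norm{\point})$, testing the supremum defining $\lagrangian$ along the ray $\mom = t \vel / \norm{\vel}$ and sending $t \to \infty$ forces $\lagrangian(\point, \vel) = +\infty$.

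For item (4), nonnegativity is obtained by taking $\mom = 0$ in the defining supremum, yielding $\lagrangian(\point, \vel) \geq - \hamilt(\point, 0) = 0$. The equality characterization follows from Fenchel--Young in the smooth setting: $\lagrangian(\point, \vel) = 0$ iff $\vel$ lies in $\partial_{\mom}\hamilt(\point, 0) = \{\grad_{\mom} \hamilt(\point, 0)\}$ by item (1), and the direct computation $\grad_{\mom} \hamiltalt(\point, 0) = \ex[\noise(\point, \sample)] = 0$ from \cref{asm:noise-weak}\cref{asm:noise-zero-weak} pins down this unique critical point and identifies it with the stated expression. Uniqueness is ensured by the strict convexity of $\hamiltalt(\point, \cdot)$ near the origin, itself a consequence of $\grad^{2}_{\mom} \hamiltalt(\point, 0) = \covof{\noise(\point, \sample)} \succ 0$ from the same assumption. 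I do not anticipate any serious obstacle: all four items reduce to routine convex-analytic manipulations, with the only real care needed being the dominated convergence argument justifying differentiation under the integral in item (1), which in turn rests squarely on the uniform growth bound that \cref{asm:noise-weak}\cref{asm:noise-growth-weak} provides.
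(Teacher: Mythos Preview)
The paper does not supply its own proof of this lemma: it is explicitly restated from \ourpaper[Lem.~B.1] and cited as such, so there is no in-paper argument to compare against. Your outline is the standard route and is essentially correct.

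Two small points are worth flagging, though they concern apparent slips in the restated statement rather than your reasoning. In item (3), your Cauchy--Schwarz argument yields $\hamilt(\point,\vel)\le 2\growth(1+\norm{\point})\norm{\vel}$, which is linear in $\norm{\vel}$; the quadratic $\norm{\vel}^{2}$ as printed cannot hold near the origin whenever $\grad\obj(\point)\neq 0$ (take $\vel$ parallel to $-\grad\obj(\point)$), so this is evidently a typo and your linear bound is both correct and exactly what drives the domain inclusion. In item (4), your computation $\grad_{\mom}\hamilt(\point,0)=-\grad\obj(\point)+\ex[\noise(\point,\sample)]=-\grad\obj(\point)$ gives $\lagrangian(\point,\vel)=0\iff \vel=-\grad\obj(\point)$, not $+\grad\obj(\point)$; this is consistent with how the lemma is used downstream (e.g.\ \cref{lem:basic_prop_flow}, where zero action corresponds to the flow of $-\grad\obj$), so again the sign in the restated lemma is a transcription error. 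Your proof actually establishes the corrected versions; it would be worth saying so explicitly rather than writing ``the stated expression''.
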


The following lemma provides a lower bound on the Lagrangian and is an immediate consequence of the sub-Gaussian tails assumption (\cref{asm:noise-subG-weak}).
\begin{lemma}[{\ourpaper[Lem.~D.5]}]
	\label{lem:csq_subgaussian}
	For any $\point \in \points$, $\vel \in \dspace$,
	\begin{equation}
		\lagrangian(\point, \vel) \geq \frac{\norm{\vel + \grad \obj(\point)}^2}{2\bdvar \circ \obj(\point)}\,.
	\end{equation}
\end{lemma}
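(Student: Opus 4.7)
\textbf{Proof proposal for \cref{lem:csq_subgaussian}.}
The plan is to push the sub-Gaussian tail bound of \cref{asm:noise-subG-weak} through the Legendre\textendash Fenchel transform, using the order-reversing property of convex conjugation. Since $\noise = -\err$, the bound in \cref{asm:noise-subG-weak} transfers verbatim to $\noise$: for every $\point \in \vecspace$ and every $\mom \in \vecspace$,
\begin{equation}
\hamiltalt(\point,\mom)
    = \log \ex\bracks*{\exp\parens*{\inner{\mom,\noise(\point,\sample)}}}
    \leq \frac{\bdvar \circ \obj(\point)}{2}\, \norm{\mom}^2 .
\end{equation}

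Adding the affine term $-\inner{\grad \obj(\point), \mom}$ to both sides of this inequality yields an upper bound on the full Hamiltonian:
\begin{equation}
\hamilt(\point, \mom)
    \leq -\inner{\grad \obj(\point), \mom} + \frac{\bdvar \circ \obj(\point)}{2}\, \norm{\mom}^2 .
\end{equation}
Denote the right-hand side by $\hamilt_0(\point, \mom)$; as a function of $\mom$ it is a quadratic with positive leading coefficient (since $\inf \bdvar > 0$ by \cref{asm:noise-subG-weak}), so its convex conjugate is easy to compute by completing the square.

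The main (and only) step is then to apply convex conjugation in the $\mom$ variable. Because $f \leq g$ implies $f^* \geq g^*$, and because $\lagrangian(\point, \cdot) = \hamilt(\point, \cdot)^*$ by definition, the pointwise bound $\hamilt(\point, \cdot) \leq \hamilt_0(\point, \cdot)$ gives
\begin{equation}
\lagrangian(\point, \vel)
    \geq \hamilt_0(\point, \cdot)^*(\vel)
    = \sup_{\mom \in \dspace} \braces*{\inner{\mom, \vel + \grad \obj(\point)} - \frac{\bdvar \circ \obj(\point)}{2}\, \norm{\mom}^2}
    = \frac{\norm{\vel + \grad \obj(\point)}^2}{2\, \bdvar \circ \obj(\point)} ,
\end{equation}
where the last equality is the standard conjugate of a scaled squared norm. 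This is the claimed inequality, and no step presents a genuine obstacle: the argument is purely algebraic once the tail bound is in hand.
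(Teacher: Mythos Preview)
Your proof is correct and follows the natural approach: the sub-Gaussian bound of \cref{assumption:subgaussian} gives a pointwise upper bound on $\hamilt(\point,\cdot)$, and since Legendre--Fenchel conjugation is order-reversing, this yields the stated lower bound on $\lagrangian(\point,\cdot)$ after computing the conjugate of the quadratic. The paper cites this lemma from \ourpaper\ without reproducing the proof here, but this is exactly the argument one expects.
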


\subsection{A \acl{LDP} for \acs{SGD}}
\label{app:LDP}

In this section, we present and restate the large deviation principles established in \ourpaper for \ac{SGD}. Note that their proof is itself an application of the general theory of \citet{FW98}.
From the sequences $(\curr[\state])_{\run \geq \start}$ and $(\curr[\sample])_{\run \geq \start}$, we define another discrete sequence: a subsampled or, accelerated, sequence
	      \begin{equation}
		      \accstate_\run \defeq \state_{\run \floor{1 / \step}}\,. \label{eq:rescaled}
	      \end{equation}

From the Lagrangian defined in \eqref{eq:lagrangian}, we define, on $\contfuncs([0, \horizon], \vecspace)$, the normalized action functional $\action_{\start,\horizon}$ by
\begin{equation}\label{eq:actionfunctional}
	\action_{\start,\horizon}(\pth) =
	\begin{cases}
		\int_{0}^\horizon \lagrangian(\pth_\time, \dot \pth_\time) \dd \time
		         & \text{ if } \pth \text{ absolutely continuous} \\
		+ \infty & \text{ otherwise}
	\end{cases}
\end{equation}
following \citet[Chap.~3.2]{FW98}, as a manner to quantify how ``probable'' a trajectory is.

For some $\nRuns>0$, we will first equip $\dcurvesat{\nRuns}$ with the distance
\begin{equation}
	\dist_{\nRuns}(\dpth, \dpthalt) = \max_{\start \leq \run \leq \nRuns - 1} \norm{\dpth_\run - \dpthalt_\run}\,.
\end{equation}

Now,  for $\nRuns \geq \start$, $\dpth = (\dpth_\start,\dots,\dpth_{\nRuns -1})\in \vecspace^\nRuns$, let us define the normalized discrete action functional
\begin{equation}
	\daction_\nRuns(\dpth) \defeq \sum_{\run = \start}^{\nRuns - 2} \rate(\dpth_\run, \dpth_{\run + 1})
\end{equation}
where the cost of moving from one iteration to the next is defined for any $\point, \pointalt \in \vecspace$ from the previous continuous normalized action functional (\cf \cref{eq:actionfunctional}) with horizon $1$ as
\begin{equation}\label{eq:defrate}
	\rate(\point, \pointalt) \defeq \inf \setdef{\action_{0,1}(\pth)}{\pth \in \contfuncs([0, 1], \vecspace), \pth_0 = \point, \pth_1 = \pointalt}\,.
\end{equation}


We now present the large deviation principle on the discrete accelerated sequence $(\curr[\accstate])_{\start \leq \run \leq \nRuns-1} = (\state_{\run \floor{\step^{-1}}})_{\start \leq \run }$.
In the following result, the functional $\step^{-1}\daction_\nRuns$ is thus the action functional in $\parens*{\vecspace}^\nRuns$ of the process $(\curr[\accstate])_{\start \leq \run \leq \nRuns-1} $ uniformly with respect to the starting point $ \startingpoint$ in any compact set $\cpt \subset \vecspace$, as $\step \to 0$.

\begin{proposition}
	\label{cor:ldp_iterates_full}
	Fix $\nRuns \geq \start$.
	\begin{itemize}
		\item For any $\level > 0$, the set
		      \begin{equation}
			      \pths_{\nRuns}^\cpt(\level) \defeq \setdef{\dpth \in \vecspace^\nRuns}{\dpth_\start \in \cpt,  \daction_\nRuns(\dpth) \leq \level}
		      \end{equation}
		      is compact and $\daction_\nRuns$ is \ac{lsc} on $\vecspace^\nRuns$.
		\item
		      For any $\level, \margin, \precs > 0$, $\cpt \subset \vecspace$ compact, there exists $\step_0 > 0$ such that, for any $\step \in (0, \step_0]$, for any $\startingpoint \in \cpt$, $\run \leq \nRuns$, $\dpth \in \pths_{\run}^{\{\startingpoint\}}(\level)$, we have that
	      \begin{subequations}
		      \begin{align}
			      \prob_\startingpoint \parens*{\dist_{\run}(\accstate, \dpth) < \margin}                                     & \geq \exp \parens*{- \frac{\daction_\run(\dpth) + \precs}{\step}} \\
			      \prob_\startingpoint \parens*{\dist_{\run}(\accstate, \pths_{\run}^{\{\startingpoint\}}(\level)) > \margin} & \leq \exp \parens*{- \frac{\level - \precs}{\step}}\,.
		      \end{align}
	      \end{subequations}
	\end{itemize}
\end{proposition}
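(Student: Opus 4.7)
The plan is to derive the proposition from the continuous-time Freidlin\textendash Wentzell LDP for \eqref{eq:SGD-app} established in \ourpaper (Corollary~C.2), and then transfer the result to the subsampled sequence $(\accstate_0, \dots, \accstate_{\run-1})$ via the evaluation map $\pth \mapsto (\pth_0, \dots, \pth_{\run-1})$. Over a time window of length $\run - 1$ (corresponding to $(\run-1)\floor{1/\step}$ discrete iterations), the piecewise-affine interpolated trajectory of \eqref{eq:SGD-app} satisfies, uniformly in the starting point on compacta, an LDP on $\contfuncs([0, \run-1], \vecspace)$ with rate $\step^{-1}\action_{0,\run-1}$. The evaluation map is continuous, and since $\action_{0,\run-1}$ decomposes additively across the unit subintervals, the contracted rate function on $\vecspace^\run$ is precisely $\daction_\run$.

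For the lower semi-continuity and compactness claim, I would first show that $\rate$ is \ac{lsc} on $\vecspace \times \vecspace$: given $(\point_\run, \pointalt_\run) \to (\point, \pointalt)$ with $\rate(\point_\run, \pointalt_\run)$ bounded, extract near-minimizing paths from a compact sublevel set of $\action_{0,1}$ (compactness being a standard Freidlin\textendash Wentzell fact), pass to a convergent subsequence, and conclude by lower semi-continuity of $\action_{0,1}$. Lower semi-continuity of $\daction_\nRuns$ follows since it is a finite sum of \ac{lsc} functions. Compactness of $\pths_\nRuns^\cpt(\level)$ is obtained by induction on the coordinate: \cref{lem:basic_prop_h_l}(3) combined with \cref{lem:csq_subgaussian} and the coercivity of $\obj$ forces each coordinate into a bounded set depending only on the previous coordinate and the step action cost, so starting from $\dpth_\start \in \cpt$ propagates to a precompact envelope depending only on $\cpt$, $\level$, and $\nRuns$.

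For the quantitative LDP bounds, the lower bound proceeds as follows: given $\dpth \in \pths_\run^{\{\startingpoint\}}(\level)$, choose a continuous path $\pth^{\ast} \in \contfuncs([0,\run-1], \vecspace)$ with $\pth^{\ast}_\runB = \dpth_\runB$ for each $\runB$ and $\action_{0,\run-1}(\pth^{\ast}) \leq \daction_\run(\dpth) + \precs/2$, then apply the continuous-time LDP lower bound to the $\margin$-tube around $\pth^{\ast}$. For the upper bound, set $F = \setdef{\dpth \in \vecspace^\run}{\dpth_0 = \startingpoint,\ \dist_\run(\dpth, \pths_\run^{\{\startingpoint\}}(\level)) \geq \margin}$; by compactness and lower semi-continuity, $\inf_F \daction_\run \geq \level + \precs/2$ for $\margin$ small enough (otherwise a minimizing sequence in $F$ with action approaching $\level$ would, by precompactness, converge to a point in $\pths_\run^{\{\startingpoint\}}(\level)$, contradicting $\dist_\run \geq \margin$). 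Applying the LDP upper bound on the pre-image of $F$ in the continuous path space yields the claim.

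The main obstacle is ensuring \emph{uniformity} of the LDP constants with respect to both $\startingpoint \in \cpt$ and $\run \leq \nRuns$. Uniformity in $\startingpoint$ is directly provided by Corollary~C.2 of \ourpaper, while uniformity in $\run$ requires propagating the compactness estimates for $\pths_\run^{\{\startingpoint\}}(\level)$ independently of $\run$. Here coercivity of $\obj$ and the growth control from \cref{asm:SNR-weak} play an essential role, as they guarantee that rare escape trajectories would cost more than $\level$, thereby confining the analysis to a fixed precompact set and yielding the required uniform constants.
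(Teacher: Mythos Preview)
The paper does not actually prove this proposition: it is presented as a restatement of \ourpaper[Cor.~C.2], which in turn is described as ``an application of the theory of \citet{FW98}.'' There is therefore no in-paper proof to compare your proposal against.

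That said, your sketch is a reasonable reconstruction of how such a result is obtained. The contraction-principle idea (pushing the continuous-time LDP on $\contfuncs([0,\run-1],\vecspace)$ forward through the evaluation map $\pth \mapsto (\pth_0,\dots,\pth_{\run-1})$, and identifying the contracted rate with $\daction_\run$ via the additivity of $\action_{0,\run-1}$ over unit subintervals) is exactly the right mechanism, and the lower-semicontinuity/compactness argument via sublevel sets of $\action_{0,1}$ is standard. One small caution: in your upper-bound step you argue that $\inf_F \daction_\run \geq \level + \precs/2$ ``for $\margin$ small enough,'' but the proposition's bound is $\exp(-(\level - \precs)/\step)$, not $\exp(-(\level + \precs/2)/\step)$, and the Freidlin\textendash Wentzell upper bound is already stated in precisely this ``distance-from-sublevel-set'' form; you do not need (and will not in general get) the stronger infimum inequality. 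Also note a possible mislabeling: in the present paper, Cor.~C.2 of \ourpaper\ appears to \emph{be} the discrete LDP you are trying to prove, not a separate continuous-time statement you can invoke as input.
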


\subsection{Return to critical points}
\label{app:lyap}

In this section, we restate a key result from \ourpaper: the lemma below provides a control on the return time to a neighborhood of the set of critical points. In particular, it shows that the distribution of this return time is roughly sub-exponential.
\begin{definition}[Stopping times for the accelerated process]
	\label{def:stoptimes}
	For any set $\plainset \subset \vecspace$, we define the hitting and exit times of $\plainset$:
    \begin{subequations}
	\begin{align}
        \hittime_\plainset  & \defeq \inf \setdef{\run \geq {\start}}{\curr[\accstate] \in \plainset}\,, \\
		\exittime_\plainset & \defeq \inf \setdef{\run \geq \start}{\curr[\accstate] \notin \plainset}\,.
	\end{align}
    \end{subequations}
\end{definition}

\begin{lemma}[{\ourpaper[Lem.~D.21]}]
	\label{lem:est_going_back_to_crit}
	Consider $\crit(\obj) \subset \open \subset \bigcpt \subset \vecspace$ with $\open$ an open set and $\bigcpt$ a compact set.
	Then, there is some $\step_0, \coef_0, \constA, \constB > 0$ such that,
	\begin{equation}
		\forall \step \leq \step_0,\, \coef \leq  \coef_0,\, \state \in \bigcpt\,,\quad
		\ex_{\state} \bracks*{e^{\frac{\coef \hittime_{\open}}{\step}}} \leq e^{ \frac{\constA \coef}{\step} + \constB}\,.
	\end{equation}
\end{lemma}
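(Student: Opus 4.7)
My plan is to combine the deterministic reachability of $\open$ by the gradient flow, a one-cycle large deviation bound obtained from \cref{cor:ldp_iterates_full}, and a Markov iteration that turns that one-cycle bound into a geometric tail for $\hittime_\open$, from which the exponential-moment estimate follows.

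First, I would embed $\bigcpt$ into a compact sublevel set $\widetilde{\bigcpt}=\{\obj\leq M\}$ with $M$ strictly larger than $\sup_{\bigcpt}\obj$; this set is compact by the coercivity part of \cref{asm:obj-weak}. Gradient flow trajectories started in $\widetilde{\bigcpt}$ remain in $\widetilde{\bigcpt}$, and since $\crit(\obj)\subset\open$ and $\widetilde{\bigcpt}\setminus\open$ is compact, there is $\delta>0$ such that $\norm{\nabla\obj}\geq\delta$ on $\widetilde{\bigcpt}\setminus\open$. Along the gradient flow, $\obj$ therefore decreases at rate at least $\delta^{2}$ outside $\open$, and because $\obj$ is bounded below, every such trajectory must enter $\open$ within a uniform time $T>0$. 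Since the gradient flow is the unique zero-action curve of $\lagrangian$ and $\lagrangian$ is lower semi-continuous and non-negative, any discrete path that stays $\margin$-away from the gradient flow over horizon $\nRuns=\lceil T\rceil$ has action bounded below by some $\level(\margin)>0$. Applying the upper bound of \cref{cor:ldp_iterates_full} at horizon $\nRuns$ then gives, uniformly in $\state\in\widetilde{\bigcpt}$ and for all small enough $\step$,
\begin{equation*}
\prob_\state\bigl(\hittime_\open>\nRuns \text{ or } \accstate_n\notin\widetilde{\bigcpt} \text{ for some } n\leq \nRuns\bigr)\leq e^{-C_1/\step},
\end{equation*}
provided $\margin$ is small enough that the $\margin$-tube around the gradient flow from $\state$ lies in $\widetilde{\bigcpt}$ and enters $\open$ before accelerated time $T$.

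Iterating at the multiples $k\nRuns$ via the strong Markov property yields $\prob_\state(\hittime_\open>k\nRuns)\leq e^{-kC_1/\step}$ for $\state\in\widetilde{\bigcpt}$. Writing the exponential moment as a sum and grouping in blocks of $\nRuns$ accelerated steps produces the geometric series
\begin{equation*}
\ex_\state\bigl[e^{\coef\hittime_\open/\step}\bigr] \leq e^{\coef\nRuns/\step}\sum_{k\geq 0} e^{(\coef\nRuns-C_1)k/\step},
\end{equation*}
which converges as soon as $\coef\leq\coef_0:=C_1/(2\nRuns)$. Rearranging the prefactor delivers the announced bound $\exp(\constA\coef/\step+\constB)$, with $\constA$ of order $T$ and $\constB$ absorbing the bounded remainder of the series.

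The main obstacle I foresee is closing the Markov iteration when the iterates temporarily leave $\widetilde{\bigcpt}$, since the one-cycle bound was established only for starting points in $\widetilde{\bigcpt}$. I would handle this through a Lyapunov-type control of $\obj(\accstate_n)$: the descent lemma for \eqref{eq:SGD-app}, combined with the sub-Gaussian tails of \cref{asm:noise-subG-weak} and the signal-to-noise condition of \cref{asm:SNR-weak}, makes $\obj(\accstate_n)$ a discrete near-supermartingale far from $\crit(\obj)$, so excursions above the sublevel threshold $M$ are exponentially rare in $1/\step$. The contribution of such ``escape'' cycles to the exponential moment is then of order $e^{-C'/\step}$, which is negligible compared to the main geometric-series term and therefore closes the iteration.
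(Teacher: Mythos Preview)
The paper does not prove this lemma: it is explicitly restated from the companion work \ourpaper[Lem.~D.21] and invoked as a black box, so there is no ``paper's own proof'' to compare against. Your outline is therefore being assessed on its own merits.

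Your strategy---uniform reachability of $\open$ by the gradient flow, a one-block large-deviation bound from \cref{cor:ldp_iterates_full}, and a geometric iteration via the Markov property---is the standard and correct skeleton for results of this type. The uniform time $T$ exists for exactly the reason you give, and the passage from a one-block tail bound to an exponential moment via block summation is routine.

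The genuine difficulty, which you correctly identify but do not resolve, is the non-compactness issue: the one-block bound from \cref{cor:ldp_iterates_full} is uniform only over compact initial data, so the Markov iteration breaks as soon as $\accstate_{k\nRuns}\notin\widetilde{\bigcpt}$. Your proposed fix (``a Lyapunov-type control of $\obj(\accstate_n)$'') is the right idea, but it is not a detail---it is the heart of the matter. Note that \cref{asm:SNR-weak} carries the very specific constant $16\log 6\cdot\vdim$; constants of this kind do not appear in assumptions unless a delicate quantitative argument is needed. Establishing that the accelerated iterates return to (or stay in) a fixed compact set with probability $1-e^{-c/\step}$, uniformly over unbounded starting points, requires a careful drift analysis combining the growth conditions in \cref{asm:obj-weak,asm:noise-weak} with the sub-Gaussian tails and the signal-to-noise lower bound. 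This is where most of the work in the companion paper's proof lies, and your sketch treats it as an afterthought rather than the main obstacle.
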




\subsection{Attractors}
\label{app:attractors:setup}
\label{app:attractors}
We again build on the work of \ourpaper\ which took inspiration from the framework of \citet{kiferRandomPerturbationsDynamical1988}.
We first need to define the gradient flow of $\obj$.

\begin{definition}
    \label{def:flow}
	Define, for $\point \in \vecspace$, the flow $\flowmap$  of $-  \grad \obj$ started at $\point$, \ie
	\begin{align}
		\flowmap_0(\state)          & = \state                                  \\
		\dot \flowmap_\time(\state) & = - \grad \obj(\flowmap_\time(\state))\,.
	\end{align}
	and let $\map(\state)$  be the value of this flow at time $1$, \ie
	\begin{equation}
		\label{eq:mapdef}
		\map(\state) = \flowmap_1(\state)\,.
	\end{equation}
\end{definition}

We first list some basic properties.

\begin{lemma}[{Properties of the flow \ourpaper[Lem.~D.1]}]
	\label{lem:basic_prop_flow}
	$\flowmap$ is well-defined and continuous in both time and space, and, for any $\horizon \geq 0$, $\pth \in \contfuncs([0, \horizon], \vecspace)$ such that $\pth_\tstart = \point$,
	\begin{equation}
		\action_{0, \horizon}(\pth) = 0 \iff \pth_\time = \flowmap_\time(\point) ~~ \text{ for all } \time \in [0, \horizon]\,.
	\end{equation}
\end{lemma}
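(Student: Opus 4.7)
The plan is to handle the two claims of Lemma B.2 in sequence: first well-posedness and continuity of the flow $\flowmap$, then the zero-action characterization.

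For well-posedness, I would invoke Picard--Lindelöf on the ODE $\dot\state = -\nabla\obj(\state)$: Assumption B.1(b) gives global Lipschitz continuity of $\nabla\obj$, so on any bounded time interval there is a unique local $C^1$ solution started at $\point$. To promote this to a global solution defined on $[0,\infty)$, I would use coercivity (Assumption B.1(a)): since $\frac{d}{dt}\obj(\flowof{\time}{\point}) = -\norm{\nabla\obj(\flowof{\time}{\point})}^{2} \leq 0$, the flow stays inside the sublevel set $\setdef{\stateB}{\obj(\stateB) \leq \obj(\point)}$, which is bounded (hence relatively compact) by coercivity; this rules out finite-time blowup. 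Joint continuity of $(\time,\point)\mapsto \flowof{\time}{\point}$ is then a standard Grönwall argument exploiting Lipschitzness of $\nabla\obj$, which also shows that $\map = \flowmap_{1}$ is continuous.

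For the zero-action characterization, I would start from the definition \eqref{eq:actionfunctional} of $\action_{0,\horizon}$. If $\action_{0,\horizon}(\pth) = 0$, then in particular $\pth$ must be absolutely continuous, so $\dot\pth$ exists almost everywhere and is locally integrable. Since Lemma B.1(4) asserts $\lagrangian(\point,\vel) \geq 0$ with equality if and only if $\vel$ equals the drift of the gradient flow at $\point$, the identity $\int_{0}^{\horizon} \lagrangian(\pth_{\time},\dot\pth_{\time})\,d\time = 0$ combined with nonnegativity of the integrand forces $\lagrangian(\pth_{\time},\dot\pth_{\time}) = 0$ for almost every $\time \in [0,\horizon]$. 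This yields $\dot\pth_{\time} = -\nabla\obj(\pth_{\time})$ for almost every $\time$, i.e., $\pth$ solves the gradient flow ODE in the Carathéodory sense with $\pth_{0} = \point$. Uniqueness of solutions to a Lipschitz ODE then gives $\pth_{\time} = \flowof{\time}{\point}$ on $[0,\horizon]$. The converse direction is immediate: if $\pth_{\time} = \flowof{\time}{\point}$, then $\pth$ is $C^{1}$ with $\dot\pth_{\time} = -\nabla\obj(\pth_{\time})$ at every $\time$, so $\lagrangian(\pth_{\time},\dot\pth_{\time}) = 0$ throughout, yielding $\action_{0,\horizon}(\pth) = 0$.

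The only slightly delicate point is the ``almost everywhere implies everywhere'' step: it relies on $\pth$ being absolutely continuous (automatic when the action is finite) and on $\nabla\obj \circ \pth$ being continuous (since $\pth$ and $\nabla\obj$ are continuous), so both sides of $\dot\pth_{\time} = -\nabla\obj(\pth_{\time})$ are well-defined almost everywhere and the equality extends in the sense needed for the uniqueness theorem to apply. Given the Lipschitz smoothness hypothesis, no further technical machinery is required, and none of the steps present a genuine obstacle beyond this bookkeeping.
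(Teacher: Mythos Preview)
Your proposal is correct. The paper does not supply its own proof of this lemma: it is restated verbatim from the companion paper \ourpaper[Lem.~D.1], so there is no in-paper argument to compare against. Your approach---Picard--Lindel\"of plus the Lyapunov bound from coercivity for global well-posedness, and Lemma~\ref{lem:basic_prop_h_l}(4) plus Carath\'eodory uniqueness for the zero-action characterization---is the natural one and matches what any standard treatment would do.
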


The following lemma translates this for $\map$. 

\begin{lemma}[{Properties of $\map$,\ourpaper[Lem.~D.2]}]
	\label{lem:basic_prop_map}
	$\map$ is well-defined and continous and, for any $\point, \pointalt \in \vecspace$,
	\begin{equation}
		\rate(\point, \pointalt) = 0 \iff \pointalt = \map(\point)\,.
	\end{equation}
\end{lemma}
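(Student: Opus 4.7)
The plan is to establish the two statements of \cref{lem:basic_prop_map} separately, using the correspondence between zero-action paths and integral curves of $-\grad\obj$ already recorded in \cref{lem:basic_prop_flow}.

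For the well-definedness and continuity of $\map$, I would first observe that $\obj$ is nonincreasing along $\flowmap_\time(\state)$, since $\ddt \obj(\flowmap_\time(\state)) = -\norm{\grad\obj(\flowmap_\time(\state))}^2 \leq 0$, so $\flowmap_\time(\state)$ remains in the compact sublevel set $\{\obj \leq \obj(\state)\}$ by the coercivity hypothesis of \cref{asm:obj-weak}. The $\smooth$-Lipschitz continuity of $\grad\obj$ combined with the Cauchy\textendash Lipschitz theorem then yields global existence and continuous dependence of $\flowmap_\time$ on both time and initial data; specialising to $\time = 1$ gives the claimed properties of $\map = \flowmap_1$. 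For the ``$\Leftarrow$'' direction of the equivalence, the candidate path $\pth_\time = \flowmap_\time(\point)$ satisfies $\pth_0 = \point$, $\pth_1 = \map(\point) = \pointalt$ and $\action_{0,1}(\pth) = 0$ by \cref{lem:basic_prop_flow}; hence $\rate(\point, \pointalt) \leq 0$, and the nonnegativity of $\lagrangian$ recorded in \cref{lem:basic_prop_h_l} forces $\rate(\point, \pointalt) = 0$.

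For the converse, suppose $\rate(\point, \pointalt) = 0$ and pick a minimising sequence $(\pth^\run)$ of absolutely continuous paths with $\pth^\run_0 = \point$, $\pth^\run_1 = \pointalt$, and $\action_{0,1}(\pth^\run) \to 0$. Combining the quadratic lower bound $\lagrangian(\point, \vel) \geq \norm{\vel + \grad\obj(\point)}^2 / (2 \bdvar \circ \obj(\point))$ from \cref{lem:csq_subgaussian} with Cauchy\textendash Schwarz, and exploiting the uniform growth control on $\grad\obj$ along paths of bounded action, I would derive uniform boundedness and equicontinuity of $(\pth^\run)$ on $[0,1]$. Arzel\`a\textendash Ascoli then extracts a subsequential uniform limit $\pth^*$ with $\pth^*_0 = \point$ and $\pth^*_1 = \pointalt$; lower semicontinuity of $\action_{0,1}$ under uniform convergence (the continuous-time analogue of the LSC statement of \cref{cor:ldp_iterates_full}, inherited from convexity and lower semicontinuity of $\lagrangian(\point,\cdot)$) then yields $\action_{0,1}(\pth^*) \leq \liminf_\run \action_{0,1}(\pth^\run) = 0$. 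By \cref{lem:basic_prop_flow} this forces $\pth^*_\time = \flowmap_\time(\point)$ for all $\time \in [0,1]$; evaluating at $\time = 1$ delivers $\pointalt = \map(\point)$.

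The main obstacle is precisely this converse direction: the infimum defining $\rate(\point, \pointalt)$ need not be attained a priori, so one must pass to the limit in a minimising sequence. This requires both equicontinuity of the sequence (coming from the quadratic coercivity of \cref{lem:csq_subgaussian}) and lower semicontinuity of $\action_{0,1}$ under uniform convergence \textendash\ a standard but nontrivial calculus-of-variations fact that is the continuous-time counterpart of the LSC statement in \cref{cor:ldp_iterates_full}.
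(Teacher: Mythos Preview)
The paper does not actually prove this lemma here: it is restated verbatim from the authors' earlier work \ourpaper[Lem.~D.2], with no proof supplied in the present appendix. So there is no ``paper's own proof'' to compare against.

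Your proposal is correct and is the natural argument. The well-definedness and continuity of $\map$, and the $\Leftarrow$ direction, follow exactly as you describe. For the $\Rightarrow$ direction, your minimising-sequence-plus-Arzel\`a--Ascoli approach is the standard one; the equicontinuity step you flag as the main obstacle does require first confining all paths $\pth^\run$ to a fixed compact set. This containment is precisely what \cref{lem:cpt_paths_bd_action} (proved later in the paper via \cref{lem:basic_bdpot_ineq}) provides: any path of action at most $c$ starting from a compact set stays in a sublevel set of the potential $\bdpot$. Once this is in hand, the quadratic lower bound of \cref{lem:csq_subgaussian} with $\bdvar\circ\obj$ bounded on that compact gives a uniform $L^2$ bound on $\dot\pth^\run + \grad\obj(\pth^\run)$, hence on $\dot\pth^\run$, and equicontinuity follows. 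The lower semicontinuity of $\action_{0,1}$ under uniform convergence is indeed the continuous-time analogue of the first item of \cref{cor:ldp_iterates_full}, and is a standard consequence of the convexity and lower semicontinuity of $\lagrangian$ (see e.g.\ \citet[Chap.~3]{FW98}).
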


The next two lemmas are key regularity results on the connected components of the critical set.

\begin{lemma}[{\ourpaper[Lem.~D.8]}]
	\label{lem:W_nbd}
	For any $\closed \subset \crit(\obj)$ connected component of the critical set,
	there is $\radius_0 > 0$ such that, for any $0 < \radius \leq \radius_0$,
	\begin{equation}
		\nbdalt_\radius(\closed) \defeq \setdef{\state \in \points}{\rate(\state, \closed) < \radius,\, \rate(\closed, \state) < \radius}
	\end{equation}
	is open and contains $\closed$.
\end{lemma}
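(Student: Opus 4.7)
The plan is to prove the two claims separately: the containment $\closed\subseteq\nbdalt_\radius(\closed)$, which is essentially immediate, and the openness of $\nbdalt_\radius(\closed)$, which is the substantive part.

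For containment, I would argue as follows. Take any $\point\in\closed\subseteq\crit(\obj)$, so $\grad\obj(\point)=0$. The fourth item of \cref{lem:basic_prop_h_l} gives $\lagrangian(\point,0)=0$, whence the constant path $\pth_\time\equiv\point$ on $[0,1]$ has zero action. Thus $\rate(\point,\point)=0$, and by choosing $\pointalt=\point$ in the infima defining $\rate(\point,\closed)$ and $\rate(\closed,\point)$, both quantities are $0<\radius$, so $\point\in\nbdalt_\radius(\closed)$.

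For openness, by symmetry of the two defining conditions it suffices to prove that $\{\point:\rate(\point,\closed)<\radius\}$ is open (the argument for $\{\rate(\closed,\point)<\radius\}$ is identical with paths reversed). Fix a base point $\point_0$ in this set, pick $\pointalt_0\in\closed$ together with a path $\pth^0\in\contfuncs([0,1])$ such that $\pth^0_0=\point_0$, $\pth^0_1=\pointalt_0$ and $\action_{0,1}(\pth^0)<\radius-\precs$ for some $\precs>0$. For a perturbation $\point$ of $\point_0$ I would set
\begin{equation}
\pth^\point_\time
	\defeq \pth^0_\time + (1-\time)(\point-\point_0),
\end{equation}
so that $\pth^\point_0=\point$, $\pth^\point_1=\pointalt_0$, and $\dot\pth^\point_\time=\dot\pth^0_\time-(\point-\point_0)$. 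As $\point\to\point_0$, both $\pth^\point$ and $\dot\pth^\point$ converge to $\pth^0$ and $\dot\pth^0$ uniformly on $[0,1]$; hence, once continuity of the integrand is established, $\action_{0,1}(\pth^\point)\to\action_{0,1}(\pth^0)<\radius-\precs$, so $\action_{0,1}(\pth^\point)<\radius$ for $\norm{\point-\point_0}$ small. This shows $\rate(\point,\pointalt_0)<\radius$, and consequently $\rate(\point,\closed)<\radius$, giving an open ball around $\point_0$ inside the set.

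The principal obstacle is the continuity of the action functional under the above endpoint perturbation: \cref{lem:basic_prop_h_l} only provides lower semicontinuity of $\lagrangian$, whereas here one needs an upper estimate that enables a dominated-convergence argument on $[0,1]$. The plan is to exploit the noise structure to obtain, in a tube $\bigcup_{\norm{\point-\point_0}\le\margin}\bracks{\pth^\point([0,1])\times\{\dot\pth^\point_\time:\time\in[0,1]\}}$, a locally uniform majorant for $\lagrangian(\pth^\point_\time,\dot\pth^\point_\time)$. Concretely, since $\covof{\noise(\point,\sample)}\mg 0$ (\cref{asm:noise-zero-weak}) and $\noise$ has sub-Gaussian tails with continuous variance proxy (\cref{asm:noise-subG-weak}), the Hamiltonian $\hamilt(\point,\cdot)$ is strictly convex with a quadratic lower bound on compact sets; its Legendre dual $\lagrangian(\point,\cdot)$ is therefore locally bounded on the interior of its effective domain. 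Because $\action_{0,1}(\pth^0)<\infty$, the map $\time\mapsto\lagrangian(\pth^0_\time,\dot\pth^0_\time)$ is integrable, and the uniform closeness of $(\pth^\point,\dot\pth^\point)$ to $(\pth^0,\dot\pth^0)$ keeps the perturbed integrand inside this domain of local boundedness for $\margin$ sufficiently small, yielding the required dominated convergence. Finally, the threshold $\radius_0$ arises by choosing $\radius$ small enough that $\nbdalt_\radius(\closed)$ does not encroach on neighborhoods of other critical components (via the separation of critical components guaranteed by \cref{asm:crit}), which is only needed to keep the above local analysis confined to a neighborhood of $\closed$.
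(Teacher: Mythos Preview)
The paper does not prove this lemma here; it is imported from \ourpaper[Lem.~D.8], so there is no in-paper argument to compare against. Your containment argument is correct. The openness argument, however, has a real gap at precisely the step you call the ``principal obstacle''. Since $\samples$ is compact and $\noise$ continuous, the support of $\noise(\point,\cdot)$ is compact, so $\dom\lagrangianalt(\point,\cdot)\subset\clconv\supp\noise(\point,\cdot)$ is a bounded convex set and $\lagrangianalt$ is continuous only on its \emph{interior}. Finiteness of $\action_{0,1}(\pth^0)$ guarantees merely that $\dot\pth^0_\time+\grad\obj(\pth^0_\time)\in\dom\lagrangianalt(\pth^0_\time,\cdot)$ for a.e.\ $\time$, not that these vectors lie in the interior. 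At boundary points $\lagrangianalt$ can be finite yet jump to $+\infty$ under arbitrarily small perturbations (e.g., $\noise$ uniform on $\{-1,0,1\}$ in one dimension gives $\lagrangianalt(1)=\log 3<\infty$ but $\lagrangianalt(1+\epsilon)=+\infty$). Your interpolation shifts the noise-velocity by $-(\point-\point_0)+\bracks{\grad\obj(\pth^\point_\time)-\grad\obj(\pth^0_\time)}$, which can push a boundary velocity out of the domain and render $\action_{0,1}(\pth^\point)=+\infty$. The clause ``keeps the perturbed integrand inside this domain of local boundedness'' is exactly what is not established.

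The standard repair is a preliminary interior-approximation step. Since $\exof{\noise(\point,\sample)}=0$ and $\covof{\noise(\point,\sample)}\mg 0$, the origin lies in $\intr\dom\lagrangianalt(\point,\cdot)$; by convexity and $\lagrangianalt(\point,0)=0$, the scaled velocity $\lambda v$ with $\lambda<1$ is interior and satisfies $\lagrangianalt(\point,\lambda v)\le\lambda\,\lagrangianalt(\point,v)$. One first replaces $\pth^0$ by a path built from this scaled noise profile (correcting the $O(1-\lambda)$ endpoint defect and reparametrising to keep horizon $1$), obtaining a near-optimal path whose velocities are uniformly interior; only then does your linear-interpolation and dominated-convergence argument go through. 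This approximation is the missing idea.
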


\begin{lemma}[{\ourpaper[Lem.~D.9]}]
	\label{lem:equivalence_classes_dpth_stay_close}
	Let $\closed \subset \crit(\obj)$ be a connected component of the critical set.
    Then, for any $\precs > 0$, there is some $\nRuns \geq 1$ such that, for any $\state, \statealt \in \closed$,
	there is $\dpth \in \restrdcurvesat{\nRuns}$ such that $\dpth_\start = \state$, $\dpth_{\nRuns - 1} = \statealt$, $\daction_\nRuns(\dpth) < \precs$ and $\max_{\start \leq  \run < \nRuns} d(\dpth_\run, \closed) < \precs$.
\end{lemma}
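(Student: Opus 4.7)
The plan is to exploit two structural facts about $\closed$. First, on $\closed$ we have $\grad\obj \equiv 0$, so $\lagrangian(\point,\argdot)$ coincides with the noise Lagrangian $\lagrangianalt(\point,\argdot)$ there; moreover, since $\hamiltalt(\point,\cdot)$ is $C^2$, convex, with minimum $0$ at $\mom = 0$, vanishing gradient at $\mom=0$, and uniformly positive-definite Hessian $\covof{\noise(\point,\argdot)}$ on the compact $\closed$ (by \cref{asm:noise-weak}), convex duality yields constants $\Const, V > 0$ with $\lagrangianalt(\point,\vel) \le \Const \norm{\vel}^2$ for all $\point \in \nbd_1(\closed)$ and $\norm{\vel} \le V$. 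Combined with $\lips$-Lipschitzness of $\grad\obj$ and \cref{eq:lagrangian}, this gives $\lagrangian(\point,\vel) \le 2\Const(\norm{\vel}^2 + \lips^2 r^2)$ for $\point \in \nbd_r(\closed)$ and $\norm{\vel}\le V$. Second, for any $\point \in \closed$ the constant path at $\point$ has $\lagrangian(\point,0) = \lagrangianalt(\point,0) = 0$, so $\rate(\point,\point) = 0$ and stationary padding at points of $\closed$ is free.

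The next step is to build a uniform discrete skeleton in $\closed$. Fix $r_0 > 0$ (to be chosen). By compactness, cover $\closed$ by finitely many open balls $B(\point_1,r_0), \dots, B(\point_N,r_0)$ with $\point_k \in \closed$, and form the graph on $\{1,\dots,N\}$ in which $k \sim \ell$ iff $\norm{\point_k - \point_\ell} < 2r_0$. Since $\closed$ is connected and covered by these balls, this graph is connected; let $D \le N-1$ denote its diameter. For each edge $(k,\ell)$, \cref{asm:crit} supplies a piecewise absolutely continuous curve $\pth^{k\ell}\from[0,1]\to\closed$ joining $\point_k$ to $\point_\ell$, of finite length; taking the maximum over the finite edge set yields a uniform bound $L_{\max} < \infty$.

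Given $\state,\statealt \in \closed$, I then select $k_0, k_m$ with $\state \in B(\point_{k_0},r_0)$ and $\statealt \in B(\point_{k_m},r_0)$, plus a graph path $k_0,\dots,k_m$ with $m \le D$. For a positive integer $T$, I build the discrete path of length $\nRuns = DT + 3$ as follows: start with $\dpth_0 = \state$; take one step to $\dpth_1 = \point_{k_0}$; along each reference edge $\pth^{k_j k_{j+1}}$, reparameterize by arc length, stretch the time interval to horizon $T$, and sample at integer times to get $T$ new discrete points lying in $\closed$; after the last edge, take one step to $\statealt$; finally, pad with stationary steps at $\statealt$ up to index $\nRuns - 1$. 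The two end-hops cost at most $2\Const(1+\lips^2)r_0^2$ each via the Lagrangian bound applied to the straight line (which stays in $\nbd_{r_0}(\closed)$); each backbone substep has speed $\le L_{\max}/T$ and stays in $\closed$, so its cost is $\le \Const(L_{\max}/T)^2$, giving a backbone total $\le D\Const L_{\max}^2/T$; padding is free. Choosing $r_0$ small and then $T$ large (both functions of $\precs$ only) makes the total action below $\precs$, while all discrete points lie in $\closed$, so $\max_\run \dist(\dpth_\run,\closed) = 0 < \precs$.

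The main obstacle is securing genuine uniformity of $\nRuns$ in the pair $(\state,\statealt)$. This rests on two points: the finiteness of $L_{\max}$, which crucially uses that the cover is finite and that each reference path, being piecewise absolutely continuous on $[0,1]$, has finite length (note that the global ``intrinsic diameter'' of $\closed$ need not be finite a priori); and the free stationary padding, which allows bringing shorter discrete paths to the common length $\nRuns$ without additional cost. Establishing the uniform local Lagrangian upper bound $\lagrangianalt(\point,\vel) \le \Const\norm{\vel}^2$ on $\nbd_1(\closed)\times\clball(0,V)$, via Taylor expansion and a compactness argument for $\Sigma(\point)^{-1}$, is the remaining technical ingredient.
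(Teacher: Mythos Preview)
The paper does not prove this lemma; it is quoted from \ourpaper[Lem.~D.9], so there is no in-text argument to compare against. On its own merits your strategy is the natural one and essentially complete: the local quadratic upper bound $\lagrangianalt(\point,\vel) \le \Const\norm{\vel}^2$ near $\closed$ (via strict positive-definiteness of $\cov\noise(\point,\sample)$ and Legendre duality), a finite reference skeleton from a compactness cover of $\closed$, discretization along that skeleton with time dilation, and free stationary padding using $\rate(\point,\point)=0$ on $\closed$ are exactly the right ingredients, and you correctly trace the uniformity of $\nRuns$ to the finiteness of the cover plus the padding device.

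One point to make explicit: your claim that each reference path has finite length reads \cref{asm:crit} in the standard way---absolute continuity on each closed piece $[\time_\run,\time_{\run+1}]$, hence $\dot\pth \in L^1([0,1])$. The paper's literal phrasing (``$\dot\pth$ integrable on every closed interval of $(\time_\run,\time_{\run+1})$'') could in principle be parsed as only local integrability on the open pieces, which would not preclude infinite length. All the motivating examples in \cref{rem:asm_def} (isolated points, smooth manifolds, definable sets) satisfy the stronger reading, so this is a phrasing ambiguity in the assumption rather than a gap in your argument; just state which reading you use.
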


\subsection{Convergence and stability}
\label{sec:convergence_and_stability}

\begin{lemma}[{\ourpaper[Lem.~D.28]}]
    \label{lem:flow_convergence}
    \label{lem:cv_flow}
    For any $\state \in \restrpoints$, there exists $\iComp \in \gencompIndices$ such that
    \begin{equation}
        \lim_{\time \to +\infty} d(\flowmap_\time(\state), \gencomp_\iComp) = 0\,.
    \end{equation}
\end{lemma}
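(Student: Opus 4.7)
The plan is to combine three standard ingredients: energy dissipation along the gradient flow, compactness of the trajectory via coercivity, and a LaSalle-type argument combined with the finiteness of the connected components of $\crit(\obj)$ provided by \cref{asm:crit}.

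First, I would observe that $\obj \circ \flowmap_\time(\state)$ is non-increasing in $\time$, since $\frac{d}{d\time} \obj(\flowmap_\time(\state)) = -\norm{\grad \obj(\flowmap_\time(\state))}^2 \leq 0$. Coupled with \cref{asm:obj-coer-weak}, this implies that the forward trajectory $\setdef{\flowmap_\time(\state)}{\time \geq 0}$ is contained in the sublevel set $\setdef{\pointalt \in \vecspace}{\obj(\pointalt) \leq \obj(\state)}$, which is compact. (If $\restrpoints$ restricts $\state$ further, the argument only simplifies; the compactness of the orbit closure is what matters.) I can then consider the $\omega$-limit set
\begin{equation}
\omega(\state) \defeq \setdef*{\pointalt \in \vecspace}{\exists\, \time_\run \to \infty,\; \flowmap_{\time_\run}(\state) \to \pointalt}\,, \notag
\end{equation}
which is nonempty, compact, connected, and invariant under $\flowmap$ since the orbit has compact closure and $\flowmap$ is continuous (\cref{lem:basic_prop_flow}).

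Second, a standard LaSalle-type argument shows $\omega(\state) \subseteq \crit(\obj)$. Indeed, by monotonicity, $\obj$ takes a constant value $\const^\ast \defeq \lim_{\time \to \infty} \obj(\flowmap_\time(\state))$ on $\omega(\state)$. By invariance, for any $\pointalt \in \omega(\state)$ the whole forward orbit $\flowmap_\time(\pointalt)$ stays in $\omega(\state)$, so $\time \mapsto \obj(\flowmap_\time(\pointalt))$ is both non-increasing and equal to $\const^\ast$, forcing $\frac{d}{d\time} \obj(\flowmap_\time(\pointalt)) = -\norm{\grad \obj(\flowmap_\time(\pointalt))}^2 = 0$ at $\time = 0$, whence $\grad \obj(\pointalt) = 0$.

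Third, since by \cref{asm:crit} the critical set $\crit(\obj)$ has only finitely many connected components $\gencomp_1,\dots,\gencomp_{\nGencomps}$, and these are pairwise disjoint and compact, they are separated by a positive distance. Because $\omega(\state)$ is connected and contained in $\crit(\obj) = \bigcup_\iComp \gencomp_\iComp$, it must be entirely contained in a single $\gencomp_\iComp$. Convergence $\dist(\flowmap_\time(\state), \gencomp_\iComp) \to 0$ then follows from the definition of the $\omega$-limit set together with the compactness of the orbit closure: if it failed, one could extract a subsequence $\flowmap_{\time_\run}(\state) \to \pointB \notin \gencomp_\iComp$, contradicting $\pointB \in \omega(\state) \subseteq \gencomp_\iComp$.

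No step is really the ``hard part''; the only point that requires mild care is verifying that $\omega(\state)$ is connected, but this is a classical consequence of the compactness and connectedness of each $\setdef{\flowmap_\time(\state)}{\time \geq \horizon}$, combined with the nested intersection formula $\omega(\state) = \bigcap_{\horizon \geq 0} \overline{\setdef{\flowmap_\time(\state)}{\time \geq \horizon}}$. The rest is routine once \cref{asm:crit} provides the separation of components.
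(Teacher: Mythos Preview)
Your argument is correct and is the standard LaSalle/$\omega$-limit-set proof of this fact. The paper itself does not give a proof of this lemma: it is simply restated from \ourpaper[Lem.~D.28], so there is no ``paper's own proof'' to compare against here. Your treatment is exactly what one would expect in the cited source, and the only point worth noting is that the role of $\restrpoints$ is immaterial to your argument---coercivity of $\obj$ already gives compactness of the forward orbit for any $\state\in\vecspace$.
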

\begin{definition}
	A connected component of the critical points $\connectedcomp \subset \crit(\obj)$ is said to be \emph{asymptotically stable} if there exists $\U$ a neighborhood of $\connectedcomp$ such that, for any $\state \in \U$, $\flowmap_\time(\state)$ converges to $\connectedcomp$, \ie
    \begin{equation}
        \lim_{\time \to +\infty} d(\flowmap_\time(\state), \connectedcomp) = 0\,.
    \end{equation}
\end{definition}

The notions of minimizing component and asymptotic stability are equivalent in our context.
\begin{definition}
    \label{def:minimizing_component}
	$\connectedcomp$ connected component of $\crit(\obj)$ is \emph{minimizing} if there exists $\open$ a neighborhood of $\connectedcomp$ such that
	\begin{equation}
		\argmin_{\point \in \open} \obj(\point) = \connectedcomp
	\end{equation}
\end{definition}
Note that since $\crit(\obj)$ is closed, $\connectedcomp$ is closed as well as a connected component of a closed set.

\begin{lemma}[{\ourpaper[Lem.~D.29]}]
	\label{lemma:minimizing_component_asympt_stable}
	For any $\connectedcomp \subset \crit(\obj)$ connected component of the set of critical points,
    $\connectedcomp$ is minimizing if and only if it is asymptotically stable.
\end{lemma}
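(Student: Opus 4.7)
The plan is to prove the two implications separately, using throughout that $\obj$ decreases along orbits of $\flowmap$ since $\tfrac{d}{d\time}\obj(\flowmap_\time(\state)) = -\norm{\grad\obj(\flowmap_\time(\state))}^{2}$, and that $\obj$ is constant on each connected component of $\crit(\obj)$ (obtained by integrating $\grad\obj = 0$ along the piecewise absolutely continuous paths granted by \cref{asm:crit}). Write $\obj_\connectedcomp$ for this constant value on $\connectedcomp$.

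For the forward direction, I would begin with an open neighborhood $\open$ witnessing the minimizing property, so that $\obj > \obj_\connectedcomp$ strictly off $\connectedcomp$ inside $\open$, and engineer a smaller trap around $\connectedcomp$. Concretely, pick $\radius > 0$ with $\overline{\nbd_\radius(\connectedcomp)} \subset \open$ and extract two strictly positive cutoffs: first, $\eta_{1}$, the minimum of $\obj - \obj_\connectedcomp$ over the compact annulus $\overline{\nbd_\radius(\connectedcomp)} \setminus \nbd_{\radius/2}(\connectedcomp)$, which is positive by compactness combined with strict minimality on $\open \setminus \connectedcomp$; second, $\eta_{2}$, the minimum of $\obj_{\gencomp_\iComp} - \obj_\connectedcomp$ over the finitely many critical components $\gencomp_\iComp \neq \connectedcomp$ intersecting $\overline{\nbd_\radius(\connectedcomp)}$, with the convention $\eta_{2} = +\infty$ if no such component exists (positive by finiteness of $\crit(\obj)$ components and the minimizing hypothesis). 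Setting $\eta = \min(\eta_{1}, \eta_{2})$, the open sub-neighborhood $\nbdalt = \nbd_{\radius/2}(\connectedcomp) \cap \{\obj < \obj_\connectedcomp + \eta\}$ works: monotonicity of $\obj$ along $\flowmap$ forbids any orbit starting in $\nbdalt$ from crossing the annulus (where $\obj \geq \obj_\connectedcomp + \eta_{1}$), so the orbit remains in the compact tube $\overline{\nbd_\radius(\connectedcomp)}$, and \cref{lem:flow_convergence} gives convergence to some component $\gencomp_\iComp$ meeting the tube; the inequality $\obj_{\gencomp_\iComp} \leq \obj(\state) < \obj_\connectedcomp + \eta_{2}$ then rules out $\gencomp_\iComp \neq \connectedcomp$.

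For the converse, I would work directly with the open neighborhood $\U$ appearing in the definition of asymptotic stability. For any $\state \in \U$, non-increase of $\obj$ along the flow combined with $\flowmap_\time(\state) \to \connectedcomp$ yields $\obj(\state) \geq \lim_{\time \to \infty} \obj(\flowmap_\time(\state)) = \obj_\connectedcomp$, so $\connectedcomp \subset \argmin_{\U} \obj$. If equality $\obj(\state) = \obj_\connectedcomp$ holds for some $\state \in \U$, then $\obj(\flowmap_\time(\state))$ is constant, forcing $\norm{\grad\obj(\flowmap_\time(\state))} \equiv 0$, so $\state \in \crit(\obj)$; the orbit is then stationary at $\state$, and its convergence to the closed set $\connectedcomp$ pins $\state$ inside $\connectedcomp$. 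I expect the only genuine technical point to be the joint design of $\eta_{1}$ and $\eta_{2}$ in the forward direction: the former closes the tube against monotone escape, while the latter excludes sideways capture by some other critical component intersecting the tube with objective value only slightly above $\obj_\connectedcomp$. Everything else reduces to standard gradient-flow monotonicity together with \cref{lem:flow_convergence} and the regularity granted by \cref{asm:crit}.
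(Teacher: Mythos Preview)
The paper does not supply its own proof of this lemma; it is quoted verbatim from the companion work \ourpaper\ (as Lemma~D.29 there), so there is no in-paper argument to compare against. Your proof is correct and follows the standard Lyapunov/LaSalle template: the forward direction traps the orbit in a compact sublevel tube via the barrier $\eta_{1}$ on the annulus and then invokes \cref{lem:flow_convergence} together with the $\eta_{2}$ cutoff to rule out capture by a rival component; the converse is the elementary observation that monotonicity of $\obj$ along orbits forces $\obj \geq \obj_\connectedcomp$ on the attracting neighborhood, with equality only on $\connectedcomp$ itself (since equality freezes the orbit at a critical point, which by hypothesis lies in $\connectedcomp$).

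One minor simplification is available: the paper already records, just after the choice-of-components definition in \cref{app:subsec:setup}, that the neighborhoods $\nbd_\margin(\gencomp_\iComp)$ can be taken pairwise disjoint for small $\margin$. If you choose $\radius$ below that threshold, no component other than $\connectedcomp$ meets $\overline{\nbd_\radius(\connectedcomp)}$ at all, and the $\eta_{2}$ device becomes superfluous.
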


\section{Transitions}
\label{app:transitions}

\label{app:estimates_trans_inv}

In this section, we study the transitions of the accelerated sequence of \ac{SGD} iterates between the different sets of critical points.
As in \ourpaper, we build on the work of \citet{kiferRandomPerturbationsDynamical1988} and \citet{FW98}.
More precisely, \cref{app:trans:setup,app:trans:induced_chain,app:trans:estimates,app:trans:accelerated_chain,app:trans:initial_transition}
consists in refining the framework of \ourpaper\ to be able to obtain precise time estimates on the transitions between the different sets of critical points. Such results are provided in \cref{app:trans:transition_time}.

\subsection{Setup}
\label{app:trans:setup}




We adapt to our context \citet[Lem.~5.4]{kiferRandomPerturbationsDynamical1988} and simplify it using ideas from \citet[Chap.~6]{FW98}.

\begin{definition}[{\citet[Chap.~6,\S2]{FW98}}]
	\label{def:dquasipotalt}
	\label{def:dquasipot}
	For $\iComp \in \compIndices$, $\jComp \in \gencompIndices$, $\margin> 0$,
	\begin{align}
		\dquasipotup_{\iComp, \jComp} \defeq &
		\inf \setdef*{\daction_{\nRuns}(\dpth)}{\nRuns \geq 1, \dpth \in \restrdcurvesat{\nRuns},\, \dpth_\start \in \gencomp_\iComp,\, \dpth_{\nRuns - 1} \in \gencomp_\jComp,\, \dpth_\run \notin \bigcup_{\lComp \neq \iComp, \jComp} \gencomp_\lComp\, \text{ for all }\run = 1, \dots, \nRuns - 2}
		\notag\\
	\dquasipot_{\iComp, \jComp}^\margin \defeq &
		\inf \setdef*{\daction_{\nRuns}(\dpth)}{\nRuns \geq 1, \dpth \in \restrdcurvesat{\nRuns},\, \dpth_\start \in \U_\margin(\gencomp_\iComp),\, \dpth_{\nRuns - 1} \in \U_\margin(\gencomp_\jComp),\, \dpth_\run \notin \bigcup_{\lComp \neq \iComp, \jComp} \gencomp_\lComp\, \text{ for all }\run = 1, \dots, \nRuns - 2}
	\end{align}
\end{definition}

While $\dquasipotup_{\iComp, \jComp}$ is the usual definition from \citet[Chap.~6,\S2]{FW98}, \revise{$\dquasipotdown_{\iComp, \jComp}^\margin$} is a variant that will prove helpful.

Let us now first list a few immediate properties.
\begin{lemma}
	\label{lem:properties_dquasipot}
    For $\iComp \in \compIndices$, $\jComp \in \gencompIndices$, $\dquasipotdown_{\iComp, \jComp}^{\margin}$ is non-decreasing in $\margin$ and, for any $\margin > 0$, $\revise{\dquasipotdown_{\iComp, \jComp}^{\margin}} \leq \dquasipotdown_{\iComp, \jComp}$.
\end{lemma}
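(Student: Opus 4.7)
The plan is to derive both parts directly from the set-inclusion monotonicity of the infimum, applied to the admissible classes of discrete paths appearing in \cref{def:dquasipotalt}. Both statements are essentially tautological given the definition, so no machinery beyond unwinding the defining inf is required; the lemma is preparatory, pinning down the comparisons that let later arguments move freely between quasi-potentials anchored exactly at the critical components and those anchored at their $\margin$-neighborhoods.

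For the monotonicity statement in $\margin$, I would fix $0 < \margin_1 \leq \margin_2$ and observe that $\U_{\margin_1}(\gencomp_\iComp) \subseteq \U_{\margin_2}(\gencomp_\iComp)$, with the analogous inclusion for $\gencomp_\jComp$. The intermediate constraint $\dpth_\run \notin \bigcup_{\lComp \neq \iComp,\jComp} \gencomp_\lComp$ for $\run = 1, \dotsc, \nRuns - 2$ does not depend on $\margin$, hence any admissible pair $(\nRuns, \dpth)$ at level $\margin_1$ is also admissible at level $\margin_2$. Taking infima of $\daction_\nRuns(\dpth)$ over the larger family then yields the claimed monotone comparison between $\dquasipotdown^{\margin_1}_{\iComp,\jComp}$ and $\dquasipotdown^{\margin_2}_{\iComp,\jComp}$.

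For the second inequality, the same argument applies after observing that $\gencomp_\iComp \subseteq \U_\margin(\gencomp_\iComp)$ and $\gencomp_\jComp \subseteq \U_\margin(\gencomp_\jComp)$ for every $\margin > 0$. Consequently every path admissible in the definition of $\dquasipotdown_{\iComp,\jComp}$ (with endpoints in $\gencomp_\iComp$ and $\gencomp_\jComp$) is also admissible in the definition of $\dquasipotdown^\margin_{\iComp,\jComp}$, and infimum-monotonicity delivers $\dquasipotdown^\margin_{\iComp,\jComp} \leq \dquasipotdown_{\iComp,\jComp}$, completing the proof. There is no genuine obstacle: the only point worth flagging for the reader is that the ``no middle crossing'' clause is preserved verbatim across both the $\margin$-relaxation and the passage $\margin \to 0^{+}$, which is precisely what makes the two classes of admissible paths nest.
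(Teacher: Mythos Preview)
Your argument is correct and is exactly the immediate set-inclusion reasoning the paper has in mind; indeed the paper does not spell out a proof for this lemma at all, introducing it only as ``a few immediate properties'' after \cref{def:dquasipotalt}. One small remark: your inclusion argument actually shows that $\margin \mapsto \dquasipot_{\iComp,\jComp}^{\margin}$ is non-\emph{increasing} (larger $\margin$ gives a larger admissible class, hence a smaller infimum), which is what the subsequent use of the lemma requires---the word ``non-decreasing'' in the statement is a slip in the paper, and your derivation is consistent with the intended monotonicity.
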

Note that the fact that $\dquasipotdown_{\iComp, \jComp}^{\margin}$ is non-decreasing in $\margin$ implies that the limit $\lim_{\margin \to 0} \dquasipotdown_{\iComp, \jComp}^{\margin}$ exists.

We now exploit the regularity around the $\comp_\iComp$, 
to obtain an alternate expression for $\dquasipotdown_{\iComp, \jComp}$ as the limit of $\dquasipotdown_{\iComp, \jComp}^{\margin}$ as $\margin \to 0$.
\begin{lemma}
	\label{lem:alternate_dquasipotdown}
	For $\iComp \in \compIndices$, $\jComp \in \gencompIndices$,
    the following equality holds:
	\begin{equation}
		\dquasipotdown_{\iComp, \jComp} = \lim_{\margin \to 0} \dquasipotdown_{\iComp, \jComp}^\margin\,.
	\end{equation}
\end{lemma}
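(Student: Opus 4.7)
The plan is to prove the two inequalities separately. The direction ``$\lim_{\margin \to 0} \dquasipotdown_{\iComp,\jComp}^\margin \le \dquasipotdown_{\iComp,\jComp}$'' is immediate from \cref{lem:properties_dquasipot}, since $\dquasipotdown_{\iComp,\jComp}^\margin \le \dquasipotdown_{\iComp,\jComp}$ for every $\margin > 0$. The bulk of the work lies in the reverse inequality, which I would obtain by a ``path extension'' argument: any near-optimal discrete path for $\dquasipotdown_{\iComp,\jComp}^\margin$ can be extended at both endpoints into an admissible discrete path for $\dquasipotdown_{\iComp,\jComp}$, at a cost vanishing with $\margin$.

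To implement this, I would fix $\precs > 0$ and pick $\radius \in (0,\precs)$. By \cref{lem:W_nbd}, $\nbdalt_\radius(\gencomp_\iComp)$ and $\nbdalt_\radius(\gencomp_\jComp)$ are open neighborhoods of their respective components; by compactness, there is $\margin_0 > 0$ such that $\U_{\margin_0}(\gencomp_\iComp) \subset \nbdalt_\radius(\gencomp_\iComp)$, $\U_{\margin_0}(\gencomp_\jComp) \subset \nbdalt_\radius(\gencomp_\jComp)$, and additionally the neighborhoods $\U_{\margin_0}(\gencomp_\lComp)$ for $\lComp = 1, \dots, \nGencomps$ are pairwise disjoint (possible by the compactness and disjointness of the components). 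For $\margin \le \margin_0$, pick an admissible $\dpth^\margin \in \dcurvesat{\nRuns^\margin}$ for $\dquasipotdown_{\iComp,\jComp}^\margin$ with $\daction_{\nRuns^\margin}(\dpth^\margin) \le \dquasipotdown_{\iComp,\jComp}^\margin + \precs$, and set $\point \defeq \dpth^\margin_\start \in \U_\margin(\gencomp_\iComp)$ and $\pointalt \defeq \dpth^\margin_{\nRuns^\margin - 1} \in \U_\margin(\gencomp_\jComp)$. The definition of $\nbdalt_\radius$ then yields $\state \in \gencomp_\iComp$ with $\rate(\state, \point) < \radius$ and $\statealt \in \gencomp_\jComp$ with $\rate(\pointalt, \statealt) < \radius$.

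Forming the extended discrete path $(\state, \point, \dpth^\margin_1, \dots, \dpth^\margin_{\nRuns^\margin - 2}, \pointalt, \statealt)$ of length $\nRuns^\margin + 2$, its action is bounded by $\dquasipotdown_{\iComp,\jComp}^\margin + \precs + 2\radius$, and admissibility for $\dquasipotdown_{\iComp,\jComp}$ requires only that its intermediate points avoid $\gencomp_\lComp$ for $\lComp \neq \iComp, \jComp$. The original intermediates $\dpth^\margin_1, \dots, \dpth^\margin_{\nRuns^\margin - 2}$ satisfy this by assumption, while the new intermediates $\point$ and $\pointalt$ lie respectively in $\U_{\margin_0}(\gencomp_\iComp)$ and $\U_{\margin_0}(\gencomp_\jComp)$, hence are disjoint from every other component by the pairwise disjointness of the $\U_{\margin_0}$-neighborhoods. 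This gives $\dquasipotdown_{\iComp,\jComp} \le \dquasipotdown_{\iComp,\jComp}^\margin + \precs + 2\radius$; sending $\margin \to 0$, then $\radius \to 0$, then $\precs \to 0$ concludes. The main subtlety here is ensuring admissibility of the extended path at its new intermediate endpoints; the crucial ingredient is \cref{lem:W_nbd}, which furnishes the $\rate$-neighborhoods $\nbdalt_\radius$ as genuine open neighborhoods of the critical components and thereby enables the endpoint connections at small $\rate$-cost. Everything else is soft topology and compactness bookkeeping.
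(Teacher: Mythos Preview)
Your proposal is correct and follows essentially the same approach as the paper's proof: both directions are handled identically, with the nontrivial inequality obtained by extending a near-optimal path for $\dquasipotdown_{\iComp,\jComp}^\margin$ at each endpoint using the $\rate$-neighborhoods $\nbdalt_\radius$ furnished by \cref{lem:W_nbd}, and verifying that the added intermediate points avoid the other components via the pairwise disjointness of small $\U_\margin$-neighborhoods. The only cosmetic difference is that you separate the tolerance into two parameters $\precs$ and $\radius$, whereas the paper uses a single $\precs$ throughout.
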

\begin{proof}
	\UsingNamespace{proof:lem:alternate_dquasipotdown}
    By \cref{lem:properties_dquasipot}, we have
	\begin{equation}
		\dquasipotdown_{\iComp, \jComp}^\margin \leq \dquasipotdown_{\iComp, \jComp}^\margin\,,
	\end{equation}
	and therefore
	\begin{equation}
		\lim_{\margin \to 0} \dquasipotdown_{\iComp, \jComp}^\margin
		\leq \dquasipotdown_{\iComp, \jComp}\,.
	\end{equation}
	It remains to show the reverse inequality.

	Take $\precs > 0$.
	Apply \cref{lem:W_nbd} to $\gencomp_\iComp$: at the potential cost of reducing $\precs$, we get that $\W_\precs(\gencomp_\iComp)$ is an open neighborhood of $\gencomp_\iComp$.

	Take $\margin > 0$ small enough so that $\U_\margin(\gencomp_\iComp) \subset \W_\precs(\gencomp_\iComp)$, $\U_\margin(\gencomp_\iComp)$ does not intersect with any other $\gencomp_\lComp$, $\lComp \neq \jComp$.

	Now, take any $\nRuns \geq 1$ and $\dpth \in \restrdcurvesat{\nRuns}$ such that $\dpth_\start \in \nhd_\margin(\gencomp_\iComp)$, $\dpth_{\nRuns - 1} \in \nhd_\margin(\gencomp_\jComp)$, $\dpth_\run \notin \bigcup_{\lComp \neq \iComp, \jComp} \gencomp_\lComp$ for all $\run = 1, \dots, \nRuns - 2$.

	By the choice of $\margin$, $\dpth_\start$ cannot be in $\bigcup_{\lComp \neq \iComp, \jComp} \gencomp_\lComp$ and $\dpth_\start$ is inside $\W_\precs(\gencomp_\iComp)$ so that there exists $\point \in \gencomp_\iComp$ such that $\rate(\point, \dpth_\start) < \precs$.
    Similarly, there exists $\pointalt \in \gencomp_\jComp$ such that $\rate(\dpth_{\nRuns - 1}, \pointalt) < \precs$.
    Now consider the path $\dpthalt \in \restrdcurvesat{\nRuns + 2}$ defined as $\dpthalt = (\point,\dpth_\start, \dpth_1, \dots, \dpth_{\nRuns - 1}, \pointalt)$ that satisfies $\dpthalt_\start \in \gencomp_\iComp$, $\dpthalt_{\nRuns} \in \gencomp_\jComp$ and $\dpthalt_\run \notin \bigcup_{\lComp \neq \iComp, \jComp} \gencomp_\lComp$ for all $\run = 1, \dots, \nRuns - 2$. Furthermore, $\daction_{\nRuns + 2}(\dpthalt) \leq \daction_{\nRuns}(\dpth) + 2\precs$ and thus $\dquasipotdown_{\iComp, \jComp} \leq  \daction_{\nRuns}(\dpth) + 2\precs$.
	Passing to the infimum over such paths $\dpth$ yields
	\begin{equation}
		\dquasipotdown_{\iComp, \jComp}^\margin \leq \dquasipotdown_{\iComp, \jComp}^\margin + 2\precs\,,
	\end{equation}
	which concludes the proof.
\end{proof}

\subsection{Induced chain}
\label{app:trans:induced_chain}

We now define an important object: the law of the (accelerated) iterated at the first time they reach some set $\nhdaltalt$ (typically a neighborhood of the critical set). Due to our interest in the finite-time dynamics, we slightly deviate from the classical definitions of \citet[Chap. 3.4]{doucMarkovChains2018} and \citet[Prop.~5.3]{kiferRandomPerturbationsDynamical1988} to follow more closely the one of \citet[Chap.~6,\S2]{FW98}.

\begin{definition}
    \label{def:induced_chain}
	Consider $\V_\iComp$, $\iComp = 1, \dots, \nGencomps$ disjoint neighborhoods of $\gencomp_\iComp$ and denote by $\V \defeq \bigcup_{\iComp = 1}^{\nGencomps} \V_\iComp$ their union.
	We define recursively the sequences of stopping times $(\exittime_\run)_{\run \geq \start}$ and $(\hittime_\run)_{\run \geq \start}$ by
	\begin{itemize}
		\item For $\run = \start$,
		      \begin{align}
			      \exittime_\start & \defeq \start    \\
			      \hittime_\start  & \defeq \start\,.
		      \end{align}
		\item For $\run \geq \start$, if $\accstate_{\hittime_\run}$ is in $\V_{\iComp_{\run}}$, then
		      \begin{align}
			      \exittime_{\run + 1} & \defeq
			      \begin{cases}
				      \inf \setdef{\runalt \geq \hittime_\run}{\accstate_{\runalt} \notin \V_{\iComp_{\run}}} & \text{if } \accstate_{\hittime_\run} \in \V_{\iComp_{\run}} \text{ for some $\iComp_\run$  s.t.~}
				      \gencomp_{\iComp_\run} \in \setof*{\comp_1, \dots, \comp_\nComps}\,,                                                                                                                        \\
				      \hittime_\run                                                                           & \text{otherwise}\,.
			      \end{cases} \\
			      \hittime_{\run + 1}  & \defeq \inf \setdef{\runalt \geq \exittime_{\run + 1}}{\accstate_{\runalt} \in \V}\,.
		      \end{align}
	\end{itemize}
	We denote by $\statealt_\run = \accstate_{\hittime_\run}$, $\run = \running,$ the induced Markov chain and by $\probalt_\nhdaltalt(\state, \cdot)$ the corresponding Markov transition probability \ie the law of $\statealt_\afterstart$ started at $\statealt_\start = \state$.
\end{definition}


A few remarks are in order on $(\statealt_\run)_{\run \geq \start}$:
\begin{itemize}
	\item $\statealt_\run \in \V$ for all $\run \geq \afterstart$.
	\item If the chain reaches a neighborhood of $\sink_1, \dots, \sink_\nSinks$
        it stays there forever.
\end{itemize}

\subsection{Estimates of the transition probabilities}
\label{app:trans:estimates}

\begin{lemma}
	\label{lem:est_trans_prob_ub}
	For any $\precs > 0$, $\infprecs > 0$, for any small enough neighborhoods $\nhdaltalt_\idx$ of $\gencomp_\idx$, $\idx = 1, \dots, \nGencomps$ , there is some $\step_0 > 0$ such that for all $\idx \in \compIndices$, $\idxalt \in \gencompIndices$, $\state \in \nhdaltalt_{\idx}$, $0 < \step < \step_0$,
	\begin{equation}
		\probalt_{\nhdaltalt}(\state, \nhdaltalt_{\idxalt})
		\leq
        \begin{cases}
		\exp \parens*{
			- \frac{\dquasipotdown_{\idx, \idxalt}}{\step}
			+ \frac{\precs}{\step}
        }\quad & \text{if } \dquasipotdown_{\idx, \idxalt} < +\infty\,, \\
        \exp \parens*{ - \frac{\infprecs }{ \step}} \quad & \text{otherwise}\,.
        \end{cases}
	\end{equation}
\end{lemma}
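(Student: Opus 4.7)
The plan is to combine three tools already available in the excerpt: the pathwise LDP for the accelerated chain (Proposition~\ref{cor:ldp_iterates_full}), the sub-exponential return time to a neighborhood of $\crit(\obj)$ (Lemma~\ref{lem:est_going_back_to_crit}), and the limiting identity $\dquasipotdown_{\idx,\idxalt}=\lim_{\margin\to 0}\dquasipotdown_{\idx,\idxalt}^{\margin}$ (Lemma~\ref{lem:alternate_dquasipotdown}). The overall idea is to truncate time using the return-time estimate so that the LDP upper bound applies on a bounded horizon, then identify the infimum of the action on that horizon with the discrete quasipotential $\dquasipotdown^{\margin}_{\idx,\idxalt}$.

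First, I unpack the event $\{\statealt_{\afterstart}\in\nhdaltalt_{\idxalt}\}$ using Definition~\ref{def:induced_chain}. On this event, the accelerated trajectory $(\accstate_{\run})_{0\le \run\le\hittime_{\afterstart}}$ starts at $\state\in\nhdaltalt_{\idx}$, exits $\nhdaltalt_{\idx}$ at some time $\exittime_{\afterstart}$, wanders outside $\V=\bigcup_{\lComp}\nhdaltalt_{\lComp}$ until $\hittime_{\afterstart}-1$, and finally lands in $\nhdaltalt_{\idxalt}$ at time $\hittime_{\afterstart}$. Consequently the whole trajectory avoids $\bigcup_{\lComp\neq \idx,\idxalt}\nhdaltalt_{\lComp}$, which is exactly the class of discrete paths used in Definition~\ref{def:dquasipotalt} to define $\dquasipotdown_{\idx,\idxalt}^{\margin}$ (after choosing the $\nhdaltalt_{\lComp}$ to lie in $\nbd_{\margin}(\gencomp_{\lComp})$).

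Second, I truncate at a deterministic horizon $\horizon$. Because $\idx$ indexes a non-sink component, Lemma~\ref{lem:est_going_back_to_crit} applied to a suitable open $\open\supset\crit(\obj)$ and Markov's inequality yield $\prob_{\state}(\hittime_{\afterstart}>\horizon)\le\exp(-(\dquasipotdown_{\idx,\idxalt}+\precs)/\step)$ (respectively $\exp(-(\infprecs+\precs)/\step)$ in the infinite-cost case), provided $\horizon$ is taken large enough in terms of the constants $\constA,\constB$ from that lemma. This absorbs the tail of the hitting time into an acceptable error term. On the complementary event $\{\hittime_{\afterstart}\le\horizon\}$, the trajectory lies in the set
\begin{equation}
\pths_{\margin}(\horizon)\defeq\bigcup_{\nRuns=1}^{\horizon+1}\setdef*{\dpth\in\vecspace^{\nRuns}}{\dpth_{\start}=\state,\,\dpth_{\nRuns-1}\in\nhdaltalt_{\idxalt},\,\dpth_{\run}\notin\textstyle\bigcup_{\lComp\neq\idx,\idxalt}\nhdaltalt_{\lComp}\ \forall\run}.
\end{equation}
Its discrete action satisfies $\inf_{\dpth\in\pths_{\margin}(\horizon)}\daction_{\nRuns}(\dpth)\ge\dquasipotdown_{\idx,\idxalt}^{\margin}$ by construction. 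Applying the upper-bound half of Proposition~\ref{cor:ldp_iterates_full} (the level-set shape $\pths_{\run}^{\{\state\}}(\level)$) with $\level$ slightly below $\dquasipotdown_{\idx,\idxalt}^{\margin}$ gives
\begin{equation}
\prob_{\state}\bigl(\{\statealt_{\afterstart}\in\nhdaltalt_{\idxalt}\}\cap\{\hittime_{\afterstart}\le\horizon\}\bigr)\le\exp\bigl(-(\dquasipotdown_{\idx,\idxalt}^{\margin}-\precs)/\step\bigr).
\end{equation}

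Finally, I choose $\margin$ small. In the finite case, Lemma~\ref{lem:alternate_dquasipotdown} ensures $\dquasipotdown_{\idx,\idxalt}^{\margin}\ge\dquasipotdown_{\idx,\idxalt}-\precs$ for $\margin$ small enough, so that combining the two bounds and absorbing constants into a slightly larger $\precs$ yields the claim. In the infinite case, the same lemma gives $\dquasipotdown_{\idx,\idxalt}^{\margin}\ge\infprecs+2\precs$ for $\margin$ small, which (together with the truncation estimate) yields the $\exp(-\infprecs/\step)$ bound. The main technical hurdle is making sure the horizon $\horizon$ chosen in the second step can be fixed independently of the (still-to-be-shrunk) neighborhoods $\nhdaltalt_{\lComp}$ so that the LDP constants in Proposition~\ref{cor:ldp_iterates_full} do not depend on $\margin$; this is handled by fixing a large compact set $\cpt$ containing all admissible trajectories (using the coercivity in \cref{asm:obj-weak,asm:SNR-weak}) before shrinking $\margin$.
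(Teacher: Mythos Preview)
Your proposal is correct and follows essentially the same route as the paper: choose $\margin$ via Lemma~\ref{lem:alternate_dquasipotdown}, truncate the transition time at a finite horizon $\nRuns$ using Lemma~\ref{lem:est_going_back_to_crit} and Markov's inequality, and on the bounded horizon invoke the LDP upper bound of Proposition~\ref{cor:ldp_iterates_full} by showing the transition event forces the trajectory away from the level set $\pths_{\nRuns}^{\{\state\}}(\dquasipotdown_{\idx,\idxalt}-2\precs)$. The only refinement the paper makes explicit (and you leave implicit) is the two-scale margin trick: first fix $\margin$ so that $\V_{\lComp}\subset\U_{\margin/2}(\gencomp_{\lComp})$, then pick a smaller $\marginalt$ with $\U_{2\marginalt}(\gencomp_{\lComp})\subset\V_{\lComp}$, so that any path $\marginalt$-close to a trajectory realizing the transition still lies in the admissible class defining $\dquasipotdown_{\idx,\idxalt}^{\margin}$; your final paragraph's worry about fixing $\horizon$ before shrinking neighborhoods is not actually needed, since the lemma statement allows $\step_{0}$ (and hence $\horizon$) to depend on the already-fixed $\V_{\lComp}$.
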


\begin{proof}
	\UsingNamespace{proof:lem:est_trans_prob_ub}

	Following the alternative definition of the $\dquasipotdown_{\iComp, \jComp}$'s \cref{lem:alternate_dquasipotdown}, choose $\margin > 0$ small enough so that both
	$\U_{\frac{\margin}{2}}(\gencomp_\iComp)$, $\idx = 1, \dots, \nGencomps$ are pairwise disjoints and
	the following holds for all $\iComp, \jComp \in \gencompIndices$:
	\begin{equation}
        \dquasipotdown_{\iComp, \jComp}^{\margin}
        \geq
        \begin{cases}
		\dquasipotdown_{\iComp, \jComp} - \precs \quad& \text{if } \dquasipotdown_{\iComp, \jComp} < +\infty\,, \\
        \infprecs\quad & \text{otherwise}\,.
        \end{cases}
		\LocalLabel{eq:lim-dquasipotalt}
	\end{equation}

	Require then that $\V_\iComp$ be contained in $\U_{\frac{\margin}{2}}(\gencomp_\iComp)$ for all $\idx = 1, \dots, \nGencomps$.
	Note that the $\V_\iComp$'s are pairwise disjoint by construction.

    Given neighborhoods $\V_\idx$, $\idx = 1, \dots, \nGencomps$, by compactness of $\crit(\obj)$, there exists $\marginalt > 0$ such that $\marginalt \leq \half[\margin]$, $\U_{2\marginalt}(\gencomp_\idx)$ is contained in $\V_\idx$ for all $\idx = 1, \dots, \nGencomps$.

	Fix $\idx, \idxalt \in \gencompIndices$ and consider $\dpth \in \restrdcurvesat{\nRuns}$ such that $\dpth_\start \in \U_\marginalt(\V_\idx) \subset \U_{\margin}(\gencomp_\idx)$, $\dpth_{\nRuns - 1} \in \U_\marginalt(\V_\idxalt) \subset \U_{\margin}(\gencomp_\idxalt)$ and $\dpth_\run \in \U_{\marginalt}\parens*{\vecspace \setminus \bigcup_{\idxaltalt \neq \idx, \idxalt} \V_\idxaltalt}$ for all $\run = 1, \dots, \nRuns - 2$.
	By the choice of $\marginalt$, $\dpth_\run$ cannot be in $\bigcup_{\idxaltalt \neq \idx, \idxalt} \U_\marginalt(\gencomp_\idxaltalt)$ for any $\run = 1, \dots, \nRuns - 2$.

	By construction of $\dpth$ we thus have that
	\begin{align}
		\daction_{\nRuns}(\dpth) & \geq \dquasipotdown_{\idx, \idxalt}^{\margin}
		\notag\\
		                         & \geq \begin{cases}
                                     \dquasipotdown_{\idx, \idxalt} - \precs & \text{if } \dquasipotdown_{\idx, \idxalt} < +\infty\,, \\
                                     \infprecs & \text{otherwise}\,.
		                         \end{cases}
		\label{eq:prop:est_trans_prob:step1:lower_bound_dpth}
	\end{align}
	where the last inequality follows from \cref{\LocalName{eq:lim-dquasipotalt}}.

	Fix $\state \in \V_\idx$. Let us now bound the probability
	\begin{equation}
		\probalt_{\nhdaltalt}(\state, \nhdaltalt_{\idxalt}) =
		\prob_\state \parens*{\accstate_{\hittime_\V} \in \V_\idx}\,,
	\end{equation}
    and start with the case where $\dquasipotdown_{\idx, \idxalt} < +\infty$.

	We have, for any $\nRuns \geq \start$,
	\begin{equation}
		\prob_\state \parens*{\accstate_{\hittime_\V} \in \V_\idx} \leq
		\prob_\state \parens*{\accstate_{\hittime_\V} \in \V_\idx,\, \hittime_\V < \nRuns} + \prob_\state \parens*{\hittime_\V \geq \nRuns}\,.
	\end{equation}
	We first bound the second probability using \cref{lem:est_going_back_to_crit} applied to $\open \gets \V_\idx$.
	Take $\nRuns$ such that $\coef_0(\constA - \nRuns) + \step_0 \constB \leq - \dquasipotdown_{\idx, \idxalt}$.
	Then, by Markov's inequality and \cref{lem:est_going_back_to_crit}, it holds that for all $\step \leq \step_0$
	\begin{align}
		\prob_\state \parens*{\hittime_\V \geq \nRuns}
		 & \leq
		\prob_\state \parens*{\exp \parens*{\frac{\coef_0 \hittime_\V}{\step}} \geq \exp \parens*{\frac{\coef_0 \nRuns}{\step}}}
		\notag\\
		 & \leq \exp \parens*{\frac{\coef_0 (\constA - \nRuns)}{\step} + \constB}                                                	\notag\\
		 & \leq \exp \parens*{\frac{- \dquasipotdown_{\idx, \idxalt}}{\step}}\,.
		\label{eq:prop:est_trans_prob:step1:upper_bound_hittime}
	\end{align}

	We now bound the term $\prob_\state \parens*{\accstate_{\hittime_\V} \in \V_\idxalt,\, \hittime_\V < \nRuns}$ for this choice of $\nRuns$.

	For this, we show that $\accstate_{\hittime_\V} \in \V_\idxalt$ with $\hittime_\V < \nRuns$ implies that
	\begin{equation}
		\dist_{\nRuns}\parens*{\accstate, \pths_{\nRuns}^{\setof{\state}}\parens*{\dquasipotdown_{\idx, \idxalt} - 2 \precs}} > \half[\marginalt]\,.
	\end{equation}
	Indeed, on the event $\accstate_{\hittime_\V} \in \V_\idxalt$ with $\hittime_\V < \nRuns$, there is some $\nRunsalt \leq \nRuns$ such that $\hittime_\V = \nRunsalt - 1$.
	If $\dist_{\nRuns}\parens*{\accstate, \pths_{\nRuns}^{\setof{\state}}\parens*{\dquasipotdown_{\idx, \idxalt} - 2 \precs}} > \half[\marginalt]$ did not hold, this would mean that there exists $\dpth \in (\vecspace)^{\nRunsalt}$ such that $\dist_{\nRunsalt}(\accstate, \dpth) < \marginalt$, $\dpth_\start = \state$ and,
	$\daction_{\nRunsalt}(\dpth) \leq \dquasipotdown_{\idx, \idxalt} - 2 \precs$. In particular, $\dpth$ would also satisfy
	$\dpth_{\nRunsalt - 1} \in \U_{\marginalt}(\V_\idxalt)$, $\dpth_\run \in \U_{\marginalt}(\vecspace \setminus \V)$ for all $\run = 1, \dots, \nRunsalt - 2$ and, as a consequence, $\dpth \in \restrdcurvesat{\nRunsalt}$.
	This would be thus in direct contradiction of \cref{eq:prop:est_trans_prob:step1:lower_bound_dpth}.

	Therefore, we have that
	\begin{align}
		\prob_\state \parens*{\accstate_{\hittime_\V} \in \V_\idxalt,\, \hittime_\V < \nRuns}
		 & \leq
		\prob_\state \parens*{\dist_{\nRuns}\parens*{\accstate, \pths_{\nRuns}^{\setof{\state}}\parens*{\dquasipotdown_{\idx, \idxalt} - 2 \precs}} > \half[\marginalt]}
		\notag\\
		 & \leq
		\exp \parens*{- \frac{\dquasipotdown_{\idx, \idxalt} - 3 \precs}{\step}}\,,
	\end{align}
	by \cref{cor:ldp_iterates_full}.

	Combining this bound with \cref{eq:prop:est_trans_prob:step1:upper_bound_hittime} yields
	\begin{equation}
		\prob_\state(\accstate_{\hittime_\V} \in \V_\idxalt) \leq \exp \parens*{- \frac{\dquasipotdown_{\idx, \idxalt} - 3 \precs}{\step}} + \exp \parens*{\frac{- \dquasipotdown_{\idx, \idxalt}}{\step}}\,,
	\end{equation}
	which concludes the proof when $\dquasipotdown_{\idx, \idxalt} < +\infty$.

    Let us now examine the case where $\dquasipotdown_{\idx, \idxalt} = +\infty$.
	Again, we have, for any $\nRuns \geq \start$,
	\begin{equation}
		\prob_\state \parens*{\accstate_{\hittime_\V} \in \V_\idx} \leq
		\prob_\state \parens*{\accstate_{\hittime_\V} \in \V_\idx,\, \hittime_\V < \nRuns} + \prob_\state \parens*{\hittime_\V \geq \nRuns}\,.
	\end{equation}
	We first bound the second probability using \cref{lem:est_going_back_to_crit} applied to $\open \gets \V_\idx$.
	Take $\nRuns$ such that $\coef_0(\constA - \nRuns) + \step_0 \constB \leq - \infprecs$.
	Then, by Markov's inequality and \cref{lem:est_going_back_to_crit}, it holds that for all $\step \leq \step_0$
	\begin{align}
		\prob_\state \parens*{\hittime_\V \geq \nRuns}
		 & \leq
		\prob_\state \parens*{\exp \parens*{\frac{\coef_0 \hittime_\V}{\step}} \geq \exp \parens*{\frac{\coef_0 \nRuns}{\step}}}
		\notag\\
		 & \leq \exp \parens*{\frac{\coef_0 (\constA - \nRuns)}{\step} + \constB}                                                	\notag\\
         & \leq \exp \parens*{\frac{- \infprecs}{\step}}\,.
		\label{eq:prop:est_trans_prob:step1:upper_bound_hittime_inf}
	\end{align}

	We now bound the term $\prob_\state \parens*{\accstate_{\hittime_\V} \in \V_\idxalt,\, \hittime_\V < \nRuns}$ for this choice of $\nRuns$.

	For this, we show that $\accstate_{\hittime_\V} \in \V_\idxalt$ with $\hittime_\V < \nRuns$ implies that
	\begin{equation}
		\dist_{\nRuns}\parens*{\accstate, \pths_{\nRuns}^{\setof{\state}}\parens*{\infprecs -  \precs}} > \half[\marginalt]\,.
	\end{equation}
	Indeed, on the event $\accstate_{\hittime_\V} \in \V_\idxalt$ with $\hittime_\V < \nRuns$, there is some $\nRunsalt \leq \nRuns$ such that $\hittime_\V = \nRunsalt - 1$.
	If $\dist_{\nRuns}\parens*{\accstate, \pths_{\nRuns}^{\setof{\state}}\parens*{\infprecs - \precs}} > \half[\marginalt]$ did not hold, this would mean that there exists $\dpth \in (\vecspace)^{\nRunsalt}$ such that $\dist_{\nRunsalt}(\accstate, \dpth) < \marginalt$, $\dpth_\start = \state$ and,
	$\daction_{\nRunsalt}(\dpth) \leq \infprecs -  \precs$. In particular, $\dpth$ would also satisfy
	$\dpth_{\nRunsalt - 1} \in \U_{\marginalt}(\V_\idxalt)$, $\dpth_\run \in \U_{\marginalt}(\vecspace \setminus \V)$ for all $\run = 1, \dots, \nRunsalt - 2$ and, as a consequence, $\dpth \in \restrdcurvesat{\nRunsalt}$.
	This would be thus in direct contradiction of \cref{eq:prop:est_trans_prob:step1:lower_bound_dpth}.

	Therefore, we have that
	\begin{align}
		\prob_\state \parens*{\accstate_{\hittime_\V} \in \V_\idxalt,\, \hittime_\V < \nRuns}
		 & \leq
		\prob_\state \parens*{\dist_{\nRuns}\parens*{\accstate, \pths_{\nRuns}^{\setof{\state}}\parens*{\infprecs - \precs}} > \half[\marginalt]}
		\notag\\
		 & \leq
		\exp \parens*{- \frac{\infprecs - 2 \precs}{\step}}\,,
	\end{align}
    by \cref{cor:ldp_iterates_full}.

    Combining this bound with \cref{eq:prop:est_trans_prob:step1:upper_bound_hittime_inf} yields
	\begin{equation}
		\prob_\state(\accstate_{\hittime_\V} \in \V_\idxalt) \leq \exp \parens*{- \frac{\infprecs - 2 \precs}{\step}} + \exp \parens*{\frac{- \infprecs}{\step}}\,,
	\end{equation}
	which concludes the proof.

\end{proof}

\begin{lemma}
	\label{lem:est_trans_prob_lb}
	For any $\precs > 0$,for any neighborhoods $\nhdaltalt_\idx$ of $\gencomp_\idx$, $\idx = 1, \dots, \nGencomps$ small enough, there exists $\step_0 > 0$ such that for all $\idx \in \compIndices$, $\idxalt \in \gencompIndices$, $\state \in \nhdaltalt_{\idx}$, $0 < \step < \step_0$,
	\begin{equation}
		\probalt_{\nhdaltalt}(\state, \nhdaltalt_{\idxalt})
		\geq
		\exp \parens*{
			- \frac{\dquasipotup_{\idx, \idxalt}}{\step}
			- \frac{\precs}{\step}
		}\,.
	\end{equation}
\end{lemma}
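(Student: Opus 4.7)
The plan is to derive this estimate in parallel with the upper bound of \cref{lem:est_trans_prob_ub}, but now using the lower-bound half of the LDP in \cref{cor:ldp_iterates_full}. The guiding idea is that, by definition of $\dquasipotup_{\idx,\idxalt}$, one can exhibit an explicit near-optimal discrete path from $\gencomp_\idx$ to $\gencomp_\idxalt$ avoiding the other critical components, extend it so that it starts at the prescribed point $\state$, and then show that a small tube around this extended path forces the event $\{\accstate_{\hittime_1} \in \nhdaltalt_\idxalt\}$; the LDP lower bound then converts the tube probability into the required estimate for $\probalt_\nhdaltalt(\state, \nhdaltalt_\idxalt)$.

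Concretely, fix $\precs' > 0$ (to be tuned later). \cref{def:dquasipotalt} furnishes some $\nRuns \geq 1$ and $\dpth \in \restrdcurvesat{\nRuns}$ with $\dpth_\start \in \gencomp_\idx$, $\dpth_{\nRuns-1} \in \gencomp_\idxalt$, $\dpth_\run \notin \bigcup_{\lComp \neq \idx,\idxalt}\gencomp_\lComp$ for middle indices, and $\daction_\nRuns(\dpth) \leq \dquasipotup_{\idx,\idxalt} + \precs'$. By truncating the path at the last middle time it visits $\gencomp_\idx$ and at the first time it hits $\gencomp_\idxalt$ (which only decreases the action), we may further assume that $\dpth_\run \notin \gencomp_\idx$ for all $\run \geq 1$ and that $\dpth_\run \in \gencomp_\idxalt$ only at $\run = \nRuns - 1$. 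To anchor the path at $\state$, I would shrink the neighborhoods $\nhdaltalt_\kComp$ so that $\nhdaltalt_\idx \subset \W_{\precs'}(\gencomp_\idx)$ (available via \cref{lem:W_nbd}), giving a one-step transition from $\state$ to some $\pointalt \in \gencomp_\idx$ of action below $\precs'$, and then invoke \cref{lem:equivalence_classes_dpth_stay_close} to connect $\pointalt$ to $\dpth_\start$ by an internal path of length $\nRunsalt$ and action $< \precs'$, staying within $\precs'$ of $\gencomp_\idx$. Concatenation yields an enlarged path $\dpthalt$ of total length $\tilde\nRuns$ (depending only on $\idx,\idxalt,\precs'$) from $\state$ to $\dpth_{\nRuns-1} \in \gencomp_\idxalt$ with $\daction_{\tilde\nRuns}(\dpthalt) \leq \dquasipotup_{\idx,\idxalt} + 3\precs'$.

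Next, I would choose $\margin_\star > 0$ small enough so that (i) $\ball(\dpth_{\nRuns-1},\margin_\star) \subset \nhdaltalt_\idxalt$, (ii) each middle point $\dpth_1,\ldots,\dpth_{\nRuns-2}$ sits at distance $> \margin_\star$ from $\bigcup_{\lComp \neq \idxalt}\nhdaltalt_\lComp$ (the case $\lComp = \idx$ being handled by the truncation above), and (iii) the tail of the prepended segment already lies outside $\nhdaltalt_\idx$, guaranteeing $\exittime_1 < \tilde\nRuns$ on the tube event. Applying \cref{cor:ldp_iterates_full} with $\level = \dquasipotup_{\idx,\idxalt} + 3\precs'$, precision $\margin_\star$, and a compact $\cpt$ containing the closure of $\bigcup_\kComp \nhdaltalt_\kComp$ then provides $\step_0 > 0$ such that, for all $\step \in (0,\step_0]$ and all $\state \in \nhdaltalt_\idx$,
\begin{equation*}
\prob_\state\bigl(\dist_{\tilde\nRuns}(\accstate,\dpthalt) < \margin_\star\bigr) \geq \exp\bigl(-(\dquasipotup_{\idx,\idxalt} + 4\precs')/\step\bigr).
\end{equation*}
On this event, $\accstate$ exits $\nhdaltalt_\idx$, avoids every $\nhdaltalt_\lComp$ with $\lComp \neq \idxalt$ at all subsequent intermediate times, and lies inside $\nhdaltalt_\idxalt$ at time $\tilde\nRuns - 1$; hence $\accstate_{\hittime_1} \in \nhdaltalt_\idxalt$. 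Setting $\precs' = \precs/4$ delivers the claim.

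The main obstacle is ensuring that the extended path $\dpthalt$ actually leaves $\nhdaltalt_\idx$ and stays out until it reaches $\nhdaltalt_\idxalt$: the prepended segment must stay close enough to $\gencomp_\idx$ to absorb an action budget of $O(\precs')$, yet the main body of $\dpth$ must remain uniformly away from $\nhdaltalt_\idx$ so that the chain does not accidentally re-enter $\nhdaltalt_\idx$ and produce a spurious $\hittime_1$ landing in the wrong component. The truncation of $\dpth$ combined with a small enough choice of $\nhdaltalt_\idx$ resolves this, but it requires a careful uniform treatment across all finitely many pairs $(\idx,\idxalt) \in \compIndices \times \gencompIndices$ so that a single shrinkage of $\nhdaltalt$ accommodates every transition simultaneously.
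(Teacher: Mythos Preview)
Your proposal is correct and follows essentially the same route as the paper: pick a near-optimal discrete path realizing $\dquasipotup_{\idx,\idxalt}$ from \cref{def:dquasipotalt}, prepend a short connecting segment from $\state$ via \cref{lem:W_nbd} and \cref{lem:equivalence_classes_dpth_stay_close}, then apply the lower-bound half of \cref{cor:ldp_iterates_full} to a tube around the concatenation; the paper handles uniformity over pairs $(\idx,\idxalt)$ exactly as you suggest, by fixing all paths first and then taking $\margin = \min_{\idx,\idxalt}\margin_{\idx,\idxalt}$ before shrinking the $\nhdaltalt_\lComp$. Your truncation step is a clean way to keep the middle of $\dpth$ away from $\nhdaltalt_\idx$, a point the paper treats somewhat loosely. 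One small correction to your condition~(iii): you want the prepended segment to stay \emph{inside} $\nhdaltalt_\idx$ (so that $\exittime_1$ does not fire prematurely and trigger a spurious re-entry), while the truncated main body $\dpth_1,\ldots,\dpth_{\nRuns-2}$ stays outside all $\nhdaltalt_\lComp$ with $\lComp\neq\idxalt$; the exit $\exittime_1<\tilde\nRuns$ is then guaranteed simply because the endpoint lies in $\nhdaltalt_\idxalt$, which is disjoint from $\nhdaltalt_\idx$.
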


Note that this result is trivially valid if $\dquasipotup_{\idx, \idxalt} = +\infty$.

\begin{proof}
	\UsingNamespace{proof:lem:est_trans_prob_lb}
	For any $(\idx, \idxalt) \in \compIndices \times \gencompIndices$, there exists $\nRuns_{\iComp, \jComp} \geq 1$, $\dpth^{\iComp, \jComp} \in \restrdcurvesat{\nRuns_{\iComp, \jComp}}$ such that $\dpth^{\iComp, \jComp}_\start \in \gencomp_\iComp$, $\dpth^{\iComp, \jComp}_{\nRuns_{\iComp, \jComp} - 1} \in \gencomp_\jComp$, $\dpth^{\iComp, \jComp}_\run \notin \bigcup_{\lComp \neq \iComp, \jComp} \gencomp_\lComp$ for all $\run = 1, \dots, \nRuns_{\iComp, \jComp} - 2$ and $\daction_{\nRuns_{\iComp, \jComp}}(\dpth^{\iComp, \jComp}) \leq \dquasipotup_{\iComp, \jComp} + \precs$.

	Define $\margin_{\idx, \idxalt} \defeq \min \setdef*{d(\dpth^{\idx,\idxalt}_\run, \bigcup_{\idxaltalt\neq\idx,\idxalt} \gencomp_\idxaltalt)}{\run = 1, \dots, \nRuns_{\idx,\idxalt} - 2}$ and $\margin \defeq \min_{\idx, \idxalt \in \indices} \margin_{\idx, \idxalt}$. By construction, it holds that $\margin > 0$.

	Without loss of generality, at the potential cost of reducing $\precs$, we can assume that \cref{lem:W_nbd} can be applied to every $\gencomp_\iComp$, $\iComp = 1, \dots, \nComps$ with $\radius \gets \precs$ and denote by $\W_\iComp$ the corresponding neighborhoods of $\gencomp_\iComp$.
	Require that $\V_\idx$ be contained in $\W_\idx \cap \U_{\margin/2}(\eqcl_\idx)$ for all $\idx = 1, \dots, \neqcl$.
	Now, given such $\V_\idx$ neighborhoods of $\gencomp_\idx$, $\idx = 1, \dots, \nGencomps$, by compactness, there exists $0 < \marginalt \leq \margin / 2$ such that $\U_\marginalt(\gencomp_\idx)$ is contained in $\V_\idx$ for all $\idx = 1, \dots, \nGencomps$.

	Apply \cref{lem:equivalence_classes_dpth_stay_close} to $\gencomp_\idx$, $\idx = 1, \dots, \neqcl$ with $\precs \gets \min(\precs, \marginalt/2)$ and denote by $\nRuns_\idx$ the bound on the length of paths obtained.

	We are now ready to prove the result.
	Fix $(\iComp, \jComp) \in \compIndices \times \gencompIndices$ and consider $\stateA \in \V_\iComp$.
	Since $\V_\idx \subset \W_\idx$, there exists $\stateB \in \eqcl_\idx$ such that $\rate(\stateA, \stateB) < \precs$.

	By \cref{lem:equivalence_classes_dpth_stay_close}, there exists $\run \leq \nRuns$, $\dpth \in \restrdcurvesat{\run}$ such that $\dpth_\start = \stateB$, $\dpth_{\nRuns - 1} = \dpth^{\idx, \idxalt}_\start$, $\dpth_\runalt \in \U_{\marginalt/2}(\eqcl_\idx)$ for all $\runalt = 1, \dots, \run - 2$ and $\daction_\run(\dpth) <\precs$.

	Considering the concatenation
	\begin{equation}
		\dpthalt \defeq \parens*{
		\stateA,
		\dpth_\start,
		\dpth_\afterstart,
		\dots,
		\dpth_{\run - 2},
		\dpth_{\run - 1},
		\dpth^{\idx, \idxalt}_\afterstart,
		\dots,
		\dpth^{\idx, \idxalt}_{\nRuns_{\idx,\idxalt} - 1}
		}
	\end{equation}
	which is a path of length $\run + \nRuns_{\idx,\idxalt} + 1$ made of $\state \in \V_\idx$, then exactly $\run$ points in $\U_{\marginalt/2}(\eqcl_\idx)$ then $\nRuns_{\idx,\idxalt} - 2$ in $\vecspace \setminus \U_{\margin / 2}(\V)$ and $\dpth^{\idx, \idxalt}_{\nRuns_{\idx,\idxalt} - 1} \in \eqcl_\idxalt$.
	Moreover, by construction, $\daction_{\nRuns + \nRuns_{\idx,\idxalt}}(\dpthalt) \leq \dquasipotalt_{\idx, \idxalt} + 3\precs$.
	Therefore, if
	\begin{equation}\LocalLabel{eq:closepath}
		\dist_{\run + \nRuns_{\idx,\idxalt} + 1}\parens*{\accstate, \dpthalt} < \marginalt/2\,,
	\end{equation}
	with $\accstate_\start = \stateA$, then $\accstate_1,\dots,\accstate_\run$ are in $\U_{\marginalt}(\eqcl_\idx) \subset \V_\idx$  and, since $\marginalt \leq \margin / 2$, $\accstate_{\run + 2}, \dots, \accstate_{\run + \nRuns_{\idx,\idxalt} - 1}$ are not in $\U_{\margin / 4}(\V)$, and therefore not in $\V$. Moreover, $\accstate_{\run + \nRuns_{\idx,\idxalt}}$ would be in $\U_{\marginalt/2}(\eqcl_\idxalt) \subset \V_\idxalt$.

	Thus, all the paths $\accstate$ satisfying \eqref{\LocalName{eq:closepath}} with $\accstate_\start = \stateA$ correspond to exactly one transition of the induced chain $(\statealt_\run)_{\run}$ from $\stateA$ to $\nhdaltalt_{\idxalt}$.

	Therefore, using the definition of $ \probalt_{\nhdaltalt}$, we have that
	\begin{align}
		\probalt_{\nhdaltalt}(\state, \nhdaltalt_{\idxalt})
		 & \geq
		\prob_\state \parens*{\dist_{\run + \nRuns_{\idx,\idxalt} + 1}\parens*{\accstate, \dpthalt} < \marginalt/2}
		\notag\\
		 & \geq
		\exp \parens*{-\frac{\dquasipotup_{\idx, \idxalt} + 4 \precs}{\step}}\,,
	\end{align}
	by \cref{cor:ldp_iterates_full}.
\end{proof}

\subsection{Accelerated induced chain}
\label{app:trans:accelerated_chain}

We will take the following convention, for any $\idx \in \gencompIndices$, $\margin > 0$:
\begin{equation}
    \dquasipotdown_{\idx, \idx}^{\margin} = \dquasipotdown_{\idx, \idx} =  \dquasipotup_{\idx, \idx} = 0\,.
\end{equation}
\revise{Recall that $\probalt_{\nhdaltalt}$ is the transition probabilities of the induced chain $(\statealt_\run)_{\run \geq 0}$, defined in \cref{def:induced_chain}. In other words $\probalt_{\nhdaltalt}(\state, \V_\jComp)$ is the probability that the sequence of \cref{eq:rescaled} starting from $\state$ at time $0$ enters $\V_\jComp$ before any other $\V_\lComp$ for $\lComp \neq \jComp$.}
\revise{Moreover, $\nComps$ still denotes the number of connected components of the critical set of $\obj$ that are not part of the global minimum, see \cref{def:choice_components} in \cref{app:subsec:setup}.}
\begin{definition}
    \label{def:accelerated_induced_chain}
    Given the induced chain $(\statealt_\run)_{\run \geq 0}$, we define the accelerated induced chain $(\accinducedstate_\run)_{\run \geq 0}$ as follows:
    \begin{equation}
        \accinducedstate_{\run} \defeq \statealt_{\run \nComps}\,.
    \end{equation}
    We denote by $\accprobalt_{\nhdaltalt}$ the transition probabilities of the accelerated induced chain which corresponds to the $\nComps-th$ power of $\probalt_{\V}$: it satisfies\footnote{This equation is sometimes referred to as the Chapman-Kolmogorov equation.}, for any $\state \in \nhdaltalt_{\idx}$, $\idx \in \gencompIndices$ and any measurable set $A \subset \vecspace$,
    \begin{equation}
        \accprobalt_{\nhdaltalt}(\state, A) = \int \int \dots \int \oneof{\statealt_{\nComps} \in A} \probalt_{\nhdaltalt}(\state, d \statealt_1) \probalt_{\nhdaltalt}(\statealt_1, d \statealt_2) \dots \probalt_{\nhdaltalt}(\statealt_{\nComps - 1}, d \statealt_{\nComps})\,.
    \end{equation}
\end{definition}

\revise{We now define transition costs for the accelerated induced chain, which correspond the ones introduced in the main text \cref{eq:qpot-mat} in \cref{sec:results}.}

\begin{definition}[{Inspired by \citet[Chap.~6,\S2]{FW98}}]
	\label{def:finaldquasipot}
	For $\iComp \in \compIndices$, $\jComp \in \gencompIndices$, $\margin> 0$,
	\begin{align}
		\finaldquasipotup_{\iComp, \jComp} \defeq &
		\inf \setdef*{\daction_{\nRuns}(\dpth)}{\nRuns \geq 1, \dpth \in \restrdcurvesat{\nRuns},\, \dpth_\start \in \gencomp_\iComp,\, \dpth_{\nRuns - 1} \in \gencomp_\jComp,\, \dpth_\run \notin \bigcup_{\lComp \neq \iComp, \jComp,\,\lComp > \nComps} \gencomp_\lComp\, \text{ for all }\run = 1, \dots, \nRuns - 2}
	\end{align}
\end{definition}

\begin{lemma}
	\label{lem:dquasipot_to_finaldquasipot}
	For any $(\iComp, \jComp) \in \compIndices \times \gencompIndices$,
	\begin{align}
		\finaldquasipotup_{\iComp, \jComp}
		 & =
		\min
		\setdef*{
			\sum_{\idxaltalt = 0}^{\run-2} \dquasipotup_{\idx_{\idxaltalt}, \idx_{\idxaltalt + 1}}
		}{
			\idx_0 = \idx,\, \idx_{\run - 1} = \idxalt,\, \idx_{\idxaltalt} \in \compIndices \text{ for } \idxaltalt = 1, \dots, \run - 2\,, \run \geq 1
		}    \\
		 & =
		\min
		\setdef*{
			\sum_{\idxaltalt = 0}^{\neqcl - 1} \dquasipotup_{\idx_{\idxaltalt}, \idx_{\idxaltalt + 1}}
		}{
			\idx_0 = \idx,\, \idx_{\neqcl} = \idxalt,\, \idx_{\idxaltalt} \in \compIndices \text{ for } \idxaltalt = 1, \dots, \neqcl - 1
		}\,.
	\end{align}
\end{lemma}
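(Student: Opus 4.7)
The plan is to prove the two equalities in turn. Let $A_{\iComp, \jComp}$ denote the first minimum on the right-hand side.

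\textbf{First equality.} For $\finaldquasipotup_{\iComp, \jComp} \leq A_{\iComp, \jComp}$, I would fix any sequence $\iComp = \idx_0, \idx_1, \ldots, \idx_{r-1} = \jComp$ admissible in the minimum and $\precs > 0$, and pick for each pair $(\idx_k, \idx_{k+1})$ a discrete path $\dpth^{(k)}$ realizing $\dquasipotup_{\idx_k, \idx_{k+1}}$ up to $\precs$, with endpoints in $\gencomp_{\idx_k}$ and $\gencomp_{\idx_{k+1}}$ and interior avoiding all other components. Since the end of $\dpth^{(k)}$ and the start of $\dpth^{(k+1)}$ both lie in $\gencomp_{\idx_k}$ but need not agree, I would stitch them together using an intra-component connector with action at most $\precs$, furnished by \cref{lem:equivalence_classes_dpth_stay_close}. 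The concatenated path is admissible for $\finaldquasipotup_{\iComp, \jComp}$ because its interior only visits the non-target components $\gencomp_{\idx_1}, \ldots, \gencomp_{\idx_{r-2}}$. Its action is at most $\sum_k \dquasipotup_{\idx_k, \idx_{k+1}} + 2r\precs$; letting $\precs \to 0$ and taking the infimum over sequences gives the desired bound. Conversely, for any admissible $\dpth$ of length $\nRuns$, I would enumerate the times $0 = t_0 < t_1 < \cdots < t_{s-1} = \nRuns - 1$ at which $\dpth_{t_k}$ lies in some critical component $\gencomp_{\idx_k}$. Then $\idx_0 = \iComp$, $\idx_{s-1} = \jComp$, and by the $\finaldquasipotup$ admissibility each intermediate $\idx_k$ lies in $\compIndices$ (after truncating at the first return to $\gencomp_\jComp$ if needed, which only lowers the action). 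Each sub-path $(\dpth_{t_k}, \ldots, \dpth_{t_{k+1}})$ joins $\gencomp_{\idx_k}$ to $\gencomp_{\idx_{k+1}}$ with interior disjoint from every critical component, so its action is at least $\dquasipotup_{\idx_k, \idx_{k+1}}$. Summing and taking the infimum gives $\finaldquasipotup_{\iComp, \jComp} \geq A_{\iComp, \jComp}$.

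\textbf{Second equality.} Fixing the length to $\nComps + 1$ restricts the admissible set, so the second minimum is at least $A_{\iComp, \jComp}$; the reverse is where the work lies. Starting from any sequence $\idx_0, \ldots, \idx_{r-1}$ admissible in $A_{\iComp, \jComp}$, I would reduce it to length exactly $\nComps + 1$ without raising its cost as follows. If $r - 1 > \nComps$, pigeonhole on the $r - 1$ indices $\idx_0, \ldots, \idx_{r-2} \in \compIndices$ supplies $a < b$ with $\idx_a = \idx_b$; excising the cycle yields the admissible shorter sequence $\idx_0, \ldots, \idx_a, \idx_{b+1}, \ldots, \idx_{r-1}$, with cost lower by $\sum_{k=a}^{b-1} \dquasipotup_{\idx_k, \idx_{k+1}} \geq 0$. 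Iterating reaches $r \leq \nComps + 1$; if $r < \nComps + 1$, prepending $(\nComps + 1 - r)$ copies of $\iComp$ gives a length-$(\nComps + 1)$ admissible sequence whose added self-transitions cost $\dquasipotup_{\iComp, \iComp} = 0$ by convention.

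\textbf{Main obstacle.} The delicate step is the upper-bound direction of the first equality: naively gluing the near-optimal segments $\dpth^{(k)}$ is invalid since their endpoints in $\gencomp_{\idx_k}$ need not match. The fix, as already exploited in the proof of \cref{lem:est_trans_prob_lb}, is to patch them via the arbitrarily cheap intra-component paths supplied by \cref{lem:equivalence_classes_dpth_stay_close}; verifying that the patched path respects the target-avoidance condition of $\finaldquasipotup$ is routine but needs care.
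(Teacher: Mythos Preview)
Your proposal is correct and follows essentially the same approach as the paper's proof: the same two-sided bound for the first equality, with the $\leq$ direction handled by concatenating near-optimal segment paths via the cheap intra-component connectors of \cref{lem:equivalence_classes_dpth_stay_close}, and the $\geq$ direction by decomposing an admissible path at its visits to critical components; the second equality is likewise reduced to cycle-removal plus padding by the convention $\dquasipotup_{\iComp,\iComp}=0$. Your remark about truncating at the first hit of $\gencomp_\jComp$ is in fact a detail the paper glosses over, so if anything your version of the $\geq$ direction is slightly cleaner.
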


\begin{proof}
    \UsingNamespace{proof:lem:dquasipot_to_finaldquasipot}
    We focus on proving the first equality for $\finaldquasipotdown_{\iComp, \jComp}$; the proof for $\finaldquasipotup_{\iComp, \jComp}$ follows similarly.
    Let us denote by $\mathRHS(\iComp,\jComp)$ the right-hand side:
    \begin{equation}
        \mathRHS(\iComp,\jComp) \defeq \min
        \setdef*{
            \sum_{\idxaltalt = 0}^{\run-2} \dquasipotdown_{\idx_{\idxaltalt}, \idx_{\idxaltalt + 1}}
        }{
            \idx_0 = \iComp,\, \idx_{\run - 1} = \jComp,\, \idx_{\idxaltalt} \in \compIndices \text{ for } \idxaltalt = 1, \dots, \run - 2\,, \run \geq 1
        }
    \end{equation}

    We will prove that $\finaldquasipotdown_{\iComp, \jComp} = \mathRHS(\iComp,\jComp)$.

    ($\geq$): Let us show that $\finaldquasipotdown_{\iComp, \jComp} \geq \mathRHS(\iComp,\jComp)$. Fix $\precs > 0$. There is a path $\dpth \in \restrdcurvesat{\nRuns}$ with $\dpth_\start \in \gencomp_\iComp$, $\dpth_{\nRuns-1} \in \gencomp_\jComp$ that avoids $\bigcup_{\lComp \neq \iComp, \jComp,\, \lComp > \nComps} \gencomp_\lComp$ so that, by definition of $\finaldquasipotup_{\iComp, \jComp}$ (\cref{def:finaldquasipot}),
    \begin{equation}
        \finaldquasipotdown_{\iComp, \jComp} \geq \daction_{\nRuns}(\dpth) - \precs\,.
        \LocalLabel{eq:finaldquasipotdown_geq_daction}
    \end{equation}
    Let $1 \leq \run_1 < \run_2 < \dots < \run_\nRunsalt < \nRuns$ be the times when $\dpth$ belongs to some $\gencomp_\lComp$ with $\lComp \leq \nComps$. Define $\idx_0 = \iComp$, $\idx_\nRunsalt = \jComp$ and for $1 \leq i < \nRunsalt$, let $\idx_\runalt$ be such that $\dpth_{\run_\runalt} \in \gencomp_{\idx_\runalt}$. Then, by definition of $\dquasipotdown$ (\cref{def:dquasipot}),
    \begin{equation}
        \daction_{\nRuns}(\dpth) \geq \sum_{\runalt=0}^{\nRunsalt-1} \dquasipotdown_{\idx_\runalt, \idx_{\runalt+1}}\,.
        \LocalLabel{eq:daction_geq_sum_dquasipotdown}
    \end{equation}
    Combining \cref{\LocalName{eq:finaldquasipotdown_geq_daction}} and \cref{\LocalName{eq:daction_geq_sum_dquasipotdown}} and taking yields
    \begin{align}
        \finaldquasipotdown_{\iComp, \jComp} &\geq \sum_{\runalt=0}^{\nRunsalt-1} \dquasipotdown_{\idx_\runalt, \idx_{\runalt+1}}  - \precs\\
                                             &\geq \mathRHS(\iComp,\jComp) - \precs\,,
    \end{align}
    since this sequence $(\idx_\runalt)_{\runalt=0}^{\nRunsalt}$ satisfies $\idx_0 = \iComp$, $\idx_\nRunsalt = \jComp$ and $\idx_\runalt \in \compIndices$ for $1 \leq \runalt < \nRunsalt$\,.

($\leq$): Let us show that $\finaldquasipotdown_{\iComp, \jComp} \leq \mathRHS(\iComp,\jComp)$. Take $\precs > 0$ and let $(\idx_\runalt)_{\runalt=0}^{\nRunsalt}$ be a sequence achieving the minimum in $\mathRHS(\iComp,\jComp)$ up to $\precs$, \ie
\begin{equation}
    \sum_{\runalt=0}^{\nRunsalt-1} \dquasipotdown_{\idx_\runalt, \idx_{\runalt+1}} \leq \mathRHS(\iComp,\jComp) + \precs\,.
    \LocalLabel{eq:sum_dquasipotdown_close_to_min}
\end{equation}

Take $\margin > 0$ small enough such that $\nhd_{\margin}(\gencomp_\lComp)$, $\lComp = 1, \dots, \nGencomps$ are pairwise disjoint.

For each $0 \leq \runalt < \nRunsalt$, by definition of $\dquasipotdown$ (\cref{def:dquasipot}), there exists $\margin_\runalt \in (0, \margin)$ and a path $\dpth^\runalt \in \restrdcurvesat{\nRuns_\runalt}$ such that: (i) $\dpth^\runalt_\start \in \gencomp_{\idx_\runalt}$, (ii) $\dpth^\runalt_{\nRuns_\runalt-1} \in \gencomp_{\idx_{\runalt+1}}$, (iii) $\dpth^\runalt_\run \notin \bigcup_{\lComp \neq \idx_\runalt, \idx_{\runalt+1}} \gencomp_\lComp$ for all $\run = 1, \dots, \nRuns_\runalt - 2$, and (iv) $\daction_{\nRuns_\runalt}(\dpth^\runalt) \leq \dquasipotdown_{\idx_\runalt, \idx_{\runalt+1}} + \precs/\nRunsalt$.

By \cref{lem:equivalence_classes_dpth_stay_close}, for each $0 \leq \runalt < \nRunsalt-1$, there exists $\tilde{\dpth}^\runalt \in \restrdcurvesat{\tilde{\nRuns}_\runalt}$ such that: (i) $\tilde{\dpth}^\runalt_\start = \dpth^\runalt_{\nRuns_\runalt-1}$, (ii) $\tilde{\dpth}^\runalt_{\tilde{\nRuns}_\runalt-1} = \dpth^{\runalt+1}_\start$, (iii) $\tilde{\dpth}^\runalt_\run \in \nhd_{\margin/2}(\gencomp_{\idx_{\runalt+1}})$ and therefore $\tilde{\dpth}^\runalt_\run \not \in \bigcup_{\lComp \neq \idx_{\runalt+1}} \gencomp_\lComp$ for all $\run = 1, \dots, \tilde{\nRuns}_\runalt-2$,
and (iv) $\daction_{\tilde{\nRuns}_\runalt}(\tilde{\dpth}^\runalt) \leq \precs/\nRunsalt$.

Concatenating the paths $\dpth^0, \tilde{\dpth}^0, \dpth^1, \tilde{\dpth}^1, \dots, \dpth^{\nRunsalt-1}$ yields a path $\dpthalt$ that: (i) starts in $\gencomp_\iComp$, (ii) ends in $\gencomp_\jComp$, (iii) avoids $\bigcup_{\lComp \neq \iComp, \jComp,\, \lComp > \nComps} \gencomp_\lComp$, and (iv) has total cost at most $\sum_{\runalt=0}^{\nRunsalt-1} \dquasipotdown_{\idx_\runalt, \idx_{\runalt+1}} + 3\precs$.
Therefore, by definition of $\finaldquasipotdown_{\iComp, \jComp}$ (\cref{def:finaldquasipot}),
\begin{align}
    \finaldquasipotdown_{\iComp, \jComp} &\leq \sum_{\runalt=0}^{\nRunsalt-1} \dquasipotdown_{\idx_\runalt, \idx_{\runalt+1}} + 3\precs
    \notag\\
&\leq \mathRHS(\iComp,\jComp) + 4\precs\,.
\end{align}

The second equality in each case follows from the fact that optimal paths between different components can be chosen without cycles (since all costs are non-negative), and therefore their length can be bounded by the number of components $\nComps$.

\end{proof}

\begin{lemma}[Upper bound on accelerated induced chain transition probability]
\label{lem:est_trans_prob_acc_ub}
For any $\precs > 0$, $\infprecs > 0$, for any small enough neighborhoods $\nhdaltalt_\idx$ of $\gencomp_\idx$, $\idx = 1, \dots, \nGencomps$, there is some $\step_0 > 0$ such that for all $\idx \in \compIndices$, $\idxalt \in \gencompIndices$, $\state \in \nhdaltalt_{\idx}$, $0 < \step < \step_0$,
\begin{equation}
    \accprobalt_{\nhdaltalt}(\state, \nhdaltalt_{\idxalt})
    \leq
    \begin{cases}
    \exp \parens*{
        - \frac{\finaldquasipotdown_{\idx, \idxalt}}{\step}
        + \frac{\precs}{\step}
    } \quad & \text{if } \finaldquasipotdown_{\idx, \idxalt} < +\infty\,, \\
    \exp \parens*{- \frac{\infprecs}{\step}} \quad & \text{otherwise}\,.
    \end{cases}
\end{equation}
\end{lemma}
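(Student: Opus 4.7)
The plan is to expand the $\nComps$-step transition probability via the Chapman\textendash Kolmogorov equation, apply the one-step upper bound of \cref{lem:est_trans_prob_ub} to each factor, and then invoke the combinatorial identity of \cref{lem:dquasipot_to_finaldquasipot} to convert a sum of $\dquasipotdown$'s into $\finaldquasipotdown$. The overall structure is analogous to how one passes from single transition bounds to multi-step bounds for finite Markov chains.

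More concretely, I would first choose an auxiliary precision $\precs' > 0$ with $2\nComps \precs' \leq \precs$ (and, in the infinite case, $\infprecs - \nComps \precs'$ still at least the target constant up to a rescaling) and apply \cref{lem:est_trans_prob_ub} with $\precs \gets \precs'$ to obtain neighborhoods $\nhdaltalt_\idx$ and a threshold $\step_0$ that work uniformly. The key observation for the decomposition is that after each step of the induced chain the process lands in some $\nhdaltalt_{\idx_\runalt}$; moreover, once $\idx_\runalt \in \sinkIndices$, the chain is forced to stay at the same sink, so the only contributing sequences $(\idx_0 = \idx, \idx_1, \dots, \idx_{\nComps} = \idxalt)$ are those of the form $\idx_0, \dots, \idx_{\nRunsalt-1}, \idxalt, \dots, \idxalt$ with $\idx_0, \dots, \idx_{\nRunsalt-1} \in \compIndices$ and $\nRunsalt \leq \nComps$.

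Using the Markov property and bounding each nontrivial factor by \cref{lem:est_trans_prob_ub} (with sink-to-sink factors equal to $1$) gives, for each such sequence, a contribution at most $\exp\bigl(-\frac{1}{\step}\bigl[\sum_{\runalt=0}^{\nRunsalt-1}\dquasipotdown_{\idx_\runalt,\idx_{\runalt+1}} - \nComps\precs'\bigr]\bigr)$, with the convention that an infinite $\dquasipotdown$ is replaced by $\infprecs$. By \cref{lem:dquasipot_to_finaldquasipot}, the infimum of $\sum \dquasipotdown_{\idx_\runalt,\idx_{\runalt+1}}$ over such index sequences is precisely $\finaldquasipotdown_{\idx,\idxalt}$, so each sequence contributes at most $\exp(-(\finaldquasipotdown_{\idx,\idxalt} - \nComps\precs')/\step)$. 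Summing over the (finitely many) $\nGencomps^{\nComps-1}$ possible index sequences and absorbing this polynomial prefactor into an additional $\precs'/\step$ for $\step$ small enough yields the claimed bound.

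The analysis is mostly bookkeeping; the only mild subtlety is to treat uniformly the two cases of \cref{lem:est_trans_prob_ub} (finite versus infinite $\dquasipotdown_{\idx_\runalt,\idx_{\runalt+1}}$) along a single path, as an infinite cost at any edge in the sequence should make the whole contribution fall below $e^{-\infprecs/\step}$. This is handled by noting that every such sequence either has all finite costs (and then the $\finaldquasipotdown$ identity applies directly), or has at least one infinite edge (in which case a single application of the infinite-cost bound suffices and the remaining finite-cost factors are harmlessly bounded by $1$). No new ideas beyond those in the proofs of \cref{lem:est_trans_prob_ub,lem:dquasipot_to_finaldquasipot} are needed.
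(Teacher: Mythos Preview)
Your proposal is correct and follows essentially the same route as the paper: Chapman\textendash Kolmogorov expansion of the $\nComps$-step kernel, one-step bounds from \cref{lem:est_trans_prob_ub} on each factor, identification of the minimal path sum with $\finaldquasipotdown_{\idx,\idxalt}$ via \cref{lem:dquasipot_to_finaldquasipot}, and absorption of the polynomial combinatorial factor into $\precs/\step$ for small $\step$. Your handling of absorbing sinks and of paths containing an infinite-cost edge is in fact slightly more explicit than the paper's own write-up.
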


\begin{proof}
\UsingNamespace{proof:lem:est_trans_prob_acc_ub}

Fix $\iComp \in \compIndices$, $\jComp \in \gencompIndices$. By \cref{lem:dquasipot_to_finaldquasipot}, we have that
\begin{equation}
    \finaldquasipotdown_{\iComp, \jComp} = \min
    \setdef*{
        \sum_{\idxaltalt = 0}^{\nComps-1} \dquasipotdown_{\idx_{\idxaltalt}, \idx_{\idxaltalt + 1}}
    }{
        \idx_0 = \iComp,\, \idx_{\nComps} = \jComp,\, \idx_{\idxaltalt} \in \compIndices \text{ for } \idxaltalt = 1, \dots, \nComps - 1
    }\,.
    \LocalLabel{eq:finaldquasipotdown_def}
\end{equation}

First, let us consider the case where $\finaldquasipotdown_{\iComp, \jComp} < +\infty$. Take $\precs > 0$ and let $(\idx_\runalt)_{\runalt=0}^{\nComps}$ be a sequence achieving the minimum in \cref{\LocalName{eq:finaldquasipotdown_def}} up to $\precs$, \ie
\begin{equation}
    \sum_{\runalt=0}^{\nComps-1} \dquasipotdown_{\idx_\runalt, \idx_{\runalt+1}} \leq \finaldquasipotdown_{\iComp, \jComp} + \precs\,.
    \LocalLabel{eq:sum_dquasipotdown_close_to_min}
\end{equation}

By definition of the accelerated induced chain (\cref{def:accelerated_induced_chain}), we have that for any $\state \in \nhdaltalt_{\iComp}$,
\begin{align}
    \accprobalt_{\nhdaltalt}(\state, \nhdaltalt_{\jComp}) 
    &= \sum_{\idx_1,\dots,\idx_{\nComps-1} \in \compIndices} \int_{\nhdaltalt_{\idx_1}} \cdots \int_{\nhdaltalt_{\idx_{\nComps-1}}} \prod_{\runalt=0}^{\nComps-1} \probalt_{\nhdaltalt}(\state_\runalt, d\state_{\runalt+1}) \probalt_{\nhdaltalt}(\state_{\nComps}, \nhdaltalt_{\jComp})\,,
    \LocalLabel{eq:chapman_kolmogorov}
\end{align}
where we define $\state_0 = \state$.

Therefore, combining \cref{\LocalName{eq:chapman_kolmogorov}} with
\cref{lem:est_trans_prob_ub} and \cref{\LocalName{eq:sum_dquasipotdown_close_to_min}}, we obtain that for any $\state \in \nhdaltalt_{\iComp}$,
\begin{align}
    \accprobalt_{\nhdaltalt}(\state, \nhdaltalt_{\jComp}) 
    &\leq \sum_{\idx_1,\dots,\idx_{\nComps-1} \in \compIndices} \prod_{\runalt=0}^{\nComps-1} \exp \parens*{
        - \frac{\dquasipotdown_{\idx_\runalt, \idx_{\runalt+1}}}{\step}
        + \frac{\precs}{\step\nComps}
    }
    \notag\\
    &\leq \nComps^{\nComps-1} \exp \parens*{
        - \frac{\finaldquasipotdown_{\iComp, \jComp}}{\step}
        + \frac{2\precs}{\step}
    }\,.
    \LocalLabel{eq:acc_trans_prob_ub_finite}
\end{align}

For the case where $\finaldquasipotdown_{\iComp, \jComp} = +\infty$, by the same reasoning but using the alternative bound from \cref{lem:est_trans_prob_ub}, we get
\begin{align}
    \accprobalt_{\nhdaltalt}(\state, \nhdaltalt_{\jComp}) 
    &\leq \nComps^{\nComps-1} \exp \parens*{
        - \frac{\infprecs}{\step}
    }\,.
    \LocalLabel{eq:acc_trans_prob_ub_infinite}
\end{align}

Taking $\step_0$ small enough so that $\nComps^{\nComps-1} \leq \exp(\precs/\step)$ for all $\step \leq \step_0$, the result follows from \cref{\LocalName{eq:acc_trans_prob_ub_finite},\LocalName{eq:acc_trans_prob_ub_infinite}}.
\end{proof}

\begin{lemma}[Lower bound on accelerated induced chain transition probability]
\label{lem:est_trans_prob_acc_lb}
For any $\precs > 0$, for any small enough neighborhoods $\nhdaltalt_\idx$ of $\gencomp_\idx$, $\idx = 1, \dots, \nGencomps$, there is some $\step_0 > 0$ such that for all $\idx \in \compIndices$, $\idxalt \in \gencompIndices$, $\state \in \nhdaltalt_{\idx}$, $0 < \step < \step_0$,
\begin{equation}
    \accprobalt_{\nhdaltalt}(\state, \nhdaltalt_{\idxalt})
    \geq
    \exp \parens*{
        - \frac{\finaldquasipotup_{\idx, \idxalt}}{\step}
        - \frac{\precs}{\step}
    }\,.
\end{equation}
\end{lemma}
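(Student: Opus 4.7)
The plan is to mirror the proof of the companion upper bound \cref{lem:est_trans_prob_acc_ub}, but swapping \cref{lem:est_trans_prob_ub} for its one-step lower-bound counterpart \cref{lem:est_trans_prob_lb}. The key observation is that, via the $\finaldquasipotup$ analogue of \cref{lem:dquasipot_to_finaldquasipot} (whose proof parallels the one given there for $\finaldquasipotdown$), the quantity $\finaldquasipotup_{\iComp, \jComp}$ decomposes as a sum of exactly $\nComps$ single-step costs $\dquasipotup_{\idx_\runalt, \idx_{\runalt+1}}$ along an optimal sequence of intermediate indices in $\compIndices$. This is precisely the number of induced-chain substeps packed into one step of the accelerated chain, so a term-by-term matching becomes possible.

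If $\finaldquasipotup_{\iComp, \jComp} = +\infty$ the claim is trivial, so assume it is finite. Finiteness of $\compIndices$ then guarantees that the minimum is attained: there is a sequence $\iComp = \idx_0, \idx_1, \dots, \idx_{\nComps-1}, \idx_{\nComps} = \jComp$ with $\idx_\runalt \in \compIndices$ for $1 \leq \runalt \leq \nComps - 1$ satisfying $\finaldquasipotup_{\iComp, \jComp} = \sum_{\runalt=0}^{\nComps-1} \dquasipotup_{\idx_\runalt, \idx_{\runalt+1}}$. I then invoke \cref{lem:est_trans_prob_lb} with tolerance $\precs/\nComps$ to secure neighborhoods $\nhdaltalt_\idx$ of $\gencomp_\idx$ and a threshold $\step_0 > 0$ such that, for every $0 < \step \leq \step_0$, every pair $(\idx, \idxalt) \in \compIndices \times \gencompIndices$, and every $\statealt \in \nhdaltalt_\idx$, the single-step lower bound $\probalt_{\nhdaltalt}(\statealt, \nhdaltalt_\idxalt) \geq \exp(-\dquasipotup_{\idx, \idxalt}/\step - \precs/(\nComps\step))$ holds \emph{uniformly} in the starting state $\statealt$.

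Starting from the Chapman--Kolmogorov identity of \cref{def:accelerated_induced_chain}, I restrict each intermediate variable $\statealt_\runalt$ of the $\nComps$-fold iterated integral defining $\accprobalt_{\nhdaltalt}(\state, \nhdaltalt_\jComp)$ to the corresponding $\nhdaltalt_{\idx_\runalt}$. Collapsing the iterated integral from the innermost variable $\statealt_\nComps$ outward, each step first evaluates the innermost marginal factor $\probalt_{\nhdaltalt}(\statealt_{\runalt-1}, \nhdaltalt_{\idx_\runalt})$ and bounds it below uniformly via the inequality above, then pulls the constant out of the remaining integral. This telescopes to
\begin{equation}
\accprobalt_{\nhdaltalt}(\state, \nhdaltalt_\jComp) \geq \prod_{\runalt=0}^{\nComps-1} \exp\parens*{-\frac{\dquasipotup_{\idx_\runalt, \idx_{\runalt+1}}}{\step} - \frac{\precs}{\nComps\step}} = \exp\parens*{-\frac{\finaldquasipotup_{\iComp, \jComp}}{\step} - \frac{\precs}{\step}},
\notag
\end{equation}
which is the target bound.

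The chief technical point is that the inductive peel-off genuinely requires the \emph{uniform} lower bound in $\statealt \in \nhdaltalt_\idx$ delivered by \cref{lem:est_trans_prob_lb}; without uniformity, one cannot pull the factor out of the surrounding integral. A secondary subtlety concerns minimizing paths where consecutive indices coincide ($\idx_\runalt = \idx_{\runalt+1}$), invoking the convention $\dquasipotup_{\idx,\idx} = 0$: the corresponding application of \cref{lem:est_trans_prob_lb} remains valid on the diagonal (the construction in its proof admits a trivial length-one path at a point of $\gencomp_\idx$), contributing only the tolerance factor $\exp(-\precs/(\nComps\step))$, which is safely absorbed into the total budget of $\precs/\step$.
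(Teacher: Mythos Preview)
Your proposal is correct and follows precisely the approach the paper intends: the paper omits this proof, stating only that it is ``very similar'' to that of \cref{lem:est_trans_prob_acc_ub}, and your argument is exactly the natural lower-bound adaptation --- restrict the Chapman--Kolmogorov integral to a single optimal sequence of intermediate components (rather than summing over all sequences), and apply the uniform one-step lower bound of \cref{lem:est_trans_prob_lb} (rather than the upper bound of \cref{lem:est_trans_prob_ub}) at each step. Your explicit attention to the uniformity requirement and to the diagonal case $\idx_\runalt = \idx_{\runalt+1}$ is appropriate and consistent with the paper's conventions.
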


The proof of \cref{lem:est_trans_prob_acc_lb} is very similar to the proof of \cref{lem:est_trans_prob_acc_ub} and is therefore omitted.

We end this section by restating a result from \ourpaper that provides sufficient conditions for $\finaldquasipotdown_{\iComp, \jComp}$ to be finite for any $(\iComp, \jComp) \in \compIndices \times \gencompIndices$.
\begin{lemma}[{\ourpaper[Lem.~D.12]}]
    \label{lem:finite_B}
    Consider $\point, \pointalt \in \vecspace$ and assume that there exists $\horizon \in \nats$, $\pth \in \contdiff{1}([0, \horizon], \vecspace)$ such that $\pth_0 = \point$, $\pth_\horizon = \pointalt$, $\pth_\run \not \in \bigcup_{\lComp \neq \iComp, \jComp,\, \lComp > \nComps} \gencomp_\lComp$ for all $\run = 1, \dots, \horizon - 1$ and,
    for every $\time \in [0, \horizon]$, $\grad \obj(\pth_\time)$ is in the interior of the closed convex hull of the support of $\noise(\pth_\time, \sample)$, \ie
    {
        \begin{equation}
        \label{eq:lem:finite_B_condition_grad}
        \grad \obj(\pth_\time) \in \intr \clconv \supp \noise(\pth_\time, \sample)\,.
    \end{equation}
    Then, $\dquasipot(\point, \pointalt) < +\infty$.
}
\end{lemma}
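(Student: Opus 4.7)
The plan is to construct an explicit absolutely continuous curve from $\point$ to $\pointalt$ with finite action, thereby exhibiting an admissible competitor that shows $\dquasipot(\point, \pointalt) < +\infty$. The key idea is to slow down the given $\contdiff{1}$ path $\pth$ by a time reparametrization so that its velocity becomes arbitrarily small, and then to exploit the interior condition on $\grad\obj(\pth_\time)$ to ensure that the Lagrangian remains finite, and in fact uniformly bounded, along the reparametrized curve.

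First, I would translate the interior condition into a pointwise finiteness statement for $\lagrangian$. By \eqref{eq:lagrangian}, $\lagrangian(\state, \vel) = \lagrangianalt(\state, \vel + \grad \obj(\state))$, and standard convex duality applied to the log-moment generating function $\hamiltalt(\state, \cdot)$ gives the inclusion $\intr \clconv \supp \noise(\state, \sample) \subseteq \intr \dom \lagrangianalt(\state, \cdot)$. The hypothesis then yields $\grad \obj(\pth_\time) \in \intr \dom \lagrangianalt(\pth_\time, \cdot)$ for every $\time \in [0, \horizon]$, so by convexity $\lagrangianalt(\pth_\time, \cdot)$ is continuous on the interior of its effective domain and consequently $\lagrangian(\pth_\time, \cdot)$ is finite and continuous on an open neighborhood of the origin.

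The main technical step, which I expect to be the chief obstacle, is to upgrade this pointwise finiteness into a uniform bound along the curve. Since $\pth$ is $\contdiff{1}$ on the compact interval $[0, \horizon]$, the set $\setof{(\pth_\time, 0) : \time \in [0, \horizon]}$ is compact. Combining the $\contdiff{2}$-smoothness of the Hamiltonian $\hamilt$ from \cref{lem:basic_prop_h_l} with Fenchel duality, $\lagrangian$ is jointly continuous on the preimage of the interior of the effective domain of $\lagrangian(\state, \cdot)$ under $\vel \mapsto \nabla_\mom \hamilt(\state, \vel)$, and this preimage contains an open neighborhood of the compact curve $\time \mapsto (\pth_\time, 0)$. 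A finite cover argument then produces constants $\margin > 0$ and $M < +\infty$ such that
\begin{equation*}
\lagrangian(\pth_\time, \vel) \leq M \quad \text{for all } \time \in [0, \horizon] \text{ and } \norm{\vel} \leq \margin.
\end{equation*}
The subtlety here is that a priori $\lagrangian$ is only jointly lower semi-continuous, so the required upper-semicontinuity on the relevant compact set must be extracted from the smoothness of $\hamilt$ via duality rather than invoked directly.

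With this uniform bound in hand the construction is immediate. Let $V \defeq \sup_{\time \in [0, \horizon]} \norm{\dot \pth_\time} < +\infty$, pick any positive integer $T$ with $T \geq V/\margin$, and define $\curve \from [0, T\horizon] \to \vecspace$ by $\curve(\time) \defeq \pth(\time/T)$. Then $\curve(0) = \point$, $\curve(T\horizon) = \pointalt$, $T\horizon \in \nats$, and $\norm{\dot \curve(\time)} = \norm{\dot \pth(\time/T)}/T \leq V/T \leq \margin$ throughout $[0, T\horizon]$. Hence
\begin{equation*}
\action_{0, T\horizon}(\curve) = \int_0^{T\horizon} \lagrangian(\curve(\time), \dot \curve(\time)) \dd \time \leq T \horizon M < +\infty,
\end{equation*}
and since $\curve$ has the same image as $\pth$, it automatically inherits the avoidance condition imposed on the hypothesis on $\pth_\run$. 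Passing to the infimum in \cref{def:dquasipot} yields $\dquasipot(\point, \pointalt) < +\infty$ as claimed.
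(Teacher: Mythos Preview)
The paper does not prove this lemma; it is quoted as \ourpaper[Lem.~D.12] and no argument is given here, so there is nothing to compare against directly.

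Your approach is the natural one and is essentially correct. Slowing down the $\contdiff{1}$ path so that its speed drops below a threshold $\margin$, and then bounding $\lagrangian(\pth_\time,\vel)$ uniformly for $\norm{\vel}\le\margin$ via the interior hypothesis, is exactly how such finiteness statements are obtained. Your identification of the uniform bound as the crux is right, and the mechanism you sketch works: since $\hamilt$ is $\contdiff{2}$ (\cref{lem:basic_prop_h_l}) and $\nabla_\mom^2\hamiltalt(\state,\mom)$ is the covariance of the tilted law, hence positive definite whenever $\clconv\supp\noise(\state,\sample)$ has nonempty interior, the map $\mom\mapsto\nabla_\mom\hamilt(\state,\mom)$ is a $\contdiff{1}$ diffeomorphism onto $\intr\dom\lagrangian(\state,\cdot)$ locally in $(\state,\mom)$ by the implicit function theorem. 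This shows that the set $\{(\state,\vel):\vel\in\intr\dom\lagrangian(\state,\cdot)\}$ is open and that $\lagrangian$ is continuous on it, so a compactness argument on the curve $\time\mapsto(\pth_\time,0)$ gives the constants $\margin,M$ you need.

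Two minor points. First, your phrase ``preimage of the interior of the effective domain of $\lagrangian(\state,\cdot)$ under $\vel\mapsto\nabla_\mom\hamilt(\state,\vel)$'' is garbled: you mean the \emph{image} of $\vecspace$ under $\mom\mapsto\nabla_\mom\hamilt(\state,\mom)$, which equals $\intr\dom\lagrangian(\state,\cdot)$. Second, the final sentence about the avoidance condition is unnecessary: \cref{def:dquasipot} imposes no constraint on intermediate values of the curve, so the conclusion $\dquasipot(\point,\pointalt)<+\infty$ follows as soon as you exhibit any finite-action curve joining $\point$ to $\pointalt$ in integer time.
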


\subsection{Initial transition}
\label{app:trans:initial_transition}
\begin{definition}[{Inspired by \citet[Chap.~6,\S2]{FW98}}]
	\label{def:dquasipotalt_init}
	For $\jComp \in \gencompIndices$, $\margin> 0$, $\state \in \points$,
	\begin{align}
		\dquasipotup_{\state, \jComp} \defeq &
		\inf \setdef*{\daction_{\nRuns}(\dpth)}{\nRuns \geq 1, \dpth \in \restrdcurvesat{\nRuns},\, \dpth_\start \in \state,\, \dpth_{\nRuns - 1} \in \gencomp_\jComp,\, \dpth_\run \notin \bigcup_{\lComp \neq \iComp, \jComp} \gencomp_\lComp\, \text{ for all }\run = 1, \dots, \nRuns - 2}\\
		\dquasipotdown_{\state, \jComp}^{\margin} \defeq
		                                     & \inf \setdef*{\daction_{\nRuns}(\dpth)}{\nRuns \geq 1, \dpth \in \restrdcurvesat{\nRuns},\, \dpth_\start = \state,\, \dpth_{\nRuns - 1} \in \nhd_\margin(\gencomp_\jComp),\, \dpth_\run \notin \bigcup_{\lComp \neq \iComp, \jComp} \gencomp_\lComp\, \text{ for all }\run = 1, \dots, \nRuns - 2}\,.
	\end{align}
\end{definition}

The proof of the following lemmas are very similar to the ones of \cref{lem:alternate_dquasipotdown,lem:est_trans_prob_ub,lem:est_trans_prob_lb} and are therefore omitted.

\begin{lemma}
\label{lem:alternate_initial_dquasipotdown}
For any $\jComp \in \gencompIndices$, $\margin > 0$, $\state \in \points$,
\begin{equation}
    \dquasipotdown_{\state, \jComp} = \lim_{\margin \to 0} \dquasipotdown_{\state, \jComp}^{\margin}\,.
\end{equation}
\end{lemma}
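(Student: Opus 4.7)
The plan is to mirror the proof of \cref{lem:alternate_dquasipotdown}, simplified by the fact that the ``initial component'' has collapsed to a single point $\state$, so no regularization is needed at the starting endpoint. First, observe that for every $\margin > 0$, the inclusion $\gencomp_\jComp \subset \nhd_\margin(\gencomp_\jComp)$ implies that every path admissible in the definition of $\dquasipotdown_{\state,\jComp}$ is also admissible for $\dquasipotdown_{\state,\jComp}^{\margin}$. Hence $\dquasipotdown_{\state,\jComp}^{\margin} \leq \dquasipotdown_{\state,\jComp}$ for all $\margin > 0$. A standard monotonicity check also shows that $\margin \mapsto \dquasipotdown_{\state,\jComp}^{\margin}$ is non-increasing in $\margin$ (enlarging $\margin$ relaxes the endpoint constraint), so the limit $\lim_{\margin \to 0}\dquasipotdown_{\state,\jComp}^{\margin}$ exists in $[0, +\infty]$ and is bounded above by $\dquasipotdown_{\state,\jComp}$.

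For the reverse inequality, I would fix $\precs > 0$ and apply \cref{lem:W_nbd} to the terminal component $\gencomp_\jComp$: at the cost of reducing $\precs$, this provides an open neighborhood $\W_\precs(\gencomp_\jComp) \supset \gencomp_\jComp$ such that every point in it can be joined to some $\pointalt \in \gencomp_\jComp$ with action less than $\precs$. Then I would pick $\margin > 0$ small enough that $\nhd_\margin(\gencomp_\jComp) \subset \W_\precs(\gencomp_\jComp)$ and $\nhd_\margin(\gencomp_\jComp)$ does not meet any other $\gencomp_\lComp$, $\lComp \neq \jComp$.

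Now, given any admissible path $\dpth \in \restrdcurvesat{\nRuns}$ for $\dquasipotdown_{\state,\jComp}^{\margin}$, the endpoint $\dpth_{\nRuns-1}$ lies in $\W_\precs(\gencomp_\jComp)$, so there is some $\pointalt \in \gencomp_\jComp$ with $\rate(\dpth_{\nRuns-1}, \pointalt) < \precs$. Concatenating gives an extended path $\dpthalt = (\state, \dpth_1, \dots, \dpth_{\nRuns-1}, \pointalt) \in \restrdcurvesat{\nRuns+1}$ whose endpoint lies in $\gencomp_\jComp$, whose intermediate nodes still avoid the other components (by the choice of $\margin$), and whose action satisfies $\daction_{\nRuns+1}(\dpthalt) \leq \daction_{\nRuns}(\dpth) + \precs$. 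By \cref{def:dquasipotalt_init}, this yields $\dquasipotdown_{\state,\jComp} \leq \daction_{\nRuns}(\dpth) + \precs$, and passing to the infimum over $\dpth$ gives
\begin{equation}
\dquasipotdown_{\state,\jComp} \;\leq\; \dquasipotdown_{\state,\jComp}^{\margin} + \precs.
\end{equation}
Letting $\margin \to 0$ and then $\precs \to 0$ closes the inequality, which concludes the argument (with the obvious adaptation for the case $\dquasipotdown_{\state,\jComp} = +\infty$, where the bound forces $\lim_\margin \dquasipotdown_{\state,\jComp}^{\margin} = +\infty$ as well).

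The only delicate point, as in the proof of \cref{lem:alternate_dquasipotdown}, is the \emph{simultaneous} choice of $\margin$ small enough to both embed $\nhd_\margin(\gencomp_\jComp)$ inside the regular neighborhood $\W_\precs(\gencomp_\jComp)$ and keep it disjoint from the other critical components; this is what makes the concatenation legal, and it relies crucially on \cref{lem:W_nbd}. Everything else is a direct transcription of the earlier proof.
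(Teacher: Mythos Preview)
Your proposal is correct and follows exactly the approach the paper intends: the paper omits this proof, stating it is ``very similar'' to that of \cref{lem:alternate_dquasipotdown}, and your argument is precisely that transcription with the simplification you note (only the terminal endpoint needs the $\W_\precs$-regularization via \cref{lem:W_nbd}, so the concatenation adds a single point and costs $\precs$ rather than $2\precs$).
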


\begin{lemma}
	\label{lem:est_trans_prob_lb_init}
	For any $\precs > 0$, $\state \in \points$, for any neighborhoods $\nhdaltalt_\idx$ of $\gencomp_\idx$, $\idx = 1, \dots, \nGencomps$ small enough, there exists $\step_0 > 0$ such that for all $\idxalt \in \gencompIndices$, $0 < \step < \step_0$,
	\begin{equation}
		\probalt_{\nhdaltalt}(\state, \nhdaltalt_{\idxalt})
		\geq
		\exp \parens*{
			- \frac{\dquasipotup_{\state, \idxalt}}{\step}
			- \frac{\precs}{\step}
		}\,.
	\end{equation}

\end{lemma}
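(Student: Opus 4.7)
The approach mirrors the proof of \cref{lem:est_trans_prob_lb}, but it is cleaner because the starting point $\state \in \points$ is a single fixed point rather than a generic element of a neighborhood of a critical component. Consequently the initial connecting segment (obtained via \cref{lem:equivalence_classes_dpth_stay_close}) that was needed there to bridge from $\stateA \in \V_\iComp$ to some point of $\gencomp_\iComp$ can be dispensed with. The plan is therefore to produce a near-optimal path witnessing $\dquasipotup_{\state,\idxalt}$, to tune all margins so that the event ``the accelerated process stays uniformly close to this path'' is contained in the event ``one step of the induced chain from $\state$ lands in $\nhdaltalt_\idxalt$'', and then to invoke the LDP lower bound of \cref{cor:ldp_iterates_full}.

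More concretely: for each $\jComp \in \gencompIndices$ with $\dquasipotup_{\state,\jComp} < +\infty$ (otherwise the bound is vacuous) I would extract from the definition of $\dquasipotup_{\state,\jComp}$ a path $\dpth^\jComp \in \restrdcurvesat{\nRuns_\jComp}$ starting at $\state$, ending in $\gencomp_\jComp$, avoiding the other critical components at every intermediate step, and with action at most $\dquasipotup_{\state,\jComp}+\precs$. Let $\margin > 0$ be the minimum over $\jComp$ and over intermediate times of $d(\dpth^\jComp_\run,\bigcup_{\lComp \neq \jComp} \gencomp_\lComp)$, which is positive by finiteness of $\gencompIndices$. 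Now demand that the neighborhoods $\V_\iComp$ satisfy $\V_\iComp \subset \U_{\margin/2}(\gencomp_\iComp)$, and pick $\marginalt \in (0,\margin/2]$ with $\U_\marginalt(\gencomp_\iComp) \subset \V_\iComp$ for every $\iComp$. On the event $\setof{\dist_{\nRuns_\jComp}(\accstate,\dpth^\jComp) < \marginalt}$ (with $\accstate_\start = \state$), the intermediate iterates lie at distance $>\margin/2$ from every $\gencomp_\lComp$, $\lComp \neq \jComp$, hence outside $\bigcup_{\lComp \neq \jComp} \V_\lComp$, while the terminal iterate falls in $\U_\marginalt(\gencomp_\jComp) \subset \V_\jComp$. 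This realizes the one-step induced-chain transition $\state \mapsto \nhdaltalt_\jComp$, so applying \cref{cor:ldp_iterates_full} to the compact sublevel set $\pths_{\nRuns_\jComp}^{\setof{\state}}(\dquasipotup_{\state,\jComp}+\precs)$, which contains $\dpth^\jComp$, yields
\begin{equation*}
	\probalt_{\nhdaltalt}(\state,\nhdaltalt_\jComp)
	\;\geq\; \exp\!\parens*{-\frac{\dquasipotup_{\state,\jComp}+2\precs}{\step}}
\end{equation*}
for $\step$ small enough; relabelling $\precs$ then finishes the $\jComp$-th case.

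The only real difficulty is bookkeeping: the neighborhoods $\V_\iComp$ and the threshold $\step_0$ must be chosen uniformly in $\idxalt \in \gencompIndices$ so that the statement holds for all targets simultaneously. This is handled by taking minima over the finitely many indices $\jComp \in \gencompIndices$ of both the path-distance margin $\margin$ and the threshold $\step_0(\jComp)$ produced by \cref{cor:ldp_iterates_full}. No new large-deviation-theoretic difficulty arises beyond what \cref{cor:ldp_iterates_full} already encapsulates.
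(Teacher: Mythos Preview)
Your proposal is correct and follows essentially the same approach as the paper, which explicitly states that the proof is ``very similar'' to that of \cref{lem:est_trans_prob_lb} and omits it. Your observation that the connecting segment via \cref{lem:equivalence_classes_dpth_stay_close} can be dispensed with (since the near-optimal path already starts at the fixed point $\state$) is exactly the simplification one expects in passing from \cref{lem:est_trans_prob_lb} to this initial-transition variant.
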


\begin{lemma}
	\label{lem:est_trans_prob_ub_init}
	For any $\precs > 0$, $\infprecs > 0$, $\state \in \points$, for any small enough neighborhoods $\nhdaltalt_\idx$ of $\gencomp_\idx$, $\idx = 1, \dots, \nGencomps$ , there is some $\step_0 > 0$ such that for all $\idxalt \in \gencompIndices$, $0 < \step < \step_0$,
	\begin{equation}
		\probalt_{\nhdaltalt}(\state, \nhdaltalt_{\idxalt})
		\leq
        \begin{cases}
		\exp \parens*{
			- \frac{\dquasipotdown_{\state, \idxalt}}{\step}
			+ \frac{\precs}{\step}
        } \quad & \text{if } \dquasipotdown_{\state, \idxalt} < +\infty\,, \\
        \exp \parens*{- \frac{\infprecs}{\step}} \quad & \text{otherwise}\,.
        \end{cases}
	\end{equation}
\end{lemma}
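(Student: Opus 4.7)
The plan is to mirror the proof of \cref{lem:est_trans_prob_ub}, exploiting the fact that the only structural change is that the starting point is a single fixed $\state \in \points$ instead of an arbitrary point in a neighborhood $\V_\iComp$ of some component $\gencomp_\iComp$. Accordingly, the role of \cref{lem:alternate_dquasipotdown} is taken over by \cref{lem:alternate_initial_dquasipotdown}, which lets us approximate $\dquasipotdown_{\state, \idxalt}$ by the truncated quasi-potential $\dquasipotdown_{\state, \idxalt}^{\margin}$ for small enough $\margin$, and the quasi-potential-between-components step is replaced by the initial-quasi-potential definition from \cref{def:dquasipotalt_init}.

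First, given $\precs, \infprecs > 0$, I would pick $\margin > 0$ small enough that the $\U_{\margin/2}(\gencomp_\iComp)$ are pairwise disjoint and that $\dquasipotdown_{\state, \idxalt}^{\margin} \geq \dquasipotdown_{\state, \idxalt} - \precs$ when the latter is finite, and $\dquasipotdown_{\state, \idxalt}^{\margin} \geq \infprecs$ otherwise. I would then require $\V_\iComp \subset \U_{\margin/2}(\gencomp_\iComp)$ and use compactness of $\crit(\obj)$ to select $0 < \marginalt \leq \margin/2$ such that $\U_{2\marginalt}(\gencomp_\iComp) \subset \V_\iComp$ for every $\iComp$. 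The key geometric observation, identical to the one in the proof of \cref{lem:est_trans_prob_ub}, is that any discrete path $\dpth$ starting at $\state$, ending in $\U_{\marginalt}(\V_\idxalt)$, and with intermediate points outside $\bigcup_{\idxaltalt \neq \idxalt} \U_{\marginalt}(\gencomp_\idxaltalt)$ automatically lies in the class defining $\dquasipotdown_{\state, \idxalt}^{\margin}$, and therefore has discrete action at least $\dquasipotdown_{\state, \idxalt}^{\margin}$.

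With this setup in hand, I would decompose $\probalt_{\nhdaltalt}(\state, \nhdaltalt_{\idxalt}) = \prob_\state(\accstate_{\hittime_\V} \in \V_\idxalt)$ as the sum of $\prob_\state(\accstate_{\hittime_\V} \in \V_\idxalt,\, \hittime_\V < \nRuns)$ and $\prob_\state(\hittime_\V \geq \nRuns)$ for a time horizon $\nRuns$ to be chosen. The tail $\prob_\state(\hittime_\V \geq \nRuns)$ is handled by applying \cref{lem:est_going_back_to_crit} with $\open \gets \V$ and a compact $\bigcpt$ chosen to contain $\state$, combined with Markov's inequality applied to $\exp(\coef_0 \hittime_\V/\step)$; an appropriate choice of $\nRuns$ makes this term smaller than the target bound, exactly as in the proof of \cref{lem:est_trans_prob_ub}. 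For the early-hit term, the geometric observation converts the event $\setof{\accstate_{\hittime_\V} \in \V_\idxalt,\, \hittime_\V < \nRuns}$ into $\dist_{\nRuns}(\accstate, \pths_{\nRuns}^{\setof{\state}}(\dquasipotdown_{\state, \idxalt}^{\margin} - 2\precs)) > \marginalt/2$ (respectively $\pths_{\nRuns}^{\setof{\state}}(\infprecs - \precs)$ in the infinite case), to which the upper-bound half of \cref{cor:ldp_iterates_full} applies directly with $\cpt \gets \setof{\state}$.

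The main obstacle is essentially bookkeeping rather than conceptual: the argument goes through verbatim because both \cref{cor:ldp_iterates_full} and \cref{lem:est_going_back_to_crit} are uniform over the starting point in any fixed compact set, a condition trivially met by the singleton $\setof{\state}$. The proof should therefore be an almost mechanical transcription of the proof of \cref{lem:est_trans_prob_ub}, with the starting neighborhood $\V_\iComp$ replaced by $\setof{\state}$ throughout and with \cref{lem:alternate_initial_dquasipotdown} substituted for \cref{lem:alternate_dquasipotdown}; this is likely why the authors chose to omit the details.
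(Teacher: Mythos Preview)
Your proposal is correct and takes essentially the same approach as the paper, which explicitly omits the proof of \cref{lem:est_trans_prob_ub_init} on the grounds that it is ``very similar'' to that of \cref{lem:est_trans_prob_ub}. Your identification of the two substitutions---\cref{lem:alternate_initial_dquasipotdown} for \cref{lem:alternate_dquasipotdown}, and the singleton $\setof{\state}$ for the neighborhood $\V_\iComp$---is exactly the adaptation the paper has in mind.
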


Following the same methodology as for transitions between critical components, we now establish similar results for transitions starting from an arbitrary initial state.

\begin{definition}[{Inspired by \citet[Chap.~6,\S2]{FW98}}]
    \label{def:finaldquasipotalt_init}
    For $\state \in \points$, $\jComp \in \gencompIndices$, $\margin> 0$,
    \begin{align}
           \finaldquasipotup_{\state, \jComp} \defeq &
        \inf \setdef*{\daction_{\nRuns}(\dpth)}{\nRuns \geq 1, \dpth \in \restrdcurvesat{\nRuns},\, \dpth_\start = \state,\, \dpth_{\nRuns - 1} \in \gencomp_\jComp,\, \dpth_\run \notin \bigcup_{\lComp \neq \jComp,\, \lComp > \nComps} \gencomp_\lComp\, \text{ for all }\run = 1, \dots, \nRuns - 2}
    \end{align}
\end{definition}

This definition mirrors the one of \cref{def:finaldquasipot}, adapting it to account for an arbitrary initial state rather than starting from a critical component. We can then establish the following decomposition result, analogous to \cref{lem:dquasipot_to_finaldquasipot}.

\begin{lemma}
    \label{lem:dquasipot_to_finaldquasipot_init}
    For any $\state \in \points$, $\jComp \in \gencompIndices$,
    \begin{align}
        \finaldquasipotup_{\state, \jComp}
        & =
        \min
        \setdef*{
            \dquasipotup_{\state, \idx_1} + \sum_{\idxaltalt = 1}^{\run-2} \dquasipotup_{\idx_{\idxaltalt}, \idx_{\idxaltalt + 1}}
        }{
            \idx_{\run - 1} = \jComp,\, \idx_{\idxaltalt} \in \compIndices \text{ for } \idxaltalt = 1, \dots, \run - 2,\, \run \geq 1
        } \\
        & =
        \min
        \setdef*{
            \dquasipotup_{\state, \idx_1} + \sum_{\idxaltalt = 1}^{\nComps-1} \dquasipotup_{\idx_{\idxaltalt}, \idx_{\idxaltalt + 1}}
        }{
            \idx_{\nComps} = \jComp,\, \idx_{\idxaltalt} \in \compIndices \text{ for } \idxaltalt = 1, \dots, \nComps - 1
        }
    \end{align}
\end{lemma}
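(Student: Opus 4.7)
The plan is to mimic the proof of Lemma \ref{lem:dquasipot_to_finaldquasipot}, adapting the arguments to accommodate an arbitrary starting state $\state$ in place of a critical component $\gencomp_\iComp$. As in that proof, the second equality follows at once from the first: all transition costs $\dquasipotup_{\cdot,\cdot}$ are non-negative, so any optimal sequence of components can be taken without repeats, and its length is therefore bounded by $\nComps + 1$ (at most $\nComps - 1$ distinct non-target intermediate indices, followed by $\jComp$, and preceded by the initial transition from $\state$). So I focus on the first equality, and denote its right-hand side by $\mathrm{RHS}(\state,\jComp)$.

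For the inequality $\finaldquasipotup_{\state,\jComp} \geq \mathrm{RHS}(\state,\jComp)$, I would fix $\precs > 0$ and pick, by Definition \ref{def:finaldquasipotalt_init}, a path $\dpth \in \restrdcurvesat{\nRuns}$ with $\dpth_\start = \state$, $\dpth_{\nRuns - 1} \in \gencomp_\jComp$, avoiding every $\gencomp_\lComp$ with $\lComp \neq \jComp$ and $\lComp > \nComps$, and satisfying $\daction_\nRuns(\dpth) \leq \finaldquasipotup_{\state,\jComp} + \precs$. Let $1 \leq \run_1 < \dots < \run_\nRunsalt \leq \nRuns - 2$ be the indices at which $\dpth$ visits non-target components, set $\idx_\runalt \in \compIndices$ so that $\dpth_{\run_\runalt} \in \gencomp_{\idx_\runalt}$, and put $\idx_{\nRunsalt + 1} = \jComp$. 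Splitting $\dpth$ at these times and applying the definitions of $\dquasipotup_{\state,\idx_1}$ (Definition \ref{def:dquasipotalt_init}) and $\dquasipotup_{\idx_\runalt,\idx_{\runalt+1}}$ (Definition \ref{def:dquasipotalt}) to each sub-path immediately yields
\begin{equation*}
\daction_\nRuns(\dpth) \;\geq\; \dquasipotup_{\state,\idx_1} + \sum_{\runalt=1}^{\nRunsalt} \dquasipotup_{\idx_\runalt,\idx_{\runalt+1}} \;\geq\; \mathrm{RHS}(\state,\jComp),
\end{equation*}
and letting $\precs \to 0$ closes this direction.

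For the reverse bound, I would fix $\precs > 0$, pick a near-optimal sequence $(\idx_1, \dots, \idx_{\run-1})$ with $\idx_{\run-1} = \jComp$ achieving $\mathrm{RHS}(\state,\jComp) + \precs$, and build a competitor path for $\finaldquasipotup_{\state,\jComp}$ by concatenation: first a near-optimal realizer of $\dquasipotup_{\state,\idx_1}$, then, alternately, bridge segments inside each intermediate component $\gencomp_{\idx_\runalt}$ supplied by Lemma \ref{lem:equivalence_classes_dpth_stay_close} with arbitrarily small action, and near-optimal realizers of the successive $\dquasipotup_{\idx_\runalt,\idx_{\runalt+1}}$. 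The main technical obstacle, exactly as in Lemma \ref{lem:dquasipot_to_finaldquasipot}, is ensuring that the resulting concatenation remains admissible for $\finaldquasipotup_{\state,\jComp}$, \ie it does not accidentally traverse an excluded target component $\gencomp_\lComp$ with $\lComp > \nComps$ and $\lComp \neq \jComp$. This is handled just as before: the bridge segments from Lemma \ref{lem:equivalence_classes_dpth_stay_close} can be forced to lie in an arbitrarily small tubular neighborhood of the corresponding $\gencomp_{\idx_\runalt}$, which, for a sufficiently small radius, is disjoint from every other critical component by the separation property recorded in \cref{app:subsec:setup}. Summing the action costs of all sub-paths, using the bound on the bridge costs, and letting $\precs \to 0$ yields $\finaldquasipotup_{\state,\jComp} \leq \mathrm{RHS}(\state,\jComp)$, which completes the proof.
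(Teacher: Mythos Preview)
Your proposal is correct and follows precisely the approach the paper intends: it explicitly states that the proof ``follows the same arguments as in \cref{lem:dquasipot_to_finaldquasipot}, replacing the initial component with the given initial state,'' and your sketch does exactly this, with the same split-and-concatenate strategy and the same use of \cref{lem:equivalence_classes_dpth_stay_close} for the bridge segments.
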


The proof follows the same arguments as in \cref{lem:dquasipot_to_finaldquasipot}, replacing the initial component with the given initial state.

These structural results allow us to establish bounds on the transition probabilities from an arbitrary initial state, paralleling those of \cref{lem:est_trans_prob_acc_ub,lem:est_trans_prob_acc_lb}.

\begin{lemma}[Upper bound on accelerated induced chain initial transition probability]
\label{lem:est_trans_prob_acc_ub_init}
For any $\state \in \points$, for any $\precs > 0$, $\infprecs > 0$, for any small enough neighborhoods $\nhdaltalt_\idx$ of $\gencomp_\idx$, $\idx = 1, \dots, \nGencomps$, there is some $\step_0 > 0$ such that for all $\idxalt \in \gencompIndices$, $0 < \step < \step_0$,
\begin{equation}
    \accprobalt_{\nhdaltalt}(\state, \nhdaltalt_{\idxalt})
    \leq
    \begin{cases}
    \exp \parens*{
        - \frac{\finaldquasipotdown_{\state, \idxalt}}{\step}
        + \frac{\precs}{\step}
    } \quad & \text{if } \finaldquasipotdown_{\state, \idxalt} < +\infty\,, \\
    \exp \parens*{- \frac{\infprecs}{\step}} \quad & \text{otherwise}\,.
    \end{cases}
\end{equation}
\end{lemma}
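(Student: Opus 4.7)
The argument is the initial-state analogue of \cref{lem:est_trans_prob_acc_ub}, so the plan is to replicate its structure, using the new decomposition result \cref{lem:dquasipot_to_finaldquasipot_init} in place of \cref{lem:dquasipot_to_finaldquasipot}. The first transition is handled by the initial-state bound \cref{lem:est_trans_prob_ub_init}, while all subsequent transitions are handled by the component-to-component bound \cref{lem:est_trans_prob_ub}.

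Concretely, I would first apply the Chapman-Kolmogorov decomposition from \cref{def:accelerated_induced_chain} to write
\begin{equation*}
\accprobalt_{\nhdaltalt}(\state,\nhdaltalt_{\jComp})
= \sum_{\idx_1,\dots,\idx_{\nComps-1}\in\compIndices}
\int_{\nhdaltalt_{\idx_1}}\!\!\cdots\!\!\int_{\nhdaltalt_{\idx_{\nComps-1}}}
\probalt_{\nhdaltalt}(\state, d\state_1)
\prod_{\runalt=1}^{\nComps-2} \probalt_{\nhdaltalt}(\state_\runalt, d\state_{\runalt+1})
\,\probalt_{\nhdaltalt}(\state_{\nComps-1},\nhdaltalt_{\jComp}),
\end{equation*}
where the intermediate indices $\idx_\runalt$ range over $\compIndices$ (since the sinks are absorbing). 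Shrinking the $\nhdaltalt_\iComp$'s and $\step_0$ enough so that both \cref{lem:est_trans_prob_ub,lem:est_trans_prob_ub_init} apply with tolerance $\precs/(2\nComps)$ and threshold $\infprecs$, each factor is bounded by the corresponding $\dquasipotdown$-exponential (or the $\infprecs$-exponential when the relevant cost is infinite).

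When $\finaldquasipotdown_{\state,\jComp}<\infty$, invoking \cref{lem:dquasipot_to_finaldquasipot_init} yields, for any sequence $(\idx_\runalt)$, the telescoping lower bound $\dquasipotdown_{\state,\idx_1}+\sum_{\runalt=1}^{\nComps-1}\dquasipotdown_{\idx_\runalt,\idx_{\runalt+1}} \geq \finaldquasipotdown_{\state,\jComp}$; combining with the per-factor bounds gives each summand at most $\exp(-\finaldquasipotdown_{\state,\jComp}/\step + \precs/(2\step))$. Summing over the at most $\nComps^{\nComps-1}$ sequences and absorbing this combinatorial factor into a further $\exp(\precs/(2\step))$ (which is possible by taking $\step_0$ small enough) delivers the stated upper bound. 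The degenerate case $\finaldquasipotdown_{\state,\jComp}=\infty$ is handled analogously: by \cref{lem:dquasipot_to_finaldquasipot_init}, in every admissible sequence at least one of the $\dquasipotdown$'s is infinite, so the corresponding factor contributes the $\infprecs$-bound from \cref{lem:est_trans_prob_ub} or \cref{lem:est_trans_prob_ub_init}, and the same combinatorial absorption finishes the argument.

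The only step requiring care is ensuring the neighborhoods $\nhdaltalt_\iComp$ can be chosen small enough \emph{simultaneously} for both \cref{lem:est_trans_prob_ub,lem:est_trans_prob_ub_init}, and that the threshold $\infprecs$ is consistent across the two regimes; this is routine since both lemmas only require taking neighborhoods below some $\step$-independent threshold. No new large-deviation input is needed beyond what has already been established.
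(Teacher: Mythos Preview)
Your proposal is correct and follows essentially the same approach the paper indicates: it explicitly states that the proof of \cref{lem:est_trans_prob_acc_ub_init} mirrors that of \cref{lem:est_trans_prob_acc_ub}, replacing the initial component by the given initial state, and your sketch carries this out by combining the Chapman--Kolmogorov decomposition with \cref{lem:est_trans_prob_ub_init} for the first step, \cref{lem:est_trans_prob_ub} for the remaining steps, and \cref{lem:dquasipot_to_finaldquasipot_init} for the cost identity.
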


\begin{lemma}[Lower bound on accelerated induced chain initial transition probability]
\label{lem:est_trans_prob_acc_lb_init}
For any $\state \in \points$, for any $\precs > 0$, for any small enough neighborhoods $\nhdaltalt_\idx$ of $\gencomp_\idx$, $\idx = 1, \dots, \nGencomps$, there is some $\step_0 > 0$ such that for all $\idxalt \in \gencompIndices$, $0 < \step < \step_0$,
\begin{equation}
    \accprobalt_{\nhdaltalt}(\state, \nhdaltalt_{\idxalt})
    \geq
    \exp \parens*{
        - \frac{\finaldquasipotup_{\state, \idxalt}}{\step}
        - \frac{\precs}{\step}
    }\,.
\end{equation}
\end{lemma}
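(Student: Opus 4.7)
The plan is to mirror the argument sketched for \cref{lem:est_trans_prob_acc_ub_init} but on the lower-bound side, combining the path decomposition of \cref{lem:dquasipot_to_finaldquasipot_init} with the single-step lower bounds of \cref{lem:est_trans_prob_lb,lem:est_trans_prob_lb_init}. Concretely, fix $\state \in \points$, $\jComp \in \gencompIndices$ and $\precs > 0$. By \cref{lem:dquasipot_to_finaldquasipot_init}, pick an $(\nComps{+}1)$-length sequence of indices $\idx_0,\idx_1,\dots,\idx_\nComps$ (with $\idx_\nComps = \jComp$ and $\idx_\idxaltalt \in \compIndices$ for $1 \le \idxaltalt \le \nComps-1$) such that
\begin{equation}
\dquasipotup_{\state,\idx_1} + \sum_{\idxaltalt=1}^{\nComps-1}\dquasipotup_{\idx_\idxaltalt,\idx_{\idxaltalt+1}}
\le \finaldquasipotup_{\state,\jComp} + \tfrac{\precs}{2}.
\end{equation}

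Next, by \cref{def:accelerated_induced_chain}, the accelerated transition probability can be expanded via Chapman--Kolmogorov as
\begin{equation}
\accprobalt_{\nhdaltalt}(\state,\nhdaltalt_\jComp)
= \sum_{\idxalt_1,\dots,\idxalt_{\nComps-1} \in \compIndices}
\int_{\nhdaltalt_{\idxalt_1}}\!\cdots\!\int_{\nhdaltalt_{\idxalt_{\nComps-1}}}
\probalt_\nhdaltalt(\state,d\statealt_1)\cdots\probalt_\nhdaltalt(\statealt_{\nComps-1},\nhdaltalt_\jComp),
\end{equation}
and I would retain only the single summand corresponding to the distinguished sequence $(\idx_1,\dots,\idx_{\nComps-1})$. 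Applying \cref{lem:est_trans_prob_lb_init} to the initial factor (with tolerance $\precs/(2\nComps)$), and \cref{lem:est_trans_prob_lb} to each subsequent factor (uniformly over $\statealt_\idxaltalt \in \nhdaltalt_{\idx_\idxaltalt}$, with the same tolerance), the product telescopes into
\begin{equation}
\accprobalt_{\nhdaltalt}(\state,\nhdaltalt_\jComp)
\ge \exp\!\parens*{-\frac{1}{\step}\bracks*{\dquasipotup_{\state,\idx_1} + \textstyle\sum_{\idxaltalt=1}^{\nComps-1}\dquasipotup_{\idx_\idxaltalt,\idx_{\idxaltalt+1}} + \tfrac{\precs}{2}}}
\ge \exp\!\parens*{-\frac{\finaldquasipotup_{\state,\jComp}+\precs}{\step}}.
\end{equation}

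To make this rigorous, the only subtlety I expect is the compatibility of the neighborhood sizes: the single-step lemmas require the $\nhdaltalt_\idx$'s to be chosen small enough \emph{for a given} $\precs$, but here one must select a common family of neighborhoods that works simultaneously for the initial transition and for the $\nComps-1$ intermediate ones. Since there are only finitely many pairs $(\idx,\idxalt)$ to track (at most $\nGencomps^2$), one first applies \cref{lem:est_trans_prob_lb} and \cref{lem:est_trans_prob_lb_init} with tolerance $\precs/(2\nComps)$, then takes the intersection of the finitely many admissible neighborhood families and the minimum of the finitely many $\step_0$'s thus produced. Taking a final $\step_0$ small enough to absorb the multiplicative constants into the exponent (as at the end of the proof of \cref{lem:est_trans_prob_acc_ub}) yields the claimed bound. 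The main obstacle is really bookkeeping, since the underlying per-step estimates have already been established earlier in the appendix; the proof is otherwise formally identical to the lower-bound companion of \cref{lem:est_trans_prob_acc_ub_init}, with the first factor replaced by the initial-state variant of \cref{lem:est_trans_prob_lb_init}.
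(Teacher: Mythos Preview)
Your proposal is correct and follows essentially the same approach the paper indicates: it says the proof ``follows very closely those of \cref{lem:est_trans_prob_acc_ub,lem:est_trans_prob_acc_lb}, with the main modification being the treatment of the initial state instead of an initial component,'' using the Chapman--Kolmogorov decomposition together with the single-step bounds. One cosmetic point: the Chapman--Kolmogorov identity you display as an equality should strictly be a ``$\geq$'' (intermediate steps could land in a target neighborhood and get absorbed there when $\jComp\in\verticesalt$), but since you immediately drop all but one summand this has no bearing on the argument.
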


The proofs of these last two lemmas follow very closely those of \cref{lem:est_trans_prob_acc_ub,lem:est_trans_prob_acc_lb}, with the main modification being the treatment of the initial state instead of an initial component. The key arguments involving the Chapman-Kolmogorov equation and the handling of the intermediate transitions remain essentially unchanged.

\subsection{Transition time}
\label{app:trans:transition_time}

Let us begin with a preliminary escape result.
	\begin{lemma}
		\label{lem:fast_escape_nbd_eqcl}
		Given $\connectedcomp$ a connected components of $\allregcomps$,  for any $\precs > 0$, for $\step > 0$ small enough and $\V$ a small enough neighborhood of $\connectedcomp$, it holds that, for any $\point \in \V$,
		\begin{equation}
			\ex_{\point} \bracks*{\exittime_{\V}}
			\leq  \exp
			\parens*{
				\frac{\precs}{\step}
			}\,.
		\end{equation}
	\end{lemma}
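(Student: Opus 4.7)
The plan is three-fold: (i) exhibit, for every starting point $\point \in \V$, a short discrete path of small action that exits $\V$; (ii) convert this into a uniform-in-$\point$ lower bound on the probability of exiting $\V$ in bounded time, via the large deviation lower bound of \cref{cor:ldp_iterates_full}; (iii) deduce the required upper bound on $\ex_\point\bracks*{\exittime_\V}$ by a geometric Markov argument.

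For step (i), fix $\precs>0$. Apply \cref{lem:W_nbd} to pick $\radius\in(0,\radius_0]$ small, set $\V \defeq \nbdalt_\radius(\connectedcomp)$, and shrink $\radius$ further so that $\V$ is disjoint from the other critical components (possible by compactness and pairwise disjointness of the components of $\crit(\obj)$). Apply \cref{lem:equivalence_classes_dpth_stay_close} to $\connectedcomp$ with tolerance $\precs/8$, obtaining an integer $\nRuns_0\geq 1$ such that any two points of $\connectedcomp$ can be joined inside $\connectedcomp$ by a discrete path in $\restrdcurvesat{\nRuns_0}$ of action at most $\precs/8$. Fix a basepoint $\pointB \in \connectedcomp$ and choose an ``exit gate'' $\pointC \in \vecspace \setminus \V$ with $\rate(\pointB,\pointC)\leq 2\radius$; by subadditivity of $\rate$, the sub-level set $\{\pointC : \rate(\pointB,\pointC) \leq \radius + \delta\}$ meets $\vecspace \setminus \V$ for arbitrarily small $\delta>0$. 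Now, for any $\point\in\V$, the condition $\rate(\point,\connectedcomp)<\radius$ built into the definition of $\nbdalt_\radius$ supplies some $\pointalt \in \connectedcomp$ with $\rate(\point,\pointalt) < \radius$; concatenating $\point \to \pointalt$, the intra-component path $\pointalt \to \pointB$, and the one-step exit $\pointB \to \pointC$ produces $\dpth^\point \in \restrdcurvesat{\nRuns}$ with $\nRuns \defeq \nRuns_0+2$, satisfying $\dpth^\point_\start=\point$, $\dpth^\point_{\nRuns-1}=\pointC \notin \V$, and $\daction_\nRuns(\dpth^\point)\leq \radius + \precs/8 + 2\radius \leq \precs/4$ once $\radius$ is small enough.

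For steps (ii) and (iii), set $\margin_0\defeq\dist(\pointC,\V)>0$ and apply \cref{cor:ldp_iterates_full} with $\cpt\gets\overline{\V}$, $\level\gets\precs/4$, $\margin\gets\margin_0/2$, and tolerance $\precs/4$: there exists $\step_0>0$ such that for all $0<\step\leq\step_0$ and every $\point\in\V$,
\begin{equation*}
\prob_\point\parens*{\dist_\nRuns(\accstate,\dpth^\point) < \margin_0/2}
\geq \exp\parens*{-\tfrac{\daction_\nRuns(\dpth^\point)+\precs/4}{\step}}
\geq e^{-\precs/(2\step)}.
\end{equation*}
On this event $\accstate_{\nRuns-1}$ lies within $\margin_0/2$ of $\pointC\notin\V$, hence outside $\V$, so $\exittime_\V \leq \nRuns-1$. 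Iterating via the strong Markov property at the times $k(\nRuns-1)$ yields $\prob_\point(\exittime_\V > k(\nRuns-1)) \leq (1-e^{-\precs/(2\step)})^k$ for every $k \geq 0$, whence $\ex_\point\bracks*{\exittime_\V} \leq (\nRuns-1)\, e^{\precs/(2\step)}$. Since $\nRuns-1$ is a fixed integer independent of $\step$, for $\step$ small enough one has $\nRuns-1 \leq e^{\precs/(2\step)}$, which gives the desired bound $\ex_\point\bracks*{\exittime_\V} \leq e^{\precs/\step}$.

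The main technical obstacle lies in step (i), specifically the uniform-in-$\point$ construction of both the entry step into $\connectedcomp$ and the low-cost exit gate $\pointC$. Both rely delicately on the $\rate$-based ``soft'' neighborhood $\nbdalt_\radius$ of \cref{lem:W_nbd} rather than a purely metric neighborhood, because direct upper bounds on the Lagrangian (and hence on $\rate$) are not available from the blanket assumptions alone---only lower bounds via the sub-Gaussian tails (\cref{lem:csq_subgaussian}) are at our disposal. Uniformity of the intra-component traversal rests on \cref{lem:equivalence_classes_dpth_stay_close}, which in turn leverages the path-connectedness hypothesis in \cref{asm:crit} together with the compactness of $\connectedcomp$.
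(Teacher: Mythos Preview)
Your three-step plan matches the paper's proof, and steps (ii)--(iii) are fine. The gap is in step (i), specifically the construction of the exit gate $\pointC$.

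You fix $\V = \nbdalt_\radius(\connectedcomp)$ \emph{first} and then try to locate $\pointC \in \vecspace \setminus \V$ with $\rate(\pointB,\pointC)\leq 2\radius$, appealing to ``subadditivity of $\rate$''. But $\rate$ is the one-step action infimum; it is not a metric and satisfies no subadditivity that would force the sublevel set $\{\rate(\pointB,\cdot)\leq \radius+\delta\}$ to spill outside $\nbdalt_\radius(\connectedcomp)$. Nothing in the stated properties prevents this sublevel set from sitting entirely inside $\V$. Moreover, even if such a $\pointC$ exists, your $\margin_0 = \dist(\pointC,\V)$ need not be positive: $\pointC$ could lie on $\bd\V$, since you only asked for $\pointC\in\vecspace\setminus\V$ with $\V$ open.

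The paper avoids both issues by reversing the order: it first picks $\point_3 \in \W_\precs(\connectedcomp)\setminus\connectedcomp$ (this set is nonempty and open because $\connectedcomp$ is closed and $\W_\precs(\connectedcomp)$ is an open neighborhood of it), fixes an open ball $\opball(\point_3,\margin)\subset \W_\precs(\connectedcomp)\setminus\connectedcomp$, and only \emph{then} requires $\V\subset \W_\precs(\connectedcomp)\setminus\clball(\point_3,\margin/2)$. This is still an open neighborhood of $\connectedcomp$, and now $\dist(\point_3,\V)\geq\margin/2>0$ is automatic, while membership $\point_3\in\W_\precs(\connectedcomp)$ directly supplies $\pointB\in\connectedcomp$ with $\rate(\pointB,\point_3)<\precs$. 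With the exit gate chosen this way, your concatenation and the remainder of your argument go through verbatim.
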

	\begin{proof}
		\UsingNamespace{proof:lem:fast_escape_nbd_eqcl}
		Fix $\precs > 1$.
		By \cref{lem:W_nbd}, $\W_\precs(\connectedcomp)$ is an open neighborhood of $\connectedcomp$.
		Since $\connectedcomp$ is closed, $\W_\precs(\connectedcomp) \setminus \connectedcomp$ is not empty and open. Therefore, there exists $\point_3 \in \W_\precs(\connectedcomp) \setminus \connectedcomp$ and $\margin > 0$ such that $\opball(\point_3, \margin) \subset \W_\precs(\connectedcomp) \setminus \connectedcomp$.
		By construction, there exists $\point_2 \in \connectedcomp$ such that $\rate(\point_2, \point_3) < \precs$.

		Let us now apply \cref{lem:equivalence_classes_dpth_stay_close} to obtain that there exists $\nRuns \geq 2$ such that, for any $\point_1 \in \connectedcomp$, there exists $\run \leq  \nRuns$, $\dpth \in \restrdcurvesat{\nRuns}$ such that $\dpth_\start = \point_1$, $\dpth_{\nRuns - 1} = \point_2$, $\daction_\run(\dpth) < \precs$.

		Require then that $\V$ be contained in $\W_\precs(\connectedcomp) \setminus \clball(\point_3, \margin / 2)$ which is an open neighborhood of $\connectedcomp$.

		Take $\point \in \V$. Since $\V$ is in particular contained in $\W_\precs(\connectedcomp)$, there exists $\point_1 \in \connectedcomp$ such that $\rate(\point, \point_1) < \precs$.
		By our application of \cref{lem:equivalence_classes_dpth_stay_close} above, there exists $\run \leq \nRuns$, $\dpth \in \restrdcurvesat{\run}$ such that $\dpth_\start = \point_1$, $\dpth_{\run - 1} = \point_2$, $\daction_\run(\dpth) < \precs$.
		Now consider the path $\dpthalt \in \restrdcurvesat{\run+2}$ defined by
		\begin{equation}
			\dpthalt = (\point, \dpth_0, \dpth_1, \dots, \dpth_{\run - 1}, \point_3) \,,
		\end{equation}
		which satisfies
		\begin{align}
			\daction_{\run + 3}(\dpthalt) & = \rate(\point, \point_1) + \daction_{\run}(\dpth) + \rate(\point_2, \point_3) 
                                          < 3\precs\,.
			\LocalLabel{eq:daction_bound}
		\end{align}

        If a trajectory of \ac{SGD} $\accstate$ with $\init[\accstate] = \point$ satisfies $\dist_{\run + 2}( \accstate, \dpthalt) < \margin / 2$, then $\accstate_{\run + 1}$ is inside $\opball(\point_3, \margin / 2)$ and therefore not in $\V$.

		Hence, we have, for $\step > 0$ small enough,
		\begin{align}
			\prob_{\point}(\exittime_{\V} < \nRuns + 3)
			 & \geq
			\prob_{\point}(\exittime_{\V} < \run + 3)                           
			\notag\\
			 & \geq
			\prob_{\point}(\dist_{\run + 2}(\accstate, \dpthalt) < \margin / 2)
			\notag\\
			 & \geq
			\exp \parens*{
				- \frac{\daction_{\run + 2}(\dpthalt) + \precs}{\step}
			}
			\notag\\
			 & \geq
			\exp \parens*{
				- \frac{4 \precs}{\step}
			} \,, \LocalLabel{eq:proba_bd_2}
		\end{align}
		where we invoked \cref{cor:ldp_iterates_full}
		and used \cref{\LocalName{eq:daction_bound}}.

		For any $\point \in \V$, for $\run \geq 2$, the (weak) Markov property yields that
		\begin{align}
			\prob_\point \parens*{\exittime_{\V} > \run (\nRuns + 2)}
			 & =
			\ex_{\point} \bracks*{
				\oneof{\exittime_{\V} > \nRuns +2}
				\ex_{\accstate_{\nRuns + 2}} \bracks*{
					\oneof{\exittime_{\V} > \nRuns + 2}
					\dots
					\ex_{\accstate_{(\run - 2) (\nRuns + 1)}} \bracks*{
						\oneof{\exittime_{\V} > \nRuns + 2}
					}
					\dots
				}
			}
			\notag\\
			 & \leq
			\parens*{2 - \exp \parens*{
					- \frac{4 \precs}{\step}
				}}^\run \,, \LocalLabel{eq:proba_bd_3}
		\end{align}
		with \cref{\LocalName{eq:proba_bd_2}}.

		We can now finally estimate the expectation of $\exittime_{\V}$: for $\point \in \V$,
		\begin{align}
			\ex_{\point} \bracks*{\exittime_{\V}}
			 & =
			\sum_{\run = 1}^\infty \prob_{\point}(\exittime_{\V} > \run)                                                \notag\\
			 & =
			\sum_{\run=1}^\infty \sum_{\runalt=0}^{\nRuns} \prob_{\point}(\exittime_{\V} > \run (\nRuns + 1) + \runalt)
			\notag\\
			 & \leq
			(\nRuns + 2) \sum_{\run=0}^\infty \prob_{\point}(\exittime_{\V} > \run (\nRuns + 1))                        \notag\\
			 & \leq
			(\nRuns + 2) \sum_{\run=0}^\infty \parens*{1 - \exp \parens*{
					- \frac{4 \precs}{\step}
			}}^\run                                                                                                     \notag\\
			 & =
			(\nRuns + 2) \exp \parens*{
				\frac{4 \precs}{\step}
			} \,,
		\end{align}
		where we used \cref{\LocalName{eq:proba_bd_3}}.

		Finally, taking $\step > 0$ small enough so that $\nRuns + 1 \leq  \exp \parens*{\frac{\precs}{\step}}$ yields that
		\begin{equation}
			\ex_{\point} \bracks*{\exittime_{\V}} \leq \exp \parens*{\frac{5 \precs}{\step}}\,,
		\end{equation}
		which concludes the proof.
	\end{proof}

\begin{lemma}
	\label{prop:bd_transition_time}
    For any $\initset \subset \vecspace$ compact,
	for any $\precs > 0$, for any small enough neighborhoods $\nhdaltalt_\idx$ of $\gencomp_\idx$, $\idx = 1, \dots, \nGencomps$ , there is some $\step_0 > 0$ such for all $0 < \step < \step_0$, for any $\state \in \initset \setminus \bigcup_{\idx = \nComps + 1}^{\nGencomps} \nhdaltalt_\idx$,
	\begin{equation}
		1 \leq \ex_{\state}[\hittime_1] \leq e^{\frac{\precs}{\step}}\,.
	\end{equation}
\end{lemma}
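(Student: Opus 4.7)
\medskip

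\noindent\textbf{Proof strategy.} The plan is to split on whether the initial state $\state$ lies inside one of the non-target neighborhoods $\V_{\iComp_0}$ with $\iComp_0 \leq \nComps$, or outside $\V = \bigcup_{\iComp=1}^{\nGencomps} \V_\iComp$ altogether, and to bound $\ex_\state[\hittime_1]$ uniformly on $\initset$ in each case. The lower bound $\ex_\state[\hittime_1] \geq 1$ is immediate: in both situations the induced-chain mechanism of \cref{def:induced_chain} forces $\hittime_1 \geq 1$ almost surely. Indeed, if $\state \notin \V$ then $\accstate_\start = \state$ is not in $\V$, so it takes at least one accelerated step to reach $\V$; and if $\state \in \V_{\iComp_0}$ with $\iComp_0 \leq \nComps$, then $\exittime_1 \geq 1$ and hence $\hittime_1 \geq \exittime_1 \geq 1$.

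\medskip

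\noindent\textbf{Upper bound, outside $\V$.} When $\state \in \initset \setminus \V$, one has $\exittime_1 = \hittime_\start = 0$, so $\hittime_1$ coincides with the first hitting time of the open set $\V$. Since $\crit(\obj) \subset \V$, we apply \cref{lem:est_going_back_to_crit} with $\open \gets \V$ and $\bigcpt$ chosen to be a compact set containing $\initset \cup \V$. Setting $\coef = \coef_0$ and using Jensen, this yields a bound $\ex_\state[\hittime_1] \leq \constA + \step\constB/\coef_0$, which is bounded by a constant independent of $\step$, and in particular by $e^{\precs/\step}$ for $\step$ small enough.

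\medskip

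\noindent\textbf{Upper bound, inside a non-target neighborhood.} When $\state \in \V_{\iComp_0}$ with $\iComp_0 \leq \nComps$, the strong Markov property at $\exittime_1$ gives the decomposition
\begin{equation}
\ex_\state[\hittime_1]
    = \ex_\state[\exittime_1]
        + \ex_\state\bigl[\ex_{\accstate_{\exittime_1}}[\hittime_\V]\bigr].
\end{equation}
For the first term, after possibly shrinking $\V_{\iComp_0}$, \cref{lem:fast_escape_nbd_eqcl} (applied with tolerance $\precs/2$) gives $\ex_\state[\exittime_1] \leq e^{\precs/(2\step)}$ for all $\step$ small. Applying this simultaneously for the finitely many non-target components $\gencomp_1,\dotsc,\gencomp_\nComps$ requires only taking the smallest compatible neighborhood size. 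For the second term, the linear growth bound $\norm{\grad\obj(\point)},\norm{\noise(\point,\sample)} \leq \growth(1+\norm{\point})$ implies, via a Gronwall-type estimate over the $\floor{1/\step}$ inner steps, that $\norm{\accstate_{\exittime_1} - \accstate_{\exittime_1 - 1}}$ is bounded by a deterministic constant depending only on $\growth$; hence $\accstate_{\exittime_1}$ lies in an enlargement $\bigcpt$ of $\V$ that depends only on the chosen neighborhoods. Applying \cref{lem:est_going_back_to_crit} to this $\bigcpt$ with $\open \gets \V$ yields a constant bound on $\ex_{\accstate_{\exittime_1}}[\hittime_\V]$ uniform in $\state$.

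\medskip

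\noindent\textbf{Conclusion and main difficulty.} Combining the two cases gives $\ex_\state[\hittime_1] \leq e^{\precs/(2\step)} + \bigoh(1)$, which is dominated by $e^{\precs/\step}$ for all $\step$ small enough, uniformly over $\state \in \initset \setminus \bigcup_{\idx > \nComps} \V_\idx$. The main obstacle is the second case: one has to justify carefully that $\accstate_{\exittime_1}$ remains in a fixed compact set so that \cref{lem:est_going_back_to_crit} applies uniformly, which is where the linear growth hypothesis on $\grad\obj$ and $\noise$ plays a decisive role.
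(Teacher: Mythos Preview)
Your proposal is correct and follows essentially the same route as the paper: the same case split on whether $\state$ lies in some $\V_{\iComp_0}$ with $\iComp_0\le\nComps$ or outside $\V$, the same strong-Markov decomposition $\ex_\state[\hittime_1]=\ex_\state[\exittime_1]+\ex_\state[\ex_{\accstate_{\exittime_1}}[\hittime_\V]]$, and the same two inputs (\cref{lem:fast_escape_nbd_eqcl} for the escape term, \cref{lem:est_going_back_to_crit} for the return term). The one substantive difference is that you make explicit, via the linear-growth/Gronwall estimate, why $\accstate_{\exittime_1}$ remains in a fixed compact set so that \cref{lem:est_going_back_to_crit} applies uniformly; the paper's proof invokes that lemma with the compact set $\initset$ but does not spell out this containment, so your version is in fact a bit more careful here.
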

\begin{proof}
	\UsingNamespace{proof:prop:bd_transition_time}
	Let us begin with the \ac{LHS} inequality. There are two cases: either $\state$ belongs to $\bigcup_{\idx = 1}^{\nComps} \nhdaltalt_\idx$ or not.
	If $\state$ belongs to $\bigcup_{\idx = 1}^{\nComps} \nhdaltalt_\idx$, then by definition $\exittime_1 \geq 1$ and therefore $\hittime_1 \geq 1$.
	If $\state$ does not belong to $\bigcup_{\idx = 1}^{\nComps} \nhdaltalt_\idx$, then $\exittime_1 = 0$ but necessarily $\hittime_1 \geq 1$.
	Hence, in all cases, $\hittime_1 \geq 1$ so that the \ac{LHS} inequality holds.

	We now turn to the \ac{RHS} inequality.
	As before, we will separate the proof into two cases: either $\state$ belongs to $\bigcup_{\idx = 1}^{\nComps} \nhdaltalt_\idx$ or not.
	If $\state$ belongs to some $\V_\iComp$ for some $\iComp \in \compIndices$ then, by \cref{lem:fast_escape_nbd_eqcl}, we have that
	\begin{equation}
		\ex_{\state}[\exittime_1] = \ex_{\state}[\exittime_{\V_\iComp}]
		\leq e^{\frac{\precs}{\step}}\,.
		\LocalLabel{eq:bd_transition_time:escape}
	\end{equation}
	In particular, $\exittime_1$ is finite almost surely.
	Therefore, the strong Markov property implies that
	\begin{equation}
		\ex_{\state}[\hittime_1] = \ex_{\state}[\exittime_1] + \ex_{\state}[\ex_{\accstate_{\exittime_1}}[\hittime_{\V}]]\,. \LocalLabel{eq:bd_transition_time:strong_markov}
	\end{equation}
    Applying \cref{lem:est_going_back_to_crit} with $\U \gets \bigcup_{\idx = 1}^{\nGencomps} \nhdaltalt_\idx$ and $\points \gets \initset$,
    and using Jensen's inequality, we obtain that
	\begin{align}
		\ex_{\accstate_{\exittime_1}}[\hittime_{\V}]
		\leq
		\constA  + \frac{\constB \step_0}{\coef_0} \eqdef \constC\,.
	\end{align}
	Plugging this bound into \cref{\LocalName{eq:bd_transition_time:strong_markov}} and using \cref{\LocalName{eq:bd_transition_time:escape}} yields
	\begin{equation}
		\ex_{\state}[\hittime_1] \leq e^{\frac{\precs}{\step}} + \constC\,,
	\end{equation}
	which concludes the proof of this case.

	Finally, if $\state$ does not belong to $\bigcup_{\idx = 1}^{\nComps} \nhdaltalt_\idx$, then one only needs to apply \cref{lem:est_going_back_to_crit} as above to obtain the result.
\end{proof}

\begin{corollary}
\label{cor:bd_transition_time_K}
    For any $\initset \subset \vecspace$ compact,
    for any $\nRuns \geq 1$,
    for any $\precs > 0$, for any small enough neighborhoods $\nhdaltalt_\idx$ of $\gencomp_\idx$, $\idx = 1, \dots, \nGencomps$, there is some $\step_0 > 0$ such that for all $0 < \step < \step_0$, for any $\state \in \initset \setminus \bigcup_{\idx = \nComps + 1}^{\nGencomps} \nhdaltalt_\idx$,
    \begin{equation}
        1 \leq \ex_{\state}[\hittime_{\nRuns}] \leq  e^{\frac{\precs}{\step}}\,.
    \end{equation}
\end{corollary}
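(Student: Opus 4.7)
The strategy is induction on $\nRuns$, combined with the strong Markov property and \cref{prop:bd_transition_time} as the base case. The lower bound $\hittime_{\nRuns} \geq 1$ is immediate since $\hittime_{\nRuns} \geq \hittime_{1} \geq 1$ by monotonicity and \cref{prop:bd_transition_time}, so the only work lies in the upper bound.

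For the inductive step, I would decompose
\begin{equation}
\hittime_{\nRuns+1} = \hittime_{\nRuns} + \parens*{\hittime_{\nRuns+1} - \hittime_{\nRuns}} \notag
\end{equation}
and observe that the second term is either $0$ or a fresh ``first hitting time'' seen from $\accstate_{\hittime_{\nRuns}}$. Concretely, by the construction of the induced chain in \cref{def:induced_chain}, if $\accstate_{\hittime_{\nRuns}} \in \nhdaltalt_{\iComp}$ with $\iComp > \nComps$ (a sink), then $\exittime_{\nRuns+1} = \hittime_{\nRuns}$ and hence $\hittime_{\nRuns+1} = \hittime_{\nRuns}$; if instead $\iComp \leq \nComps$, then starting afresh from $\accstate_{\hittime_{\nRuns}}$ the random variable $\hittime_{\nRuns+1} - \hittime_{\nRuns}$ has the law of $\hittime_{1}$ under $\prob_{\accstate_{\hittime_{\nRuns}}}$. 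Applying the strong Markov property at $\hittime_{\nRuns}$ (which is finite $\prob_{\state}$-a.s.\ by the induction hypothesis) therefore gives
\begin{equation}
\ex_{\state}[\hittime_{\nRuns+1}] \leq \ex_{\state}[\hittime_{\nRuns}] + \sup_{\statealt \in \bigcpt'} \ex_{\statealt}[\hittime_{1}]\,, \notag
\end{equation}
where $\bigcpt' \defeq \bigcup_{\iComp=1}^{\nComps} \overline{\nhdaltalt_{\iComp}}$ is compact (since each $\gencomp_{\iComp}$ is compact and $\nhdaltalt_{\iComp}$ can be chosen bounded). Note that $\bigcpt'$ is disjoint from $\bigcup_{\iComp=\nComps+1}^{\nGencomps} \nhdaltalt_{\iComp}$ provided the $\nhdaltalt_{\iComp}$ are pairwise disjoint, which is guaranteed for sufficiently small neighborhoods.

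To chain the bound $\nRuns$ times while landing at the prescribed $\precs$, I would apply \cref{prop:bd_transition_time} with tolerance $\precs/2$ and enlarged compact set $\initset \cup \bigcpt'$: for $\step$ small enough, each single-step contribution is at most $e^{\precs/(2\step)}$, so by induction
\begin{equation}
\ex_{\state}[\hittime_{\nRuns}] \leq \nRuns \, e^{\precs/(2\step)}\,. \notag
\end{equation}
Finally, shrinking $\step_{0}$ further so that $\nRuns \leq e^{\precs/(2\step)}$ for all $\step \leq \step_{0}$ yields $\ex_{\state}[\hittime_{\nRuns}] \leq e^{\precs/\step}$, uniformly in $\state \in \initset \setminus \bigcup_{\iComp=\nComps+1}^{\nGencomps} \nhdaltalt_{\iComp}$.

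The only mild subtlety is ensuring that \cref{prop:bd_transition_time} can be reapplied at step $\nRuns+1$ with the \emph{same} family of neighborhoods $\nhdaltalt_{\iComp}$: this is fine because the conclusion of that lemma holds uniformly over all starting points in a fixed compact set disjoint from the sink neighborhoods, and the compact set $\initset \cup \bigcpt'$ can be fixed once and for all at the start of the argument (it depends on $\nRuns$ only through the finitely many $\nhdaltalt_{\iComp}$'s). No genuine obstacle arises; the main care is simply to fix the order of quantifiers as (neighborhoods, then $\step_{0}$), matching the statement.
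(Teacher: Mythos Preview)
Your proposal is correct and follows essentially the same approach as the paper: both telescope $\hittime_{\nRuns}$ into single-step increments, apply the strong Markov property together with \cref{prop:bd_transition_time} to bound each increment by $e^{\precs'/\step}$, and then absorb the factor $\nRuns$ by shrinking $\step_0$. Your treatment is in fact slightly more careful in two places: you explicitly enlarge the compact set to $\initset \cup \bigcpt'$ so that \cref{prop:bd_transition_time} applies uniformly at the intermediate states, and your lower bound $\hittime_{\nRuns} \geq \hittime_1 \geq 1$ is cleaner than the paper's claim $\hittime_{\runalt} \geq \runalt$ (which can fail once the chain is absorbed at a sink).
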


\begin{proof}
    The lower bound follows directly from the definition of $\hittime_{\nRuns}$ since $\hittime_\runalt \geq \runalt$ for all $\runalt \geq 1$ by construction.

    For the upper bound, first note that we can write $\hittime_{\nRuns}$ as a telescoping sum:
    \begin{equation}
        \hittime_{\nRuns} = \sum_{\runalt=0}^{\nRuns-1} (\hittime_{\runalt+1} - \hittime_{\runalt})\,,
    \end{equation}
    with the convention that $\hittime_0 = 0$. Therefore,
    \begin{equation}
        \ex_{\state}[\hittime_{\nRuns}] = \sum_{\runalt=0}^{\nRuns-1} \ex_{\state}[\hittime_{\runalt+1} - \hittime_{\runalt}]\,.
    \end{equation}
    
    By the strong Markov property applied at time $\hittime_{\runalt}$ for each term, we have
    \begin{align}
        \ex_{\state}[\hittime_{\runalt+1} - \hittime_{\runalt}] 
        &= \ex_{\state}[\ex_{\accstate_{\hittime_{\runalt}}}[\hittime_1]]
        \notag\\
        &\leq e^{\frac{\precs}{\step}}\,,
    \end{align}
    where the inequality follows from \cref{prop:bd_transition_time} since $\accstate_{\hittime_{\runalt}}$ belongs to $\bigcup_{\idx = 1}^{\nGencomps} \nhdaltalt_\idx$ by definition of $\hittime_{\runalt}$ for all $\runalt \geq 1$ (and for $\runalt=0$, we can apply \cref{prop:bd_transition_time} directly to the initial point $\state$).

    Summing over $\runalt$ from $0$ to $\nRuns-1$ yields
    \begin{align}
        \ex_{\state}[\hittime_{\nRuns}] &\leq \nRuns e^{\frac{\precs}{\step}}
        \notag\\
                                         &\leq  e^{\frac{2\precs}{\step}}\,,
    \end{align}
    for $\step$ small enough.
\end{proof}

\section{Finite-Time Analysis}
\label{app:finite-time}

\subsection{Markov Chains on finite state spaces}
\label{app:subsec:lemmas-mc}

In this subsection, we introduce the necessary notation and then restate a key lemma from \citet[Chap.~6,\S3]{FW98}.

Consider a finite set $\vertices$ and $\verticesalt \subseteq \vertices$.
Denote by $\integer \defeq \card \vertices \setminus \verticesalt$.

Given $\edgeprob : (\vertices \setminus \verticesalt) \times \vertices \to [0, 1]$, we define the probability of a $\graph$ with edges in $(\vertices \setminus \verticesalt) \times \vertices$ as
\begin{equation}
	\graphprobof{\graph}{\edgeprob} \defeq \prod_{\vertexA \to \vertexB \in \graph} \edgeprobof{\vertexA \to \vertexB}\,.
\end{equation}

A graph $\graph$ consisting of arrows $\vertexB \to \vertexC$ with $\vertexB \in \vertices \setminus \verticesalt$, $\vertexC \in \vertices$, $\vertexB \neq \vertexC$ is called a $\verticesalt$-graph on $\vertices$ if
\begin{enumerate}[label=(\roman*)]
	\item Every vertex in $\vertices \setminus \verticesalt$ has exactly one outgoing arrow.
	\item There are no cycles, or, equivalently, from every vertex in $\vertices \setminus \verticesalt$ there is a directed path to a vertex in $\verticesalt$.
\end{enumerate}
The set of $\verticesalt$-graphs is denoted by $\graphs(\verticesalt)$.

We denote by $\graphs(\vertexA \not\joins \verticesalt)$ the set of
graphs with exactly $\card (\vertices \setminus \verticesalt) - 1$ edges from $\vertices \setminus \verticesalt$ to $\vertices$, no cycles and no path from $\vertexA$ to $\verticesalt$.
Note that, equivalently, this set is made of all $\verticesalt$-graphs from which a single edge from the path from $\vertexA$ to $\verticesalt$ has been removed.

\begin{lemma}[{\citet[Chap.~6,\S3,Lem.~3.4]{FW98}}]
    \label{lem:mc-avg-exit-time}
    \UsingNamespace{lem:mc-avg-exit-time-lb}
	Consider a Markov Chain on state space $\statespace = \bigcup_{\vertex \in \vertices} \statespace_{\vertex}$ with $\statespace_{\vertex}, \vertex \in \vertices$ disjoint and non-empty, with transition probabilities that satisfy:
		for $\vertexA \in \vertices \setminus \verticesalt$, $\vertexB \in \vertices$ with $\vertexA \neq \vertexB$,
		      \begin{align}
                  \forall \state \in \statespace_{\vertexA},\quad
                  \transerror^{-1} \transprobdown_{\vertexA \vertexB}
                  \leq 
			      \prob(\state, \statespace_\vertexB)
			      \leq
			      \transerror \transprobup_{\vertexA \vertexB}\,,
			      \LocalLabel{eq:bds-prob}
		      \end{align}
              for some $\transerror \geq 1$, $\transprobup_{\vertexA \vertexB} > 0$.
	For $\vertexA \in \vertices \setminus \verticesalt$, the expected time to reach $\verticesalt$ when starting at $\state$  denoted by $\entrtimeof{\verticesalt}{\state}$ satisfies: for all $\state \in \statespace_{\vertexA}$,
		      \begin{equation}
			      \transerror^{3 \times 4^{\integer}}
			      \frac{
				      \sum_{\graph \in \graphs(\vertexA \not\joins \verticesalt)} \graphprobof{\graph}{\edgeprobup}
			      }{
				      \sum_{\graph \in \graphs} \graphprobof{\graph}{\edgeprobup}
			      }
                  \leq \entrtimeof{\verticesalt}{\state} \leq
                  \transerror^{3 \times 4^{\integer}}
			      \frac{
				      \sum_{\graph \in \graphs(\vertexA \not\joins \verticesalt)} \graphprobof{\graph}{\edgeprobdown}
			      }{
				      \sum_{\graph \in \graphs} \graphprobof{\graph}{\edgeprobdown}
			      }
		      \end{equation}
\end{lemma}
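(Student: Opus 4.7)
The plan is to view this statement as a partitioned-state-space version of the classical Freidlin--Wentzell identity for hitting times in finite Markov chains (\cite[Chap.~6,\S3,Lem.~3.4]{FW98}), and to import the corresponding proof essentially verbatim. The first step would be to reduce the problem to a genuine finite-state chain on the index set $\vertices$ by tracking only the block index $\vertex$ such that the current state lies in $\statespace_\vertex$. Since the blocks are disjoint and the absorbing set $\verticesalt$ is defined blockwise, the hitting time of $\verticesalt$ for the original chain coincides with the hitting time for this induced chain; the hypothesis of the lemma then guarantees that its one-step transition probabilities, which depend on $\state \in \statespace_\vertexA$ only through the bounds $\transerror^{-1}\transprobdown_{\vertexA\vertexB}$ and $\transerror\,\transprobup_{\vertexA\vertexB}$, are sandwiched by those two envelopes up to the constant $\transerror$.

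Next, I would invoke the matrix-tree identity for finite absorbing Markov chains (\cite[Chap.~6,\S3,Lem.~3.3]{FW98}), which provides the exact representation
\[
\entrtimeof{\verticesalt}{\vertexA} = \frac{\sum_{\graph \in \graphs(\vertexA \not\joins \verticesalt)} \graphprobof{\graph}{\edgeprob}}{\sum_{\graph \in \graphs(\verticesalt)} \graphprobof{\graph}{\edgeprob}},
\]
where $\edgeprob$ denotes the actual transition weights of the induced chain. Every graph in the denominator has $\integer$ edges and every graph in the numerator has $\integer - 1$ edges, so substituting the two-sided sandwich on $\edgeprob$ produces bounds of the same shape with $\edgeprobup$ (respectively $\edgeprobdown$) in place of $\edgeprob$, at the cost of a multiplicative factor that is polynomial in $\transerror$ and $\integer$. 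Choosing $\edgeprobup$ throughout yields the lower bound for $\entrtimeof{\verticesalt}{\state}$ and $\edgeprobdown$ throughout yields the upper bound.

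The subtle part, and the main obstacle, is to justify keeping the \emph{same} envelope on both sides of each ratio instead of mixing $\edgeprobup$ and $\edgeprobdown$ between numerator and denominator (which is what the direct sandwich produces). Aligning the envelopes requires the combinatorial comparison argument from the proof of \cite[Chap.~6,\S3,Lem.~3.4]{FW98}: by pairing each $\graphs(\verticesalt)$-graph in the denominator with sub-trees of $\graphs(\vertexA \not\joins \verticesalt)$-graphs in the numerator and trading edges one at a time, one can convert $\edgeprobdown$-weights into $\edgeprobup$-weights at the cost of $\transerror^{3}$ per edge traded. The number of trades needed is crudely controlled by the number of labelled graphs on $\integer$ vertices, which yields the factor $4^\integer$ in the exponent and, combined with the per-edge cost, the constant $\transerror^{3 \times 4^\integer}$ appearing in the lemma. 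None of the individual steps is deep, but tracking the combinatorics carefully is where I expect the work to lie.
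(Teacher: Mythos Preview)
The paper does not prove this lemma: it is stated with attribution to \citet[Chap.~6,\S3,Lem.~3.4]{FW98} and used as a black box, with no proof given in the paper itself. Your sketch of the Freidlin--Wentzell argument (reduction to the finite index chain, the matrix-tree identity of their Lem.~3.3, and the combinatorial edge-trading that produces the $\transerror^{3\times 4^\integer}$ factor) is a reasonable outline of how the cited result is established in the source, but for the purposes of this paper no proof is required beyond the citation.
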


\subsection{Hitting time of the accelerated process}
\label{app:subsec:hitting_time_acc_process}

We will instantiate the lemmas of the previous section \cref{app:subsec:lemmas-mc} with $\vertices = \gencompIndices$, $\verticesalt = \setof{\nComps+1,\dots,\nGencomps}$ and $\statespace_{\iComp} = \V_{\iComp}$ for $\iComp \in \gencompIndices$. We have $\integer = \nGencomps - \nComps$.
\revise{Note that the notation $\verticesalt$ is in line with the notation of the main text where we denote by $\verticesalt$ both the set of indices $\setof{\nComps+1,\dots,\nGencomps}$ and the set of union of the corresponding components 
    $\bigcup_{\jComp \in \verticesalt} \gencomp_{\jComp} = \bigcup_{\jComp = 1}^{\nSinks} \sink_{\jComp} = \argmin \obj$.
}

\WA{TODO: update before}
Define by $\acchittime_{\verticesalt} \defeq \acchittime_{\bigcup_{\jComp \in \verticesalt} \V_{\jComp}}$ the hitting time of $\bigcup_{\jComp \in \verticesalt} \V_{\jComp}$ for the accelerated \ac{SGD} process.
We also consider the induced chain $(\statealt_{\run})_{\run \geq \start}$ on $\bigcup_{\iComp = 1}^{\nGencomps} \V_{\iComp}$ defined in \cref{def:induced_chain} as well as its accelerated version defined in \cref{def:accelerated_induced_chain}
    and denote by $\accinducedhittime_{\verticesalt}$ the hitting time of $\bigcup_{\jComp \in \verticesalt} \V_{\jComp}$ for this accelerated induced chain.
These two hitting times are related by the following lemma, which is a key consequence of \cref{prop:bd_transition_time}.

\begin{lemma}
    \label{lem:acc_hitting_time_acc_induced}
	For any $\precs > 0$, for any small enough neighborhoods $\nhdaltalt_\idx$ of $\gencomp_\idx$, $\idx = 1, \dots, \nGencomps$ , there is some $\step_0 > 0$ such for all $0 < \step < \step_0$, for any $\state \in \bigcup_{\idx = 1}^{\nGencomps} \nhdaltalt_\idx$,
	\begin{equation}
        \ex_{\state}[\accinducedhittime_{\verticesalt}] \leq \ex_{\state}[\acchittime_{\verticesalt}] \leq \ex_{\state}[\accinducedhittime_{\verticesalt}] \times e^{\frac{\precs}{\step}}\,.
	\end{equation}
\end{lemma}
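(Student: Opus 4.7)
The plan is to compare both hitting times to the non-accelerated induced-chain hitting time
\begin{equation}
    \tau \defeq \inf\setof{\run \geq 0 : \statealt_\run \in \textstyle\bigcup_{\jComp \in \verticesalt}\V_\jComp}
\end{equation}
via the pathwise identity $\acchittime_\verticesalt = \hittime_\tau$. The direction $\acchittime_\verticesalt \leq \hittime_\tau$ is immediate, since $\accstate_{\hittime_\tau} = \statealt_\tau$ lies in $\bigcup_{\jComp \in \verticesalt}\V_\jComp$ by definition. The reverse direction $\hittime_\tau \leq \acchittime_\verticesalt$ is the main pathwise observation: because the $\V_\iComp$ are pairwise disjoint, when the accelerated SGD process first enters $\bigcup_{\jComp \in \verticesalt}\V_\jComp$ at time $\acchittime_\verticesalt$ it necessarily comes either from outside $\bigcup_\iComp \V_\iComp$ (in which case a fresh induced-chain epoch $\hittime_{k+1}$ fires at $\acchittime_\verticesalt$) or from a distinct non-target $\V_{\iComp}$ with $\iComp \leq \nComps$ (in which case $\exittime_{k+1}$ and $\hittime_{k+1}$ simultaneously equal $\acchittime_\verticesalt$); either way $\tau \leq k+1$ and $\hittime_\tau \leq \acchittime_\verticesalt$. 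Since the induced chain freezes upon entering the target set, we also get the bracketing $\accinducedhittime_\verticesalt \leq \tau \leq \nComps \accinducedhittime_\verticesalt$.

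For the lower bound, observe that for every $1 \leq k \leq \tau$, the state $\accstate_{\hittime_{k-1}}$ lies in some $\V_\iComp$ with $\iComp \leq \nComps$, so the defining condition for $\exittime_k$ forces $\hittime_k \geq \exittime_k \geq \hittime_{k-1} + 1$. Iterating gives $\hittime_\tau \geq \tau$; combined with $\accinducedhittime_\verticesalt \leq \tau$ and the identity $\acchittime_\verticesalt = \hittime_\tau$, this yields the pathwise inequality $\accinducedhittime_\verticesalt \leq \acchittime_\verticesalt$. Taking expectations produces the left-hand inequality of the lemma.

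For the upper bound, telescope
\begin{equation}
    \hittime_\tau = \sum_{k \geq 0}(\hittime_{k+1}-\hittime_k)\oneof{\tau > k}.
\end{equation}
The event $\{\tau > k\}$ is measurable with respect to the history up to the stopping time $\hittime_k$, and on that event $\accstate_{\hittime_k}$ lies in the compact set $\initset \defeq \overline{\bigcup_\iComp \V_\iComp}$ while staying outside $\bigcup_{\jComp \in \verticesalt}\V_\jComp$. The strong Markov property applied at $\hittime_k$, together with \cref{prop:bd_transition_time} invoked with tolerance $\precs/2$ and compact $\initset$, yields
\begin{equation}
    \ex_\state\bracks*{(\hittime_{k+1}-\hittime_k)\oneof{\tau > k}} \leq e^{\precs/(2\step)}\,\prob_\state(\tau > k).
\end{equation}
Summing over $k$ and using $\tau \leq \nComps \accinducedhittime_\verticesalt$ gives $\ex_\state[\acchittime_\verticesalt] = \ex_\state[\hittime_\tau] \leq \nComps\, e^{\precs/(2\step)}\,\ex_\state[\accinducedhittime_\verticesalt]$. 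For $\step$ small enough the combinatorial factor satisfies $\nComps \leq e^{\precs/(2\step)}$ and is absorbed, delivering the desired bound $e^{\precs/\step}$.

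The main obstacle is the pathwise identification $\acchittime_\verticesalt = \hittime_\tau$, which requires a careful case split on the location of $\accstate_{\acchittime_\verticesalt - 1}$ (outside $\bigcup_\iComp \V_\iComp$, inside some non-target $\V_\iComp$, or inside a target $\V_\jComp$) together with a concrete unpacking of the recursive definitions of $\exittime_{k+1}$ and $\hittime_{k+1}$ from \cref{def:induced_chain}. Once this identity is in place, both bounds follow from a routine Wald-type telescoping built on the single-step estimate of \cref{prop:bd_transition_time}.
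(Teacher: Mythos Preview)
Your proof is correct and rests on the same core ingredients as the paper's: the freezing property of the induced chain once it enters a target neighborhood, a telescoping of $\acchittime_{\verticesalt}$ into induced-chain increments, the strong Markov property, and the single-transition time estimate. The main structural difference is that you introduce the non-accelerated induced-chain hitting time $\tau$ as an explicit bridge, prove the pathwise identity $\acchittime_{\verticesalt}=\hittime_\tau$, and use the bracketing $\accinducedhittime_{\verticesalt}\leq\tau\leq\nComps\,\accinducedhittime_{\verticesalt}$ together with the one-step bound \cref{prop:bd_transition_time}, absorbing the residual factor $\nComps$ into the exponent at the end. The paper instead telescopes directly in blocks of $\nComps$ consecutive transitions, writing $\acchittime_{\verticesalt}=\sum_{\run\geq 0}\oneof{\accinducedstate_\run\notin\text{target}}(\hittime_{(\run+1)\nComps}-\hittime_{\run\nComps})$, and invokes the $\nComps$-step bound \cref{cor:bd_transition_time_K} (itself just an iterated form of \cref{prop:bd_transition_time}). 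Your treatment of the pathwise identity is more explicit than the paper's, which simply asserts the telescoping sum; the two decompositions are equivalent once one notes that freezing forces $\hittime_\tau=\hittime_{\nComps\accinducedhittime_{\verticesalt}}$. Both routes are equally valid and yield the same bounds.
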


\begin{proof}
    \UsingNamespace{proof:lem:acc_hitting_time_induced}

    First, if $\state$ belongs to $\bigcup_{\idx = \nComps + 1}^{\nGencomps} \nhdaltalt_\idx = \bigcup_{\jComp \in \verticesalt} \V_{\jComp}$, then $\acchittime_{\verticesalt} = 0$ and $\accinducedhittime_{\verticesalt} = 0$ so the statement holds trivially.

    We now consider the case where $\state$ belongs to $\bigcup_{\idx = 1}^{\nComps} \nhdaltalt_\idx$. Since $\gencomp_1, \dots, \gencomp_\nComps$ are compact, we can require that $\bigcup_{\idx = 1}^{\nComps} \nhdaltalt_\idx$ be relatively compact.
    We can now apply \cref{cor:bd_transition_time_K} to the accelerated process to obtain that with $\initset \gets \cl{\bigcup_{\idx = 1}^{\nComps} \nhdaltalt_\idx}$, for any small enough neighborhoods $\nhdaltalt_\idx$ of $\gencomp_\idx$, $\idx = 1, \dots, \nGencomps$ , there is some $\step_0 > 0$ such for all $0 < \step < \step_0$, for any $\state \in \bigcup_{\idx = 1}^{\nGencomps} \nhdaltalt_\idx$, we have,
    \begin{equation}
        1 \leq \ex_{\state}[\hittime_{\nComps}] \leq e^{\frac{\precs}{\step}}\,.
        \LocalLabel{eq:one-transition-time}
    \end{equation}
    We now have that, by \cref{def:induced_chain},
    \begin{align}
        \ex_{\state}[\acchittime_{\verticesalt}]
        &=
        \ex_{\state} \bracks*{
            \sum_{\run = 0}^{\infty}
        \oneof[\big]{\accstate_{\hittime_{\run \nComps}} \notin \bigcup_{\jComp \in \verticesalt} \V_{\jComp}}
            \parens*{\hittime_{(\run+1) \nComps} - \hittime_{\run \nComps}}
        }\notag
        \notag\\
        &=
        \sum_{\run = 0}^{\infty}
        \ex_{\state} \bracks*{
    \oneof[\big]{\accstate_{\hittime_{\run \nComps}} \notin \bigcup_{\jComp \in \verticesalt} \V_{\jComp}}
            \parens*{\hittime_{(\run+1) \nComps} - \hittime_{\run \nComps}}
        }\notag\\
        &=
        \sum_{\run = 0}^{\infty}
        \ex_{\state} \bracks*{
        \oneof[\big]{\accstate_{\hittime_{\run \nComps}} \notin \bigcup_{\jComp \in \verticesalt} \V_{\jComp}}
        \ex_{\accstate_{\hittime_{\run \nComps}} \bracks*{\hittime_\nComps}}
        }\,,
        \LocalLabel{eq:acc-hitting-time}
    \end{align}
    where we used the strong Markov property in the last equality, since $\hittime_1$ is always finite almost surely by \cref{\LocalName{eq:one-transition-time}}.

    Combining \cref{\LocalName{eq:acc-hitting-time}} with \cref{\LocalName{eq:one-transition-time}}, we obtain the bound:
        \begin{equation}
            \ex_{\state}[\acchittime_{\verticesalt}]
            =
            \sum_{\run = 0}^{\infty}
            \ex_{\state} \bracks*{
        \oneof[\big]{\accstate_{\hittime_\run} \notin \bigcup_{\jComp \in \verticesalt} \V_{\jComp}}
            }
        \leq 
        \ex_{\state}[\acchittime_{\verticesalt}]
        \leq 
        \sum_{\run = 0}^{\infty}
        \ex_{\state} \bracks*{
        \oneof[\big]{\accstate_{\hittime_\run} \notin \bigcup_{\jComp \in \verticesalt} \V_{\jComp}}
    } \times e^{\frac{\precs}{\step}} = \ex_{\state}[\accinducedhittime_{\verticesalt}] \times e^{\frac{\precs}{\step}}\,,
        \end{equation}  
        which yields the result.
\end{proof}

\begin{assumption}
\label{asm:finite_cost}
For any $\iComp \in \compIndices$, $\jComp \in \gencompIndices$,
\begin{equation}
    \finaldquasipotup_{\iComp, \jComp} < +\infty\,.
\end{equation}
    
\end{assumption}

From now on, we will assume that \cref{asm:finite_cost} holds.

\begin{definition}
    \label{def:time_quasipot}
For $\vertexA \in \vertices \setminus \verticesalt$,  we define the following quantities:
\begin{align}
    \timequasipotupexcisedfrom{\vertexA}{\verticesalt}      & \defeq \min \setdef*{ \sum_{\vertexC \to \vertexD \in \graph} \finaldquasipotup_{\vertexC,\vertexD}}{\graph \in \graphs\parens*{\vertexA \not\joins \verticesalt}}
    \\
    \timequasipotupbaseof{\vertexA}{\verticesalt}   & \defeq \min \setdef*{\sum_{\vertexC \to \vertexD \in \graph} \finaldquasipotup_{\vertexC,\vertexD}}{\graph \in \graphs(\verticesalt)}\\
    \timequasipotuprelativeto{\vertexA}{\verticesalt} & \defeq \timequasipotupbaseof{\vertexA}{\verticesalt} - \timequasipotupexcisedfrom{\vertexA}{\verticesalt}\,.
\end{align}
\end{definition}

Note that \cref{asm:finite_cost} ensures that all these quantities are finite.

\begin{lemma}
    \label{lem:hitting_acc_induced_from_comp}
    For any $\vertexA \in \vertices \setminus \verticesalt$, for any $\precs > 0$, for $\V_1,\dots,\V_\nGencomps$ neighborhoods of $\gencomp_1,\dots,\gencomp_\nGencomps$ small enough, there is $\init[\step] > 0$ such that for all $0 < \step < \step_0$, for any $\state \in \V_{\vertexA}$,
    \begin{equation}
        e^\frac{\timequasipotdownrelativeto{\vertexA}{\verticesalt} - \precs}{\step} 
        \leq 
        \ex_{\state}\bracks*{\accinducedhittime_{\verticesalt}} \leq e^\frac{\timequasipotuprelativeto{\vertexA}{\verticesalt} + \precs}{\step}\,.
    \end{equation}
\end{lemma}

\begin{proof}
\UsingNamespace{proof:lem:hitting_acc_induced_from_comp}
Fix $\vertexA \in \vertices \setminus \verticesalt$. We will apply \cref{lem:mc-avg-exit-time} to the accelerated induced chain.

Let us first verify the assumptions of these lemmas. By \cref{lem:est_trans_prob_acc_ub,lem:est_trans_prob_acc_lb}, for any $\precs > 0$, for small enough neighborhoods $\nhdaltalt_\idx$ of $\gencomp_\idx$, $\idx = 1, \dots, \nGencomps$, there exists $\step_0 > 0$ such that for all $\idx \in \compIndices$, $\idxalt \in \gencompIndices$, $\state \in \nhdaltalt_{\idx}$, $0 < \step < \step_0$:
\begin{align}
    \accprobalt_{\nhdaltalt}(\state, \nhdaltalt_{\idxalt})
    &\leq
    \exp \parens*{
        - \frac{\finaldquasipotdown_{\idx, \idxalt}}{\step}
        + \frac{\precs}{\step}
    }\,, \LocalLabel{eq:trans_prob_ub} \\
    \accprobalt_{\nhdaltalt}(\state, \nhdaltalt_{\idxalt})
    &\geq
    \exp \parens*{
        - \frac{\finaldquasipotup_{\idx, \idxalt}}{\step}
        - \frac{\precs}{\step}
    }\,. \LocalLabel{eq:trans_prob_lb}
\end{align}
Note that \cref{asm:finite_cost} ensures that both $\finaldquasipotdown_{\idx, \idxalt}$ and $\finaldquasipotup_{\idx, \idxalt}$ are finite.

We define, for any $\idx \in \vertices \setminus \verticesalt$, $\idxalt \in \vertices$ with $\idx \neq \idxalt$:
\begin{align}
    \transprobdown_{\idx \idxalt} \defeq \exp \parens*{-\frac{\finaldquasipotup_{\idx, \idxalt}}{\step}}\quad \text{ and,} \quad
    \transprobup_{\idx \idxalt} \defeq \exp \parens*{-\frac{\finaldquasipotdown_{\idx, \idxalt}}{\step}}\,.
\end{align}

Let us verify the conditions of \cref{lem:mc-avg-exit-time} with $\transerror \defeq e^{\precs/\step}$:

    By \cref{\LocalName{eq:trans_prob_ub},\LocalName{eq:trans_prob_lb}}, for all $\idx \in \vertices \setminus \verticesalt$, $\idxalt \in \vertices$ with $\idx \neq \idxalt$, $\state \in \nhdaltalt_{\idx}$:
    \begin{equation}
        \transerror \transprobdown_{\idx \idxalt}
        \leq 
        \accprobalt_{\nhdaltalt}(\state, \nhdaltalt_{\idxalt})
        \leq
        \transerror \transprobup_{\idx \idxalt}\,.
        \LocalLabel{eq:trans_prob_bounds}
    \end{equation}

    

Now we can apply \cref{lem:mc-avg-exit-time} to obtain that for $\state \in \nhdaltalt_{\vertexA}$:
\begin{equation}
    \transerror^{-3 \times 4^{\integer}} e^{\frac{\timequasipotdownrelativeto{\vertexA}{\verticesalt}}{\step}} 
    \leq 
    \ex_{\state}[\accinducedhittime_{\verticesalt}] 
    \leq 
    \transerror^{3 \times 4^{\integer}} e^{\frac{\timequasipotuprelativeto{\vertexA}{\verticesalt}}{\step}}\,.
    \LocalLabel{eq:hitting_bounds}
\end{equation}

Recalling that $\transerror = e^{\precs/\step}$, the bounds in \cref{\LocalName{eq:hitting_bounds}} become:
\begin{equation}
    e^{\frac{\timequasipotdownrelativeto{\vertexA}{\verticesalt} - 3 \times 4^{\integer}\precs}{\step}} 
    \leq 
    \ex_{\state}[\accinducedhittime_{\verticesalt}] 
    \leq 
    e^{\frac{\timequasipotuprelativeto{\vertexA}{\verticesalt} + 3 \times 4^{\integer}\precs}{\step}}\,,
\end{equation}
which concludes the proof since $\precs > 0$ was arbitrary.

\end{proof}

\begin{definition}
    \label{def:time_quasipot_from_point}
Let us now define, for $\point \in \vecspace$, 
\begin{align}
    \timequasipotuprelativeto{\point}{\verticesalt} &=
    \pospart*{\max_{\vertexA \in \vertices \setminus \verticesalt} \parens*{\timequasipotuprelativeto{\vertexA}{\verticesalt} - \finaldquasipotdown_{\point \vertexA}}}\,.
\end{align}
with the convention that these quantities are zero if $\point \in \bigcup_{\jComp \in \verticesalt} \V_{\jComp}$ or equal to the quantities defined in \cref{def:time_quasipot} if $\point \in \bigcup_{\iComp = 1}^{\nComps} \V_{\iComp}$.
\end{definition}

Note that these quantities defined in \cref{def:time_quasipot_from_point} are non-negative and finite by \cref{asm:finite_cost}.

\begin{lemma}
    \label{lem:hitting_acc_induced_from_init}
    For any $\init[\state] \in \vecspace$, for any $\precs > 0$, for $\V_1,\dots,\V_\nGencomps$ neighborhoods of $\gencomp_1,\dots,\gencomp_\nGencomps$ small enough, there is $\init[\step] > 0$ such that for all $0 < \step < \step_0$, 
    \begin{equation}
        e^\frac{\timequasipotdownrelativeto{\init[\state]}{\verticesalt} - \precs}{\step} 
        \leq 
        \ex_{\state}\bracks*{\accinducedhittime_{\verticesalt}} \leq e^\frac{\timequasipotuprelativeto{\init[\state]}{\verticesalt} + \precs}{\step}\,.
    \end{equation}
\end{lemma}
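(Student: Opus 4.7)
The plan is to reduce the statement to \cref{lem:hitting_acc_induced_from_comp} by conditioning on the first step $\accinducedstate_\afterstart$ of the accelerated induced chain and using the initial-transition estimates of \cref{lem:est_trans_prob_acc_ub_init,lem:est_trans_prob_acc_lb_init}. Two boundary cases dictated by \cref{def:time_quasipot_from_point} are handled separately: if $\init[\state] \in \V_\vertexA$ for some $\vertexA \in \vertices \setminus \verticesalt$, the statement collapses to \cref{lem:hitting_acc_induced_from_comp} by the stated convention, while if $\init[\state] \in \bigcup_{\jComp \in \verticesalt} \V_\jComp$ then $\accinducedhittime_\verticesalt = 0$ and the upper bound is trivial (the lower bound being interpreted under the same zero-convention). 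The substantive case is therefore $\init[\state] \notin \bigcup_{\iComp = 1}^{\nGencomps} \V_\iComp$.

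In that case $\accinducedstate_\afterstart$ lies almost surely in $\bigcup_{\iComp = 1}^{\nGencomps} \V_\iComp$ by construction of the induced chain, so the strong Markov property applied at time $\afterstart$ yields
\begin{equation}
\ex_{\init[\state]}\bracks*{\accinducedhittime_\verticesalt} = 1 + \sum_{\vertexA \in \vertices \setminus \verticesalt} \int_{\V_\vertexA} \accprobalt_\nhdaltalt(\init[\state], d \statealt)\, \ex_\statealt\bracks*{\accinducedhittime_\verticesalt}\,,
\end{equation}
the terms with $\vertexA \in \verticesalt$ dropping out since $\accinducedhittime_\verticesalt = 0$ on $\bigcup_{\jComp \in \verticesalt} \V_\jComp$. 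The problem thus reduces to bounding a finite mixture of the quantities controlled by \cref{lem:hitting_acc_induced_from_comp}, weighted by the initial-transition probabilities controlled by \cref{lem:est_trans_prob_acc_ub_init,lem:est_trans_prob_acc_lb_init}.

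For the upper bound I would combine the upper estimate of \cref{lem:est_trans_prob_acc_ub_init} on $\accprobalt_\nhdaltalt(\init[\state], \V_\vertexA)$ with the uniform upper estimate of \cref{lem:hitting_acc_induced_from_comp} over $\V_\vertexA$, so that each summand is dominated by $\exp((\timequasipotuprelativeto{\vertexA}{\verticesalt} - \finaldquasipotdown_{\init[\state], \vertexA} + 2\precs)/\step)$. Absorbing the factor $\nGencomps$ into the exponent via $\nGencomps \leq e^{\precs/\step}$ for $\step$ small, and invoking the definition of $\timequasipotuprelativeto{\init[\state]}{\verticesalt}$ as the positive part of $\max_{\vertexA}(\timequasipotuprelativeto{\vertexA}{\verticesalt} - \finaldquasipotdown_{\init[\state], \vertexA})$, the sum is at most $\exp((\timequasipotuprelativeto{\init[\state]}{\verticesalt} + 4\precs)/\step)$, with the trailing $+1$ being negligible. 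For the lower bound I would retain only the summand corresponding to a $\vertexA^\ast \in \vertices \setminus \verticesalt$ attaining that maximum and combine the matching lower estimates of \cref{lem:est_trans_prob_acc_lb_init,lem:hitting_acc_induced_from_comp} to obtain $\ex_{\init[\state]}\bracks*{\accinducedhittime_\verticesalt} \geq \exp((\timequasipotdownrelativeto{\init[\state]}{\verticesalt} - 2\precs)/\step)$; in the degenerate case where the positive part vanishes, the trivial bound $\ex_{\init[\state]}\bracks*{\accinducedhittime_\verticesalt} \geq 1 \geq e^{-\precs/\step}$ takes over.

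The main obstacle I anticipate is purely bookkeeping: ensuring that a single choice of neighborhoods $\V_\iComp$ is simultaneously admissible for \cref{lem:hitting_acc_induced_from_comp,lem:est_trans_prob_acc_ub_init,lem:est_trans_prob_acc_lb_init} with a common tolerance $\precs$. Since each of those lemmas imposes only a ``small enough'' condition on the neighborhoods, intersecting the admissible sizes suffices, and the polynomial-in-$\nGencomps$ factors produced along the way can be absorbed into $\precs$ by shrinking $\step_0$.
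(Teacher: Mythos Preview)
Your proposal is correct and follows essentially the same route as the paper: handle the two boundary cases dictated by \cref{def:time_quasipot_from_point}, then in the generic case condition on the first step of the accelerated induced chain (the paper phrases this as the strong Markov property at time $\hittime_{\nComps}$ for the underlying accelerated process, which is the same event), combine \cref{lem:est_trans_prob_acc_ub_init} with \cref{lem:hitting_acc_induced_from_comp} for the upper bound, and absorb the $\bigoh(\nComps)$ prefactor into the exponent. Your treatment of the lower bound is in fact more explicit than the paper's, which simply asserts that it ``follows similarly'' from \cref{lem:est_trans_prob_acc_lb_init} and the lower half of \cref{lem:hitting_acc_induced_from_comp}; your case split on whether the positive part vanishes is the right way to make that precise.
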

\begin{proof}
    \UsingNamespace{proof:lem:hitting_acc_induced_from_init}
    If $\init[\state] \in \bigcup_{\iComp = 1}^{\nComps} \V_{\iComp}$, then \cref{lem:hitting_acc_induced_from_comp} applies and yields the result. if $\init[\state] \in \bigcup_{\iComp = \nComps + 1}^{\nGencomps} \V_{\iComp}$, then the result holds trivially.
    Let us now consider the general case where $\init[\state] \in \vecspace \setminus \bigcup_{\iComp = 1}^{\nGencomps} \V_{\iComp}$ and let us first prove the upper bound, the lower bound follows similarly.

    By \cref{lem:hitting_acc_induced_from_comp} and \cref{lem:est_trans_prob_acc_ub_init}: 
    for small enough neighborhoods $\nhdaltalt_\idx$ of $\gencomp_\idx$, $\idx = 1, \dots, \nGencomps$, there exists $\step_0 > 0$ such that for all $0 < \step < \step_0$, $\iComp \in \compIndices$, $\alt\state \in \nhdaltalt_{\iComp}$, both
\begin{equation}
\ex_{\alt\state}[\accinducedhittime_{\verticesalt}] \leq \exp\parens*{\frac{\timequasipotuprelativeto{\iComp}{\verticesalt} + \precs}{\step}}
\LocalLabel{eq:bd-accinduced-time}
\end{equation}
and,
\begin{equation}
    \accprobalt_{\nhdaltalt}(\init[\state], \nhdaltalt_{\iComp}) \leq \exp\parens*{-\frac{\finaldquasipotdown_{\init[\state], \iComp}}{\step} + \frac{\precs}{\step}}
    \LocalLabel{eq:bd-accprobalt}
\end{equation}
hold.

Using the strong Markov property at time $\hittime_{\nComps}$ (which is finite almost surely by \cref{cor:bd_transition_time_K}), we have:
\begin{align}
    \ex_{\init[\state]}[\accinducedhittime_{\verticesalt}] &=  \sum_{\iComp =1}^{\nGencomps}
    \ex_{\init[\state]}\bracks*{
        (1 + \ex_{\accstate_{\hittime_{\nComps}}}[
    \accinducedhittime_{\verticesalt}
]) \oneof*{\accstate_{\hittime_{\nComps}} \in \V_{\iComp}}}
\notag\\
    &= 1
    + \sum_{\iComp =1}^{\nComps}
    \ex_{\init[\state]}\bracks*{
        \ex_{\accstate_{\hittime_{\nComps}}}[ \accinducedhittime_{\verticesalt}
        ] \oneof*{\accstate_{\hittime_{\nComps}} \in \V_{\iComp}}
    }
    +
    \sum_{\iComp = \nComps + 1}^{\nGencomps}
        0 \times \oneof*{\accstate_{\hittime_{\nComps}} \in \V_{\iComp}}
        \notag\\
    &\leq 
    1
    +
    \sum_{\iComp =1}^{\nComps}
    \exp \parens*{\frac{\timequasipotuprelativeto{\iComp}{\verticesalt} - \finaldquasipotdown_{\init[\state], \iComp} + 2\precs}{\step}}\,,
\end{align}
where we used \cref{\LocalName{eq:bd-accinduced-time},\LocalName{eq:bd-accprobalt}} in the last inequality.
Bounding the sum as follows
\begin{align}
   1
    +
    \sum_{\iComp =1}^{\nComps}
    \exp \parens*{\frac{\timequasipotuprelativeto{\iComp}{\verticesalt} - \finaldquasipotdown_{\init[\state], \iComp} + 2\precs}{\step}}
    &\leq 
    (\nComps + 1) \exp \parens*{\frac{0 \vee \max_{\iComp \in \compIndices} \parens*{\timequasipotuprelativeto{\iComp}{\verticesalt} - \finaldquasipotdown_{\init[\state], \iComp} + 2\precs}}{\step}}
    \notag\\&= (\nComps + 1) \exp \parens*{\frac{\timequasipotuprelativeto{\init[\state]}{\verticesalt} + 2\precs}{\step}}\,,
\end{align}
yields the upper bound.

The lower bound follows similarly using \cref{lem:est_trans_prob_acc_lb_init} and the lower bound from \cref{lem:hitting_acc_induced_from_comp}.

\end{proof}

\begin{theorem}
    \label{thm:hitting_acc_from_init}
    For any $\init[\state] \in \vecspace$, for any $\precs > 0$, for $\V_1,\dots,\V_\nGencomps$ neighborhoods of $\gencomp_1,\dots,\gencomp_\nGencomps$ small enough, there is $\init[\step] > 0$ such that for all $0 < \step < \step_0$, 
    \begin{equation}
        e^\frac{\timequasipotdownrelativeto{\init[\state]}{\verticesalt} - \precs}{\step} 
        \leq 
        \ex_{\init[\state]}\bracks*{\acchittime_{\verticesalt}} \leq e^\frac{\timequasipotuprelativeto{\init[\state]}{\verticesalt} + \precs}{\step}\,.
    \end{equation}
\end{theorem}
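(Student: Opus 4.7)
My plan is to prove \cref{thm:hitting_acc_from_init} by relating the hitting time $\acchittime_{\verticesalt}$ of the accelerated process to the hitting time $\accinducedhittime_{\verticesalt}$ of the accelerated induced chain uniformly over initial states $\init[\state] \in \vecspace$, and then invoking the already-established bounds of \cref{lem:hitting_acc_induced_from_init}. Since that lemma directly gives the desired exponent $\timequasipotuprelativeto{\init[\state]}{\verticesalt}$ for arbitrary starting points, the only remaining work is to extend the sandwich of \cref{lem:acc_hitting_time_acc_induced}, which is currently stated only for starting points in $\V = \bigcup_\iComp \V_\iComp$, to a general $\init[\state]$.

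Concretely, I would mimic the telescoping argument used in the proof of \cref{lem:acc_hitting_time_acc_induced}: decompose $\acchittime_{\verticesalt}$ into the contributions of successive blocks of $\nComps$ induced transitions and apply the strong Markov property at each $\hittime_{k\nComps}$ to obtain
\[
	\ex_{\init[\state]}[\acchittime_{\verticesalt}]
	= \sum_{k \geq 0} \ex_{\init[\state]}\!\bracks*{\oneof[\big]{\accinducedstate_k \notin \textstyle\bigcup_{\jComp \in \verticesalt}\V_\jComp}\, \ex_{\accinducedstate_k}[\hittime_{\nComps}]}.
\]
For $k \geq 1$ the inner expectation is bounded by $e^{\precs/\step}$ via \cref{cor:bd_transition_time_K}, since $\accinducedstate_k \in \V$ by construction of the induced chain. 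The delicate term is $k=0$ when $\init[\state]\notin\V$: there the first block must first wait for $\accstate$ to enter $\V$, whose expectation is sub-exponential by \cref{lem:est_going_back_to_crit} applied with a relatively compact neighborhood containing $\init[\state]$. Combining this entry time with \cref{cor:bd_transition_time_K} yields the same bound $e^{\precs/\step}$, modulo absorbing a multiplicative constant into the tolerance.

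Summing then gives the sandwich
\[
	\ex_{\init[\state]}[\accinducedhittime_{\verticesalt}]
	\leq \ex_{\init[\state]}[\acchittime_{\verticesalt}]
	\leq e^{\precs/\step}\, \ex_{\init[\state]}[\accinducedhittime_{\verticesalt}],
\]
the lower bound being immediate since each induced block requires at least one accelerated step. Plugging in \cref{lem:hitting_acc_induced_from_init} produces the desired exponent, with the extra $e^{\precs/\step}$ factor harmlessly absorbed into $\precs$. The main point requiring care is coordinating the choice of the neighborhoods $\V_\iComp$ and the threshold $\step_0$ so that \cref{cor:bd_transition_time_K,lem:est_going_back_to_crit,lem:hitting_acc_induced_from_init} all apply simultaneously with the same tolerance; this is routine since each of those lemmas only asks that the neighborhoods and the step-size be small enough, so taking a common refinement suffices.
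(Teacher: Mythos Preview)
Your proposal is correct and follows the same two-step strategy as the paper: sandwich $\ex_{\init[\state]}[\acchittime_{\verticesalt}]$ between multiples of $\ex_{\init[\state]}[\accinducedhittime_{\verticesalt}]$ and then invoke \cref{lem:hitting_acc_induced_from_init}. The paper's proof simply cites \cref{lem:acc_hitting_time_acc_induced} for the sandwich without comment, even though that lemma is stated only for $\state \in \bigcup_\idx \V_\idx$; you correctly spot this and re-run the telescoping argument for a general $\init[\state]$.

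One small simplification: you do not need a separate appeal to \cref{lem:est_going_back_to_crit} for the $k=0$ term. \cref{cor:bd_transition_time_K} (and the underlying \cref{prop:bd_transition_time}) already allow an arbitrary compact $\initset$, so it suffices to enlarge $\initset$ once to contain both $\init[\state]$ and $\cl\bigcup_{\idx=1}^{\nComps}\V_\idx$; the bound $\ex_{\accinducedstate_k}[\hittime_{\nComps}] \leq e^{\precs/\step}$ then holds uniformly for all $k \geq 0$, and the proof of \cref{lem:acc_hitting_time_acc_induced} goes through verbatim. This is presumably what the paper has in mind when it applies the lemma directly.
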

\begin{proof}
    The result follows directly by combining \cref{lem:acc_hitting_time_acc_induced,lem:hitting_acc_induced_from_init}.

Specifically, by \cref{lem:hitting_acc_induced_from_init}, for any $\precs > 0$, for small enough neighborhoods $\nhdaltalt_\idx$ of $\gencomp_\idx$, $\idx = 1, \dots, \nGencomps$, there exists $\step_0 > 0$ such that for all $0 < \step < \step_0$:
\begin{equation}
    e^\frac{\timequasipotdownrelativeto{\init[\state]}{\verticesalt} + \precs}{\step} 
    \leq 
    \ex_{\init[\state]}\bracks*{\accinducedhittime_{\verticesalt}} \leq e^\frac{\timequasipotuprelativeto{\init[\state]}{\verticesalt} + \precs}{\step}\,.
\end{equation}
Then by \cref{lem:acc_hitting_time_acc_induced}, potentially reducing $\step_0$, we have:

\begin{equation}
    \ex_{\init[\state]}[\accinducedhittime_{\verticesalt}] \leq \ex_{\init[\state]}[\acchittime_{\verticesalt}] \leq \ex_{\init[\state]}[\accinducedhittime_{\verticesalt}] \times e^{\frac{\precs}{\step}}\,.
\end{equation}

Combining these inequalities and using the fact that $\precs > 0$ was arbitrary concludes the proof.
\end{proof}

The following lemma, though looking at first weaker than \cref{thm:hitting_acc_from_init}, will be useful later due to its uniformity in the initial condition in neighborhoods of the components.
\begin{lemma}
    \label{lem:hitting_acc_from_comp}
    Under \cref{asm:finite_cost}, for any $\vertexA \in \vertices \setminus \verticesalt$, for any $\precs > 0$, for $\V_1,\dots,\V_\nGencomps$ neighborhoods of $\gencomp_1,\dots,\gencomp_\nGencomps$ small enough, there is $\init[\step] > 0$ such that for all $0 < \step < \step_0$, for any $\state \in \V_{\vertexA}$,
    \begin{equation}
        e^\frac{\timequasipotdownrelativeto{\vertexA}{\verticesalt} - \precs}{\step} 
        \leq 
        \ex_{\state}\bracks*{\acchittime_{\verticesalt}} \leq e^\frac{\timequasipotuprelativeto{\vertexA}{\verticesalt} + \precs}{\step}\,.
    \end{equation}
\end{lemma}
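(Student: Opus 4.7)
The plan is to recognize that this lemma is essentially the restriction of \cref{thm:hitting_acc_from_init} to an initial point lying inside a critical neighborhood $\V_\vertexA$, but with uniformity in $\state\in\V_\vertexA$. The uniformity is already baked into the two building blocks that were used to prove \cref{thm:hitting_acc_from_init}, so it suffices to chain them together carefully.

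First, I would invoke \cref{lem:hitting_acc_induced_from_comp} with the given index $\vertexA\in\vertices\setminus\verticesalt$ and tolerance $\precs/2$: this yields neighborhoods $\V_1,\dotsc,\V_\nGencomps$ and $\step_0>0$ such that, for every $\step\in(0,\step_0]$ and every $\state\in\V_\vertexA$,
\begin{equation}
e^{(\timequasipotdownrelativeto{\vertexA}{\verticesalt}-\precs/2)/\step}
\leq \ex_\state\bracks*{\accinducedhittime_\verticesalt}
\leq e^{(\timequasipotuprelativeto{\vertexA}{\verticesalt}+\precs/2)/\step}\,.
\end{equation}
Crucially, the bound holds \emph{uniformly} over $\state\in\V_\vertexA$, because \cref{lem:hitting_acc_induced_from_comp} is stated and proved uniformly in the starting point within the chosen neighborhood.

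Next, I would apply \cref{lem:acc_hitting_time_acc_induced} with tolerance $\precs/2$ (possibly shrinking the neighborhoods and $\step_0$ further): this gives, for every $\state\in\bigcup_{\idx=1}^{\nGencomps}\V_\idx$ and $\step\in(0,\step_0]$,
\begin{equation}
\ex_\state\bracks*{\accinducedhittime_\verticesalt}
\leq \ex_\state\bracks*{\acchittime_\verticesalt}
\leq \ex_\state\bracks*{\accinducedhittime_\verticesalt}\cdot e^{\precs/(2\step)}\,.
\end{equation}
Since $\V_\vertexA\subset\bigcup_{\idx}\V_\idx$, this bound applies in particular to all $\state\in\V_\vertexA$.

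Combining the two displayed estimates and absorbing the extra $\precs/2$ from the second into the envelope of the first yields
\begin{equation}
e^{(\timequasipotdownrelativeto{\vertexA}{\verticesalt}-\precs/2)/\step}
\leq \ex_\state\bracks*{\acchittime_\verticesalt}
\leq e^{(\timequasipotuprelativeto{\vertexA}{\verticesalt}+\precs)/\step}
\end{equation}
uniformly in $\state\in\V_\vertexA$, which is stronger than the claim. I do not anticipate any genuine obstacle here: all hard work (the large-deviation-style estimates on the induced chain, the transition-time control, and the application of \cref{lem:mc-avg-exit-time}) has already been carried out. The only subtlety is to pick the common neighborhoods $\V_1,\dotsc,\V_\nGencomps$ and common $\step_0$ at which both \cref{lem:hitting_acc_induced_from_comp} and \cref{lem:acc_hitting_time_acc_induced} hold simultaneously, which is achieved by taking the minimum of the two $\step_0$'s and intersecting the two systems of neighborhoods.
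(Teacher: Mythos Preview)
Your proposal is correct and follows essentially the same approach as the paper's proof: combine \cref{lem:hitting_acc_induced_from_comp} (which already gives bounds on $\ex_\state[\accinducedhittime_\verticesalt]$ uniform in $\state\in\V_\vertexA$) with the sandwich from \cref{lem:acc_hitting_time_acc_induced}, shrinking neighborhoods and $\step_0$ as needed. The paper does exactly this, phrasing it as ``combine the two lemmas and use that $\precs>0$ was arbitrary'' rather than tracking $\precs/2$ explicitly.
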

\begin{proof}
The result follows directly by combining \cref{lem:acc_hitting_time_acc_induced,lem:hitting_acc_induced_from_comp}.

Specifically, by \cref{lem:hitting_acc_induced_from_comp}, for any $\precs > 0$, for small enough neighborhoods $\nhdaltalt_\idx$ of $\gencomp_\idx$, $\idx = 1, \dots, \nGencomps$, there exists $\step_0 > 0$ such that for all $0 < \step < \step_0$, for any $\state \in \nhdaltalt_{\vertexA}$:

\begin{equation}
    e^\frac{\timequasipotdownrelativeto{\vertexA}{\verticesalt} + \precs}{\step} 
    \leq 
    \ex_{\state}\bracks*{\accinducedhittime_{\verticesalt}} \leq e^\frac{\timequasipotuprelativeto{\vertexA}{\verticesalt} + \precs}{\step}\,.
\end{equation}

Then by \cref{lem:acc_hitting_time_acc_induced}, potentially reducing $\step_0$, we have:

\begin{equation}
    \ex_{\state}[\accinducedhittime_{\verticesalt}] \leq \ex_{\state}[\acchittime_{\verticesalt}] \leq \ex_{\state}[\accinducedhittime_{\verticesalt}] \times e^{\frac{\precs}{\step}}\,.
\end{equation}

Combining these inequalities and using the fact that $\precs > 0$ was arbitrary concludes the proof.
\end{proof}

Another technical lemma that will be useful later is the following.

\begin{lemma}
    \label{lem:lb_induced_hitting_time}
    For any $\init[\point] \in \vecspace$, for any $\precs > 0$, for $\V_1,\dots,\V_\nGencomps$ neighborhoods of $\gencomp_1,\dots,\gencomp_\nGencomps$ small enough, there is $\init[\step] > 0$ such that for all $0 < \step < \step_0$, 
    \begin{equation}
        e^\frac{\timequasipotdownrelativeto{\init[\point]}{\verticesalt} - \precs}{\step} 
        \leq 
        \ex_{\init[\point]}\bracks*{\inducedhittime_{\verticesalt}}
    \end{equation}
\end{lemma}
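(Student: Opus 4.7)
The plan is to reduce the lower bound for the induced-chain hitting time $\inducedhittime_{\verticesalt}$ to the lower bound already established for the accelerated induced chain in \cref{lem:hitting_acc_induced_from_init}. The key observation is a deterministic pathwise comparison: since $\accinducedstate_\run = \statealt_{\run \nComps}$ by \cref{def:accelerated_induced_chain} and since the sets $\V_\jComp$ with $\jComp \in \verticesalt$ are absorbing for the induced chain (see the remarks after \cref{def:induced_chain}, which record that once the induced chain enters a neighborhood of a target component it stays there forever), one has $\accinducedhittime_{\verticesalt} = \lceil \inducedhittime_{\verticesalt}/\nComps \rceil$ on every sample path. In particular, as soon as $\nComps \geq 1$, this yields $\accinducedhittime_{\verticesalt} \leq \inducedhittime_{\verticesalt}$ almost surely, so taking expectations gives
\begin{equation}
    \ex_{\init[\point]}\bracks*{\inducedhittime_{\verticesalt}} \;\geq\; \ex_{\init[\point]}\bracks*{\accinducedhittime_{\verticesalt}}\,.
\end{equation}

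Now I would fix $\precs > 0$ and an initial point $\init[\point] \in \vecspace$, and invoke the lower bound of \cref{lem:hitting_acc_induced_from_init}: for all sufficiently small neighborhoods $\V_1,\dots,\V_\nGencomps$ of $\gencomp_1,\dots,\gencomp_\nGencomps$, there exists $\step_0 > 0$ such that, for every $0 < \step < \step_0$,
\begin{equation}
    \ex_{\init[\point]}\bracks*{\accinducedhittime_{\verticesalt}} \;\geq\; \exp\!\parens*{\frac{\timequasipotdownrelativeto{\init[\point]}{\verticesalt} - \precs}{\step}}\,.
\end{equation}
Chaining this with the pathwise inequality above immediately produces the claimed lower bound on $\ex_{\init[\point]}[\inducedhittime_{\verticesalt}]$, and the uniformity of the required smallness of $\step$ and of the neighborhoods $\V_\iComp$ is simply inherited from \cref{lem:hitting_acc_induced_from_init}.

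There is no real obstacle here; the only point that needs a moment of care is justifying the absorption property of $\bigcup_{\jComp \in \verticesalt} \V_\jComp$ for the induced chain, which is a direct consequence of the construction in \cref{def:induced_chain}: whenever $\statealt_\run$ lands in some $\V_{\iComp_\run}$ with $\iComp_\run \in \verticesalt$ (i.e., $\gencomp_{\iComp_\run}$ is one of the target components $\sink_1,\dots,\sink_\nSinks$), the stopping time $\exittime_{\run+1}$ is set to $\hittime_\run$, so $\hittime_{\run+1} = \hittime_\run$ and $\statealt_{\run+1} = \statealt_\run \in \V_{\iComp_\run}$. By induction the chain never leaves that neighborhood, which is exactly what is needed to deduce $\accinducedhittime_{\verticesalt} \leq \inducedhittime_{\verticesalt}$ on every sample path and conclude the proof.
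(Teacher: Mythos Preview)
Your proof is correct and follows essentially the same approach as the paper: reduce to \cref{lem:hitting_acc_induced_from_init} via the pathwise comparison $\accinducedhittime_{\verticesalt} \leq \inducedhittime_{\verticesalt}$, which holds because the target neighborhoods are absorbing for the induced chain and $\accinducedstate_\run = \statealt_{\run\nComps}$. Note that the paper's one-line proof writes the comparison as $\inducedhittime_{\verticesalt} \leq \accinducedhittime_{\verticesalt}$, which appears to be a typo; your direction is the one actually needed and your justification via $\accinducedhittime_{\verticesalt} = \lceil \inducedhittime_{\verticesalt}/\nComps \rceil$ is correct.
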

\begin{proof}
    This result follows directly from \cref{lem:hitting_acc_induced_from_init} and the fact that $\inducedhittime_{\verticesalt} \leq \accinducedhittime_{\verticesalt}$.
\end{proof}

\subsection{Hitting time of \ac{SGD}}
\label{sec:sgd_hitting_time}
\revise{Leveraging the bounds on the hitting time of the accelerated process, we can now derive bounds on the hitting time of \ac{SGD}. Denote by $\sgdhittime$ the hitting time of $\V = \bigcup_{\jComp \in \verticesalt} \V_{\jComp}$ for the \ac{SGD} sequence:   
\begin{equation}
    \sgdhittime \defeq \inf \setdef*{\run \geq \start}{\state_{\run} \in \bigcup_{\jComp \in \verticesalt} \V_{\jComp}}\,.
\end{equation}
It corresponds to the hitting time of \ac{SGD} defined in the main text \cref{eq:globtime} in the simpler case where each $\V_\jComp$ is of the form $\V_\jComp = {\U_{\margin}(\gencomp_\jComp)}$ for $\jComp \in \verticesalt$.}
\revise{We now state our first main results on the actual \ac{SGD} sequence \cref{eq:SGD-app}. The theorem below is a corollary of \cref{thm:hitting_acc_from_init} which concerned the subsampled process \cref{eq:rescaled}.}
\begin{theorem}
    \label{thm:ub_sgd_hitting_time}
    For any $\init[\state] \in \vecspace$, for any $\precs > 0$, for $\V_1,\dots,\V_\nGencomps$ neighborhoods of $\gencomp_1,\dots,\gencomp_\nGencomps$ small enough, there is $\init[\step] > 0$ such that for all $0 < \step < \step_0$, 
    \begin{equation}
        \ex_{\init[\state]}\bracks*{\sgdhittime} \leq e^\frac{\timequasipotuprelativeto{\init[\state]}{\verticesalt} + \precs}{\step}\,.
    \end{equation}
\end{theorem}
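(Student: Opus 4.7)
The plan is to reduce the statement to the already-established upper bound on the accelerated hitting time (\cref{thm:hitting_acc_from_init}) by exploiting the simple deterministic relationship between the original \ac{SGD} process $(\curr[\state])_{\run \geq \start}$ and its subsampled version $\accstate_\run = \state_{\run \floor{1/\step}}$.

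The key observation is that, by construction, whenever the accelerated process $\accstate$ enters $\bigcup_{\jComp \in \verticesalt} \V_\jComp$ at time $\run$, the original process $\state$ has necessarily already entered this set by iteration $\run \floor{1/\step}$. This gives the pathwise inequality
\begin{equation}
\sgdhittime_{\verticesalt} \leq \floor{1/\step} \cdot \acchittime_{\verticesalt}\,,
\end{equation}
which in turn yields $\ex_{\init[\state]}[\sgdhittime_{\verticesalt}] \leq \floor{1/\step} \cdot \ex_{\init[\state]}[\acchittime_{\verticesalt}]$.

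Given $\precs > 0$, I would first invoke \cref{thm:hitting_acc_from_init} with tolerance $\precs/2$ to select small enough neighborhoods $\V_1, \dots, \V_{\nGencomps}$ and a threshold $\step_1 > 0$ such that, for all $0 < \step < \step_1$,
\begin{equation}
\ex_{\init[\state]}[\acchittime_{\verticesalt}] \leq \exp\parens*{\frac{\timequasipotuprelativeto{\init[\state]}{\verticesalt} + \precs/2}{\step}}\,.
\end{equation}
Then I would pick $\step_0 \in (0, \step_1]$ small enough so that $\floor{1/\step} \leq 1/\step \leq e^{\precs/(2\step)}$ for all $\step \in (0, \step_0]$; this is possible since $(1/\step)/e^{\precs/(2\step)} \to 0$ as $\step \to 0$. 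Combining the two bounds gives
\begin{equation}
\ex_{\init[\state]}[\sgdhittime_{\verticesalt}] \leq \exp\parens*{\frac{\precs}{2\step}} \cdot \exp\parens*{\frac{\timequasipotuprelativeto{\init[\state]}{\verticesalt} + \precs/2}{\step}} = \exp\parens*{\frac{\timequasipotuprelativeto{\init[\state]}{\verticesalt} + \precs}{\step}}\,,
\end{equation}
which is exactly the claimed upper bound.

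The argument is essentially a bookkeeping step: the real work has been done in \cref{thm:hitting_acc_from_init}, which itself relies on the Markov chain estimates of \cref{lem:mc-avg-exit-time} applied to the accelerated induced chain, together with the control on transition times from \cref{prop:bd_transition_time}. Since there is no obstacle here beyond ensuring that the multiplicative factor $\floor{1/\step}$ is subexponential in $1/\step$ (trivial for any polynomial factor), I do not anticipate any technical difficulty in executing this plan.
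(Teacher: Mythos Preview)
Your proposal is correct and matches the paper's proof essentially verbatim: the paper also reduces to \cref{thm:hitting_acc_from_init} via the pathwise bound $\sgdhittime_{\verticesalt} \leq \acchittime_{\verticesalt}\,\ceil{\step^{-1}}$ and then absorbs the subexponential factor $\ceil{\step^{-1}} \leq e^{\precs/\step}$ for small $\step$. The only cosmetic discrepancy is your use of $\floor{1/\step}$ versus the paper's $\ceil{\step^{-1}}$, which is immaterial since both are $O(1/\step)$.
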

\begin{proof}
    \UsingNamespace{proof:thm:ub_sgd_hitting_time}
    This result follows from \cref{thm:hitting_acc_from_init} and the fact that, by construction,
    \begin{equation}
        \sgdhittime \leq \acchittime_{\verticesalt} \ceil*{\step^{-1}}\,,
    \end{equation}
    with $\ceil{\step^{-1}} \leq  e^{\frac{\precs}{\step}}$ for small enough $\step$.
\end{proof}
\begin{corollary}
    \label{cor:sgd_hitting_time_high_proba}
    For any $\init[\state] \in \vecspace$, for any $\Margin > 0$, for $\V_1,\dots,\V_\nGencomps$ neighborhoods of $\gencomp_1,\dots,\gencomp_\nGencomps$ small enough, there is $\init[\step] > 0$ such that for all $0 < \step < \step_0$, 

    \begin{equation}
        \prob_{\state} \parens*{\sgdhittime < \exp \parens*{
                \frac{
                    \timequasipotuprelativeto{\init[\state]}{\verticesalt} + \Margin
                }{\step}
            }
        }
        \leq e^{-\frac{\Margin}{2 \step}}\,.
    \end{equation}
\end{corollary}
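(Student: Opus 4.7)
The plan is to derive this corollary as a direct consequence of \cref{thm:ub_sgd_hitting_time} via Markov's inequality. (I am reading the statement as the standard high-probability restatement of a mean bound, so I will treat the event in the corollary as $\{\sgdhittime_{\verticesalt} > \exp((\timequasipotuprelativeto{\init[\state]}{\verticesalt} + \Margin)/\step)\}$, which is the only reading compatible with the use of the upper bound from \cref{thm:ub_sgd_hitting_time}.)

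First, I would invoke \cref{thm:ub_sgd_hitting_time} with the tolerance parameter $\precs$ specialized to $\precs \defeq \Margin/2$. The theorem then produces a choice of neighborhoods $\V_1,\dots,\V_\nGencomps$ of the critical components $\gencomp_1,\dots,\gencomp_\nGencomps$ and a threshold $\step_0 > 0$ such that, for every $0 < \step < \step_0$,
\begin{equation}
\ex_{\init[\state]}\bracks*{\sgdhittime_{\verticesalt}}
    \leq \exp\parens*{\frac{\timequasipotuprelativeto{\init[\state]}{\verticesalt} + \Margin/2}{\step}}.
\end{equation}
These neighborhoods and this $\step_0$ are what I will take in the statement of the corollary.

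Next, I would apply Markov's inequality with the threshold $A \defeq \exp((\timequasipotuprelativeto{\init[\state]}{\verticesalt} + \Margin)/\step)$:
\begin{equation}
\prob_{\init[\state]}\parens*{\sgdhittime_{\verticesalt} > A}
    \leq \frac{\ex_{\init[\state]}[\sgdhittime_{\verticesalt}]}{A}
    \leq \frac{\exp\parens{(\timequasipotuprelativeto{\init[\state]}{\verticesalt} + \Margin/2)/\step}}{\exp\parens{(\timequasipotuprelativeto{\init[\state]}{\verticesalt} + \Margin)/\step}}
    = \exp\parens*{-\frac{\Margin}{2\step}},
\end{equation}
which is exactly the desired bound.

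There is essentially no obstacle here: the content is in \cref{thm:ub_sgd_hitting_time}, and the corollary is a routine Markov step. The only point to keep in mind is to ensure that the tolerance $\precs$ in the invocation of \cref{thm:ub_sgd_hitting_time} is chosen strictly smaller than $\Margin$ (the choice $\precs = \Margin/2$ is the cleanest), so that the exponent in the numerator does not cancel the one in the denominator; this is what yields the factor $1/2$ in the final bound $e^{-\Margin/(2\step)}$.
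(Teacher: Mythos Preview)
Your proposal is correct and essentially identical to the paper's own proof: both invoke \cref{thm:ub_sgd_hitting_time} with $\precs = \Margin/2$ and apply Markov's inequality to obtain the bound $e^{-\Margin/(2\step)}$. Your parenthetical remark about the direction of the inequality in the event is also spot on: the paper's own proof actually bounds $\prob_{\state}(\sgdhittime_{\verticesalt} \geq \cdot)$, not $\prob_{\state}(\sgdhittime_{\verticesalt} < \cdot)$, so the ``$<$'' in the displayed statement is a typo.
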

\begin{proof}
    By Markov's inequality and \cref{thm:ub_sgd_hitting_time}, for any $\init[\state] \in \vecspace$, for $\V_1,\dots,\V_\nGencomps$ neighborhoods of $\gencomp_1,\dots,\gencomp_\nGencomps$ small enough, there exists $\step_0 > 0$ such that for all $0 < \step < \step_0$:
    \begin{align}
        \prob_{\state} \parens*{\sgdhittime \geq \exp \parens*{
                \frac{
                    \timequasipotuprelativeto{\init[\state]}{\verticesalt} + \Margin
                }{\step}
            }
        }
        & \leq \frac{
            \ex_{\state} \bracks*{\sgdhittime}
        }{
            \exp \parens*{
                \frac{
                    \timequasipotuprelativeto{\init[\state]}{\verticesalt} + \Margin
                }{\step}
            }
        } \\
        & \leq \frac{
            \exp \parens*{
                \frac{
                    \timequasipotuprelativeto{\init[\state]}{\verticesalt} + \Margin /2 
                }{\step}
            }
        }{
            \exp \parens*{
                \frac{
                    \timequasipotuprelativeto{\init[\state]}{\verticesalt} + \Margin
                }{\step}
            }
        } = \exp \parens*{-\Margin / 2 \step} \,.
    \end{align}
\end{proof}

Let us now focus on the lower-bound which requires more care. We will require the following additional assumption:
\begin{assumption}
\label{asm:sos}
Assume that there exist $\strong > 0$, $\bdvarup < + \infty$, $\Radius > 0$, such that, 
\begin{enumerate}
    \item  $\cl \U_{\Radius}(\sink_\jComp)$,  $\jComp = 1,\dots,\nSinks$ are pairwise disjoint.
    \item for all $\jComp =1,\dots,\nSinks$, $\state \in \cl \U_{\Radius}(\sink_\jComp)$,   
        $\noise(\state, \sample)$ is a $\bdvarup$-sub-Gaussian:
        \begin{equation}
            \forall \mom \in \vecspace,\quad
                  \log \ex \bracks*{e^{\inner{\mom, \noise(\point, \sample)}}}
			      \leq \frac{\bdvarup}{2} \norm{\mom}^2\,.
        \end{equation}
    \item for all $\jComp =1,\dots,\nSinks$, $\point \in \cl \U_{\Radius}(\sink_\jComp)$, there exists $\pointalt \in \proj_{\sink_\jComp}(\point)$ a projection of $\point$ on $\sink_\jComp$ such that:
        \begin{equation}
            \inner{\grad \obj(\point)}{\point - \pointalt} \geq \frac{\strong}{2} \norm{\point - \pointalt}^2
        \end{equation}
\end{enumerate}
\end{assumption}
Note that, by \cref{asm:noise} and compactness of $\crit(\obj)$, such a $\bdvarup$ always exists since one can take
\begin{equation}
    \bdvarup = \sup \setdef*{\bdvar(\obj(\state))}{\state \in \bigcup_{\jComp = 1}^{\nSinks} \cl \U_{\Radius}(\sink_\jComp)}\,.
\end{equation}
In particular, if $\bdvar$ is constant, then one can simply take $\bdvarup = \bdvar$.

\begin{lemma}
\label{lem:local_high_prob_sink_bound}
Under \cref{asm:sos}, for any $\jComp \in \verticesalt$, there exists $\step_0 > 0$ such that for all $0 < \step < \step_0$, for any $\init[\state] \in \vecspace$ satisfying $d(\init[\state], \sink_\jComp) \leq \frac{\Radius}{12}$, we have, for \ac{SGD} started at $\init[\state]$:
\begin{equation}
    \prob_{\init[\state]}\parens*{
        \forall \run \geq \start,\,\,\,
        d(\state_\run, \sink_\jComp) \leq \Radius
    }
    \geq
    1 - \exp \parens*{-\frac{\strong \Radius^2}{1152 \bdvarup \step}}\,.
\end{equation}
\end{lemma}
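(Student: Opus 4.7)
The plan is to establish a standard confinement argument built on the local quadratic growth hypothesis in \cref{asm:sos} and the sub-Gaussian tails of the noise. First, combining the quadratic growth bound with the $\smooth$-Lipschitz smoothness of $\grad\obj$ from \cref{asm:obj-weak}, I would show that a single gradient step contracts the squared distance to the sink: for any $\point \in \cl \nbd_\Radius(\sink_\jComp)$ with projection $\sol \in \proj_{\sink_\jComp}(\point)$,
\begin{equation}
\norm{\point - \step \grad \obj(\point) - \sol}^2
\leq \norm{\point - \sol}^2 \bigl(1 - \step\strong + \step^2\smooth^2\bigr)
\leq \norm{\point - \sol}^2 \bigl(1 - \tfrac{\step\strong}{2}\bigr)
\end{equation}
for all $\step$ small enough. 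Introducing the exit time $\sigma \defeq \inf\{\run \geq 0 : d(\state_\run, \sink_\jComp) > \Radius\}$ for the (non-accelerated) \ac{SGD} iterates defined by \eqref{eq:SGD-app}, expanding $\norm{\state_{\run+1} - \sol_\run}^2$ with $\sol_\run \in \proj_{\sink_\jComp}(\state_\run)$, and applying the contraction yields, on $\{\run < \sigma\}$, a recursion of the form
\begin{equation}
Y_{\run+1} \leq \bigl(1 - \tfrac{\step\strong}{2}\bigr) Y_\run + 2\step \inner{\noise_\run}{\xi_\run} + \step^2 \norm{\noise_\run}^2,
\end{equation}
where $Y_\run \defeq d(\state_\run, \sink_\jComp)^2$, $\xi_\run \defeq \state_\run - \sol_\run - \step \grad \obj(\state_\run)$, and $\norm{\xi_\run} \leq 2\sqrt{Y_\run}$ for $\step$ small enough.

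The heart of the argument is to build an exponential supermartingale $V_\run \defeq \exp(\lambda Y_{\run \wedge \sigma})$ for a carefully tuned $\lambda > 0$. Using the sub-Gaussian bound in \cref{asm:sos}, the linear-in-noise contribution is controlled by
\begin{equation}
\ex\bigl[\exp(2\step\lambda \inner{\noise_\run}{\xi_\run}) \mid \filter_\run\bigr]
\leq \exp\bigl(2\step^2\lambda^2 \bdvarup \norm{\xi_\run}^2\bigr)
\leq \exp\bigl(8\step^2\lambda^2 \bdvarup Y_\run\bigr),
\end{equation}
while the quadratic contribution $\ex[\exp(\step^2 \lambda \norm{\noise_\run}^2) \mid \filter_\run]$ is bounded via the standard MGF estimate for squared norms of sub-Gaussian vectors, valid so long as $\step^2 \lambda \bdvarup$ stays bounded away from $1/2$. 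Choosing $\lambda$ of order $\strong/(\bdvarup \step)$ renders the effective coefficient of $Y_\run$ in the exponent strictly negative, so that the stopped process $(V_\run)$ is a genuine supermartingale.

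Applying Doob's maximal inequality (or optional stopping at $\sigma \wedge N$ followed by $N \to \infty$) then yields
\begin{equation}
\prob_{\init[\state]}(\sigma < \infty)
\leq \frac{\ex_{\init[\state]}[V_0]}{\exp(\lambda \Radius^2)}
\leq \exp\bigl(-\lambda(\Radius^2 - d(\init[\state], \sink_\jComp)^2)\bigr)
\leq \exp\bigl(-\tfrac{143}{144}\lambda\Radius^2\bigr),
\end{equation}
using the hypothesis $d(\init[\state], \sink_\jComp) \leq \Radius/12$, and substituting the chosen $\lambda$ produces the stated bound with constant $1/1152$. The main technical obstacle is the joint control of the two noise contributions when constructing the supermartingale: the quadratic $\step^2 \norm{\noise_\run}^2$ term forces $\lambda$ to be small enough for the squared-norm MGF to be finite, while the linear term's coefficient $8\step^2 \lambda^2 \bdvarup Y_\run$ must be dominated by the $\step\strong Y_\run/2$ contraction, which together with the $(1 - 1/144)$ gap coming from the $\Radius/12$ initialization radius produces the prefactor $1152$.
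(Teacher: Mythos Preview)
Your overall strategy—build an exponential Lyapunov function of the squared distance, show it is a supermartingale up to the exit time, and invoke Doob—is a natural alternative to the paper's argument, but there is a genuine gap in the supermartingale step. After conditioning, the quadratic term contributes a factor
\[
\ex\bigl[\exp(\step^2\lambda\norm{\noise_\run}^2)\mid\filter_\run\bigr]
\;\leq\; (1 - 2\step^2\lambda\bdvarup)^{-\vdim/2}
\;\approx\; \exp(c\,\vdim\,\step^2\lambda\,\bdvarup),
\]
which with $\lambda\asymp\strong/(\bdvarup\step)$ is of order $\exp(c'\step)$. This constant is strictly greater than $1$ and does \emph{not} depend on $Y_\run$, so it cannot be absorbed by the contraction in the $Y_\run$-coefficient: you obtain $\ex[V_{\run+1}\mid\filter_\run]\leq V_\run\cdot e^{c'\step}$, not $\ex[V_{\run+1}\mid\filter_\run]\leq V_\run$. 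Over $\run$ steps this factor compounds to $e^{c'\run\step}$, and since the lemma asks for confinement for \emph{all} $\run\geq 0$, the Doob bound $\prob(\sigma<\infty)\leq e^{-\lambda(\Radius^2 - Y_0)}$ does not follow. (One can show the supermartingale property holds whenever $Y_\run$ exceeds a threshold of order $\step$, but $Y_\run$ may dip below it repeatedly, so turning this into a uniform-in-$\run$ exit bound requires an additional excursion or reflection argument that you have not supplied.)

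The paper circumvents this entirely by two devices. First, it works with a \emph{projected} auxiliary sequence $\projstate_\run\in\cl\nbd_\Radius(\sink_\jComp)$, so that $\norm{\noise_\run}$ and $\norm{\grad\obj(\projstate_\run)}$ are bounded by a deterministic constant $\gbound$ on this compact set (via \cref{asm:noise-weak}\cref{asm:noise-growth-weak}); the quadratic term then becomes $\step^2\gbound^2$, a deterministic quantity that is simply made $\leq\Radius^2/12$ by taking $\step$ small. Second, instead of exponentiating the distance, the paper isolates the geometrically weighted martingale
\[
\mart_\run = \sum_{\runalt=0}^{\run}(1-\strong\step)^{\run-\runalt}\inner{\noise_\runalt}{\projstate_\runalt-\stateC_\runalt},
\]
whose \ac{MGF} is bounded \emph{uniformly in $\run$} because $\sum_\runalt(1-\strong\step)^{2(\run-\runalt)}\leq 1/(\strong\step)$; Doob's inequality on $e^{\coef\mart_\run}$ then yields $\prob(\sup_\run\step\mart_\run\geq\Margin)\leq\exp(-\strong\Margin^2/(2\step\Radius^2\bdvarup))$ directly. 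Plugging this back into the unrolled distance recursion and choosing $\Margin=\Radius^2/24$ gives the constant $1152$. Your approach could be repaired by borrowing the first device—replacing the stochastic bound on $\step^2\norm{\noise_\run}^2$ by the deterministic $\step^2\gbound^2$—but you would still need to handle the resulting additive constant without letting it accumulate, and the paper's weighted-martingale decomposition is the cleanest way to do so.
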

\begin{proof}
\UsingNamespace{proof:lem:local_high_prob_sink_bound}

Let us denote by $\cpt \defeq \cl \U_{\Radius}(\sink_\jComp)$ 
Given $\init[\state] \in \cpt$, 
we define the projected \ac{SGD} sequence as 
\begin{equation}
	\left\{
	\begin{aligned}
         & \init[\projstate] = \init[\state] \\
         & \next[\projstate] \in \proj_\cpt  \parens*{\curr[\projstate] - \step \grad \obj(\curr[\projstate]) + \step \curr[\noise]}\,, \quad \text{ where } \curr[\noise] = \noise(\curr[\projstate], \curr[\sample])\,.
	\end{aligned}
	\right.
    \LocalLabel{eq:proj-sgd}
\end{equation}

By \cref{asm:sos}, for any $\run \geq \start$, there exists $\curr[\stateC] \in \proj_{\sink_\jComp}(\curr[\projstate])$ such that:
\begin{equation}
    \inner{\grad \obj(\curr[\projstate])}{\curr[\projstate] - \curr[\stateC]} \geq \frac{\strong}{2} \norm{\curr[\projstate] - \curr[\stateC]}^2\,.
    \LocalLabel{eq:sos-proj-iterates}
\end{equation}

Define the martingale sequence $(\mart_\run)_{\run \geq \start}$ by:
\begin{equation}
    \mart_\run \defeq \sum_{\runalt =  0}^\run
    (1 - \strong \step)^{\run - \runalt} \inner{\noise_\runalt}{\projstate_\runalt - \stateC_\runalt}\,.
\end{equation}

We recursively compute the moment generating function of $\mart_\run$: for $\coef \geq 0$, $\run \geq \start$, by the tower property of conditional expectation and \cref{asm:sos}:
\begin{align}
    \ex \bracks*{e^{\coef \mart_\run} }
    &\leq 
    \exp \parens*{\frac{\coef^2 \Radius^2 \bdvarup}{2}}
    \exp \parens*{\frac{(1 - \strong \step)^2 \coef^2 \bdvarup}{2}}
    \dots
    \exp \parens*{\frac{(1 - \strong \step)^{2\run} \coef^2 \bdvarup}{2}}
    \notag\\
    &=
    \exp \parens*{\frac{\coef^2 \Radius^2 \bdvarup }{2} \times \frac{1 - (1 - \strong \step)^{2(\run+1)}}{1 - (1 - \strong \step)^2}}
    \notag\\
    &\leq 
    \exp \parens*{\frac{\coef^2 \Radius^2 \bdvarup}{2 \strong \step}}\,,
    \LocalLabel{eq:mgf-mart}
\end{align}
provided that $\step$ is small enough so that $\strong \step < 1$. Note that we used that the iterates of \cref{\LocalName{eq:proj-sgd}} are at distance at most $\Radius$ from $\sink_\jComp$ by construction.

Since $\parens*{e^{\coef \mart_\run}}_{\run \geq \start}$ is a convex function of the martingale sequence $\parens*{\mart_\run}_{\run \geq \start}$, it is a sub-martingale.
We can apply Doob's maximal inequality to the non-negative sub-martingale $(\mart_\run)_{\run \geq \start}$ to obtain that for any $\Margin > 0$, $\nRuns \geq \start$:
\begin{align}
    \prob \parens*{ \sup_{\run \leq \nRuns - 1} \mart_\run \geq \Margin}
    &\leq 
    \prob \parens*{ \sup_{\run \leq \nRuns - 1} e^{\coef \mart_\run} \geq e^{\coef \Margin}}
    \notag\\
    &\leq
    \exp \parens*{\frac{\coef^2 \Radius^2 \bdvarup}{2 \strong \step} - \coef \Margin}\,,
    \LocalLabel{eq:doob-maximal}
\end{align}
where we used \cref{\LocalName{eq:mgf-mart}} in the last inequality.
Optimizing the right-hand side of \cref{\LocalName{eq:doob-maximal}} with respect to $\coef$ and setting $\coef = \frac{\strong \step \Margin}{\Radius^2 \bdvarup}$, we obtain:
\begin{equation}
    \prob \parens*{ \sup_{\run \leq \nRuns - 1} \mart_\run \geq \Margin}
    \leq 
    \exp \parens*{- \frac{\strong \step \Margin^2}{2 \Radius^2 \bdvarup}}\,.
\end{equation}

Taking $\Margin \gets \Margin / \step$ and $\nRuns \to +\infty$, we obtain that, by monotone continuity of probability measures, for any $\Margin > 0$:
\begin{equation}
    \prob \parens*{ \sup_{\run \geq \start} \step \mart_\run \geq \Margin}
    \leq 
    \exp \parens*{- \frac{\strong \Margin^2}{2 \step \Radius^2 \bdvarup}}\,.
    \LocalLabel{eq:sup-mart-bound}
\end{equation}

Denote by $\gbound \defeq \sup_{\point \in \cpt, \sample \in \samples} \norm{- \grad \obj(\point) + \noise(\point, \sample)}$ the bound on the gradient and on the noise on $\cpt$, which is finite by \cref{asm:obj-weak,asm:noise-weak}.

Let us now derive a recursive inequality for the distance to $\sink_\jComp$. For any iterate $\run \geq \start$, by non-expansiveness of the projection, we have:
\begin{align}
d^2(\next[\projstate], \sink_\jComp)
&\leq  \norm{\next[\projstate] - \curr[\stateC]}^2\notag\\
&\leq \norm{\curr[\projstate] - \step \grad \obj(\curr[\projstate]) + \step \curr[\noise] - \curr[\stateC]}^2 \notag\\
&= \norm{\curr[\projstate] - \curr[\stateC]}^2 + \step^2\norm{\grad \obj(\curr[\projstate]) - \curr[\noise]}^2 \notag\\
&\quad - 2\step\inner{\grad \obj(\curr[\projstate])}{\curr[\projstate] - \curr[\stateC]} + 2\step\inner{\curr[\noise]}{\curr[\projstate] - \curr[\stateC]} \notag\\
&\leq (1 - \strong \step)\norm{\curr[\projstate] - \curr[\stateC]}^2 + \step^2\gbound^2 + 2\step\inner{\curr[\noise]}{\curr[\projstate] - \curr[\stateC]}\,,
\end{align}
where we used \cref{\LocalName{eq:sos-proj-iterates}} in the last inequality.

Iterating this inequality and using that $d(\projstate_\run, \sink_\jComp) = \norm{\projstate_\run - \stateC_\run}$, we obtain:
\begin{align}
d^2(\projstate_\run, \sink_\jComp) 
&\leq (1 - \strong \step)^\run d^2(\init[\state], \sink_\jComp) + \step^2\gbound^2\sum_{i=0}^{\run-1}(1 - \strong \step)^i
\notag\\
&\quad + 2\step\sum_{i=0}^{\run-1}(1 - \strong \step)^{\run-1-i}\inner{\noise_i}{\projstate_i - \stateC_i}
\notag\\
&= (1 - \strong \step)^\run d^2(\init[\state], \sink_\jComp) + \step^2\gbound^2\frac{1 - (1 - \strong \step)^\run}{\strong \step} + 2\step \mart_{\run-1}
\notag\\
&\leq (1 - \strong \step)^\run d^2(\init[\state], \sink_\jComp) + \frac{2\step \gbound^2}{\strong} + 2\step \mart_{\run-1}\,.
\LocalLabel{eq:rec-dist}
\end{align}

Using that $d(\init[\state], \sink_\jComp) \leq \frac{\Radius}{12}$ by assumption, requiring that $\step > 0$ be small enough so that $\frac{2 \step \gbound^2}{\strong} \leq \frac{\Radius^2}{12}$ and taking $\Margin \gets \frac{\Radius^2}{24}$ in \cref{\LocalName{eq:sup-mart-bound}}, we obtain that for any $\run \geq \start$, with probability at least $1 - \exp \parens*{-\frac{\strong \Radius^2}{1152 \bdvarup \step}}$:
\begin{equation}
    d^2(\projstate_\run, \sink_\jComp) \leq \frac{\Radius^2}{4}\,.
    \LocalLabel{eq:dist-bound}
\end{equation}

In addition, take $\step > 0$ small enough so that $\frac{\Radius}{2} + \step \gbound \leq \frac{\Radius}{2}$: this implies that if $\state \in \vecspace$ so that $d(\state, \sink_\jComp) \leq \frac{\Radius}{2}$, then $\state - \step \grad \obj(\state) + \step \noise(\state, \sample) \in \cpt$. Combining this remark with \cref{\LocalName{eq:dist-bound}}, we can show recursively that the sequences $(\state_\run)_{\run \geq \start}$ and $(\projstate_\run)_{\run \geq \start}$ coincide with probability at least $1 - \exp \parens*{-\frac{\strong \Radius^2}{1152 \bdvarup \step}}$, yielding the desired result.

\end{proof}

Define by $\inducedhittime_{\verticesalt}$ the hitting time of $\bigcup_{\jComp \in \verticesalt} \V_{\jComp}$ by the induced chain $(\statealt_\run)_{\run \geq \start}$ (see \cref{def:induced_chain}).
\begin{lemma}
\label{lem:max_cost_tail_bound}
Define 
\begin{equation}
    \bdquasipotup \defeq \max_{\vertexA \in \vertices \setminus \verticesalt} \min_{\vertexB \in \verticesalt} \dquasipotup_{\vertexA, \vertexB} + 1\,.
\end{equation}
For any $\init[\state] \in \vecspace$, for $\V_1,\dots,\V_\nGencomps$ neighborhoods of $\gencomp_1,\dots,\gencomp_\nGencomps$ small enough, there is $\init[\step] > 0$ such that for all $0 < \step < \step_0$, for any $\run \geq \start$, 
\begin{equation}
    \prob_{\init[\state]}
    \parens*{
        \inducedhittime_{\verticesalt} > \run
    }
    \leq
    \parens*{1 - e^{-\frac{\bdquasipotup}{\step}}}^{\run}\,.
\end{equation}
    
\end{lemma}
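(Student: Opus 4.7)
The plan is to establish the claim as a standard geometric tail bound by exhibiting a uniform lower bound on the one-step transition probability of the induced chain from any non-target component to the target set $\verticesalt$, and then iterating via the Markov property.

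First, I would observe that by the very definition of $\bdquasipotup$, for every $\vertexA \in \vertices \setminus \verticesalt$ there exists $\vertexB^{\ast}(\vertexA) \in \verticesalt$ such that
\begin{equation}
\dquasipotup_{\vertexA, \vertexB^{\ast}(\vertexA)}
	= \min_{\vertexB \in \verticesalt} \dquasipotup_{\vertexA, \vertexB}
	\leq \bdquasipotup - 1\,.
\end{equation}
Fixing any $\precs \in (0, 1)$, I would then invoke \cref{lem:est_trans_prob_lb} with this tolerance: for sufficiently small neighborhoods $\nhdaltalt_{\idx}$ of $\gencomp_{\idx}$ and sufficiently small $\step > 0$, for every $\iComp \in \compIndices$ and every $\state \in \nhdaltalt_{\iComp}$,
\begin{equation}
\probalt_{\nhdaltalt}\parens*{\state,\, \nhdaltalt_{\vertexB^{\ast}(\iComp)}}
	\geq \exp\parens*{-\frac{\dquasipotup_{\iComp, \vertexB^{\ast}(\iComp)} + \precs}{\step}}
	\geq \exp\parens*{-\frac{\bdquasipotup}{\step}}\,.
\end{equation}
Since $\nhdaltalt_{\vertexB^{\ast}(\iComp)} \subset \bigcup_{\jComp \in \verticesalt} \nhdaltalt_{\jComp}$, this yields the key uniform bound
\begin{equation}
\prob_{\state}\parens*{\statealt_{1} \notin \textstyle\bigcup_{\jComp \in \verticesalt} \nhdaltalt_{\jComp}}
	\leq 1 - e^{-\bdquasipotup/\step}
	\qquad \text{for all $\iComp \in \compIndices$, $\state \in \nhdaltalt_{\iComp}$.}
\end{equation}

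With this in hand, the proof reduces to an iterated application of the Markov property of the induced chain $(\statealt_{\run})_{\run \geq \start}$. On the event $\setof{\inducedhittime_{\verticesalt} > \run - 1}$, the state $\statealt_{\run - 1}$ lies in some $\nhdaltalt_{\iComp'}$ with $\iComp' \in \compIndices$, so conditioning on $\statealt_{\run - 1}$ and applying the above one-step estimate gives
\begin{equation}
\prob_{\init[\state]}(\inducedhittime_{\verticesalt} > \run)
	= \ex_{\init[\state]}\bracks*{\oneof{\inducedhittime_{\verticesalt} > \run-1}\, \prob_{\statealt_{\run-1}}\parens*{\statealt_{1} \notin \textstyle\bigcup_{\jComp \in \verticesalt} \nhdaltalt_{\jComp}}}
	\leq \parens*{1 - e^{-\bdquasipotup/\step}}\, \prob_{\init[\state]}(\inducedhittime_{\verticesalt} > \run-1)\,.
\end{equation}
A straightforward induction on $\run$ then yields the claimed bound.

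The only delicate point is the initial step when $\init[\state] \notin \V$: in that case $\statealt_{0} = \init[\state]$ does not live in any component neighborhood, so the one-step lower bound cannot be applied directly at $\run = 1$. I would handle this by bounding that first factor trivially by $1$ and absorbing the loss into the ``$+1$'' slack built into the definition of $\bdquasipotup$, which ensures that $e^{-\bdquasipotup/\step}$ is small enough for the bound $(1 - e^{-\bdquasipotup/\step})^{\run}$ to still dominate $(1 - e^{-\bdquasipotup/\step})^{\run - 1}$ after suitable reduction of $\step$. Apart from this bookkeeping, the proof is a direct geometric tail computation, and I expect the sharpening of the transition estimate uniformly in $\state$ via \cref{lem:est_trans_prob_lb} to be the main technical ingredient.
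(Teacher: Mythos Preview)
Your proof is essentially identical to the paper's: both invoke \cref{lem:est_trans_prob_lb} (the paper takes $\precs = 1$) to obtain the uniform one-step lower bound $\probalt_{\nhdaltalt}(\state, \bigcup_{\jComp \in \verticesalt} \nhdaltalt_{\jComp}) \geq e^{-\bdquasipotup/\step}$ for every $\state \in \nhdaltalt_{\iComp}$, $\iComp \in \compIndices$, and then iterate via the Markov property to get the geometric tail.

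One caveat: your proposed fix for the initial step when $\init[\state] \notin \V$ is garbled --- $(1 - e^{-\bdquasipotup/\step})^{\run}$ never dominates $(1 - e^{-\bdquasipotup/\step})^{\run-1}$, so the ``$+1$'' slack cannot repair the exponent in the way you describe. The paper itself does not single out this case; in fact the recursion as written only cleanly yields $(1 - e^{-\bdquasipotup/\step})^{\run-1}$ when $\init[\state] \notin \V$, since $\statealt_{0} = \init[\state]$ need not lie in any $\nhdaltalt_{\iComp}$. This is harmless for the downstream use in \cref{thm:lb_sgd_hitting_time}, where only the geometric decay rate matters, but if you want the stated exponent you should either restrict to $\init[\state] \in \V$ or invoke \cref{lem:est_trans_prob_lb_init} for the first transition.
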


Note that \cref{asm:finite_cost} ensures that $\bdquasipotup$ is finite.

\begin{proof}
\UsingNamespace{proof:lem:max_cost_tail_bound}

Fix some $\init[\state] \in \vecspace$.
By definition of $\bdquasipotup$, for any vertex $\vertexA \in \vertices \setminus \verticesalt$, there exists $\vertexB \in \verticesalt$ such that:
\begin{equation}
    \dquasipotup_{\vertexA, \vertexB} \leq \bdquasipotup - 1\,.
\end{equation}

Fix $\precs \gets 1$. By \cref{lem:est_trans_prob_lb}, for small enough neighborhoods $\nhdaltalt_\idx$ of $\gencomp_\idx$, $\idx = 1, \dots, \nGencomps$, there exists $\step_0 > 0$ such that for all $\vertexA \in \vertices \setminus \verticesalt$, $\state \in \nhdaltalt_{\vertexA}$, $0 < \step < \step_0$:
\begin{align}
    \probalt_{\nhdaltalt}\left(\state, \bigcup_{\vertexB \in \verticesalt} \nhdaltalt_{\vertexB}\right)
    &\geq \max_{\vertexB \in \verticesalt} \probalt_{\nhdaltalt}(\state, \nhdaltalt_{\vertexB})
    \notag\\
    &\geq \exp \parens*{
        - \frac{\min_{\vertexB \in \verticesalt} \dquasipotup_{\vertexA, \vertexB}}{\step}
        - \frac{\precs}{\step}
    }
    \notag\\
    &\geq \exp \parens*{
        - \frac{\bdquasipotup}{\step}
    }\,.
    \LocalLabel{eq:lb-trans-prob}
\end{align}

Therefore, for any $\run \geq \afterstart$, by the strong Markov property, we have:
\begin{align}
    \prob_{\init[\state]}
    \parens*{\inducedhittime_{\verticesalt} > \run}
    &= \ex_{\init[\state]} \bracks*{\oneof{\inducedhittime_{\verticesalt} > \run - 1} \oneof{\statealt_{\run} \notin \bigcup_{\jComp \in \verticesalt} \V_{\jComp}}}
    \notag\\
    &= \ex_{\init[\state]} \bracks*{\oneof{\inducedhittime_{\verticesalt} > \run - 1} \ex_{\statealt_{\run - 1}} \bracks*{\oneof{\statealt_{\run} \notin \bigcup_{\jComp \in \verticesalt} \V_{\jComp}}} }
    \notag\\
    &\leq \parens*{1 - \exp \parens*{-\frac{\bdquasipotup}{\step}}} \prob_{\init[\state]} \parens*{\inducedhittime_{\verticesalt} > \run - 1} \,,
    \LocalLabel{eq:strong-markov}
\end{align}
where we used the \cref{\LocalName{eq:lb-trans-prob}} in the last inequality.
Iterating \cref{\LocalName{eq:strong-markov}}, we obtain the desired result.

\end{proof}

\begin{theorem}
\label{thm:lb_sgd_hitting_time}
\UsingNamespace{thm:lb_sgd_hitting_time}
Fix $\init[\state] \in \vecspace$.
Under \cref{asm:sos} and assuming that,
\begin{equation}
    \frac{\strong \Radius^2}{1152 \bdvarup}
    >
    \bdquasipotup - \timequasipotuprelativeto{\init[\state]}{\verticesalt}\,,
    \LocalLabel{eq:lb-sgd-hitting-time-condition}
\end{equation}
for any $\precs > 0$, for $\V_1,\dots,\V_\nGencomps$ neighborhoods of $\gencomp_1,\dots,\gencomp_\nGencomps$ small enough, there is $\init[\step] > 0$ such that for all $0 < \step < \step_0$:
\begin{equation}
    \ex_{\init[\state]}\bracks*{\sgdhittime} \geq e^{\frac{\timequasipotdownrelativeto{\init[\state]}{\verticesalt} - \precs}{\step}}\,.
\end{equation}
\end{theorem}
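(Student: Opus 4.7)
The plan is to deduce the lower bound on $\ex_{\init[\state]}[\sgdhittime_{\verticesalt}]$ from the already-established lower bound on the induced-chain hitting time $\inducedhittime_{\verticesalt}$ provided by \cref{lem:lb_induced_hitting_time}, using \cref{lem:local_high_prob_sink_bound} to ensure that the SGD time and the induced-chain time are comparable with high probability. The first step is to choose the neighborhoods $\V_{\iComp}$ small enough that \cref{lem:lb_induced_hitting_time} applies with $\precs/2$ in place of $\precs$, and in addition to require $\V_{\jComp} \subseteq \U_{\Radius/12}(\sink_{\jComp})$ for every $\jComp \in \verticesalt$ so that \cref{lem:local_high_prob_sink_bound} can be invoked at the first entry of SGD into a target neighborhood.

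Next, we introduce the event $B$ that, from time $\sgdhittime_{\verticesalt}$ onward, the SGD iterates remain in $\U_{\Radius}(\sink_{j^{\ast}})$, where $j^{\ast} \in \verticesalt$ is the index of the target component whose neighborhood is entered first. By the strong Markov property at $\sgdhittime_{\verticesalt}$, \cref{lem:local_high_prob_sink_bound} gives $\prob_{\init[\state]}(B^{c}) \leq \exp(-\strong \Radius^{2}/(1152 \bdvarup \step))$. On $B$, the component $\sink_{j^{\ast}}$ is the only connected component of $\crit(\obj)$ contained in $\U_{\Radius}(\sink_{j^{\ast}})$ (for small enough neighborhoods), and since the induced chain is absorbed upon observing any target, a careful bookkeeping of accelerated observations inside $\V_{j^{\ast}}$ yields $\sgdhittime_{\verticesalt} \geq (\inducedhittime_{\verticesalt} - 1)\floor{1/\step}$, modulo an additive correction of order $\floor{1/\step}$ that can be absorbed in the final slack. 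Combining the two estimates, one obtains
\begin{equation}
\ex_{\init[\state]}[\sgdhittime_{\verticesalt}]
\geq \floor{1/\step}\,\parens*{\ex_{\init[\state]}[\inducedhittime_{\verticesalt}] - 1 - \ex_{\init[\state]}[\inducedhittime_{\verticesalt}\,\one\{B^{c}\}]}\,. \notag
\end{equation}

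The final step is to show that the error term $\ex_{\init[\state]}[\inducedhittime_{\verticesalt}\,\one\{B^{c}\}]$ is negligible compared to $\ex_{\init[\state]}[\inducedhittime_{\verticesalt}] \geq e^{(\timequasipotuprelativeto{\init[\state]}{\verticesalt} - \precs/2)/\step}$. The geometric tail bound of \cref{lem:max_cost_tail_bound} yields the moment estimate $\ex_{\init[\state]}[\inducedhittime_{\verticesalt}^{p}] \lesssim p!\, e^{p \bdquasipotup / \step}$, so H\"older's inequality with a large exponent $p$ controls the error by a quantity of order $p \exp\parens*{(\bdquasipotup - (1 - 1/p)\,\strong \Radius^{2}/(1152 \bdvarup))/\step}$. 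The hypothesis $\strong \Radius^{2}/(1152 \bdvarup) > \bdquasipotup - \timequasipotuprelativeto{\init[\state]}{\verticesalt}$ allows one to choose $p$ large enough that this error is dominated by $\ex_{\init[\state]}[\inducedhittime_{\verticesalt}]$, after absorbing polynomial factors into the final $\precs$-slack. The main obstacle is the time-scale bookkeeping in the intermediate step relating $\sgdhittime_{\verticesalt}$ to $\inducedhittime_{\verticesalt}$ on the event $B$, since $\V_{j^{\ast}}$ is strictly smaller than $\U_{\Radius}(\sink_{j^{\ast}})$ and SGD may oscillate between these two scales; the use of H\"older rather than Cauchy--Schwarz in the final step is also crucial in order to match the exact constant in the attracting-strength hypothesis, since Cauchy--Schwarz would lose a factor of two in the exponent.
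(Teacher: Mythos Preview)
Your proposal is correct and follows essentially the same architecture as the paper: define the trapping event $B$, use \cref{lem:local_high_prob_sink_bound} via strong Markov to bound $\prob(B^c)$, show that on $B$ one has $\inducedhittime_{\verticesalt} \leq \sgdhittime_{\verticesalt}/\floor{\step^{-1}}+1$, and combine with \cref{lem:lb_induced_hitting_time}. The only difference is in how you bound the error term $\ex_{\init[\state]}[\inducedhittime_{\verticesalt}\one_{B^c}]$: you use H\"older with a large exponent $p$ and the moment bound $\ex[\inducedhittime_{\verticesalt}^p]\lesssim p!\,e^{p\bdquasipotup/\step}$ from \cref{lem:max_cost_tail_bound}, whereas the paper writes $\ex[\inducedhittime_{\verticesalt}\one_{B^c}]=\int_0^\infty \prob(\inducedhittime_{\verticesalt}>t,\,B^c)\,dt$, splits at a horizon $\horizon=\exp\bigl((\strong\Radius^2/(1152\bdvarup)+\timequasipotuprelativeto{\init[\state]}{\verticesalt}-2\precs)/\step\bigr)$, and bounds the two pieces by $\horizon\,\prob(B^c)$ and the geometric tail directly. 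Both routes reach the same conclusion under the same hypothesis; the paper's splitting avoids the limit $p\to\infty$ and gives a slightly cleaner match to the constant in the condition, while your H\"older argument is a standard alternative.

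Regarding the ``bookkeeping'' you flag as the main obstacle: it is not actually delicate. On $B$, for every $n\geq \sgdhittime_{\verticesalt}$ the iterate $\state_n$ lies in $\U_\Radius(\sink_{j^\ast})$; since the induced chain takes values in $\bigcup_\iComp \V_\iComp$ and (for $\V$'s chosen small enough and by disjointness in \cref{asm:sos}) the only $\V_\iComp$ meeting $\U_\Radius(\sink_{j^\ast})$ is $\V_{j^\ast}$, the first induced-chain observation at an accelerated time $\hittime_{k+1}\floor{\step^{-1}}\geq \sgdhittime_{\verticesalt}$ is automatically in $\V_{j^\ast}$, giving $\inducedhittime_{\verticesalt}\leq k+1$ with no oscillation issue.
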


\begin{remark}
\label{rmk:dependence-attracting-strength}
\UsingNestedNamespace{thm:lb_sgd_hitting_time}{rmk}
Note that the condition \cref{\MasterName{eq:lb-sgd-hitting-time-condition}} is implied by the stronger condition
\begin{equation}
     \frac{\strong \Radius^2}{1152 \bdvarup}
    >
    \bdquasipotup\,.
    \LocalLabel{eq:lb-sgd-hitting-time-condition}
\end{equation}
In particular, $\bdquasipotup$ does not depend on $\obj$ nor on the noise distribution on the interior of the basins of attraction of $\sink_1,\dots,\sink_\nSinks$.
Indeed, by definition, $\bdquasipotup$ is equal to
\begin{equation}
    \bdquasipotup = \max_{\vertexA \in \vertices \setminus \verticesalt} \min_{\vertexB \in \verticesalt} \dquasipotup_{\vertexA, \vertexB} + 1\,,
\end{equation}
and, for $\vertexA \in \vertices \setminus \verticesalt$, $\vertexB \in \verticesalt$, thanks to \cref{lem:W_nbd}, we have that
\begin{equation}
    \dquasipotup_{\vertexA, \vertexB} = 
    \inf \setdef*{
        \action_{\horizon}(\pth)
    }{
        \pth \in \contfuncs([0, \horizon])\, \pth_0 \in \gencomp_{\vertexA},\, \pth_{\horizon} \in \basin(\gencomp_{\vertexB}),\, \pth_\run \notin \bigcup_{\vertexC \in \vertices} \gencomp_{\vertexC}\, \forall \run \in {1,\dots,\horizon-1}
    }\,,
\end{equation}
since for any $\point \in \basin(\gencomp_{\vertexB})$, the gradient flow started at $\point$ converges to $\gencomp_{\vertexB}$ and has zero action cost (\cref{lem:basic_prop_flow}).

Hence, this condition assumption will be satisfied for functions with a sufficiently sharp profile near $\sink_1,\dots,\sink_\nSinks$ and a given profile away from it.
\end{remark}

\begin{proof}
    \UsingNestedNamespace{thm:lb_sgd_hitting_time}{proof}

Let us first begin by invoking \cref{lem:lb_induced_hitting_time}:
for $\V_1,\dots,\V_\nGencomps$ neighborhoods of $\gencomp_1,\dots,\gencomp_\nGencomps$ small enough, there is $\init[\step] > 0$ such that for all $0 < \step < \step_0$, 
    \begin{equation}
        e^\frac{\timequasipotdownrelativeto{\init[\point]}{\verticesalt} - \precs}{\step} 
        \leq 
        \ex_{\init[\point]}\bracks*{\inducedhittime_{\verticesalt}}\,.
        \LocalLabel{eq:lb-ind-hittime}
    \end{equation}
    Also require that, for all $\jComp \in \verticesalt$, $\V_{\jComp} \subset \setdef*{\point \in \vecspace}{d\left(\point, \sink_\jComp\right) \leq \frac{\Radius}{12}}$ with $\Radius$ as in \cref{asm:sos}.

    If $\init[\state] \in \bigcup_{\jComp \in \verticesalt} \V_{\jComp}$, then the result is trivial so let us assume that $\init[\state] \notin \bigcup_{\jComp \in \verticesalt} \V_{\jComp}$.

    Let us denote by
    \begin{equation}
        \event \defeq \setof*{\forall \run \geq \start,\, d(\state_{\run + \sgdhittime}, \sink_\jComp) \leq \Radius\text{ with }\state_{\sgdhittime} \in \V_\jComp}
        \LocalLabel{eq:event}
    \end{equation}
    the event that, after \ac{SGD} hits $\bigcup_{\kComp \in \verticesalt} \V_{\kComp}$ at $\V_\jComp$, it stays in $\V_{\jComp}$ forever.
    Note that \cref{thm:ub_sgd_hitting_time} ensures that $\sgdhittime$ is finite almost surely.
    By the strong Markov property, we thus obtain from \cref{lem:local_high_prob_sink_bound} that
    \begin{equation}
        \prob_{\init[\state]}\parens*{\event} \geq 1 - \exp \parens*{-\frac{\strong \Radius^2}{1152 \bdvarup \step}}\,.
        \LocalLabel{eq:prob-event}
    \end{equation}

    Let us now start from $\ex_{\init[\state]}\bracks*{\inducedhittime_{\verticesalt}}$ where $\inducedhittime_{\verticesalt}$ is the hitting time of $\bigcup_{\jComp \in \verticesalt} \V_{\jComp}$ by the induced chain $(\statealt_\run)_{\run \geq \start}$ (see \cref{def:induced_chain}), for which we have a lower-bound from \cref{lem:lb_induced_hitting_time}.
    We write
    \begin{align}
        \ex_{\init[\state]}\bracks*{\inducedhittime_{\verticesalt}}
        &=
        \ex_{\init[\state]}\bracks*{\inducedhittime_{\verticesalt} \one_{\event}} + \ex_{\init[\state]}\bracks*{\inducedhittime_{\verticesalt} \one_{\event^C}}\,.
        \LocalLabel{eq:decomp-ex-hittime}
    \end{align}

    We begin with the first term of the \ac{RHS} of \cref{\LocalName{eq:decomp-ex-hittime}}.
    With the notations from \cref{def:induced_chain}, let us consider $\runalt \geq \start$ such that  $\hittime_{\runalt} \floor{\step^{-1}} < \sgdhittime \leq  \hittime_{\runalt + 1} \floor {\step^{-1}}$ (which is is possible since these hitting times are finite almost surely and $\sgdhittime > \start$). But, on the event $\event$, $\statealt_{\runalt+1} = \accstate_{\hittime_{\runalt + 1}} = \state_{\hittime_{\runalt + 1} \floor*{\step^{-1}}}$ must be in $\V_\jComp$. This means that, on $\event$, $\inducedhittime_{\verticesalt} = \runalt + 1$. Moreover, $\sgdhittime \leq \runalt \floor{\step^{-1}}$ so that, on $\event$
    \begin{equation}
        \inducedhittime_{\verticesalt} \leq  \frac{\sgdhittime}{\floor{\step^{-1}}} + 1\,.
    \end{equation}
    and, 
    \begin{equation}
        \ex_{\init[\state]}\bracks*{\inducedhittime_{\verticesalt} \one_{\event}} \leq \ex_{\init[\state]}\bracks*{\frac{\sgdhittime}{\floor{\step^{-1}}} + 1}\,.
        \LocalLabel{eq:induced-hittime-bound}
    \end{equation}

    Let us now focus on the second term of the \ac{RHS} of \cref{\LocalName{eq:decomp-ex-hittime}}.
    By Fubini's theorem for non-negative integrands, we can rewrite the expectation as:
    \begin{align}
        \ex_{\init[\state]}\bracks*{\inducedhittime_{\verticesalt} \one_{\event^C}}
        &=
        \ex_{\init[\state]}\bracks*{\int_{0}^{+\infty} \oneof*{\inducedhittime_{\verticesalt} > \time} \one_{\event^C} d \time}
        \notag\\
        &=
        \int_{0}^{+\infty} \prob_{\init[\state]}\parens*{\inducedhittime_{\verticesalt} > \time, \event^C} d \time\,.
    \end{align}

    Let us now split the integral into two parts. For $\horizon > 0$, we have:
    \begin{align}
        \ex_{\init[\state]}\bracks*{\inducedhittime_{\verticesalt} \one_{\event^C}}
        &=
        \int_{0}^{\horizon} \prob_{\init[\state]}\parens*{\inducedhittime_{\verticesalt} > \time, \event^C} d \time
        +
        \int_{\horizon}^{+\infty} \prob_{\init[\state]}\parens*{\inducedhittime_{\verticesalt} > \time, \event^C} d \time
        \notag\\
        &\leq 
        \int_{0}^{\horizon} \prob_{\init[\state]}\parens*{\event^C} d \time
        +
        \int_{\horizon}^{+\infty} \prob_{\init[\state]}\parens*{\inducedhittime_{\verticesalt} > \time} d \time\,.
        \LocalLabel{eq:split-integral}
    \end{align}
    By \cref{\LocalName{eq:prob-event}}, the first term of the \ac{RHS} of \cref{\LocalName{eq:split-integral}} is upper-bounded as:
    \begin{equation}
        \int_{0}^{\horizon} \prob_{\init[\state]}\parens*{\event^C} d \time \leq \horizon \exp \parens*{-\frac{\strong \Radius^2}{1152 \bdvarup \step}}\,.
        \LocalLabel{eq:bound-first-term}
    \end{equation}
    By \cref{\MasterName{eq:lb-sgd-hitting-time-condition}}, take $\precs > 0$ small enough so that
    \begin{equation}
        \frac{\strong \Radius^2}{1152 \bdvarup} \geq \bdquasipotup - \timequasipotdownrelativeto{\init[\state]}{\verticesalt} + 3\precs\,,
        \LocalLabel{eq:precs-condition}
    \end{equation}
    and define
    \begin{equation}
        \horizon \defeq \exp \parens*{
            \frac{\strong \Radius^2}{1152 \bdvarup \step}
            +\frac{\timequasipotuprelativeto{\init[\state]}{\verticesalt}
        - 2 \precs }{\step}}\,.
        \LocalLabel{eq:horizon}
    \end{equation}
    \Cref{\LocalName{eq:bound-first-term}} now becomes:
    \begin{equation}
        \int_{0}^{\horizon} \prob_{\init[\state]}\parens*{\event^C} d \time \leq 
        e^{\frac{\timequasipotuprelativeto{\init[\state]}{\verticesalt} - 2 \precs}{\step}}
        \LocalLabel{eq:bound-first-term-final}
    \end{equation}

    By \cref{lem:max_cost_tail_bound}, the second term of the \ac{RHS} of \cref{\LocalName{eq:split-integral}} can be upper-bounded as:
    \begin{align}
        \int_{\horizon}^{+\infty} \prob_{\init[\state]}\parens*{\inducedhittime_{\verticesalt} > \time} d \time 
        &\leq 
        \int_{\horizon}^{+\infty} \parens*{1 - e^{-\frac{\bdquasipotup}{\step}}}^{\time} d \time
        \notag\\
        &\leq 
        \int_{\horizon}^{+\infty} \exp \parens*{e^{-\frac{\bdquasipotup}{\step}} \time} d \time
        \notag\\  
        &= 
        e^{\frac{\bdquasipotup}{\step}} \exp \parens*{-\frac{\bdquasipotup}{\step} \horizon}\,,
        \LocalLabel{eq:bound-second-term}
    \end{align}
    where we used $1 - x \leq e^{-x}$ for any $x \in \R$ in the last inequality.
    Injecting the definition of $\horizon$ \cref{\LocalName{eq:horizon}}, this bound becomes
    \begin{align}
        \int_{\horizon}^{+\infty} \prob_{\init[\state]}\parens*{\inducedhittime_{\verticesalt} > \time} d \time
        &\leq 
        \exp \parens*{
            \frac{\bdquasipotup}{\step}
            - \exp \parens*{
                \frac{\strong \Radius^2}{1152 \bdvarup \step}
                +\frac{\timequasipotuprelativeto{\init[\state]}{\verticesalt} - \bdquasipotup - 2 \precs}{\step}
            }
        }
        \notag\\
        &\leq 
        \exp \parens*{
            \frac{\bdquasipotup}{\step}
            - \exp \parens*{
                \frac{\precs}{\step}
            }
        }\,,
    \end{align}
    where we used \cref{\LocalName{eq:precs-condition}} in the last inequality.
    With $\step > 0$ small enough, we finally obtain
    \begin{equation}
        \int_{\horizon}^{+\infty} \prob_{\init[\state]}\parens*{\inducedhittime_{\verticesalt} > \time} d \time
        \leq 1\,.
              \LocalLabel{eq:bound-second-term-final}
    \end{equation}
    Combining \cref{\LocalName{eq:bound-first-term-final}} and \cref{\LocalName{eq:bound-second-term-final}} in \cref{\LocalName{eq:split-integral}}, we obtain:
    \begin{align}
        \ex_{\init[\state]}\bracks*{\inducedhittime_{\verticesalt} \one_{\event^C}}
        &\leq 
        e^{\frac{\timequasipotuprelativeto{\init[\state]}{\verticesalt} - 2 \precs}{\step}} + 1\,,
        \LocalLabel{eq:bound-combined-split-integral}
    \end{align}
    and, combining \cref{\LocalName{eq:induced-hittime-bound}} and \cref{\LocalName{eq:bound-combined-split-integral}} above in \cref{\LocalName{eq:decomp-ex-hittime}}, we get:
    \begin{equation}
        \ex_{\init[\state]}\bracks*{\inducedhittime_{\verticesalt}}
        \leq 
        \ex_{\init[\state]}\bracks*{\frac{\sgdhittime}{\floor{\step^{-1}}} + 1} + e^{\frac{\timequasipotuprelativeto{\init[\state]}{\verticesalt} - 2 \precs}{\step}} + 1\,.
    \end{equation}
    Plugging in \cref{\LocalName{eq:lb-ind-hittime}} yields:
    \begin{equation}
        e^{\frac{\timequasipotdownrelativeto{\init[\state]}{\verticesalt} - \precs}{\step}} 
        \leq 
        \ex_{\init[\state]}\bracks*{\frac{\sgdhittime}{\floor{\step^{-1}}} + 1} + e^{\frac{\timequasipotuprelativeto{\init[\state]}{\verticesalt} - 2 \precs}{\step}} + 1\,,
    \end{equation}
    which yields the desired result by taking $\step > 0$ small enough.

\end{proof}

Let us state a variant of \cref{thm:lb_sgd_hitting_time} that will be useful later.
\begin{lemma}
\label{lem:lb_sgd_hitting_time_from_comp}
Fix $\iComp \in \gencompIndices$.
Under \cref{asm:sos} and assuming that,
\begin{equation}
    \frac{\strong \Radius^2}{1152 \bdvarup}
    >
    \bdquasipotup - \timequasipotuprelativeto{\iComp}{\verticesalt}\,,
    \LocalLabel{eq:lb-sgd-hitting-time-condition}
\end{equation}
for any $\precs > 0$, for $\V_1,\dots,\V_\nGencomps$ neighborhoods of $\gencomp_1,\dots,\gencomp_\nGencomps$ small enough, there is $\init[\step] > 0$ such that for all $0 < \step < \step_0$, $\state \in \V_{\iComp}$:
\begin{equation}
    \ex_{\state}\bracks*{\sgdhittime} \geq e^{\frac{\timequasipotdownrelativeto{\iComp}{\verticesalt} - \precs}{\step}}\,.
\end{equation}
\end{lemma}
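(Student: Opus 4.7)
The plan is to mirror the argument used in the proof of \cref{thm:lb_sgd_hitting_time}, but replacing the pointwise lower bound on the induced hitting time (provided by \cref{lem:lb_induced_hitting_time}) by a bound that is \emph{uniform} on the whole neighborhood $\V_\iComp$. Concretely, I will first invoke \cref{lem:hitting_acc_induced_from_comp} to obtain, for any tolerance $\precs>0$ and for all sufficiently small neighborhoods $\V_1,\dots,\V_\nGencomps$, a uniform lower bound of the form
\begin{equation}
\ex_\state\bracks*{\inducedhittime_{\verticesalt}}
\;\geq\; \ex_\state\bracks*{\accinducedhittime_{\verticesalt}}
\;\geq\; e^{(\timequasipotdownrelativeto{\iComp}{\verticesalt} - \precs)/\step}
\qquad\text{for all } \state\in\V_\iComp,
\end{equation}
using the fact that $\accinducedhittime_{\verticesalt}\leq \inducedhittime_{\verticesalt}$ (each step of the accelerated chain corresponds to $\nComps$ steps of the induced chain), exactly as in the proof of \cref{lem:lb_induced_hitting_time}.

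Next, I will reproduce the block of the proof of \cref{thm:lb_sgd_hitting_time} that converts a lower bound on $\ex[\inducedhittime_{\verticesalt}]$ into one on $\ex[\sgdhittime_{\verticesalt}]$. Shrinking the $\V_\jComp$'s if necessary so that $\V_\jComp\subseteq\setdef{\point}{\dist(\point,\sink_\jComp)\leq \Radius/12}$ for every $\jComp\in\verticesalt$, one defines the event $\event$ that, after $\sgdhittime_{\verticesalt}$, the iterates remain in a ball of radius $\Radius$ around the sink they first visit. By the strong Markov property applied at $\sgdhittime_{\verticesalt}$ together with \cref{lem:local_high_prob_sink_bound}, one gets
\begin{equation}
\prob_\state(\event)\;\geq\; 1 - \exp\bigl(-\tfrac{\strong\Radius^2}{1152\,\bdvarup\,\step}\bigr),
\end{equation}
and crucially this bound is uniform in $\state$ since it only depends on the post-hitting dynamics. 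Decomposing
$\ex_\state[\inducedhittime_{\verticesalt}] = \ex_\state[\inducedhittime_{\verticesalt}\one_\event] + \ex_\state[\inducedhittime_{\verticesalt}\one_{\event^C}]$
and bounding on $\event$ by $\inducedhittime_{\verticesalt}\leq \sgdhittime_{\verticesalt}/\floor{\step^{-1}} + 1$, the residual term is handled exactly as before by splitting $\int_0^\infty \prob_\state(\inducedhittime_{\verticesalt}>\time,\event^C)\dd\time$ at the threshold $\horizon = \exp\bigl(\tfrac{\strong\Radius^2}{1152\,\bdvarup\,\step} + \tfrac{\timequasipotuprelativeto{\iComp}{\verticesalt} - 2\precs}{\step}\bigr)$, using \cref{lem:max_cost_tail_bound} on the tail. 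Note that \cref{lem:max_cost_tail_bound} also provides a bound that is uniform in the initial point $\state$.

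Combining, one obtains
\begin{equation}
e^{(\timequasipotdownrelativeto{\iComp}{\verticesalt} - \precs)/\step}
\;\leq\; \ex_\state\bracks*{\tfrac{\sgdhittime_{\verticesalt}}{\floor{\step^{-1}}} + 1} + e^{(\timequasipotuprelativeto{\iComp}{\verticesalt} - 2\precs)/\step} + 1,
\end{equation}
and the assumption \eqref{eq:lb-sgd-hitting-time-condition} applied at $\iComp$ (together with the choice of $\precs$) ensures that the dominant term on the right-hand side is the expectation of $\sgdhittime_{\verticesalt}$, which therefore satisfies the claimed lower bound once $\step$ is small enough.

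The main obstacle is to verify that every step in the above decomposition can be made uniform in $\state\in\V_\iComp$: the hitting-time lower bound at the level of the induced chain must not depend on the specific point of $\V_\iComp$ (supplied by \cref{lem:hitting_acc_induced_from_comp}), the tail estimate of \cref{lem:max_cost_tail_bound} must hold uniformly in $\state$ (its proof based on the strong Markov property and \cref{lem:est_trans_prob_lb} is indeed uniform once $\V_\iComp$ is small enough), and the concentration of the post-hitting dynamics near a sink must hold uniformly over the possible entry points of $\bigcup_{\jComp\in\verticesalt}\V_\jComp$ (ensured by the choice $\V_\jComp\subset\ball(\sink_\jComp,\Radius/12)$). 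No step introduces any dependence on the particular initial state within $\V_\iComp$, so the argument of \cref{thm:lb_sgd_hitting_time} extends verbatim to produce the uniform lower bound claimed by \cref{lem:lb_sgd_hitting_time_from_comp}.
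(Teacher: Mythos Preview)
Your proposal is correct and follows essentially the same approach as the paper. The paper omits the proof, noting only that it ``is identical to the proof of \cref{thm:lb_sgd_hitting_time}: it simply uses \cref{lem:hitting_acc_from_comp} to obtain the necessary variant of \cref{lem:lb_induced_hitting_time} and then follows the same steps''; your sketch makes this explicit and correctly flags the uniformity checks (for the induced hitting-time bound, the tail estimate of \cref{lem:max_cost_tail_bound}, and the post-hitting concentration) as the only places where care is needed. The minor difference is that you cite \cref{lem:hitting_acc_induced_from_comp} directly rather than \cref{lem:hitting_acc_from_comp}, but since the latter is derived from the former this is immaterial.
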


The only difference with \cref{thm:lb_sgd_hitting_time} is that the result is uniform over the initial state in $\V_{\iComp}$ instead of being specific to a given initial point.
Its proof is identical to the proof of \cref{thm:lb_sgd_hitting_time}: it simply uses \cref{lem:hitting_acc_from_comp} to obtain the necessary variant of \cref{lem:lb_induced_hitting_time} and then follows the same steps. It is therefore omitted.

\subsection{Basin-dependent convergence result}
\label{sec:basin-dependent-convergence}
Let us recall the potential function from \ourpaper.

\begin{definition}[{Potential, \ourpaper[Def.~4]}]\label{def:bdpot}
	Define, for $\state\in \vecspace$
	\begin{align}
		\label{eq:defbdpot}
		\bdpot(\state) = 2 \bdprimvar \circ \obj(\state)
	\end{align}
	where $\bdprimvar : \points \to \R$ is a twice continuously differentiable primitive of $1/\bdvar$.
\end{definition}

\begin{lemma}
\label{lem:basic_bdpot_ineq}
For any $\curve \in \contfuncs([0, \horizon])$, $\time \in [0, \horizon]$, we have
\begin{equation}
    \bdpot(\curveat{\time}) - \bdpot(\curveat{0}) \leq \action_{0, \horizon}(\curve)\,.
\end{equation}
    
\end{lemma}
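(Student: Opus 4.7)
The plan is to verify the inequality via a chain-rule computation combined with the sub-Gaussian lower bound on the Lagrangian from \cref{lem:csq_subgaussian}.

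First, I dispose of the degenerate case. If $\curve$ is not absolutely continuous, then by definition $\actof{0,\horizon}{\curve} = +\infty$, while the \ac{LHS} is finite since $\bdpot$ is continuous (as $\obj$ is continuous and $\bdprimvar$ is $C^2$) and $\curve([0,\horizon])$ is compact. So the inequality holds trivially in this case.

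Next, assume $\curve$ is absolutely continuous. By \cref{def:bdpot}, $\bdpot = 2\bdprimvar \circ \obj$ with $\bdprimvar' = 1/\bdvar$, so $\nabla\bdpot = \frac{2\nabla\obj}{\bdvar\circ\obj}$. Applying the chain rule (valid along absolutely continuous curves since $\bdpot$ is $C^2$), we get
\begin{equation}
\bdpot(\curveat{\time}) - \bdpot(\curveat{0})
= \int_0^\time \frac{2\inner{\nabla\obj(\curveat{\timealt})}{\dot\curveat{\timealt}}}{\bdvar\circ\obj(\curveat{\timealt})} \dd\timealt\,.
\end{equation}
Now invoke \cref{lem:csq_subgaussian} to lower-bound the Lagrangian pointwise: for every $\timealt\in[0,\time]$,
\begin{equation}
\lagof{\orcl}{\curveat{\timealt},\dot\curveat{\timealt}}
\geq \frac{\norm{\dot\curveat{\timealt} + \nabla\obj(\curveat{\timealt})}^2}{2\bdvar\circ\obj(\curveat{\timealt})}\,.
\end{equation}

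The heart of the argument is the algebraic identity $\norm{\vel+\nabla\obj}^2 - 4\inner{\vel}{\nabla\obj} = \norm{\vel-\nabla\obj}^2 \geq 0$. Dividing by $2\bdvar\circ\obj > 0$ and integrating over $[0,\time]$ yields
\begin{equation}
\int_0^\time \frac{\norm{\dot\curveat{\timealt} + \nabla\obj(\curveat{\timealt})}^2}{2\bdvar\circ\obj(\curveat{\timealt})} \dd\timealt
- \int_0^\time \frac{2\inner{\nabla\obj(\curveat{\timealt})}{\dot\curveat{\timealt}}}{\bdvar\circ\obj(\curveat{\timealt})} \dd\timealt
= \int_0^\time \frac{\norm{\dot\curveat{\timealt} - \nabla\obj(\curveat{\timealt})}^2}{2\bdvar\circ\obj(\curveat{\timealt})} \dd\timealt
\geq 0\,.
\end{equation}
Combining the three displays, $\bdpot(\curveat{\time})-\bdpot(\curveat{0}) \leq \int_0^\time \lagof{\orcl}{\curveat{\timealt},\dot\curveat{\timealt}}\dd\timealt$. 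Finally, since $\lagof{\orcl}{\argdot,\argdot}\geq 0$ by \cref{lem:basic_prop_h_l}, extending the integration range from $[0,\time]$ to $[0,\horizon]$ only increases the \ac{RHS}, giving $\bdpot(\curveat{\time})-\bdpot(\curveat{0}) \leq \actof{0,\horizon}{\curve}$, as claimed.

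There is no real obstacle here: the only subtlety is the factor-of-two calibration in the definition $\bdpot = 2\bdprimvar\circ\obj$, which is precisely what makes the completing-the-square identity $\norm{\dot\curve-\nabla\obj}^2\geq 0$ give the desired direction of the inequality. Had the prefactor been anything other than $2$, the remainder term would not have been a nonnegative square.
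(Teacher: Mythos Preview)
Your proof is correct and follows essentially the same approach as the paper's: chain rule for $\bdpot\circ\curve$, then the elementary inequality $4\inner{\dot\curve}{\nabla\obj}\leq\norm{\dot\curve+\nabla\obj}^2$ (which the paper phrases as Young's inequality, you as a completing-the-square identity), then \cref{lem:csq_subgaussian}, then nonnegativity of the Lagrangian to pass from $[0,\time]$ to $[0,\horizon]$. Your explicit handling of the non-absolutely-continuous case is a nice touch that the paper leaves implicit.
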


\begin{proof}
   We have that, with $\bdpot$ defined in \cref{def:bdpot}, by Young's inequality,
	\begin{align}
		\bdpot(\pth_\time) - \bdpot(\pth_0) & = 2 \int_0^\time \frac{\inner{\dot \pth_\timealt, \grad \obj(\pth_\timealt)}}{\bdvar \circ \obj(\pth_\timealt)} \dd \timealt
		\notag\\
                                            &\leq 
                                \int_0^\time \frac{\norm{\dot \pth_\timealt}^2}{2\bdvar \circ \obj(\pth_\timealt)} \dd \timealt
                                +
                                \int_0^\time \frac{\norm{\grad \obj(\pth_\timealt)}^2}{2\bdvar \circ \obj(\pth_\timealt)} \dd \timealt
                                +
                                \int_0^\time \frac{\inner{\dot \pth_\timealt, \grad \obj(\pth_\timealt)}}{\bdvar \circ \obj(\pth_\timealt)} \dd \timealt
                                \notag\\
                                            &=  \int_0^\time \frac{\norm{\dot \pth_\timealt + \grad \obj(\pth_\timealt)}^2}{2\bdvar \circ \obj(\pth_\timealt)} \dd \timealt
                                            \notag\\
		                                   & \leq  \int_0^\time  \lagrangian(\pth_\timealt, \dot \pth_\timealt)  \dd \timealt
		                                   \notag\\
                                           & \leq \action_{0, \horizon}(\curve)\,.
	\end{align}
	where we used \cref{lem:csq_subgaussian} in the last inequality.
\end{proof}

\begin{lemma}
\label{lem:bdpot_coercive}
$\bdpot$ is coercive \ie $\bdpot(\point) \to +\infty$ as $\norm{\point} \to +\infty$.
    
\end{lemma}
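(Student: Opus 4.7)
My plan is to reduce the coercivity of $\bdpot$ to the divergence of a one-dimensional integral, and then establish that divergence by a gradient-ascent argument combined with the gradient-to-noise coercivity condition of \cref{assumption:coercivity_noise}.

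First, observe that $\bdprimvar$ is strictly increasing on $\R$ since $\bdprimvar'(t) = 1/\bdvar(t) > 0$. Combined with the coercivity of $\obj$ (\cref{asm:obj-coer-weak}), this reduces the coercivity of $\bdpot = 2\bdprimvar \circ \obj$ to showing that $\bdprimvar(t) \to +\infty$ as $t \to +\infty$, which is in turn equivalent to $\int^{+\infty} ds/\bdvar(s) = +\infty$ (since $\bdprimvar(t) - \bdprimvar(t_0) = \int_{t_0}^{t} ds/\bdvar(s)$).

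The main step will proceed by contradiction. Assume $\int^{+\infty} ds/\bdvar(s) < +\infty$. By \cref{assumption:coercivity_noise}, there exist $\Radius_0, \const > 0$ such that $\norm{\grad \obj(\state)}^{2} \geq \const\,\bdvar \circ \obj(\state)$ whenever $\norm{\state} \geq \Radius_0$. Using the coercivity of $\obj$, I pick $\state_0$ with $\obj(\state_0) > \sup_{\norm{\state} \leq \Radius_0} \obj(\state)$ and consider the gradient-ascent curve defined by $\dot\pth = \grad \obj(\pth)$, $\pth(0) = \state_0$. The Lipschitz smoothness of $\grad \obj$ from \cref{asm:obj-smooth-weak} ensures that $\pth$ exists for all $\time \geq 0$ (its norm grows at most exponentially by Grönwall), while the monotonicity of $\obj \circ \pth$ together with the choice of $\state_0$ forces $\norm{\pth(\time)} > \Radius_0$ for every $\time \geq 0$ by a simple continuity argument on $\obj \circ \pth$.

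Along this trajectory, the inequality $\frac{d}{d\time}\obj(\pth(\time)) = \norm{\grad \obj(\pth(\time))}^{2} \geq \const\,\bdvar(\obj(\pth(\time)))$ therefore holds for all $\time \geq 0$. Since $\bdvar$ is bounded below by a positive constant on the range of $\obj$ (by \cref{asm:noise-subG-weak}), this gives $\obj(\pth(\time)) \to +\infty$ at least linearly in $\time$. Setting $t = \obj(\pth(\time))$ and separating variables yields
\begin{equation}
\time \leq \const^{-1} \int_{\obj(\state_0)}^{t} \frac{ds}{\bdvar(s)}\,.
\end{equation}
Under our contradiction hypothesis the right-hand side stays bounded as $t \to +\infty$, while the left-hand side diverges, which is the desired contradiction. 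The only delicate step is verifying that $\pth$ stays outside $\clball(0, \Radius_0)$ so that the gradient-to-noise bound remains in force along the entire orbit; this is where the quantitative lower bound $\obj(\state_0) > \sup_{\norm{\state} \leq \Radius_0}\obj$ enters, and is the main (small) obstacle in making the argument rigorous.
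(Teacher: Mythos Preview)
Your argument is correct and takes a genuinely different route from the paper's. The paper's one-line proof appeals directly to the growth condition $\bdvar(s) = o(s^{2})$ from \cref{asm:SNR-weak}, concluding that any primitive $\bdprimvar$ of $1/\bdvar$ must be unbounded; as written, that implication is not fully justified (for instance, $\bdvar(s) \sim s^{3/2}$ satisfies $\bdvar(s) = o(s^{2})$ yet gives a convergent integral $\int^{\infty} ds/\bdvar(s)$). You instead exploit a different clause of the same assumption, the signal-to-noise lower bound $\liminf_{\norm{\state}\to\infty} \norm{\grad\obj(\state)}^{2}/\bdvar(\obj(\state)) > 0$, and derive the divergence of $\int^{\infty} ds/\bdvar(s)$ indirectly: along a gradient-ascent orbit kept outside a large ball, $\obj$ increases fast enough that separation of variables forces the integral to diverge. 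Your route is longer but self-contained and robust, requiring only the SNR inequality together with the coercivity and Lipschitz smoothness of $\obj$; what the paper's approach buys, once the growth hypothesis is strengthened enough to make the ``therefore'' valid, is brevity.
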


\begin{proof}
    By \cref{asm:SNR-weak}, $\bdvar(s) = o(s^2)$ as $s \to \infty$ and therefore $\primvar$ a primitive of $1/\bdvar$ must be coercive.
    By coercivity of $\obj$ (\cref{asm:obj-weak}), $\bdpot = 2 \primvar \circ \obj$ is also coercive.
\end{proof}

\begin{lemma}
\label{lem:cpt_paths_bd_action}
\UsingNamespace{lem:cpt_paths_bd_action}
For any $\cpt \subset \vecspace$ compact, for any $\const > 0$, the set
\begin{equation}
    \setdef*{\curveat{\time}}{\time \in [0, \horizon],\, \curve \in \contfuncs([0, \horizon]), \horizon \in \nats,\, \pth_0 \in \cpt,\,\action_{\time,\horizon}(\curve) \leq \const}
    \LocalLabel{eq:cpt_paths_bd_action}
\end{equation}
is included in the set
\begin{equation}
    \setdef*{\point \in \vecspace}{\bdpot(\point) \leq \const + \sup_{\cpt} \bdpot}\,,
\end{equation}
which is compact.
\end{lemma}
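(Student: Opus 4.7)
The plan is to directly invoke \cref{lem:basic_bdpot_ineq} to translate the action bound into a pointwise bound on $\bdpot$ along the trajectory, and then to leverage the coercivity of $\bdpot$ established in \cref{lem:bdpot_coercive} for the compactness conclusion. There is no serious obstacle here: the work has already been done in the two preceding lemmas, and this statement simply packages them together.

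First, I would fix an arbitrary element $\point = \curveat{\time}$ of the set on the left-hand side, where $\curve \in \contfuncs([0, \horizon])$ is a path with $\curveat{0} \in \cpt$ and action at most $\const$. By \cref{lem:basic_bdpot_ineq},
\begin{equation}
\bdpot(\point) = \bdpot(\curveat{\time}) \leq \bdpot(\curveat{0}) + \action_{0, \horizon}(\curve).
\end{equation}
Since $\curveat{0}$ lies in the compact set $\cpt$ and $\bdpot = 2\bdprimvar \circ \obj$ is continuous (as $\obj$ is $C^{2}$ by \cref{asm:obj-weak} and $\bdprimvar$ is $C^{2}$ by \cref{def:bdpot}), we have $\bdpot(\curveat{0}) \leq \sup_\cpt \bdpot < +\infty$. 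Since the Lagrangian is non-negative by \cref{lem:basic_prop_h_l}, the action along any subinterval is bounded by the total action, and so the hypothesis on $\action_{\time,\horizon}(\curve)$ (combined with non-negativity on the complementary interval) can be used to ensure $\action_{0,\horizon}(\curve) \leq \const$ in whichever direction is needed; either way we conclude $\bdpot(\point) \leq \sup_\cpt \bdpot + \const$, which is exactly the claimed inclusion.

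For compactness of $\{\point \in \vecspace : \bdpot(\point) \leq \const + \sup_\cpt \bdpot\}$, I would combine two observations: (i) $\bdpot$ is continuous, so this sublevel set is closed; (ii) $\bdpot$ is coercive by \cref{lem:bdpot_coercive}, so the sublevel set is bounded. In the finite-dimensional Euclidean space $\vecspace$, a closed and bounded set is compact by the Heine\textendash Borel theorem, which finishes the argument.
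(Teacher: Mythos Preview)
Your proof is correct and matches the paper's approach exactly: invoke \cref{lem:basic_bdpot_ineq} for the inclusion, then \cref{lem:bdpot_coercive} (plus continuity and Heine\textendash Borel, which you spell out and the paper leaves implicit) for compactness. The hedging about the subscript $\action_{\time,\horizon}$ versus $\action_{0,\horizon}$ is unnecessary\textemdash the paper's own proof simply reads the hypothesis as $\action_{0,\horizon}(\curve) \leq \const$ and proceeds directly.
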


\begin{proof}
    \UsingNestedNamespace{lem:cpt_paths_bd_action}{proof}
    Consider such a path $\curve \in \contfuncs([0, \horizon])$ with $\horizon \in \nats$.
    By \cref{lem:basic_bdpot_ineq}, we have, for any $\time \in [0, \horizon]$,
    \begin{equation}
        \bdpot(\curveat{\time}) - \bdpot(\curveat{0}) \leq \action_{0, \horizon}(\curve) \leq \const\,.
    \end{equation}
    Hence the set from \cref{\MasterName{eq:cpt_paths_bd_action}} is included in
	\begin{equation}
        \setdef*{\point \in \vecspace}{\bdpot(\point) \leq \const + \sup_{\cpt} \bdpot}\,.
        \LocalLabel{eq:bd_pot_bd}
	\end{equation}
    Now, by \cref{lem:bdpot_coercive}, the set of \cref{\LocalName{eq:bd_pot_bd}} is compact, so that the set of \cref{\MasterName{eq:cpt_paths_bd_action}} is bounded.
\end{proof}

The following lemma is a slight refinement of \ourpaper[Lem.~D.33].
For any $\connectedcomp \subset \crit(\obj)$ connected component of the set of critical points, we define its basin of attraction as the set of points from which the gradient flow converges to $\connectedcomp$:
\begin{equation}
    \basin(\connectedcomp) \defeq \setdef*{\point \in \vecspace}{\lim_{\time \to +\infty} d(\flowmap_{\time}(\point), \connectedcomp) = 0}\,.
\end{equation}

\begin{lemma}
	\label{lemma:ground_states_loc_min}
	For any $\connectedcomp \subset \crit(\obj)$ connected component which is minimizing, for $\margin > 0$ small enough, for $\marginalt \in [0, \margin)$ small enough,
	\begin{equation}
		\dquasipot(
            \U_\marginalt(\connectedcomp),
            \vecspace \setminus \U_\margin(\connectedcomp)) > 0\,.
	\end{equation}
    Moreover, $\U_\margin(\connectedcomp)$ can be assumed to be contained in $\basin(\connectedcomp)$.
\end{lemma}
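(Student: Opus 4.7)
The plan is to use the potential $\bdpot$ of \cref{def:bdpot} as a quasi-Lyapunov function for the action: by \cref{lem:basic_bdpot_ineq}, the increase of $\bdpot$ along any continuous path is controlled by the action of that path. Since $\connectedcomp$ is minimizing, $\bdpot$ has a strict local minimum on $\connectedcomp$, so any path escaping a fixed neighborhood of $\connectedcomp$ must incur a positive jump in $\bdpot$, and hence a positive action, which will bound the set-wise quasi-potential away from zero.

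In detail, I would proceed as follows. First, since $\connectedcomp$ is minimizing, it is asymptotically stable by \cref{lemma:minimizing_component_asympt_stable}, so $\basin(\connectedcomp)$ contains a neighborhood of $\connectedcomp$; and by \cref{def:minimizing_component} there is a neighborhood $\open$ of $\connectedcomp$ with $\argmin_\open \obj = \connectedcomp$. I would fix $\margin > 0$ small enough that $\cl{\U_\margin(\connectedcomp)} \subset \open \cap \basin(\connectedcomp)$; the basin inclusion immediately gives the ``moreover'' assertion. Because $\grad\obj \equiv 0$ on the path-connected set $\connectedcomp$ (\cref{asm:crit}), $\obj$ is constant on $\connectedcomp$, with value $\obj_\connectedcomp$. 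Compactness of $\bd\U_\margin(\connectedcomp)$ together with the minimality property then yields $\obj_\margin \defeq \min_{\bd\U_\margin(\connectedcomp)} \obj > \obj_\connectedcomp$; since $\bdprimvar$ is a primitive of $1/\bdvar > 0$ and hence strictly increasing, this gives $\bdpot_\margin \defeq \min_{\bd\U_\margin(\connectedcomp)} \bdpot > 2\bdprimvar(\obj_\connectedcomp)$, where the right-hand side is the common value of $\bdpot$ on $\connectedcomp$. Continuity of $\bdpot$ then lets me pick $\marginalt \in (0, \margin)$ with $\sup_{\cl{\U_\marginalt(\connectedcomp)}} \bdpot < \bdpot_\margin$.

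To finish, take any $\curve \in \contfuncs([0, \horizon])$ with $\curveat{0} \in \U_\marginalt(\connectedcomp)$ and $\curveat{\horizon} \in \vecspace \setminus \U_\margin(\connectedcomp)$. If $\curve$ is not absolutely continuous, $\action_{0,\horizon}(\curve) = +\infty$ and there is nothing to check; otherwise continuity of $\curve$ provides a first exit time $\time^* \in (0, \horizon]$ with $\curveat{\time^*} \in \bd\U_\margin(\connectedcomp)$, and \cref{lem:basic_bdpot_ineq} gives
\begin{equation}
0 < \bdpot_\margin - \sup_{\cl{\U_\marginalt(\connectedcomp)}} \bdpot \leq \bdpot(\curveat{\time^*}) - \bdpot(\curveat{0}) \leq \action_{0, \horizon}(\curve).
\end{equation}
Passing to the infimum over such paths yields $\dquasipot(\U_\marginalt(\connectedcomp), \vecspace \setminus \U_\margin(\connectedcomp)) > 0$.

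The main obstacle lies in the coordinated choice of $\margin$ and $\marginalt$: $\margin$ must simultaneously deliver the inclusion into $\basin(\connectedcomp)$ and the strict gap $\obj_\margin > \obj_\connectedcomp$ on $\bd\U_\margin(\connectedcomp)$, and then $\marginalt$ must be small enough that $\bdpot$ on $\cl{\U_\marginalt(\connectedcomp)}$ does not reach up to $\bdpot_\margin$. Both choices follow from standard compactness and continuity arguments once \cref{lemma:minimizing_component_asympt_stable} and \cref{lem:basic_bdpot_ineq} are in hand, and the constancy of $\obj$ on the path-connected critical component $\connectedcomp$ is ensured by \cref{asm:crit}.
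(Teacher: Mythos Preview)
Your proposal is correct and follows essentially the same route as the paper: use $\bdpot$ as a Lyapunov-type barrier via \cref{lem:basic_bdpot_ineq}, exploit that $\connectedcomp$ is a strict local minimum of $\obj$ (hence of $\bdpot$, since $\bdprimvar$ is strictly increasing), and then choose $\marginalt$ small enough by continuity. The only cosmetic difference is that the paper places the barrier at the intermediate sphere $\{d(\cdot,\connectedcomp)=\margin/2\}$ rather than at $\bd\U_\margin(\connectedcomp)$, but both choices yield the same conclusion.
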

\begin{proof}
    Since $\connectedcomp$ is minimizing, there exists $\margin_0 > 0$ such that, for any $\state \in \U_{\margin_0}(\connectedcomp) \setminus \connectedcomp$, $\obj(\state) > \obj_\connectedcomp$ where $\obj_\connectedcomp$ is the value of $\obj$ on $\connectedcomp$.
    Moreover, by \cref{lemma:minimizing_component_asympt_stable}, taking $\margin > 0$ small enough also ensures that $\U_{\margin_0}(\connectedcomp) \subset \basin(\connectedcomp)$.

	Take $\margin \leq \margin_0$, $\open \defeq  \U_\margin(\connectedcomp)$ and $\Margin \defeq \min \setdef*{\bdpot(\state) - \bdprimvar(\obj_\connectedcomp)}{\state \in \vecspace,\, d(\state, \connectedcomp) = \margin/2}$.
	By the continuity of $\bdpot$ and the fact that $\bdprimvar$ is (strictly) increasing, we have that $\Margin > 0$.

    Finally, take $\marginalt \in [0, \margin/2)$ small enough so that, for any $\point \in \U_{\marginalt}(\connectedcomp)$, $\obj(\point) \leq  \bdprimvar(\obj_\connectedcomp) + \Margin / 2$.
	To conclude the proof of this lemma, we now show that $\dquasipot(\U_\marginalt(\connectedcomp), \vecspace \setminus \U_\margin(\connectedcomp)) > 0$.
    Consider some $\horizon > 0$ and $\pth \in \contfuncs([0, \horizon], \vecspace)$ such that $\pth_0 \in \U_\marginalt(\connectedcomp)$, $\pth_\horizon \in \vecspace \setminus \U_\margin(\connectedcomp)$.
	By continuity of $\pth$ and $d(\cdot, \connectedcomp)$, there exists $\time \in [0, \horizon]$ such that $d(\pth_\time, \connectedcomp) = \margin/2$.
    By \cref{lem:basic_bdpot_ineq}, we have that
	\begin{align}
		\Margin
		 & \leq \bdpot(\pth_\time) - \bdprimvar(\obj_\connectedcomp)
		 \notag\\
         &= \bdpot(\pth_\time) - \bdpot(\pth_0) + \bdpot(\pth_0) - \bdprimvar(\obj_\connectedcomp)
         \notag\\
         &\leq \action_{0, \horizon}(\pth) + \half[\Margin]\,.
	\end{align}
	Since this is valid for any $\pth$, we obtain that $\dquasipot(\U_\marginalt(\connectedcomp), \vecspace \setminus \U_\margin(\connectedcomp)) \geq \Margin/2 > 0$.
\end{proof}

We now proceed to the key result of this section.

\begin{lemma}
\label{lem:daction_exit_basin}
\UsingNamespace{lem:daction_exit_basin}
For any $\connectedcomp \subset \crit(\obj)$ connected component which is minimizing, for any $\point \in \basin(\connectedcomp)$, 
\begin{equation}
    \dquasipot(\point, \vecspace \setminus \basin(\connectedcomp)) > 0\,.
\end{equation}
\end{lemma}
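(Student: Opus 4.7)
I will prove this by contradiction, combining the exit-cost estimate of \cref{lemma:ground_states_loc_min} with a Gronwall-type comparison to the gradient flow. First, I invoke \cref{lemma:ground_states_loc_min} to secure $\margin > 0$ and $\marginalt \in (0, \margin)$ such that $\U_\margin(\connectedcomp) \subset \basin(\connectedcomp)$ and $\alpha \defeq \dquasipot(\U_\marginalt(\connectedcomp), \vecspace \setminus \U_\margin(\connectedcomp)) > 0$. Since $\point \in \basin(\connectedcomp)$ and $\connectedcomp$ is asymptotically stable by \cref{lemma:minimizing_component_asympt_stable}, the flow $\flowmap_\time(\point)$ converges to $\connectedcomp$, and I fix an integer $\horizon \geq 1$ with $\flowmap_\horizon(\point) \in \U_{\marginalt/2}(\connectedcomp)$.

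Suppose for contradiction that $\dquasipot(\point, \vecspace \setminus \basin(\connectedcomp)) = 0$. Then there exist continuous paths $\pth^\run \in \contfuncs([0, \horizonalt_\run])$ with integer horizons $\horizonalt_\run \geq 1$, $\pth^\run_0 = \point$, $\pth^\run_{\horizonalt_\run} \in \vecspace \setminus \basin(\connectedcomp)$, and $\action_{0, \horizonalt_\run}(\pth^\run) \to 0$. By \cref{lem:cpt_paths_bd_action}, for large $\run$ these trajectories all lie in a common compact set $\cpt$ on which $\sup_\cpt \bdvar \circ \obj < +\infty$.

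I then distinguish two cases. In Case A, infinitely many $\pth^\run$ enter $\U_\marginalt(\connectedcomp)$ at some time; the tail of such a $\pth^\run$ after first entry then joins $\U_\marginalt(\connectedcomp)$ to $\vecspace \setminus \basin(\connectedcomp) \subset \vecspace \setminus \U_\margin(\connectedcomp)$, so has action at least $\alpha$ by \cref{lemma:ground_states_loc_min}, contradicting $\action_{0, \horizonalt_\run}(\pth^\run) \to 0$. In Case B, for all large $\run$ the path $\pth^\run$ never enters $\U_\marginalt(\connectedcomp)$. Setting $\dcurve^\run_\time \defeq \dot\pth^\run_\time + \grad\obj(\pth^\run_\time)$ and using the $\smooth$-Lipschitz continuity of $\grad \obj$ from \cref{asm:obj-weak}, Gronwall's inequality yields
\begin{equation*}
\norm{\pth^\run_\time - \flowmap_\time(\point)} \leq e^{\smooth \time}\int_0^\time \norm{\dcurve^\run_\tau} \dd \tau.
\end{equation*}
Evaluating at $\time^\ast \defeq \min(\horizon, \horizonalt_\run)$, the \ac{LHS} is bounded below by a strictly positive constant independent of $\run$: at least $\marginalt/2$ when $\horizonalt_\run \geq \horizon$, since $\pth^\run_\horizon \notin \U_\marginalt(\connectedcomp)$ while $\flowmap_\horizon(\point) \in \U_{\marginalt/2}(\connectedcomp)$; and at least $\min_{\time \in [0, \horizon]} \dist(\flowmap_\time(\point), \partial \basin(\connectedcomp)) > 0$ when $\horizonalt_\run < \horizon$, since the compact flow trajectory $\{\flowmap_\time(\point) : \time \in [0, \horizon]\}$ sits inside the open basin. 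The Cauchy\textendash Schwarz inequality combined with \cref{lem:csq_subgaussian} then converts this into a positive lower bound on $\action_{0, \time^\ast}(\pth^\run)$ independent of $\run$, again contradicting $\action_{0, \horizonalt_\run}(\pth^\run) \to 0$.

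The main obstacle I anticipate is reconciling the integer-horizon convention of \cref{def:dquasipot} with the tail extraction in Case A, whose restricted horizon is generically non-integer; this can be handled either by a standard time reparametrization or by choosing an integer entry time into a slightly larger neighborhood of $\connectedcomp$ and combining with a short Gronwall correction.
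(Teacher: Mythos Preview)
Your proof is correct and follows essentially the same strategy as the paper: combine the exit barrier from \cref{lemma:ground_states_loc_min} with a Gronwall comparison to the gradient flow, controlled via \cref{lem:csq_subgaussian} and the compactness of \cref{lem:cpt_paths_bd_action}. Your case decomposition and the short-horizon argument via $\dist(\flowmap_\time(\point),\partial\basin(\connectedcomp))$ differ only cosmetically from the paper's path-extension device, and your integer-horizon worry in Case~A is harmless since the proof of \cref{lemma:ground_states_loc_min} already lower-bounds the action of any real-horizon path crossing from $\U_{\marginalt}(\connectedcomp)$ to $\vecspace\setminus\U_\margin(\connectedcomp)$.
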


\begin{proof}
\UsingNestedNamespace{lem:daction_exit_basin}{proof}
First, let us invoke \cref{lemma:ground_states_loc_min} to obtain some $0 < \marginalt < \margin$ such that
\begin{equation}
    \dquasipot(\U_\marginalt(\connectedcomp), \vecspace \setminus \U_\margin(\connectedcomp)) > 0\,.
    \LocalLabel{eq:ground_state_loc_min}
\end{equation}
and $\U_\margin(\connectedcomp) \subset \basin(\connectedcomp)$.

Then, by \cref{lem:cpt_paths_bd_action} and continuity of $\obj$ and $\bdvar$, the set
\begin{equation}
    \setdef*{\bdvar(\obj(\curveat{\time}))}{\time \in [0, \horizon],\, \curve \in \contfuncs([0, \horizon]), \horizon \in \nats,\, \pth_0 = \point,\,\action_{\time,\horizon}(\curve) \leq 1}
\end{equation}
is bounded by some finite constant $\constA \in (0, +\infty)$.

With $(\flowmap_\time(\point))_{\time \geq 0}$ the gradient flow starting from $\point$ (\cref{def:flow}), by definition of $\basin(\connectedcomp)$, there exists $\horizonalt > 0$ such that
\begin{equation}
    \flowmap_\horizonalt(\point) \in \U_{\margin/2}(\connectedcomp)\,.
    \LocalLabel{eq:flowmap_cv}
\end{equation}

Now, consider some $\horizon \in \nats$ and $\pth \in \contfuncs([0, \horizon])$ such that $\pth_0 = \point$, $\pth_\horizon \in \vecspace \setminus \basin(\connectedcomp)$ and $\action_{\horizon}(\pth) \leq 1$.

We now define
\begin{equation}
    g_\time \defeq \half \norm{\pth_\time - \flowmap_\time(\point)}^2\,.
\end{equation}
In particular, $\pth$ must be differentiable almost everywhere and so we have
\begin{align}
    \dot g_\time &= \inner{\dot \pth_\time - \dot \flowmap_\time(\point)}{\pth_\time - \flowmap_\time(\point)}\notag\\
                 &= \inner{\grad \obj(\flowmap_\time(\point)) - \grad \obj(\pth_\time)}{\pth_\time - \flowmap_\time(\point)} + \inner{\dot \pth_\time + \grad \obj(\pth_\time)}{\pth_\time - \flowmap_\time(\point)}
                 \notag\\
                 &\leq \smooth \norm{\pth_\time - \flowmap_\time(\point)}^2 + \half \norm{\dot \pth_\time + \grad \obj(\pth_\time)}^2 + \half \norm{\pth_\time - \flowmap_\time(\point)}^2
                 \notag\\
                 &= \left(\smooth + \half\right) g_\time + \half \norm{\dot \pth_\time + \grad \obj(\pth_\time)}^2\,,
                 \LocalLabel{eq:dot_g_bound}
\end{align}
where in the last inequality we used the smoothness of $\obj$ (\cref{asm:obj-weak}) and Young's inequality.

Rewriting \cref{\LocalName{eq:dot_g_bound}}, we have
\begin{align}
    \frac{d}{d \time} \parens*{e^{-\left(\smooth + \half\right) \time} g_\time} &\leq e^{-(\smooth + 1) \time} \half \norm{\dot \pth_\time + \grad \obj(\pth_\time)}^2
    \notag\\
                                                                 &\leq \half \norm{\dot \pth_\time + \grad \obj(\pth_\time)}^2\,,
\end{align}
so that integrating yields, for any $\time \in [0, \horizon]$,
\begin{equation}
    g_\time \leq \frac{e^{(\smooth + \half) \time}}{2}
        \int_0^\time \norm{\dot \pth_\timealt + \grad \obj(\pth_\timealt)}^2 \dd \timealt
\end{equation}
where we used that $\flowmap_0(\point) = \point = \pth_0$.

Furthermore, introducing $\constA$, we have
\begin{align}
    g_\time &\leq \constA  e^{(\smooth + \half) \time}
    \int_0^\time \frac{\norm{\dot \pth_\timealt + \grad \obj(\pth_\timealt)}^2}{\bdvar(\obj(\pth_\timealt))} \dd \timealt
    \notag\\
            &\leq \constA  e^{(\smooth + \half) \time} \action_{0, \time}(\pth)\,,
\end{align}
by \cref{lem:csq_subgaussian}.

Hence, we have that, for any $\time \in [0, \horizon]$,
\begin{equation}
    \norm{\pth_\time - \flowmap_\time(\point)}^2 \leq 2 \constA  e^{(\smooth + \half) \time} \action_{0, \time}(\pth)\,.
\end{equation}

Therefore, if we consider $\pth$ such that, in addition, $\action_{\horizon}(\pth) \leq  \frac{e^{-(\smooth + \half) \horizonalt}}{2 \constA} \times \frac{\marginalt^2}{4}$, we have that, for $\time = \min(\horizon, \horizonalt)$,
\begin{equation}
    \norm{\pth_\time - \flowmap_\time(\point)} \leq \frac{\marginalt}{2}\,.
    \LocalLabel{eq:pth_flowmap_close}
\end{equation}

We now distinguish two cases:
\begin{itemize}
    \item If $\time = \horizonalt$ \ie $\horizonalt \leq \horizon$, then \cref{\LocalName{eq:pth_flowmap_close}} combined with \cref{\LocalName{eq:flowmap_cv}} yields that $\pth_\horizonalt \in \U_{\marginalt}(\connectedcomp)$.
        But $\pth_\horizon$ must be in $\vecspace \setminus \basin(\connectedcomp)$ and so in $\vecspace \setminus \U_{\margin}(\connectedcomp)$, so that one must have
    \begin{equation}
        \daction_\horizon(\pth) \geq \dquasipot(\U_{\marginalt}(\connectedcomp), \vecspace \setminus \basin(\connectedcomp))\,.
        \LocalLabel{eq:action_bound_first_case}
    \end{equation}

    \item If $\time = \horizon$ \ie $\horizon \leq \horizonalt$, we extend the path $\pth$ to $\pthalt \in \contfuncs([0, \horizonalt])$ such that $\pthalt_\time = \pth_\time$ for $\time \in [0, \horizon]$ and $\pthalt_\time = \flowmap_\time(\point)$ for $\time \in [\horizon, \horizonalt]$.
        In particular, $\action_{\horizonalt}(\pthalt) = \action_{\horizon}(\pth) \leq  \frac{e^{-(\smooth + \half) \horizon}}{2 \constA} \times \frac{\marginalt^2}{4}$ so that the same computations leading to \cref{\LocalName{eq:pth_flowmap_close}} can be applied to $\pthalt$ to yield that
        \begin{equation}
            \norm{\pthalt_{\horizonalt} - \flowmap_{\horizonalt}(\point)} \leq \frac{\marginalt}{2}\,.
        \end{equation}
        This means that $\pthalt_{\horizonalt} \in \U_{\marginalt}(\connectedcomp)$ which is included in $\basin(\connectedcomp)$.
        This is contradiction since this means that $\pth_\horizon$ is in $\basin(\connectedcomp)$. Therefore, this case cannot happen.
\end{itemize}

Therefore, we have shown that, for any $\pth \in \contfuncs([0, \horizon])$ such that $\pth_0 = \point$ and $\pth_\horizon \in \vecspace \setminus \basin(\connectedcomp)$, it holds that
\begin{equation}
\action_\horizon(\pth) \geq \min \parens*{1, \frac{e^{-(\smooth + \half) \horizon}}{2 \constA} \times \frac{\marginalt^2}{4}, \dquasipot\left(\U_{\marginalt}(\connectedcomp), \vecspace \setminus \U_{\margin}(\connectedcomp)\right)}\,,
\end{equation}
which is positive by \cref{\LocalName{eq:ground_state_loc_min}}.
\end{proof}

We are now ready to prove the main result of this section.
\begin{theorem}
    \label{thm:hitting_acc_from_init_basin}
    \UsingNamespace{thm:hitting_acc_from_init_basin}
    Consider $\init[\state] \in \vecspace$ that belongs to $\basin(\gencomp_{\iComp})$ for some $\iComp \in \gencompIndices$ such that $\gencomp_{\iComp}$ is minimizing.
    Then, for any $\precs > 0$, for $\V_1,\dots,\V_\nGencomps$ neighborhoods of $\gencomp_1,\dots,\gencomp_\nGencomps$ small enough, there is $\init[\step] > 0$, an event $\event$ and a positive constant $\ConstA$
    such that for all $0 < \step < \step_0$, 
    \begin{equation}
        \prob_{\init[\state]}(\event) \geq 1 - e^{-\ConstA/\step}
    \end{equation}
    and,
    \begin{equation}
        e^\frac{\timequasipotdownrelativeto{\iComp}{\verticesalt} - \precs}{\step} 
        \leq 
        \ex_{\init[\state]}\bracks*{\acchittime_{\verticesalt}
        \,|\, \event} \leq e^\frac{\timequasipotuprelativeto{\iComp}{\verticesalt} + \precs}{\step}\,.
    \end{equation}
\end{theorem}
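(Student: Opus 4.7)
\textbf{Proof plan for \cref{thm:hitting_acc_from_init_basin}.}
The strategy is to split the accelerated trajectory into two phases: a short initial descent into the neighborhood $\V_\iComp$ of the attracting component $\gencomp_\iComp$, followed by the long escape toward $\verticesalt$. The event $\event$ will capture the typical scenario in which the first phase succeeds without the process leaving the basin of $\gencomp_\iComp$. Conditional on $\event$, the strong Markov property reduces the problem to estimating the hitting time starting from inside $\V_\iComp$, for which the uniform bounds of \cref{lem:hitting_acc_from_comp} are directly applicable.

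\textbf{Step 1 (Definition of $\event$).} Since $\gencomp_\iComp$ is minimizing, \cref{lemma:minimizing_component_asympt_stable} gives $\basin(\gencomp_\iComp)$ is open and invariant under $\flowmap_\time$; by \cref{lem:daction_exit_basin}, $\ConstA_0 \defeq \dquasipot(\init[\state], \vecspace \setminus \basin(\gencomp_\iComp)) > 0$. Since $\flowmap_\time(\init[\state]) \to \gencomp_\iComp$, we can pick an integer $\horizon$ with $\map^\horizon(\init[\state]) \in \V_\iComp$ and $\map^\run(\init[\state]) \in \basin(\gencomp_\iComp)$ for $\run = 0,\dots,\horizon$. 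The key geometric claim is that for some $\ConstA \in (0, \ConstA_0)$ and some small enough $\margin > 0$, every $\dpth \in \dcurvesat{\horizon+1}$ with $\dpth_0 = \init[\state]$ and $\daction_{\horizon+1}(\dpth) \leq \ConstA$ satisfies $\dpth_\run \in \basin(\gencomp_\iComp)$ for $\run \leq \horizon$ and $\dpth_\horizon \in \U_{-\margin}(\V_\iComp)$ (the $\margin$-shrink of $\V_\iComp$). The first inclusion follows from the definition of $\ConstA_0$. The second is a compactness/contradiction argument: if it failed, one could extract (via the compactness of $\pths_{\horizon+1}^{\{\init[\state]\}}(\ConstA)$ in \cref{cor:ldp_iterates_full}) a zero-action path ending outside $\V_\iComp$; but by \cref{lem:basic_prop_map} zero-action paths from $\init[\state]$ coincide with $(\map^\run(\init[\state]))_\run$, yielding a contradiction. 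Define
\begin{equation}
\event \defeq \bigl\{ \dist_{\horizon+1}(\accstate, \pths_{\horizon+1}^{\{\init[\state]\}}(\ConstA)) < \margin \bigr\},
\end{equation}
so that on $\event$ we have $\acchittime_{\V_\iComp} \leq \horizon$ and the pre-$\V_\iComp$ trajectory stays in $\basin(\gencomp_\iComp)$.

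\textbf{Step 2 (Probability of $\event$).} The upper-bound half of \cref{cor:ldp_iterates_full}, applied with level $\ConstA$ and precision $\precs \gets \ConstA/2$, yields $\prob_{\init[\state]}(\event^c) \leq \exp(-\ConstA/(2\step))$; this provides the constant $\ConstA$ appearing in the statement (up to a factor $2$).

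\textbf{Step 3 (Reduction via the strong Markov property).} Write $\tau \defeq \acchittime_{\V_\iComp}$; on $\event$, $\tau \leq \horizon$ and $\accstate_\tau \in \V_\iComp$. The strong Markov property gives
\begin{equation}
\ex_{\init[\state]}\!\bigl[\acchittime_\verticesalt \one_\event\bigr]
  = \ex_{\init[\state]}\!\bigl[(\tau + \acchittime_\verticesalt\circ \theta_\tau)\one_\event\bigr]
  = \ex_{\init[\state]}\!\bigl[\tau\one_\event\bigr] + \ex_{\init[\state]}\!\bigl[\ex_{\accstate_\tau}[\acchittime_\verticesalt]\,\one_\event\bigr].
\end{equation}
Shrink $\V_1, \dots, \V_{\nGencomps}$ (still containing the $\horizon$-th image of the flow) so that \cref{lem:hitting_acc_from_comp} applies: for all $\state \in \V_\iComp$,
\begin{equation}
e^{(\timequasipotdownrelativeto{\iComp}{\verticesalt} - \precs/2)/\step} \;\leq\; \ex_{\state}[\acchittime_\verticesalt] \;\leq\; e^{(\timequasipotuprelativeto{\iComp}{\verticesalt} + \precs/2)/\step}.
\end{equation}
Since $0 \leq \tau \leq \horizon$ on $\event$ and $\horizon$ is a fixed integer independent of $\step$, dividing the two-sided estimate above by $\prob(\event) \geq 1/2$ and absorbing the $\horizon$ contribution into the $\precs/\step$ exponent for $\step$ small enough gives the desired conditional bounds.

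\textbf{Main obstacle.} The delicate point is Step 1: guaranteeing that conditioning on a low-action tube around the flow forces the process to land inside $\V_\iComp$ at time $\horizon$. This requires combining the lower semi-continuity and compactness of $\daction_{\horizon+1}$ from \cref{cor:ldp_iterates_full} with the fact from \cref{lem:basic_prop_map} that zero-action discrete paths from $\init[\state]$ are uniquely the iterates of $\map$. The rest of the argument is a routine strong-Markov decomposition combined with the already-established Lemma \ref{lem:hitting_acc_from_comp}.
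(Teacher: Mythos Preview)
Your overall decomposition (short high-probability descent into $\V_\iComp$, then apply \cref{lem:hitting_acc_from_comp} via the Markov property) is sound, but Step~3 as written has a measurability gap. You apply the strong Markov property at $\tau = \acchittime_{\V_\iComp}$ and write
\[
\ex_{\init[\state]}\bigl[\acchittime_\verticesalt\circ\theta_\tau\,\one_\event\bigr]
=\ex_{\init[\state]}\bigl[\ex_{\accstate_\tau}[\acchittime_\verticesalt]\,\one_\event\bigr],
\]
but your event $\event$ depends on $(\accstate_0,\dots,\accstate_\horizon)$ while $\tau$ can be strictly smaller than $\horizon$; hence $\event\notin\filter_\tau$ and the identity is not justified. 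The fix is immediate: on $\event$ you already know $\accstate_\horizon\in\V_\iComp$ (that is precisely your $\margin$-shrink argument), so you can replace the random $\tau$ by the deterministic time $\horizon$. Then $\event\in\filter_\horizon$, the ordinary Markov property gives $\ex_{\init[\state]}[\acchittime_\verticesalt\one_\event]=\horizon\,\prob(\event)+\ex_{\init[\state]}\bigl[\ex_{\accstate_\horizon}[\acchittime_\verticesalt]\,\one_\event\bigr]$, and the two-sided bound from \cref{lem:hitting_acc_from_comp} applies directly since $\accstate_\horizon\in\V_\iComp$ on $\event$. You should also state explicitly that the tube $\U_\margin\bigl(\pths_{\horizon+1}^{\{\init[\state]\}}(\ConstA)\bigr)$ is disjoint from $\bigcup_{\jComp\in\verticesalt}\V_\jComp$ for $\V_\jComp$ small enough (so that $\acchittime_\verticesalt>\horizon$ on $\event$); this follows from the compactness of the trajectory closure $\{\flowmap_\time(\init[\state]):\time\ge 0\}\cup\gencomp_\iComp$ and its disjointness from the other $\gencomp_\jComp$.

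For comparison, the paper takes a different and somewhat cleaner route: it sets $\event=\{\accstate_{\hittime_1}\in\V_\iComp\}$, the event that the \emph{first} transition of the induced chain lands in $\V_\iComp$. This event is $\filter_{\hittime_1}$-measurable by construction, so the strong Markov property at $\hittime_1$ applies without any measurability concern. The probability bound $\prob_{\init[\state]}(\event^c)\le e^{-\ConstA/\step}$ then comes directly from the already-established \cref{lem:est_trans_prob_ub_init}, with $\ConstA=\min_{\jComp\neq\iComp}\dquasipotup_{\init[\state],\jComp}>0$ guaranteed by \cref{lem:daction_exit_basin}. Your approach bypasses the induced-chain machinery by going back to the raw LDP of \cref{cor:ldp_iterates_full}; this is more self-contained but requires the extra compactness argument in Step~1 and the repair above in Step~3.
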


\begin{proof}
    \UsingNestedNamespace{thm:hitting_acc_from_init_basin}{proof}

    First, \cref{lem:daction_exit_basin} ensures that $\dquasipot(\init[\state], \vecspace \setminus \basin(\gencomp_{\iComp})) > 0$ so that, for any $\jComp \in \vertices$, $\jComp \neq \iComp$, it holds $\dquasipot(\init[\state], \gencomp_{\jComp}) > 0$.
    Define
    \begin{equation}
        \ConstA \defeq \min_{\jComp \in \vertices,\, \jComp \neq \iComp} \dquasipot(\init[\state], \gencomp_{\jComp}) > 0\,.
    \end{equation}

    Let us now apply \cref{lem:est_trans_prob_ub_init} to obtain that, for any small enough neighborhoods $\nhdaltalt_\idx$ of $\gencomp_\idx$, $\idx = 1, \dots, \nGencomps$ , there is some $\step_0 > 0$ such that for all $\idxalt \in \gencompIndices$, $0 < \step < \step_0$,
	\begin{equation}
		\probalt_{\nhdaltalt}(\state, \nhdaltalt_{\idxalt})
		\leq
        e^{-\frac{\ConstA}{2\step}}\,.
        \LocalLabel{eq:bd-prob-other}
	\end{equation}

    Now define the event $\event$ as
    \begin{equation}
        \event \defeq \setof*{\accstate_{\hittime_1} \in \V_{\iComp}}
    \end{equation}
    with the notation of \cref{def:induced_chain}.
    \Cref{\LocalName{eq:bd-prob-other}} then ensures that
    \begin{equation}
        \prob_{\init[\state]}(\event) \geq 1 - e^{-\frac{\ConstA}{4\step}}\,,
    \end{equation}
    provided that $\step$ is small enough so that $e^{-\frac{\ConstA}{4\step}} \nGencomps \leq 1$.
    If $\iComp \in \verticesalt$, the result is immediate. In the following, we assume that $\iComp \notin \verticesalt$.

    Let us now invoke \cref{lem:hitting_acc_from_comp} to obtain that, at the potential cost of requiring smaller $\V_1,\dots,\V_\nGencomps$ neighborhoods of $\gencomp_1,\dots,\gencomp_\nGencomps$ or of reducing $\step_0$, it holds that, for any $0 < \step < \step_0$, $\state \in \init[\state]$,
\begin{equation}
e^\frac{\timequasipotuprelativeto{\vertexA}{\verticesalt} - \precs}{\step}
        \leq 
        \ex_{\state}\bracks*{\acchittime_{\verticesalt}} \leq e^\frac{\timequasipotuprelativeto{\vertexA}{\verticesalt} + \precs}{\step}\,.
        \LocalLabel{eq:bd-time-acc}
    \end{equation}

To conclude the proof, write, by the strong Markov property,
\begin{align}
    \ex_{\init[\state]}\bracks*{\acchittime_{\verticesalt} \,|\, \event}
    &=
    \frac{\ex_{\init[\state]}\bracks*{\one_\event \acchittime_{\verticesalt} }}{\prob_{\init[\state]}(\event)}\notag\\
    &=
    \frac{\ex_{\init[\state]}\bracks*{
            \one_\event \parens*{
                1
                +
                \ex_{\accstate_{\hittime_1}}\bracks*{\acchittime_{\verticesalt}}
            }
        }}{\prob_{\init[\state]}(\event)}
        \notag\\
    &\leq
    \frac{
        \ex_{\init[\state]}\bracks*{
            \one_\event
    } e^\frac{\timequasipotuprelativeto{\vertexA}{\verticesalt} + 2\precs}{\step}
        }{\prob_{\init[\state]}(\event)}
        \notag\\
    &= e^\frac{\timequasipotuprelativeto{\vertexA}{\verticesalt} + 2\precs}{\step}\,,
\end{align}
where we used \cref{\LocalName{eq:bd-time-acc}} in the last inequality and taking $\step$ small enough so that $e^{\frac{\precs}{\step}} \geq 2$.
We have thus shown the \ac{RHS} of the inequality of the statement, and the \ac{LHS} is obtained by the same argument.

\end{proof}

Similarly to how \cref{thm:ub_sgd_hitting_time} was obtained from \cref{thm:hitting_acc_from_init}, we immediately obtain the following corollary.
\begin{theorem}
    \label{thm:ub_hitting_sgd_basin}
    \label{thm:ub_sgd_hitting_time_basin}
    Consider $\init[\state] \in \vecspace$ that belongs to $\basin(\gencomp_{\iComp})$ for some $\iComp \in \gencompIndices$ such that $\gencomp_{\iComp}$ is minimizing.
    Then, for any $\precs > 0$, for $\V_1,\dots,\V_\nGencomps$ neighborhoods of $\gencomp_1,\dots,\gencomp_\nGencomps$ small enough, there is $\init[\step] > 0$, an event $\event$ and a positive constant $\ConstA$
    such that for all $0 < \step < \step_0$, 
    \begin{equation}
        \prob_{\init[\state]}(\event) \geq 1 - e^{-\ConstA/\step}
    \end{equation}
    and,
    \begin{equation}
        \ex_{\init[\state]}\bracks*{\sgdhittime
        \,|\, \event} \leq e^\frac{\timequasipotuprelativeto{\iComp}{\verticesalt} + \precs}{\step}\,.
    \end{equation}
\end{theorem}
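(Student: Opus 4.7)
The plan is to deduce this result directly from \cref{thm:hitting_acc_from_init_basin}, in the same way that \cref{thm:ub_sgd_hitting_time} was obtained from \cref{thm:hitting_acc_from_init}. The key observation is that the accelerated process and the SGD iterates are related by the time-rescaling \eqref{eq:rescaled}, namely $\accstate_\run = \state_{\run \floor{1/\step}}$, so that the hitting times satisfy the pathwise bound
\begin{equation}
\sgdhittime_{\verticesalt} \leq \acchittime_{\verticesalt} \ceil*{\step^{-1}}\,,
\end{equation}
by construction of the accelerated process.

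First, I would invoke \cref{thm:hitting_acc_from_init_basin} with tolerance $\precs/2$ to obtain, for neighborhoods $\V_1, \dots, \V_{\nGencomps}$ small enough and some $\step_0 > 0$, an event $\event$ and a constant $\ConstA > 0$ such that $\prob_{\init[\state]}(\event) \geq 1 - e^{-\ConstA/\step}$ and
\begin{equation}
\ex_{\init[\state]}\bracks*{\acchittime_{\verticesalt} \given \event}
\leq \exp\parens*{\frac{\timequasipotuprelativeto{\iComp}{\verticesalt} + \precs/2}{\step}}\,,
\end{equation}
for all $0 < \step < \step_0$. Then, taking the conditional expectation of the pathwise inequality above yields
\begin{equation}
\ex_{\init[\state]}\bracks*{\sgdhittime_{\verticesalt} \given \event}
\leq \ceil*{\step^{-1}} \ex_{\init[\state]}\bracks*{\acchittime_{\verticesalt} \given \event}\,.
\end{equation}

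Finally, by possibly shrinking $\step_0$, we can ensure that $\ceil*{\step^{-1}} \leq e^{\precs/(2\step)}$ for all $0 < \step \leq \step_0$, and combining with the previous display gives the advertised upper bound. The event $\event$ and the constant $\ConstA$ are inherited unchanged from \cref{thm:hitting_acc_from_init_basin}, so the high-probability guarantee is immediate. There is no substantive obstacle here: all the work has been done in \cref{thm:hitting_acc_from_init_basin}, which itself leverages the basin-exit estimate \cref{lem:daction_exit_basin} to ensure that with exponentially high probability the induced chain first enters $\V_\iComp$, after which the uniform bounds of \cref{lem:hitting_acc_from_comp} apply via the strong Markov property.
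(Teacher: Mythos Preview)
Your proposal is correct and follows exactly the approach indicated in the paper, which simply states that the result is obtained from \cref{thm:hitting_acc_from_init_basin} in the same way \cref{thm:ub_sgd_hitting_time} was obtained from \cref{thm:hitting_acc_from_init}. The pathwise inequality $\sgdhittime_{\verticesalt} \leq \acchittime_{\verticesalt}\ceil{\step^{-1}}$ together with $\ceil{\step^{-1}} \leq e^{\precs/(2\step)}$ for small $\step$ is precisely the mechanism used in the proof of \cref{thm:ub_sgd_hitting_time}, and the event $\event$ and constant $\ConstA$ are indeed inherited unchanged.
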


We now turn our attention to the lower-bound counterpart of \cref{thm:ub_hitting_sgd_basin}.

\begin{theorem}
    \label{thm:lb_sgd_hitting_time_basin}
 Consider $\init[\state] \in \vecspace$ that belongs to $\basin(\gencomp_{\iComp})$ for some $\iComp \in \gencompIndices$ such that $\gencomp_{\iComp}$ is minimizing.
Under \cref{asm:sos} and assuming that,
\begin{equation}
    \frac{\strong \Radius^2}{1152 \bdvarup}
    >
    \bdquasipotup - \timequasipotuprelativeto{\iComp}{\verticesalt}\,,
\end{equation}
for any $\precs > 0$, for $\V_1,\dots,\V_\nGencomps$ neighborhoods of $\gencomp_1,\dots,\gencomp_\nGencomps$ small enough, there is $\init[\step] > 0$, an event $\event$ and a positive constant $\ConstA$
such that for all $0 < \step < \step_0$, 
    \begin{equation}
        \prob_{\init[\state]}(\event) \geq 1 - e^{-\ConstA/\step}
    \end{equation}
    and,
\begin{equation}
    \ex_{\init[\state]}\bracks*{\sgdhittime} \geq e^{\frac{\timequasipotdownrelativeto{\iComp}{\verticesalt} - \precs}{\step}}\,.
\end{equation}
\end{theorem}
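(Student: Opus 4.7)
The plan is to mirror the strategy of \cref{thm:hitting_acc_from_init_basin}, but substitute its basin-uniform upper bound \cref{lem:hitting_acc_from_comp} with the basin-uniform lower bound \cref{lem:lb_sgd_hitting_time_from_comp}, which is precisely where the attracting-strength hypothesis from \cref{asm:sos} is invoked. I would take as the favorable event
\begin{equation*}
\event \defeq \braces*{\accstate_{\hittime_{1}} \in \V_{\iComp}},
\end{equation*}
\ie the event that the accelerated induced chain of \cref{def:induced_chain} first lands in the neighborhood of the minimizing component whose basin contains $\init[\state]$.

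The first step is to show that $\event$ is overwhelmingly likely. Since $\gencomp_{\iComp}$ is minimizing and $\init[\state] \in \basin(\gencomp_{\iComp})$, \cref{lem:daction_exit_basin} yields $\dquasipot(\init[\state], \vecspace \setminus \basin(\gencomp_{\iComp})) > 0$; as $\gencomp_{\jComp} \subset \vecspace \setminus \basin(\gencomp_{\iComp})$ for every $\jComp \neq \iComp$, this forces $\dquasipot(\init[\state], \gencomp_{\jComp}) > 0$, and hence $\dquasipotdown_{\init[\state], \jComp} \geq \dquasipot(\init[\state], \gencomp_{\jComp}) > 0$ by the definitions in \cref{def:dquasipot,def:dquasipotalt_init}. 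Setting $\ConstA \defeq \tfrac{1}{4} \min_{\jComp \neq \iComp} \dquasipot(\init[\state], \gencomp_{\jComp}) > 0$ and invoking \cref{lem:est_trans_prob_ub_init} with $\precs \gets \ConstA$, each of the $\nGencomps - 1$ first-transition probabilities $\probalt_{\nhdaltalt}(\init[\state], \V_{\jComp})$ with $\jComp \neq \iComp$ is bounded by $\exp(-3\ConstA/\step)$ for small enough neighborhoods and $\step$; a union bound then gives $\prob_{\init[\state]}(\event^{C}) \leq e^{-\ConstA/\step}$, as required.

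Next, I would apply the strong Markov property at the \ac{SGD} stopping time $\hittime_{1}\floor*{\step^{-1}}$, noting that this is indeed a stopping time for the \ac{SGD} filtration because $\hittime_{1}$ depends on the trajectory only through $(\state_{\runalt\floor*{\step^{-1}}})_{\runalt \leq \hittime_{1}}$ via the acceleration rule $\accstate_\run = \state_{\run\floor*{\step^{-1}}}$. On $\event$, the post-stopping state $\accstate_{\hittime_{1}} = \state_{\hittime_{1}\floor*{\step^{-1}}}$ lies in $\V_{\iComp}$, so the basin-uniform lower bound \cref{lem:lb_sgd_hitting_time_from_comp}, whose attracting-strength condition is exactly $\strong\Radius^{2}/(1152\bdvarup) > \bdquasipotup - \timequasipotuprelativeto{\iComp}{\verticesalt}$, yields $\ex_{\state}\bracks*{\sgdhittime_{\verticesalt}} \geq e^{(\timequasipotdownrelativeto{\iComp}{\verticesalt} - \precs/2)/\step}$ uniformly over $\state \in \V_{\iComp}$. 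Chaining everything together,
\begin{equation*}
\ex_{\init[\state]}\bracks*{\sgdhittime_{\verticesalt}}
  \geq \ex_{\init[\state]}\bracks*{\one_{\event}\, \ex_{\state_{\hittime_{1}\floor*{\step^{-1}}}}\bracks*{\sgdhittime_{\verticesalt}}}
  \geq \parens*{1 - e^{-\ConstA/\step}}\, e^{(\timequasipotdownrelativeto{\iComp}{\verticesalt} - \precs/2)/\step}
  \geq e^{(\timequasipotdownrelativeto{\iComp}{\verticesalt} - \precs)/\step}
\end{equation*}
for $\step$ small enough, which is the claimed bound.

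The main obstacle I anticipate is the time-scale bookkeeping: faithfully transferring $\hittime_{1}$ from the accelerated scale back to the \ac{SGD} scale while preserving measurability for the strong Markov property, and guaranteeing that the post-stopping state sits in the \emph{small} neighborhood $\V_{\iComp}$ demanded by \cref{lem:lb_sgd_hitting_time_from_comp} rather than merely in the ambient basin. Once this accounting is in place, the remainder is a clean concatenation of \cref{lem:daction_exit_basin}, \cref{lem:est_trans_prob_ub_init}, and \cref{lem:lb_sgd_hitting_time_from_comp}, with the $(1 - e^{-\ConstA/\step})$ prefactor absorbed into the $\precs/2$ slack in the exponent.
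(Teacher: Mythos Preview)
Your proposal is essentially the paper's own argument: the paper states only that \cref{thm:lb_sgd_hitting_time_basin} ``is obtained by combining \cref{lem:lb_sgd_hitting_time_from_comp} with \cref{lem:daction_exit_basin} in the same way as \cref{thm:hitting_acc_from_init_basin} was obtained from \cref{lem:hitting_acc_from_comp},'' and you reproduce exactly that template---construct $\event=\{\accstate_{\hittime_1}\in\V_{\iComp}\}$, bound $\prob(\event^C)$ via \cref{lem:daction_exit_basin} and \cref{lem:est_trans_prob_ub_init}, then condition and invoke the basin-uniform lower bound.

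One remark on the time-scale bookkeeping you flag: the displayed inequality $\ex_{\init[\state]}[\sgdhittime_{\verticesalt}]\geq\ex_{\init[\state]}[\one_{\event}\,\ex_{\state_{\hittime_1\floor{\step^{-1}}}}[\sgdhittime_{\verticesalt}]]$ is not automatic, because $\sgdhittime_{\verticesalt}$ is defined on the \emph{fine} \ac{SGD} time grid and could in principle occur before $\hittime_1\floor{\step^{-1}}$ (at a non-subsampled time), so the strong Markov decomposition does not straightforwardly lower-bound the original hitting time by the post-stopping one. In \cref{thm:hitting_acc_from_init_basin} this issue does not arise because $\hittime_1\leq\acchittime_{\verticesalt}$ holds by construction on the \emph{accelerated} grid. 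The simplest fix is to stop instead at the \ac{SGD} first-hit time of $\V$ (for which the inclusion $\bigcup_{\jComp\in\verticesalt}\V_{\jComp}\subset\V$ forces the required ordering); alternatively, one can observe that $\finaldquasipotdown_{\init[\state],\iComp}=0$ implies $\timequasipotdownrelativeto{\init[\state]}{\verticesalt}\geq\timequasipotdownrelativeto{\iComp}{\verticesalt}$ and deduce the unconditional lower bound directly from \cref{thm:lb_sgd_hitting_time}, reserving the event construction for the probability claim only.
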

\Cref{thm:lb_sgd_hitting_time_basin} is obtained by combining \cref{lem:lb_sgd_hitting_time_from_comp} with \cref{lem:daction_exit_basin} in the same way as \cref{thm:hitting_acc_from_init_basin} was obtained from \cref{lem:hitting_acc_from_comp}.

\subsection{Full transition costs}
\begin{definition}
\label{def:full-transition-cost}
We define, for $\iComp, \jComp \in \gencompIndices$,
\begin{align}
    \fulldquasipot_{\iComp, \jComp} &\defeq 
		\inf \setdef*{\daction_{\nRuns}(\dpth)}{\nRuns \geq 1, \dpth \in \restrdcurvesat{\nRuns},\, \dpth_\start \in \gencomp_\iComp,\, \dpth_{\nRuns - 1} \in \gencomp_\jComp}
		\notag\\
                                           &=
                                           \inf \setdef*{
                                               \action_\horizon(\pth)
                                           }
                                           {
                                               \horizon \in \nats, \pth \in \restrdcurvesat{\horizon},\, \pth_0 \in \gencomp_\iComp,\, \pth_{\horizon} \in \gencomp_\jComp
                                           }\,.
\end{align}
\end{definition}

With the same arguments as \cref{lem:alternate_dquasipotdown}, we can show the alternate expression for $\fulldquasipot_{\iComp, \jComp}$.
\begin{lemma}
\label{lem:alternate_fulldquasipot}
For any $\iComp, \jComp \in \gencompIndices$, it holds that
\begin{equation}
    \fulldquasipot_{\iComp, \jComp} = \lim_{\margin \to 0} \fulldquasipot_{\iComp, \jComp}^\margin\,,
\end{equation}
where
\begin{align}
\fulldquasipot_{\iComp, \jComp}^\margin &\defeq 
		\inf \setdef*{\daction_{\nRuns}(\dpth)}{\nRuns \geq 1, \dpth \in \restrdcurvesat{\nRuns},\, \dpth_\start \in \U_\margin(\gencomp_\iComp),\, \dpth_{\nRuns - 1} \in \U_\margin(\gencomp_\jComp)}
		\notag\\
                                           &=
                                           \inf \setdef*{
                                               \action_\horizon(\pth)
                                           }
                                           {
                                               \horizon \in \nats, \pth \in \restrdcurvesat{\horizon},\, \pth_0 \in \U_\margin(\gencomp_\iComp),\, \pth_{\horizon} \in \U_\margin(\gencomp_\jComp)
                                           }\,.
\end{align}
\end{lemma}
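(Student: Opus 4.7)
The proof follows the template of \cref{lem:alternate_dquasipotdown}, and in fact is strictly simpler because here there is no restriction on intermediate points (paths need only start in $\gencomp_\iComp$ and end in $\gencomp_\jComp$, and may traverse any other critical components freely). I would prove the two inequalities separately.

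For the easy direction, I would first observe that $\margin \mapsto \fulldquasipot_{\iComp,\jComp}^\margin$ is non-decreasing: if $\margin < \marginalt$ then the infimum in $\fulldquasipot_{\iComp,\jComp}^\margin$ is taken over a smaller set of paths, so $\fulldquasipot_{\iComp,\jComp}^\margin \leq \fulldquasipot_{\iComp,\jComp}^{\marginalt}$. In particular, $\lim_{\margin \to 0}\fulldquasipot_{\iComp,\jComp}^\margin$ exists. Moreover, since $\gencomp_\iComp \subset \U_\margin(\gencomp_\iComp)$ and $\gencomp_\jComp \subset \U_\margin(\gencomp_\jComp)$, every path admissible in $\fulldquasipot_{\iComp,\jComp}$ is admissible in $\fulldquasipot_{\iComp,\jComp}^\margin$, yielding $\fulldquasipot_{\iComp,\jComp}^\margin \leq \fulldquasipot_{\iComp,\jComp}$ and therefore $\lim_{\margin \to 0}\fulldquasipot_{\iComp,\jComp}^\margin \leq \fulldquasipot_{\iComp,\jComp}$.

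For the reverse inequality, fix $\precs > 0$ and apply \cref{lem:W_nbd} to both $\gencomp_\iComp$ and $\gencomp_\jComp$ with radius parameter $\precs$ (reducing $\precs$ if necessary): this produces open neighborhoods $\W_\precs(\gencomp_\iComp)$ and $\W_\precs(\gencomp_\jComp)$. Choose $\margin > 0$ small enough that $\U_\margin(\gencomp_\iComp) \subset \W_\precs(\gencomp_\iComp)$ and $\U_\margin(\gencomp_\jComp) \subset \W_\precs(\gencomp_\jComp)$. Take any $\nRuns \geq 1$ and any $\dpth \in \restrdcurvesat{\nRuns}$ with $\dpth_\start \in \U_\margin(\gencomp_\iComp)$ and $\dpth_{\nRuns-1} \in \U_\margin(\gencomp_\jComp)$. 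By the definition of $\W_\precs$, there exist $\point \in \gencomp_\iComp$ with $\rate(\point,\dpth_\start) < \precs$ and $\pointalt \in \gencomp_\jComp$ with $\rate(\dpth_{\nRuns-1}, \pointalt) < \precs$. The concatenated path $\dpthalt = (\point, \dpth_\start, \dpth_1, \dotsc, \dpth_{\nRuns-1}, \pointalt) \in \restrdcurvesat{\nRuns+2}$ satisfies $\dpthalt_\start \in \gencomp_\iComp$, $\dpthalt_{\nRuns+1} \in \gencomp_\jComp$, and $\daction_{\nRuns+2}(\dpthalt) \leq \daction_\nRuns(\dpth) + 2\precs$, so by definition of $\fulldquasipot_{\iComp,\jComp}$ we obtain $\fulldquasipot_{\iComp,\jComp} \leq \daction_\nRuns(\dpth) + 2\precs$. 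Taking the infimum over admissible $\dpth$ gives $\fulldquasipot_{\iComp,\jComp} \leq \fulldquasipot_{\iComp,\jComp}^\margin + 2\precs$, and hence $\fulldquasipot_{\iComp,\jComp} \leq \lim_{\margin \to 0}\fulldquasipot_{\iComp,\jComp}^\margin + 2\precs$. Letting $\precs \to 0$ concludes the argument.

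\textbf{Main obstacle.} There is no real obstacle: the entire proof is essentially a direct transcription of \cref{lem:alternate_dquasipotdown}, with the simplification that we do not need to verify that the intermediate points of the concatenated path $\dpthalt$ avoid other critical components. The only delicate point is the appeal to \cref{lem:W_nbd}, which supplies the key mechanism of bridging a point in $\U_\margin(\gencomp_\iComp)$ to a point in $\gencomp_\iComp$ at arbitrarily small action cost, but this lemma has already been established.
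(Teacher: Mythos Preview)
Your approach is correct and is exactly what the paper does: it states explicitly that \cref{lem:alternate_fulldquasipot} follows ``with the same arguments as \cref{lem:alternate_dquasipotdown}'', and your write-up faithfully carries this out (with the simplification you note). One small slip: the map $\margin \mapsto \fulldquasipot_{\iComp,\jComp}^\margin$ is non-\emph{increasing} (an infimum over a smaller set is larger, so $\margin < \marginalt$ gives $\fulldquasipot_{\iComp,\jComp}^\margin \geq \fulldquasipot_{\iComp,\jComp}^{\marginalt}$), but this does not affect the argument since monotonicity together with the bound $\fulldquasipot_{\iComp,\jComp}^\margin \leq \fulldquasipot_{\iComp,\jComp}$ still guarantees the limit exists.
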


We can also relate $\fulldquasipot_{\iComp, \jComp}$ in terms $\finaldquasipot_{\iComp, \jComp}$.
\begin{lemma}
    \label{lem:fulldquasipot-finaldquasipot}
    For any $\iComp \in \compIndices$, $\jComp \in \gencompIndices$, it holds that
    \begin{equation}
        \fulldquasipot_{\iComp, \jComp} \leq \finaldquasipot_{\iComp, \jComp}\,,
    \end{equation}
    and, if $\fulldquasipot_{\iComp, \jComp} < \finaldquasipot_{\iComp, \jComp}$, then there must exist $\lComp \in \verticesalt$ such that $\finaldquasipot_{\iComp, \lComp} \leq \fulldquasipot_{\iComp, \jComp} < \finaldquasipot_{\iComp, \jComp}$.
\end{lemma}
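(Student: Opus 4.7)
The first inequality $\fulldquasipot_{\iComp, \jComp} \leq \finaldquasipot_{\iComp, \jComp}$ is immediate from the definitions (\cref{def:finaldquasipot,def:full-transition-cost}): the infimum in $\finaldquasipot_{\iComp, \jComp}$ is taken over a strictly smaller set of discrete paths, so the first inequality holds tautologically. The substantive content is the second claim, and the plan is to prove it by a ``first-hit'' truncation argument applied to near-optimal paths for $\fulldquasipot_{\iComp, \jComp}$.

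Assume strictness, that is, $\fulldquasipot_{\iComp, \jComp} < \finaldquasipot_{\iComp, \jComp}$, and fix $\precs>0$ small enough that $\fulldquasipot_{\iComp, \jComp} + \precs < \finaldquasipot_{\iComp, \jComp}$. By definition of $\fulldquasipot_{\iComp, \jComp}$, pick $\nRuns_\precs \geq 1$ and a discrete path $\dpth^\precs \in \restrdcurvesat{\nRuns_\precs}$ with $\dpth^\precs_\start \in \gencomp_\iComp$, $\dpth^\precs_{\nRuns_\precs - 1} \in \gencomp_\jComp$, and $\daction_{\nRuns_\precs}(\dpth^\precs) \leq \fulldquasipot_{\iComp, \jComp} + \precs$. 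Since $\daction_{\nRuns_\precs}(\dpth^\precs) < \finaldquasipot_{\iComp, \jComp}$, the path $\dpth^\precs$ cannot lie in the set over which $\finaldquasipot_{\iComp, \jComp}$ is infimized. Therefore there exists some intermediate index $\run \in \{1, \dots, \nRuns_\precs - 2\}$ and some $\lComp_\precs > \nComps$ with $\lComp_\precs \neq \iComp, \jComp$ such that $\dpth^\precs_\run \in \gencomp_{\lComp_\precs}$; since $\iComp \in \compIndices$ automatically satisfies $\iComp \leq \nComps$, the excluded index $\lComp_\precs$ belongs to $\verticesalt \setminus \{\jComp\}$.

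Now let $\run_\precs$ be the \emph{first} intermediate index at which $\dpth^\precs$ enters some target component $\gencomp_\lComp$ with $\lComp \in \verticesalt \setminus \{\jComp\}$, and let $\lComp_\precs$ denote the corresponding index of that component. Consider the truncated path $\wilde\dpth^\precs \defeq (\dpth^\precs_\start, \dots, \dpth^\precs_{\run_\precs}) \in \restrdcurvesat{\run_\precs + 1}$. By minimality of $\run_\precs$, for $\run = 1, \dots, \run_\precs - 1$ the point $\dpth^\precs_\run$ is not in $\gencomp_\lComp$ for any $\lComp \in \verticesalt \setminus \{\lComp_\precs\}$, which is precisely the constraint appearing in the definition of $\finaldquasipot_{\iComp, \lComp_\precs}$ (recall that $\iComp \leq \nComps$ removes $\iComp$ from the forbidden set automatically). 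Since $\rate \geq 0$ by \cref{lem:basic_prop_h_l}, cutting at $\run_\precs$ only decreases the action, so
\begin{equation}
\finaldquasipot_{\iComp, \lComp_\precs}
\leq \daction_{\run_\precs + 1}(\wilde\dpth^\precs)
\leq \daction_{\nRuns_\precs}(\dpth^\precs)
\leq \fulldquasipot_{\iComp, \jComp} + \precs.
\end{equation}

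It remains to remove the $\precs$-dependence of the target index. Since $\verticesalt$ is finite, the pigeonhole principle yields an $\lComp \in \verticesalt \setminus \{\jComp\}$ and a sequence $\precs_n \downarrow 0$ along which $\lComp_{\precs_n} = \lComp$ for all $n$. Passing to the limit in the displayed inequality above gives $\finaldquasipot_{\iComp, \lComp} \leq \fulldquasipot_{\iComp, \jComp}$, which together with the strict inequality hypothesis $\fulldquasipot_{\iComp, \jComp} < \finaldquasipot_{\iComp, \jComp}$ concludes the proof. The only mildly subtle point is verifying that the ``first-hit'' truncation lands us exactly in the admissible class for $\finaldquasipot_{\iComp, \lComp_\precs}$ — which requires carefully separating the role of $\jComp$ (allowed to be revisited) from that of other target indices (forbidden at intermediate times).
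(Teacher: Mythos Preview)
Your argument is essentially correct and matches what the paper has in mind (the paper itself omits the proof as ``a direct consequence of the definitions''), but there is a genuine slip in the truncation step when $\jComp \in \verticesalt$.

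You define $\run_\precs$ as the first hit of $\bigcup_{\lComp \in \verticesalt \setminus \{\jComp\}} \gencomp_\lComp$, so minimality only gives that $\dpth^\precs_\run \notin \gencomp_\lComp$ for $\lComp \in \verticesalt \setminus \{\jComp\}$ at intermediate times. You then assert that $\dpth^\precs_\run \notin \gencomp_\lComp$ for $\lComp \in \verticesalt \setminus \{\lComp_\precs\}$, which is a \emph{different} set: since $\lComp_\precs \neq \jComp$, the set $\verticesalt \setminus \{\lComp_\precs\}$ contains $\jComp$ whenever $\jComp \in \verticesalt$. Nothing in your construction prevents the path from visiting $\gencomp_\jComp$ before time $\run_\precs$, so the truncated path need not be admissible for $\finaldquasipot_{\iComp, \lComp_\precs}$. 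Your closing remark flags the issue but does not resolve it; in fact the parenthetical ``$\jComp$ (allowed to be revisited)'' is the wrong way round, since for the target $\lComp_\precs$ the component $\gencomp_\jComp$ is forbidden.

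The fix is simple: take $\run_\precs$ to be the first hit of \emph{all} of $\bigcup_{\lComp \in \verticesalt} \gencomp_\lComp$. This time exists (if $\jComp \in \verticesalt$ the endpoint already lies there; if $\jComp \notin \verticesalt$ your violation argument produces an intermediate hit). Before $\run_\precs$ the path avoids every target component, so the truncation is admissible for $\finaldquasipot_{\iComp, \lComp_\precs}$ whatever $\lComp_\precs$ is. The case $\lComp_\precs = \jComp$ is then ruled out a posteriori, since it would give $\finaldquasipot_{\iComp, \jComp} \leq \fulldquasipot_{\iComp, \jComp} + \precs < \finaldquasipot_{\iComp, \jComp}$. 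The rest of your argument (pigeonhole over $\verticesalt$, $\precs \to 0$) goes through unchanged.
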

This lemma is a direct consequence of the definitions of $\fulldquasipot_{\iComp, \jComp}$ (\cref{def:full-transition-cost}) and $\finaldquasipot_{\iComp, \jComp}$ (\cref{def:finaldquasipot}) and is therefore omitted.

\begin{lemma}
\label{lem:fulldquasipotd-time}
\UsingNamespace{lem:fulldquasipotd-time}
For any $\iComp \in \gencompIndices$,
\begin{equation}
    \timequasipotuprelativeto{\vertexA}{\verticesalt}
    \leq 
    \min \setdef*{\sum_{\vertexC \to \vertexD \in \graph} \fulldquasipot_{\vertexC,\vertexD}}{\graph \in \graphs(\verticesalt)}
    -
    \min \setdef*{ \sum_{\vertexC \to \vertexD \in \graph} \fulldquasipot_{\vertexC,\vertexD}}{\graph \in \graphs(\vertexA \not\joins \verticesalt)}\,.
\end{equation}
\end{lemma}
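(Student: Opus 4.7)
I would split the inequality by bounding each term of $\timequasipotuprelativeto{\vertexA}{\verticesalt} = \timequasipotupbaseof{\vertexA}{\verticesalt} - \timequasipotupexcisedfrom{\vertexA}{\verticesalt}$ separately against its $\fulldquasipot$-analogue. Specifically, I would prove
\[
\timequasipotupbaseof{\vertexA}{\verticesalt}
\leq \min_{\graph \in \graphs(\verticesalt)}\sum_{\vertexC \to \vertexD \in \graph} \fulldquasipot_{\vertexC,\vertexD}
\quad\text{and}\quad
\timequasipotupexcisedfrom{\vertexA}{\verticesalt}
\geq \min_{\graph \in \graphs(\vertexA \not\joins \verticesalt)}\sum_{\vertexC \to \vertexD \in \graph} \fulldquasipot_{\vertexC,\vertexD},
\]
then subtract the two bounds to obtain the stated inequality.

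The second bound is immediate from \cref{lem:fulldquasipot-finaldquasipot}: since $\fulldquasipot_{\vertexC,\vertexD} \leq \finaldquasipot_{\vertexC,\vertexD}$ edgewise, summing over any fixed $\graph \in \graphs(\vertexA \not\joins \verticesalt)$ gives $\sum_{\graph} \fulldquasipot \leq \sum_{\graph} \finaldquasipot$, and taking the minimum over $\graphs(\vertexA \not\joins \verticesalt)$ on both sides yields the claim by the definition of $\timequasipotupexcisedfrom{\vertexA}{\verticesalt}$ in \cref{def:time_quasipot}.

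The substantive part is the first bound, which I would establish by edge-surgery on an optimal $\verticesalt$-graph. Let $\graph^{\ast} \in \graphs(\verticesalt)$ realize the right-hand side, so that $\sum_{\vertexC \to \vertexD \in \graph^{\ast}} \fulldquasipot_{\vertexC,\vertexD}$ equals the $\fulldquasipot$-minimum over $\graphs(\verticesalt)$. Every edge $\vertexC \to \vertexD$ of $\graph^{\ast}$ has source $\vertexC \in \compIndices$, since a $\verticesalt$-graph has no edges out of target vertices, so \cref{lem:fulldquasipot-finaldquasipot} applies: either $\fulldquasipot_{\vertexC,\vertexD} = \finaldquasipot_{\vertexC,\vertexD}$, in which case I keep the edge, or there exists $\lComp \in \verticesalt$ with $\finaldquasipot_{\vertexC,\lComp} \leq \fulldquasipot_{\vertexC,\vertexD}$, in which case I replace the edge by $\vertexC \to \lComp$. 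Denoting the modified graph by $\graph^{\ast\prime}$, every edge of $\graph^{\ast\prime}$ has $\finaldquasipot$-cost bounded above by the $\fulldquasipot$-cost of the corresponding edge of $\graph^{\ast}$, so $\sum_{\graph^{\ast\prime}} \finaldquasipot \leq \sum_{\graph^{\ast}} \fulldquasipot$, giving the desired bound on $\timequasipotupbaseof{\vertexA}{\verticesalt}$ provided $\graph^{\ast\prime} \in \graphs(\verticesalt)$.

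The only real obstacle is verifying that this redirection preserves the $\verticesalt$-graph structure. Out-degrees at non-target vertices are trivially unchanged; acyclicity is preserved because every redirected edge lands at a target vertex, which has no outgoing edges and hence cannot lie on a cycle; and every non-target vertex still reaches $\verticesalt$, since following outgoing edges from any $\iComp \in \compIndices$ in $\graph^{\ast\prime}$ either reproduces the trajectory from $\graph^{\ast}$ (which already reached $\verticesalt$) or is truncated at the first redirected edge, which lands directly in $\verticesalt$. This closes the argument.
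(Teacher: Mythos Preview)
Your proposal is correct and follows essentially the same approach as the paper: both bound the second term directly via $\fulldquasipot \leq \finaldquasipot$ and handle the first term by edge surgery using the dichotomy from \cref{lem:fulldquasipot-finaldquasipot}. The paper in fact proves equality for the first term (using the same redirection idea, stated more tersely), whereas you only prove the inequality you need; your verification that the surgered graph remains a $\verticesalt$-graph is more explicit than the paper's.
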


\begin{proof}
    \UsingNestedNamespace{lem:fulldquasipotd-time}{proof}
    By \cref{def:time_quasipot}, we have that
    \begin{equation}
         \timequasipotuprelativeto{\vertexA}{\verticesalt} = \timequasipotupbaseof{\vertexA}{\verticesalt} - \timequasipotupexcisedfrom{\vertexA}{\verticesalt}\,.
    \end{equation}

    Let us begin by lower-bounding the second term.
    By \cref{lem:fulldquasipot-finaldquasipot}, it holds that
    \begin{align}
        \timequasipotupexcisedfrom{\vertexA}{\verticesalt} &= \min \setdef*{ \sum_{\vertexC \to \vertexD \in \graph} \finaldquasipotup_{\vertexC,\vertexD}}{\graph \in \graphs\parens*{\vertexA \not\joins \verticesalt}}\notag\\
                                                           &\geq \min \setdef*{ \sum_{\vertexC \to \vertexD \in \graph} \fulldquasipot_{\vertexC,\vertexD}}{\graph \in \graphs\parens*{\vertexA \not\joins \verticesalt}}\,.
                                                           \LocalLabel{eq:timequasipotupexcisedfrom}
    \end{align}

    Now, let us turn our attention to the first term: by \cref{lem:fulldquasipot-finaldquasipot}, we have that
    \begin{align}
    \timequasipotupbaseof{\vertexA}{\verticesalt}   & \defeq \min \setdef*{\sum_{\vertexC \to \vertexD \in \graph} \finaldquasipotup_{\vertexC,\vertexD}}{\graph \in \graphs(\verticesalt)}
    \notag\\
                                                    &\geq \min \setdef*{\sum_{\vertexC \to \vertexD \in \graph} \fulldquasipot_{\vertexC,\vertexD}}{\graph \in \graphs(\verticesalt)}\,.
    \end{align}
    But, for any $\graph \in \graphs(\verticesalt)$ which reaches that minimum, all its edges $\vertexC \to \vertexD$ must be such $\finaldquasipotup_{\vertexC,\vertexD} = \fulldquasipot_{\vertexC,\vertexD}$ by \cref{lem:fulldquasipot-finaldquasipot} (otherwise we would replace it by an edge to $\verticesalt$ with a lower cost). Therefore, we must also have that
    \begin{align}
        \min \setdef*{\sum_{\vertexC \to \vertexD \in \graph} \fulldquasipot_{\vertexC,\vertexD}}{\graph \in \graphs(\verticesalt)}
        &\geq
\min \setdef*{\sum_{\vertexC \to \vertexD \in \graph} \finaldquasipotup_{\vertexC,\vertexD}}{\graph \in \graphs(\verticesalt)}\,,
    \end{align}
    which shows that
    \begin{equation}
        \timequasipotupbaseof{\vertexA}{\verticesalt}
        =
\min \setdef*{\sum_{\vertexC \to \vertexD \in \graph} \fulldquasipot_{\vertexC,\vertexD}}{\graph \in \graphs(\verticesalt)}\,.
    \end{equation}
    This concludes the proof.
\end{proof}

Finally, let us restate a result from \ourpaper that will be relevant in the following.
\begin{lemma}[{\ourpaper[Lem.~D.31]}]
\label{lem:not_asympt_stable}
If $\gencomp_\iComp$ is not asymptotically stable, then there exists $\jComp \in \vertices$ such that $\finaldquasipot_{\iComp, \jComp} = 0$ and such that $\gencomp_\jComp$ is asymptotically stable.
\end{lemma}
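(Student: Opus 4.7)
The plan is to follow an escaping gradient-flow trajectory out of $\gencomp_\iComp$, to show that the induced transition has zero action, and to iterate the construction until we land on an asymptotically stable component.

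First, I would use the failure of asymptotic stability of $\gencomp_\iComp$ to produce, for any prescribed neighborhood, a point $\point \notin \crit(\obj)$ arbitrarily close to $\gencomp_\iComp$ whose forward gradient-flow orbit $(\flowmap_\time(\point))_{\time \geq 0}$ does not return to $\gencomp_\iComp$. By \cref{lem:flow_convergence}, this orbit then converges as $\time \to +\infty$ to some other critical component $\gencomp_{\jComp_1}$ with $\jComp_1 \neq \iComp$.

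Next, I would exhibit $\finaldquasipot_{\iComp,\jComp_1} = 0$ by concatenating three pieces: a single step from some $\pointalt \in \gencomp_\iComp$ (chosen arbitrarily close to $\point$) to $\point$, the discretized gradient-flow segment $(\flowmap_\runalt(\point))_{\runalt = 0}^{\horizon}$ for a sufficiently large horizon $\horizon$, and a final step from $\flowmap_\horizon(\point)$ to some $\pointaltalt \in \gencomp_{\jComp_1}$. By \cref{lem:basic_prop_flow} the middle segment carries zero action, while the two endpoint steps contribute at most $\rate(\pointalt,\point) + \rate(\flowmap_\horizon(\point),\pointaltalt)$, both of which can be made arbitrarily small by taking $\pointalt \to \point$ and $\horizon \to \infty$. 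Crucially, the interior points $\flowmap_\runalt(\point)$ lie outside $\crit(\obj)$ for every finite $\runalt \geq 1$, by uniqueness of gradient-flow solutions starting off the critical set; hence the path respects the sink-avoidance constraint embedded in \cref{def:finaldquasipot}.

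If $\gencomp_{\jComp_1}$ is asymptotically stable, I set $\jComp \gets \jComp_1$ and stop. Otherwise, I apply the same construction starting from $\gencomp_{\jComp_1}$ to obtain an index $\jComp_2$ with $\finaldquasipot_{\jComp_1,\jComp_2} = 0$ and concatenate the two zero-action paths. Because $\obj$ is strictly decreasing along any non-constant gradient-flow segment, the values $\obj(\gencomp_{\jComp_k})$ visited by this iteration form a strictly decreasing sequence, so no component is revisited; as $\vertices$ is finite, the process terminates at an asymptotically stable $\gencomp_\jComp$, and the concatenated path yields $\finaldquasipot_{\iComp,\jComp} = 0$.

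The main obstacle I anticipate is making the triangle-type concatenation compatible with the sink-avoidance constraint of $\finaldquasipot$: intermediate components visited by the iteration must themselves be non-sinks, which forces one to separately rule out the degenerate case in which the escaping orbit selected in the first step converges directly to a non-asymptotically-stable sink. This case is handled by perturbing $\point$, since the set of points whose flow reaches an asymptotically stable component is generically dense in any small neighborhood of $\gencomp_\iComp$.
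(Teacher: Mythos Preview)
The paper does not actually prove this lemma; it is imported verbatim from the companion work \ourpaper[Lem.~D.31], so there is no in-paper proof to compare against. That said, your strategy---pick an escaping point near $\gencomp_\iComp$, follow the gradient flow (zero action by \cref{lem:basic_prop_flow}) to another component, cap the ends with short steps controlled by \cref{lem:W_nbd}, and iterate---is precisely the natural approach and almost certainly the one used in the cited reference.

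Two remarks on the execution. First, your termination argument via strictly decreasing $\obj$-values is cleaner if you invoke \cref{lemma:minimizing_component_asympt_stable} at the outset: ``not asymptotically stable'' is equivalent to ``not minimizing'', so you can directly choose $\point$ in every neighborhood of $\gencomp_\iComp$ with $\obj(\point) < \obj(\gencomp_\iComp)$, which forces $\obj(\gencomp_{\jComp_1}) < \obj(\gencomp_\iComp)$ strictly and makes the no-cycle argument immediate. Your phrasing via the raw definition of asymptotic stability gives a point whose flow escapes, but $\obj(\point)$ could a priori exceed $\obj(\gencomp_\iComp)$, and the strict-decrease step then needs extra justification.

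Second, the genuine soft spot is the last paragraph. You correctly identify that if some intermediate $\gencomp_{\jComp_k}$ happens to be a \emph{sink} (index $>\nComps$) that is not asymptotically stable, then $\finaldquasipot_{\jComp_k,\jComp_{k+1}}$ is not even defined (\cref{def:finaldquasipot} requires the source index to lie in $\compIndices$), and the concatenated discrete path would place an integer-time point on a forbidden component. Your proposed fix---that points flowing to an asymptotically stable component are ``generically dense''---is plausible but is not a consequence of the standing assumptions; nothing in \cref{asm:obj-weak,asm:crit} rules out, say, a robust stable manifold of a non-minimizing sink having nonempty interior in some small ball. A safer route is to avoid ever \emph{landing} on an intermediate component at integer times: the gradient-flow points $\flowmap_\runalt(\point)$ are automatically non-critical, and the ``bridging'' step near $\gencomp_{\jComp_k}$ can be realized as a single $\rate$-step whose underlying continuous path over $[0,1]$ passes through $\gencomp_{\jComp_k}$ at a non-integer time (using the paths from \cref{lem:W_nbd} on each side, reparameterized to fit in unit time). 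This keeps every $\dpth_\run$ off $\crit(\obj)$ and sidesteps the density claim entirely. Alternatively, one can combine the unconstrained cost $\fulldquasipot$ with \cref{lem:fulldquasipot-finaldquasipot} to transfer the conclusion back to $\finaldquasipot$.
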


\subsection{Link to topological properties of the loss landscape}
\label{sec:interpretation-logtime}

The goal of this section is to prove the following result.

\begin{proposition}
    \label{prop:interpretation-logtime}
    \UsingNamespace{prop:interpretation-logtime}
    The following properties are equivalent:
    \begin{enumerate}[label=(\roman*)]
        \item For all $\point \in \vecspace$, $\timequasipotuprelativeto{\point}{\verticesalt} = 0$.
        \item For all $\iComp \in \compIndices$, $\timequasipotuprelativeto{\iComp}{\verticesalt} = 0$.
        \item For all $\iComp \in \compIndices$, $\comp_\iComp$ is not locally minimizing.
    \end{enumerate}
\end{proposition}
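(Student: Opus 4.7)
The equivalence (i) $\Leftrightarrow$ (ii) is essentially a direct consequence of \cref{def:time_quasipot_from_point}. For (i) $\Rightarrow$ (ii), specialize to $\point \in \V_\iComp$ and use the convention that $\timequasipotuprelativeto{\point}{\verticesalt} = \timequasipotuprelativeto{\iComp}{\verticesalt}$ there. For (ii) $\Rightarrow$ (i), observe that $\finaldquasipotdown_{\point, \vertexA} \geq 0$, so if every $\timequasipotuprelativeto{\vertexA}{\verticesalt}$ vanishes, the max in \cref{def:time_quasipot_from_point} is a max of non-positive quantities, and its positive part is zero (the edge cases where $\point \in \bigcup_{\jComp \in \verticesalt} \V_\jComp$ or $\point \in \V_\iComp$ are immediate from the convention).

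For (iii) $\Rightarrow$ (ii), assume no $\comp_\iComp$ with $\iComp \in \compIndices$ is locally minimizing, so by \cref{lemma:minimizing_component_asympt_stable} none is asymptotically stable. Applying \cref{lem:not_asympt_stable} to each such $\comp_\iComp$ yields some $\jComp(\iComp) \in \gencompIndices$ with $\finaldquasipot_{\iComp, \jComp(\iComp)} = 0$ and $\comp_{\jComp(\iComp)}$ asymptotically stable; by hypothesis, $\jComp(\iComp) \in \verticesalt$. The graph consisting of the edges $\{\iComp \to \jComp(\iComp) : \iComp \in \compIndices\}$ is then a valid element of $\graphs(\verticesalt)$ of total cost $0$, which forces $\timequasipotupbaseof{\iComp}{\verticesalt} = 0$. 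Since this graph minus the edge $\iComp \to \jComp(\iComp)$ lies in $\graphs(\iComp \not\joins \verticesalt)$ and also has cost $0$, we also get $\timequasipotupexcisedfrom{\iComp}{\verticesalt} = 0$, and hence $\timequasipotuprelativeto{\iComp}{\verticesalt} = 0$.

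For (ii) $\Rightarrow$ (iii), I will argue by contrapositive: suppose some $\comp_\iComp$ with $\iComp \in \compIndices$ is locally minimizing, hence asymptotically stable by \cref{lemma:minimizing_component_asympt_stable}. Fix any optimal forest $\graph^* \in \graphs(\verticesalt)$ attaining $\timequasipotupbaseof{\iComp}{\verticesalt}$, and let $\iComp \to \iComp_1$ be the outgoing edge of $\iComp$ in $\graph^*$, with $\iComp_1 \neq \iComp$. Since $\comp_{\iComp_1}$ is a critical component disjoint from $\comp_\iComp$, any discrete path from $\comp_\iComp$ to $\comp_{\iComp_1}$ must exit $\basin(\comp_\iComp)$. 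Applying \cref{lemma:ground_states_loc_min} with $\marginalt = 0$ gives $\dquasipot(\comp_\iComp, \vecspace \setminus \U_\margin(\comp_\iComp)) > 0$ for $\margin$ small enough, and choosing $\margin$ also small enough so that $\comp_{\iComp_1} \cap \U_\margin(\comp_\iComp) = \emptyset$, we conclude (using $\finaldquasipotup \geq \fulldquasipot$ from \cref{lem:fulldquasipot-finaldquasipot}) that $\finaldquasipotup_{\iComp, \iComp_1} \geq \fulldquasipot_{\iComp, \iComp_1} > 0$. Now remove the edge $\iComp \to \iComp_1$ from $\graph^*$: the result is acyclic with $\nComps - 1$ edges, at most one outgoing per non-target node, and has no path from $\iComp$ to $\verticesalt$ since $\iComp$ no longer has an outgoing edge. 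It therefore belongs to $\graphs(\iComp \not\joins \verticesalt)$ and has cost $\timequasipotupbaseof{\iComp}{\verticesalt} - \finaldquasipotup_{\iComp, \iComp_1}$, which yields $\timequasipotuprelativeto{\iComp}{\verticesalt} \geq \finaldquasipotup_{\iComp, \iComp_1} > 0$, contradicting (ii).

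The main obstacle is the strict positivity of the first edge $\finaldquasipotup_{\iComp, \iComp_1}$ in the contrapositive step: it requires combining the basin-exit lower bound from \cref{lemma:ground_states_loc_min} (used with $\marginalt = 0$ to obtain a uniform positive cost for crossing the boundary of a small neighborhood of the component) with the comparison $\finaldquasipotup \geq \fulldquasipot$, so that the unconstrained quasi-potential lower bound transfers to the restricted quantity defining the forest edge weights.
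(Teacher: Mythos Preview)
Your proposal is correct and matches the paper's approach for (i)$\Leftrightarrow$(ii) and for the contrapositive $\neg$(iii)$\Rightarrow\neg$(ii). Two minor remarks on the latter: applying \cref{lemma:ground_states_loc_min} with $\marginalt=0$ is notationally awkward since $\U_0(\connectedcomp)=\emptyset$; the paper simply uses that lemma to get $\dquasipot(\comp_\iComp,\vecspace\setminus\U_\margin(\comp_\iComp))>0$ and deduces $\finaldquasipot_{\iComp,\jComp}>0$ for \emph{every} $\jComp\neq\iComp$ directly, without the detour through $\fulldquasipot$ and \cref{lem:fulldquasipot-finaldquasipot} (which is harmless but unnecessary).

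For (iii)$\Rightarrow$(ii), your route is genuinely different and in fact cleaner than the paper's. The paper first passes through \cref{lem:fulldquasipotd-time} to bound $\timequasipotuprelativeto{\iComp}{\verticesalt}$ by the analogous quantity computed with the full costs $\fulldquasipot$, and then argues that an optimal pruning can be completed to a forest by adding a single zero-cost edge to $\verticesalt$. You instead build the witnessing zero-cost forest $\{\iComp\to\jComp(\iComp):\iComp\in\compIndices\}$ and its zero-cost pruning directly from \cref{lem:not_asympt_stable}, which avoids \cref{lem:fulldquasipotd-time} entirely and sidesteps a small subtlety in the paper's argument (namely that in a general pruning the root of $\iComp$'s tree need not be $\iComp$ itself, so ``adding $\iComp\to\jComp$'' may require a bit more care). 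Your construction works because every edge lands directly in $\verticesalt$, so the resulting graph is trivially acyclic with the right out-degrees.
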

\begin{proof}
    \UsingNestedNamespace{prop:interpretation-logtime}{proof}
    Items $(i)$ and $(ii)$ are equivalent by definition of $\timequasipotuprelativeto{\point}{\verticesalt}$ (\cref{def:time_quasipot_from_point}).
    We thus focus on the equivalence of $(ii)$ and $(iii)$.

    First, let us consider the case where there is some $\iComp \in \compIndices$ such that $\comp_\iComp$ is locally minimizing.
    By \cref{lemma:ground_states_loc_min}, there exists $\margin > 0$ small enough such that
    \begin{equation}
        \dquasipot(\comp_\iComp, \vecspace \setminus \U_\margin(\comp_\iComp)) > 0\,.
    \end{equation}
    As a consequence, for any $\jComp \neq \iComp$, $\finaldquasipot_{\iComp, \jComp} > 0$.

    Consider $\graph \in \graphs(\vertices)$ such that
    \begin{equation}
    \timequasipotupbaseof{\iComp}{\verticesalt} = \sum_{\vertexC \to \vertexD \in \graph} \finaldquasipot_{\vertexC, \vertexD}\,.
        \LocalLabel{eq:timequasipotupbaseof}
    \end{equation}
    Consider the graph $\graphalt$ obtained by removing the edge in graph that exits $\iComp$.
    This edge must have positive cost and therefore
    \begin{align}
        \sum_{\vertexC \to \vertexD \in \graph} \finaldquasipot_{\vertexC, \vertexD}
        > \sum_{\vertexC \to \vertexD \in \graphalt} \finaldquasipot_{\vertexC, \vertexD} \geq \timequasipotupexcisedfrom{\iComp}{\verticesalt}\,.
        \LocalLabel{eq:ineq-timequasipotupbaseof}
    \end{align}
    Combining \cref{\LocalName{eq:timequasipotupbaseof}} and \cref{\LocalName{eq:ineq-timequasipotupbaseof}}, we obtain that $\timequasipotuprelativeto{\iComp}{\verticesalt} > 0$.

    Let us now turn to the case where there is no $\iComp \in \compIndices$ such that $\comp_\iComp$ is locally minimizing.
    Take any $\iComp \in \compIndices$. We bound $\timequasipotuprelativeto{\iComp}{\verticesalt}$ with \cref{lem:fulldquasipotd-time}:
    \begin{equation}
        \timequasipotuprelativeto{\iComp}{\verticesalt}
        \leq 
        \min \setdef*{\sum_{\vertexC \to \vertexD \in \graph} \fulldquasipot_{\vertexC,\vertexD}}{\graph \in \graphs(\verticesalt)}
        -
        \min \setdef*{ \sum_{\vertexC \to \vertexD \in \graph} \fulldquasipot_{\vertexC,\vertexD}}{\graph \in \graphs(\vertexA \not\joins \verticesalt)}\,.
        \LocalLabel{eq:timequasipotuprelativeto-second}
    \end{equation}
    Take any $\graph \in \graphs(\vertexA \not\joins \verticesalt)$. By \cref{lem:not_asympt_stable}, there exists $\jComp \in \vertices$ such that $\finaldquasipot_{\iComp, \jComp} = 0$ and $\comp_\jComp$ is asymptotically stable. By assumption, $\jComp$ must thus belong to $\verticesalt$.
    Consider $\graphalt \in \graphs(\verticesalt)$ obtained by adding the edge $\iComp \to \jComp$ to $\graph$.
    \Cref{\LocalName{eq:timequasipotuprelativeto-second}} then yields that $\timequasipotuprelativeto{\iComp}{\verticesalt} = 0$.
\end{proof}

\section{Potential-based Bounds}
\label{app:interpretable}

\subsection{Noise assumptions}
\label{subsec:noise-assumptions}

\Cref{asm:noise-potential} introduces key requirements for the noise behavior through bounds on $\hamiltalt$ and $\lagrangianalt$ around the gradient $\grad \obj(\state)$. Intuitively, the bound on $\hamiltalt$ controls how large the noise can be (upper bound), while the bound on $\lagrangianalt$ ensures the noise maintains sufficient variance in all directions (lower bound) around the gradient. We then present two ways to satisfy these requirements:

\paragraph{Gaussian noise.} The first approach, detailed in \Cref{lem:gaussian-noise-potential}, considers the case when the noise follows a truncated Gaussian distribution. This is a standard Gaussian with variance $\variance(\obj(\state))$, but restricted to stay within a ball of radius $\Radius(\obj(\state))$.

\paragraph{Support and covariance conditions.} \Cref{lem:support-cond-implies-noise-potential} provides a more general approach with two main requirements:
\begin{itemize}
   \item A support condition ensuring the noise can be large enough in any direction around $\grad \obj(\state)$ with some positive probability
   \item A lower bound on the covariance matrix of the noise
\end{itemize}

This second approach is more flexible as it allows for non-Gaussian noise distributions. A key example is discrete noise distributions, where $\noise(\state, \sample)$ takes values in a finite set. The support condition then requires that some points in the support are sufficiently far from $\grad \obj(\state)$ in every direction.

\begin{assumption}
\label{asm:noise-potential}
For some $\noiseassumptionset \subset \vecspace$, there exist $\varup, \vardown : \R \to (0, +\infty)$ continuous functions such that, for all $\state \in \noiseassumptionset$, it holds that
\begin{align}
\forall \mom \in \vecspace \text{ s.t. } \norm{\mom} \leq  \frac{2\norm{\grad \obj(\state)}}{\varup(\obj(\state))}\,, \quad 
    &\hamiltalt(\state, \mom) \leq  \frac{ \varup(\obj(\state)) \norm{\mom}^2}{2}
    \notag\\
    \forall \vel \in \vecspace \text{ s.t. } \norm{\vel - \grad \obj(\state)} \leq  \norm{\grad \obj(\state)}\,, \quad
    &\lagrangianalt(\state, \vel) \leq  \frac{\norm{\vel}^2}{2\vardown(\obj(\state))}\,.
\end{align}
\end{assumption}

\revise{
\begin{remark}
    \Cref{asm:noise-potential} is a generalized and detailed version of \cref{asm:noise-lb}. Indeed, our blanket assumptions on the noise \cref{asm:noise-weak} already imply an upper-bound on $\hamiltalt(\state, \mom)$ with $\varup = \bdvar$ for any $\state, \mom \in \vecspace$. \Cref{asm:noise-potential} allows for refined results when a  tighter upper-bound on $\hamiltalt(\state, \mom)$ is available for a restricted set of $\state$ and $\mom$.
\end{remark}
}

Note that, in general, we will have $\varup \geq \vardown$, \revise{hence the notation}.

\begin{lemma}
\label{lem:support-cond-implies-noise-potential}
\UsingNamespace{lem:support-cond-implies-noise-potential}
Assume that, for some $\noiseassumptionset \subset \vecspace$ relatively compact, 
\begin{itemize}
    \item there exists $\constA > 0$ such that, 
        \begin{equation}
            \constB \defeq \inf \setdef*{
                \prob \parens*{
                    \inner*{
                        \noise(\state, \sample) - \grad \obj(\state)
                    }{
                        \uvec
                    } \geq \constA
                    + \norm{\grad \obj(\state)}
                }
            }{\state \in \noiseassumptionset, \uvec \in \sphere}
            > 0\,,
            \LocalLabel{eq:def-constAB}
        \end{equation}
    \item there exists $\varalt: \R \to (0, +\infty)$ continuous function such that, for all $\state \in \noiseassumptionset$, it holds that
        \begin{equation}
            \cov \noise(\state, \sample) \mgeq \varalt(\obj(\state)) \identity\,,
            \LocalLabel{eq:def-varalt}
        \end{equation}
\end{itemize}
Then, for any $\state \in \noiseassumptionset$, $\vel \in \vecspace$ such that $\norm{\vel - \grad \obj(\state)} \leq \norm{\grad \obj(\state)}$, it holds that
\begin{equation}
    \lagrangianalt(\state, \vel) \leq
\max \parens*{
    1, \frac{\log \constB^{-1}}{\constA \constC}
}
\frac{\norm{\vel}^2}{ \varalt(\obj(\state))}\,,
\end{equation}
where $\constC > 0$ is a constant small enough such that
\begin{equation}
    \forall \state \in \noiseassumptionset, \mom \in \clball(0, \constC), \quad
        \norm*{
            \Hess_{\mom} \hamiltalt(\state, \mom)
            - \cov \noise(\state, \sample)
        }
        \leq  \half \varalt(\obj(\state))\,,
        \LocalLabel{eq:def-constC}
\end{equation}
for the matrix norm associated with the Euclidean norm.
\end{lemma}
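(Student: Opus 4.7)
Set $M \defeq \max(1, \log \constB^{-1}/(\constA \constC))$. I would build two complementary lower bounds on the cumulant generating function $\hamiltalt(\state, \cdot)$ --- a quadratic one valid near the origin and a linear one valid elsewhere --- and then split the Legendre sup defining $\lagrangianalt(\state, \vel)$ at the threshold $\norm{\mom} = M \constC$. The threshold is calibrated so that under the hypothesis $\norm{\vel - \grad \obj(\state)} \leq \norm{\grad \obj(\state)}$, the outer regime contributes at most zero to the sup.

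\textbf{Step 1: quadratic bound, extended radially.} A second-order Taylor expansion around $\mom = 0$, using $\hamiltalt(\state, 0) = 0$ and $\grad_\mom \hamiltalt(\state, 0) = \ex[\noise(\state, \sample)] = 0$ (from \cref{asm:noise-zero-weak}), combined with the Hessian hypothesis of the lemma and the covariance lower bound $\cov \noise(\state, \sample) \mgeq \varalt(\obj(\state)) \identity$, gives $\hamiltalt(\state, \mom) \geq \tfrac{1}{4} \varalt \norm{\mom}^2$ on $\clball(0, \constC)$. Because $\hamiltalt(\state, \cdot)$ is convex with value $0$ at the origin (\cref{lem:basic_prop_h_l}), the map $t \mapsto \hamiltalt(\state, t\uvec)/t$ is nondecreasing on $(0, \infty)$ for every unit $\uvec$, so $\hamiltalt(\state, r\uvec) \geq (r/\constC) \cdot \tfrac{1}{4}\varalt \constC^2 = \tfrac{1}{4}\varalt \constC r$ for $r \geq \constC$. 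Together these two bounds give $\hamiltalt(\state, \mom) \geq \varalt \norm{\mom}^2/(4M)$ on the whole ball $\clball(0, M \constC)$. Young's inequality $\norm{\vel}\norm{\mom} \leq M\norm{\vel}^2/\varalt + \varalt \norm{\mom}^2/(4M)$ then closes the inner case: $\inner{\vel, \mom} - \hamiltalt(\state, \mom) \leq M \norm{\vel}^2/\varalt$.

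\textbf{Step 2: linear bound from the support hypothesis.} A Chernoff-type argument applied to the support condition of the lemma gives, for $\mom = t\uvec$ with $t > 0$ and $\uvec$ unit, $\ex[\exp(t \inner{\uvec, \noise(\state, \sample)})] \geq \constB \exp\big(t(\constA + \norm{\grad \obj(\state)} + \inner{\grad \obj(\state), \uvec})\big)$, and hence
\begin{equation*}
\hamiltalt(\state, \mom) \geq (\constA + \norm{\grad \obj(\state)}) \norm{\mom} + \inner{\grad \obj(\state), \mom} - \log \constB^{-1}.
\end{equation*}
Subtracting this from $\inner{\vel, \mom}$, the $\grad \obj(\state)$ terms regroup as $\inner{\vel - \grad \obj(\state), \mom} - \norm{\grad \obj(\state)}\norm{\mom}$ which is $\leq 0$ by Cauchy--Schwarz and the hypothesis on $\vel$, leaving
\begin{equation*}
\inner{\vel, \mom} - \hamiltalt(\state, \mom) \leq -\constA \norm{\mom} + \log \constB^{-1}.
\end{equation*}
For $\norm{\mom} > M \constC$ the right-hand side is at most $-\constA M \constC + \log \constB^{-1} \leq 0$ by the very definition of $M$. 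Taking the supremum across both regimes yields the claim.

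\textbf{Main obstacle.} The delicate point is the joint calibration of the threshold $M \constC$ and of the inflated denominator in the inner quadratic bound: too small a threshold and the radial extension from Step~1 cannot reach it; too large a threshold and the linear bound of Step~2 ceases to be non-negative, so that the trivial $\hamiltalt \geq 0$ would become binding and the final inequality would fail. The choice $M = \max(1, \log \constB^{-1}/(\constA \constC))$ is the sharpest one reconciling the two regimes, and a quick side check shows $\constA \cdot M\constC \geq \log \constB^{-1}$, so the linear bound is indeed the binding one on $\norm{\mom} > M\constC$ and the argument goes through.
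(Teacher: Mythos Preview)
Your proposal is correct and follows essentially the same approach as the paper: the same Taylor/Hessian argument for the quadratic lower bound on $\hamiltalt$ near $0$, the same convexity-based radial extension (the paper phrases it via Jensen, you via monotonicity of $t\mapsto\hamiltalt(t\uvec)/t$), and the same Chernoff lower bound from the support condition combined with Cauchy--Schwarz and the hypothesis on $\vel$. The only cosmetic difference is the final packaging: the paper first localizes the $\argmax$ to the ball of radius $\log\constB^{-1}/\constA$ and then bounds $\min(\constC,\norm{\mom})\norm{\mom}$ by a quadratic there, whereas you split the supremum at the threshold $M\constC$ directly; the two are equivalent.
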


Note that $\constC$ always exists by the relative compactness of $\noiseassumptionset$ and the fact that
\begin{equation}
    \Hess_{\mom} \hamiltalt(\state, 0) = \cov \noise(\state, \sample)\,.
\end{equation}

\begin{proof}
\UsingNestedNamespace{lem:support-cond-implies-noise-potential}{proof}

The first part of the proof consists in showing that, for $\state \in \noiseassumptionset$, $\mom \in \vecspace$, it holds that
\begin{equation}
\hamiltalt(\state, \mom) \leq \frac{\varalt(\obj(\state)) \min(\constC, \norm{\mom})\norm{\mom}}{4}\,.
    \LocalLabel{eq:target-hamiltalt-bound}
\end{equation}

Let us first consider the case where $\norm{\mom} \leq \constC$. Since both $\hamiltalt(\state, 0) = 0$ and $\grad_\mom \hamiltalt(\state, 0) =\ \ex[\noise(\state, \sample)] = 0$, we have that
\begin{align}
    \hamiltalt(\state, \mom) 
    &= \half \int_0^1 \inner*{\mom}{\Hess_{\mom} \hamiltalt(\state, t \mom) \mom} d t
    \notag\\
    &\geq
    \half \parens*{
        \int_0^1 \inner*{\mom}{\cov \noise(\state, \sample) \mom} d t
        - \half \varalt(\obj(\state)) \norm{\mom}^2
    }
    \notag\\
    &\geq
    \frac{\varalt(\obj(\state)) \norm{\mom}^2}{4}\,,
    \LocalLabel{eq:hamiltalt-bound-constC}
\end{align}
where we successively used Cauchy-Schwarz inequality, \cref{\MasterName{eq:def-constC}} and \cref{\MasterName{eq:def-varalt}}.
\Cref{\LocalName{eq:hamiltalt-bound-constC}} readily yields \cref{\LocalName{eq:target-hamiltalt-bound}} when $\norm{\mom} \leq \constC$.

Let us now consider the case where $\norm{\mom} > \constC$. By concavity of the function $s \mapsto s^{\constC / \norm{\mom}} s$ on $\R_+$ and Jensen's inequality, we have that
\begin{align}
    \hamiltalt(\state, \mom) 
    &\geq
    \frac{\norm{\mom}}{\constC} \hamiltalt\left(\state, \constC \frac{\mom}{\norm{\mom}}\right)
    \notag\\
    &\geq
    \frac{\norm{\mom}}{\constC} \frac{\varalt(\obj(\state)) \norm{\constC \frac{\mom}{\norm{\mom}}}^2}{4}
    = \frac{\varalt(\obj(\state)) \constC\norm{\mom}}{4}\,,
\end{align}
where we used \cref{\LocalName{eq:hamiltalt-bound-constC}} in the last inequality. This concludes the proof of \cref{\LocalName{eq:target-hamiltalt-bound}}.

The second step of this proof now consists in showing that, for $\state \in \noiseassumptionset$, $\vel \in \vecspace$ such that $\norm{\vel - \grad \obj(\state)} \leq \norm{\grad \obj(\state)}$, it holds that
\begin{equation}
    \argmax_{\mom \in \vecspace} \setof*{\inner*{\mom}{\vel} - \hamiltalt(\state, \mom)} 
    =
    \argmin_{\mom \in \vecspace} \setof*{\hamiltalt(\state, \mom) - \inner*{\mom}{\vel}}
    \subset
    \clball \parens*{0, \frac{\log \constB^{-1}}{\constA}}\,.
    \LocalLabel{eq:target-mom-bound}
\end{equation}

For $\state \in \noiseassumptionset$, $\mom \in \vecspace$, we can lower-bound $\hamiltalt(\state, \mom)$ using \cref{\MasterName{eq:def-constAB}}:
\begin{align}
    \hamiltalt(\state, \mom) 
    &=
    \log \ex \bracks*{\exp \parens*{
        \inner*{\mom}{\noise(\state, \sample)}}
    }
    \notag\\
    &=\inner*{\mom}{\grad \obj(\state)}
    +
    \log \ex \bracks*{\exp \parens*{
        \inner*{\mom}{\noise(\state, \sample) - \grad \obj(\state)}}
    }
    \notag\\
    &\geq
    \inner*{\mom}{\grad \obj(\state)}
    +
    \log \ex \bracks*{e^{
    \norm{\mom} (\constA + \norm{\grad \obj(\state)})
    }
    \oneof*{
        \inner*{\noise(\state, \sample) - \grad \obj(\state)}{\frac{\mom}{\norm{\mom}}}
    \geq \constA + \norm{\grad \obj(\state)}}
    }
    \notag\\
    &\geq
    \inner*{\mom}{\grad \obj(\state)}
    +
    \norm{\mom} (\constA + \norm{\grad \obj(\state)})
    +
    \log \constB\,.
\end{align}
As a consequence, for $\vel \in \vecspace$ such that $\norm{\vel - \grad \obj(\state)} \leq \norm{\grad \obj(\state)}$, we obtain that
\begin{align}
    \hamiltalt(\state, \mom) - \inner*{\mom}{\vel}
    &\geq
    \inner*{\mom}{\grad \obj(\state) - \vel}
    +
    \norm{\mom} (\constA + \norm{\grad \obj(\state)})
    +
    \log \constB
    \notag\\
    &\geq
    \norm{\mom} \constA
    +
    \log \constB\,,
    \LocalLabel{eq:hamiltalt-mom-vel-bound}
\end{align}
where we used Cauchy-Schwarz inequality and the condition on $\vel$.

In particular, \cref{\LocalName{eq:hamiltalt-mom-vel-bound}} implies that $\mom \mapsto \hamiltalt(\state, \mom) - \inner*{\mom}{\vel}$ is coercive on $\vecspace$ and therefore $\argmin_{\mom \in \vecspace} \setof*{\hamiltalt(\state, \mom) - \inner*{\mom}{\vel}}$ is well-defined.

Remarking that $\hamiltalt(\state, 0) - \inner*{0}{\vel} = 0$, we can now conclude that $\argmin_{\mom \in \vecspace} \setof*{\hamiltalt(\state, \mom) - \inner*{\mom}{\vel}}$ must be included in
\begin{equation}
    \setdef*{\mom \in \vecspace}{\norm{\mom} \constA + \log \constB \leq 0}
    = \clball \parens*{0, \frac{\log \constB^{-1}}{\constA}}\,,
\end{equation}
which completes the proof of \cref{\LocalName{eq:target-mom-bound}}.

To conclude the proof of this lemma, we now use \cref{\LocalName{eq:target-mom-bound}} to obtain that, for $\state \in \noiseassumptionset$, $\vel \in \vecspace$ such that $\norm{\vel - \grad \obj(\state)} \leq \norm{\grad \obj(\state)}$, 
\begin{align}
    \lagrangianalt(\state, \vel) 
    &=
    \sup_{\mom \in \vecspace} \setof*{\inner*{\mom}{\vel} - \hamiltalt(\state, \mom)}
    \notag\\
    &=
    \sup \setdef*{\inner*{\mom}{\vel} - \hamiltalt(\state, \mom)}{\mom \in \clball \parens*{0, \frac{\log \constB^{-1}}{\constA}}}
    \notag\\
    &\leq
    \sup \setdef*{\inner*{\mom}{\vel} - \frac{\varalt(\obj(\state)) \min(\constC, \norm{\mom})\norm{\mom}}{4}}{\mom \in \clball \parens*{0, \frac{\log \constB^{-1}}{\constA}}}\,,
    \LocalLabel{eq:lagrangianalt-bound}
\end{align}
where we used \cref{\LocalName{eq:target-hamiltalt-bound}} in the last inequality.

Now, we note that, for any $\mom \in \clball \parens*{0, \frac{\log \constB^{-1}}{\constA}}$, it holds that
\begin{align}
    \frac{\varalt(\obj(\state)) \min(\constC, \norm{\mom})\norm{\mom}}{4}
    &\geq
    \begin{cases}
        \frac{\varalt(\obj(\state)) \norm{\mom}^2}{4} & \text{if } \norm{\mom} \leq \constC\\
        \frac{\constA}{\log \constB^{-1}}
        \frac{\varalt(\obj(\state)) \constC \norm{\mom}^2}{4} & \text{if } \norm{\mom} > \constC\\
    \end{cases}
    \notag\\
    &\geq
    \min \parens*{
        1, \frac{\constA \constC}{\log \constB^{-1}}
    }
    \frac{\varalt(\obj(\state)) \norm{\mom}^2}{4}\,.
    \LocalLabel{eq:varalt-mom-bound}
\end{align}

Plugging \cref{\LocalName{eq:varalt-mom-bound}} into \cref{\LocalName{eq:lagrangianalt-bound}}, we obtain that
\begin{align}
    \lagrangianalt(\state, \vel)
    &\leq
    \sup \setdef*{\inner*{\mom}{\vel} - 
  \min \parens*{
        1, \frac{\constA \constC}{\log \constB^{-1}}
    }
\frac{\varalt(\obj(\state)) \norm{\mom}^2}{4}}{\mom \in \clball \parens*{0, \frac{\log \constB^{-1}}{\constA}}}\notag\\
&\leq 
\sup \setdef*{\inner*{\mom}{\vel} - 
  \min \parens*{
        1, \frac{\constA \constC}{\log \constB^{-1}}
    }
\frac{\varalt(\obj(\state)) \norm{\mom}^2}{4}}{\mom \in \vecspace}
\notag\\
&=
\max \parens*{
    1, \frac{\log \constB^{-1}}{\constA \constC}
}
\frac{\norm{\vel}^2}{ \varalt(\obj(\state))}\,,
\end{align}
which concludes the proof of this lemma.
\end{proof}

The following lemma is an immediate consequence of \cref{lem:truncated_gaussian:lagrangian}.

\begin{lemma}
\label{lem:gaussian-noise-potential}
Consider $\precs > 0$.
Assume that $\noise(\state, \sample)$ follows a Gaussian distribution $\gaussian(0, \variance(\obj(\state)) \identity)$ conditioned on being in $\clball(0, \Radius(\obj(\state))$ for all $\state \in \vecspace$ and some continuous function $\variance, \Radius: \R \to (0, +\infty)$. Assume that $8\norm{\grad \obj(\state)} \leq  \Radius(\obj(\state))$ for all $\state \in \vecspace$ and that $\inf_{\state \in \vecspace} \frac{\Radius(\obj(\state))}{\sdev(\obj(\state))} \geq \bigoh \parens*{\log 1/\precs}$.
Then \cref{asm:noise-potential} holds with $\noiseassumptionset = \vecspace$, $\varup(\obj(\state)) = (1+\precs)\variance(\obj(\state))$ and $\vardown(\obj(\state)) = (1 -\precs)\variance(\obj(\state))$ for all $\state \in \vecspace$.
    
\end{lemma}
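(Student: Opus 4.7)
The plan is to verify the two bounds of \cref{asm:noise-potential} separately, writing $Z$ for the truncated Gaussian with variance $\sigma^2 = \variance(\obj(\state))$ restricted to the ball of radius $R = \Radius(\obj(\state))$, and $Y \sim \gaussian(0, \sigma^2 \identity)$ for its untruncated counterpart. All bounds are pointwise in $\state$, so the continuity and positivity of $\varup, \vardown$ are inherited from those of $\variance$. For the Hamiltonian upper bound, the first step is a completion-of-squares computation that gives
\begin{equation*}
\hamiltalt(\state, \mom) = \log \ex\bracks*{e^{\inner{\mom}{Z}}} = \frac{\sigma^2 \norm{\mom}^2}{2} + \log \frac{\prob(\norm{Y + \sigma^2 \mom} \leq R)}{\prob(\norm{Y} \leq R)}.
\end{equation*}
By Anderson's inequality (the Gaussian probability of a centered symmetric convex body is maximized when the Gaussian's mean sits at the origin), the log-ratio is nonpositive. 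This yields $\hamiltalt(\state, \mom) \leq \sigma^2 \norm{\mom}^2 / 2 \leq \varup \norm{\mom}^2 / 2$ globally in $\mom$, which is even stronger than the assumption demands.

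For the Lagrangian bound, I would reduce the statement to \cref{lem:truncated_gaussian:lagrangian} by verifying its two inputs. Input \textup{(i):} a near-complete variance lower bound $\cov(Z) \mgeq (1 - \precs) \sigma^2 \identity = \vardown \identity$. This is classical for spherically truncated Gaussians: one writes $\cov(Z) = \sigma^2 \identity - \text{(correction)}$, where the correction has operator norm bounded by a constant multiple of $\prob(\norm{Y} > R) = \bigoh(\exp(-c R^2 / \sigma^2))$ by standard Gaussian tail estimates, and the hypothesis $R/\sigma \gtrsim \log(1/\precs)$ makes this correction at most $\precs \sigma^2$ once the constant is chosen large enough. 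Input \textup{(ii):} a range condition ensuring the Legendre optimizer lies inside the region where $\hamiltalt(\state, \cdot)$ is well-approximated by the pure-Gaussian CGF. The hypothesis $\norm{\vel - \grad \obj(\state)} \leq \norm{\grad \obj(\state)}$, combined with $8 \norm{\grad \obj(\state)} \leq R$, forces $\norm{\vel} \leq R/4$, which keeps the optimizer $\mom^*$ defined by $\grad_\mom \hamiltalt(\state, \mom^*) = \vel$ at $\norm{\sigma^2 \mom^*} \leq R/2$ modulo an exponentially small truncation correction.

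The main technical obstacle is propagating the Hessian control from a neighborhood of $\mom = 0$ out to $\mom = \mom^*$. Indeed, $\Hess_\mom \hamiltalt(\state, \mom)$ equals the conditional covariance of $Y + \sigma^2 \mom$ given $\norm{Y + \sigma^2 \mom} \leq R$, and this covariance drifts as $\mom$ moves. The saving grace is that for $\norm{\sigma^2 \mom} \leq R/2$ the conditioning event has probability $1 - \bigoh(\exp(-c R^2 / \sigma^2))$, so the lower bound $\Hess_\mom \hamiltalt(\state, \mom) \mgeq \vardown \identity$ propagates uniformly in this range. Integrating along the segment $[0, \mom^*]$ gives $\hamiltalt(\state, \mom^*) \geq \vardown \norm{\mom^*}^2 / 2$, which via the Fenchel identity $\lagrangianalt(\state, \vel) = \inner{\mom^*}{\vel} - \hamiltalt(\state, \mom^*)$ yields $\lagrangianalt(\state, \vel) \leq \norm{\vel}^2 / (2 \vardown)$ as required. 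Packaging this Hessian-propagation step is precisely the content of \cref{lem:truncated_gaussian:lagrangian}, so once \textup{(i)} and \textup{(ii)} are verified, the conclusion is immediate.
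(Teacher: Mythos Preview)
Your proposal is correct in substance but substantially more work than what the paper does. The paper's proof is a one-line invocation of \cref{lem:truncated_gaussian:lagrangian}: that lemma already delivers \emph{both} the Hamiltonian upper bound and the Lagrangian upper bound for the truncated Gaussian, with error factor $1 \pm \errorterm(\variance,\Radius)$, and the only thing to check is that the hypotheses $\norm{\mom} \leq \Radius/(2\variance)$ and $\norm{\vel} \leq \Radius/4$ are implied by the range conditions in \cref{asm:noise-potential} together with $8\norm{\grad \obj(\state)} \leq \Radius$, and that $\Radius/\sdev$ large forces $\errorterm \leq \precs/2$.

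Your Anderson-inequality argument for the Hamiltonian is valid and slightly sharper (it gives the bound for all $\mom$, not just a ball), but it is redundant since the cited lemma already covers this. More importantly, what you call the ``two inputs'' to \cref{lem:truncated_gaussian:lagrangian} are not its inputs at all: that lemma has no covariance hypothesis, only the size condition on $\Radius/\sdev$ and the range restrictions on $\mom,\vel$. The covariance lower bound and the Hessian-propagation argument you describe are the \emph{internal} mechanics of how one proves such a lemma, not prerequisites for applying it. So your write-up effectively re-derives the Lagrangian half of \cref{lem:truncated_gaussian:lagrangian} from scratch rather than citing it --- correct, but circuitous.
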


\subsection{Potentials and path reversal}

Let us now define the potentials associated to the variance functions $\varup$ and $\vardown$ introduced in \cref{asm:noise-potential}.
\begin{definition}
\label{def:potentials}

Given $\varup, \vardown: \R \to (0, +\infty)$ continuous functions, we define the potentials $\potup, \potdown: \R \to \R$ as
\begin{align}
    \potup(\state) &\defeq 2 \primvarup(\obj(\state)) \quad \text{ where $\primvarup : \R \to \R$ is a primitive of $1/\vardown$}\\
    \potdown(\state) &\defeq 2 \primvardown(\obj(\state)) \quad \text{ where $\primvardown : \R \to \R$ is a primitive of $1/\varup$}\,.
\end{align}
\end{definition}

We begin by adapting \ourpaper[Lem~.E.1] to the current setting.

\begin{lemma}
	\label{lem:pth_resc}
	Consider $\pth \in \contfuncs([0, \horizon])$.
	Then, there exists $\widetilde \pth \in \contfuncs([0, \horizonalt])$ a reparametrization of $\pth$ such that, for any $\time \in [0, \horizonalt]$,
	\begin{equation}
		\norm{\dot{\widetilde \pth}_\timealt} = \norm{\grad \obj(\widetilde \pth_\timealt)}\,.
	\end{equation}
    and, under \cref{asm:noise-potential},
	\begin{equation}
		\action_\horizon(\pth) \geq \int_{0}^{\horizonalt} \frac{\norm{\dot{\widetilde \pth}_\timealt + \grad \obj(\widetilde \pth_\timealt)}^2}{2 \varup(\obj(\widetilde \pth_\timealt))} \dd \timealt\,.
	\end{equation}
\end{lemma}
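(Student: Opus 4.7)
My plan is to prove both assertions by constructing an explicit time change $\widetilde\pth$ of $\pth$ whose speed matches $\norm{\grad\obj(\widetilde\pth)}$, establishing a tailored pointwise lower bound on $\lagrangian$ that uses \cref{asm:noise-potential}, and then exploiting the reparametrization invariance of the resulting functional to transfer this bound from $\pth$ onto $\widetilde\pth$. For the construction, assuming first that $\norm{\grad\obj(\pth_t)} > 0$ on $[0, \horizon]$, I set $s(t) \defeq \int_0^t \norm{\dot\pth_u}/\norm{\grad\obj(\pth_u)} \dd u$, $\horizonalt \defeq s(\horizon)$ and $\widetilde\pth_\timealt \defeq \pth_{\phi(\timealt)}$ with $\phi \defeq s^{-1}$. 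The chain rule then gives $\phi'(\timealt) = \norm{\grad\obj(\widetilde\pth_\timealt)}/\norm{\dot\pth_{\phi(\timealt)}}$, hence $\norm{\dot{\widetilde\pth}_\timealt} = \phi'(\timealt) \norm{\dot\pth_{\phi(\timealt)}} = \norm{\grad\obj(\widetilde\pth_\timealt)}$, which is the first assertion. Subintervals on which $\norm{\grad\obj(\pth)}$ vanishes are excised and the surviving pieces concatenated: on such intervals $\widetilde\pth$ is constant and the right-hand integrand is zero, while residual measure-zero pathologies (such as trajectories traversing critical points at nonzero speed) are handled by a routine approximation argument.

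The analytical crux is the pointwise estimate
\begin{equation*}
\lagrangian(\point, \vel) \geq \frac{\norm{\vel}\,\norm{\grad\obj(\point)} + \inner{\vel}{\grad\obj(\point)}}{\varup(\obj(\point))} \qquad \text{for all } \point, \vel \in \vecspace.
\end{equation*}
I would derive this from the Legendre duality $\lagrangian(\point, \vel) = \sup_\mom \{\inner{\mom}{\vel + \grad\obj(\point)} - \hamiltalt(\point, \mom)\}$ by restricting the test vector to $\mom = r\,(\vel + \grad\obj(\point))/\norm{\vel + \grad\obj(\point)}$ with $r \in [0, 2\norm{\grad\obj(\point)}/\varup(\obj(\point))]$. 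Since $\norm{\mom} \leq 2\norm{\grad\obj(\point)}/\varup(\obj(\point))$, \cref{asm:noise-potential} supplies $\hamiltalt(\point, \mom) \leq \varup(\obj(\point))\,r^2/2$, reducing the bound to $\sup_r \{r\,\norm{\vel + \grad\obj(\point)} - \varup(\obj(\point))\,r^2/2\}$ over the admissible interval. A short case analysis depending on whether the unconstrained maximizer $r = \norm{\vel + \grad\obj(\point)}/\varup(\obj(\point))$ is feasible, combined with AM-GM in the feasible regime and the triangle inequality in the boundary regime, yields the stated inequality.

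Integrating the pointwise bound over $[0, \horizon]$ and changing variables via $t = \phi(\timealt)$ produces
\begin{equation*}
\action_\horizon(\pth) \geq \int_0^\horizonalt \frac{\norm{\dot{\widetilde\pth}_\timealt}\,\norm{\grad\obj(\widetilde\pth_\timealt)} + \inner{\dot{\widetilde\pth}_\timealt}{\grad\obj(\widetilde\pth_\timealt)}}{\varup(\obj(\widetilde\pth_\timealt))} \dd\timealt,
\end{equation*}
where both summands on the original integrand are reparametrization invariant: the first is the arc-length element weighted by $\norm{\grad\obj}/\varup$, while the second equals $\ddt \primvardown(\obj(\pth_t))$ for $\primvardown$ a primitive of $1/\varup$ (as in \cref{def:potentials}), so it integrates to a boundary term independent of the parametrization. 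Finally, the defining identity $\norm{\dot{\widetilde\pth}_\timealt} = \norm{\grad\obj(\widetilde\pth_\timealt)}$ together with the elementary fact $\norm{a}\norm{b} + \inner{a}{b} = \tfrac{1}{2}\norm{a + b}^2$ whenever $\norm{a} = \norm{b}$ collapses the integrand into $\norm{\dot{\widetilde\pth}_\timealt + \grad\obj(\widetilde\pth_\timealt)}^2/(2\varup(\obj(\widetilde\pth_\timealt)))$, yielding the claim. The main obstacle is the Step 1 construction near critical points of $\pth$; the Step 2 case analysis is technically delicate but amounts to an elementary algebraic exercise.
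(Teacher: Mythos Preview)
Your proof is correct and, at the level of the key inequality, equivalent to the paper's. Your pointwise bound
\[
\lagrangian(\point,\vel)\;\geq\;\frac{\norm{\vel}\,\norm{\grad\obj(\point)}+\inner{\vel}{\grad\obj(\point)}}{\varup(\obj(\point))}
\]
is precisely the content of \cref{lemma:almost_sq_norm_opt_rescale}: with $\lambda=\norm{\vel}/\norm{\velalt}$ and $\velalt=\grad\obj(\point)$, one computes $\lambda\,\fn(\vel/\lambda+\velalt)=(\norm{\vel}\,\norm{\velalt}+\inner{\vel}{\velalt})/\varup$, so the lemma's conclusion $\fn(\vel+\velalt)\geq\lambda\,\fn(\vel/\lambda+\velalt)$ is exactly your inequality. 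The paper reparametrizes first, carries the Jacobian $\dot\time(\timealt)$ through the Lagrangian, and then invokes the cited lemma to absorb the scaling; you instead establish the reparametrization-invariant bound directly (arc-length term plus exact derivative $\ddt\,\primvardown(\obj(\pth_t))$) and only then change variables. Your route is slightly more self-contained (no external lemma needed) and makes the invariance structure explicit, but the algebraic core and the handling of the reparametrization near critical points (which you both defer, you by a sketch, the paper by citing \citet[Chap.~4, Lem.~3.1]{FW98}) are the same.
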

\begin{proof}
	\def\resc{\widetilde{\pth}}
	By the proof \citet[Chap.~4, Lem.~3.1]{FW98}, there exists $\time(\timealt)$ change of time such that,  with $\resc_\timealt = \pth_{\time(\timealt)}$, $\norm{\dot \resc_\timealt} = \norm{\grad \obj(\resc_\timealt)}$.

	We have that
	\begin{equation}
		\action_\horizon(\pth) = \int_{0}^{\time^{-1}(\horizon)} \dot \time(\timealt) \lagrangian(\resc_\timealt, (\dot \time(\timealt))^{-1}\dot \resc_\timealt) \dd \timealt\,,
	\end{equation}
	so it suffices to bound $\lagrangian(\resc_\timealt, \dot \resc_\timealt)$ from below: by definition, we have
	\begin{align}
		\lagrangian(\resc_\timealt, (\dot \time(\timealt))^{-1}\dot \resc_\timealt)
		 & \geq
		\sup \setdef*{
			\inner{\mom, (\dot \time(\timealt))^{-1}\dot \resc_{\timealt} + \grad \obj(\resc_\timealt)}
			- \hamiltalt(\resc_\timealt, \mom)
		}{
            \norm{\mom}
            \leq  \frac{2\norm{\grad \obj(\resc_\timealt)}}{\varup(\obj(\resc_\timealt))}
		}
		\notag\\
		 & \geq
		\sup \setdef*{
			\inner{\mom, (\dot \time(\timealt))^{-1}\dot \resc_{\timealt} + \grad \obj(\resc_\timealt)}
			- \half[\varup(\obj(\resc_\timealt))] \norm{\mom}^2
		}{
		 \norm{\mom}
     \leq  \frac{2\norm{\grad \obj(\resc_\timealt)}}{\varup(\obj(\resc_\timealt))}
		}\,,
	\end{align}
    by \cref{asm:noise-potential}.
	Applying \cref{lemma:almost_sq_norm_opt_rescale} with {$\vel \gets (\dot \time(\timealt))^{-1}\dot \resc_{\time}$, $\velalt \gets \grad \obj(\resc_\timealt)$ and $\lambda \gets (\dot \time(\timealt))^{-1}$},
	now exactly yields, for almost all $\timealt$,
	\begin{align}
		\lagrangian(\resc_\timealt, (\dot \time(\timealt))^{-1}\dot \resc_\timealt)
		 & \geq
		{(\dot \time(\timealt))^{-1}}
		\sup \setdef*{
			\inner{\mom, \dot \resc_{\timealt} + \grad \obj(\resc_\timealt)}
		- \half[\varup(\obj(\resc_\timealt))] \norm{\mom}^2
		}{
	 \norm{\mom}
 \leq  \frac{2\norm{\grad \obj(\resc_\timealt)}}{\varup(\obj(\resc_\timealt))}
		}
		\notag\\
		 & = {(\dot \time(\timealt))^{-1}}
		\frac{\norm{\dot \resc_\timealt + \grad \obj(\resc_\timealt)}^2}{2 \varup(\obj(\resc_\timealt))}\,,
	\end{align}
	since
    $\mom \mapsto \inner{\mom, \dot \resc_{\timealt} + \grad \obj(\resc_\timealt)}
    - \half[\varup(\obj(\resc_\timealt))] \norm{\mom}^2$ reaches it maximum at $\sol[\mom] = \frac{\dot \resc_{\timealt} + \grad \obj(\resc_\timealt)}{\varup(\obj(\resc_\timealt))}$ whose norm satisfies
    $\norm{\sol[\mom]} \leq \frac{2\norm{\grad \obj(\resc_\timealt)}}{\varup(\obj(\resc_\timealt))}$.
\end{proof}

An important consequence of \cref{asm:noise-potential} is that the action cost of going from $\point$ to $\pointalt$ can be related to the action cost of going from $\pointalt$ to $\point$. This is formalized in \cref{lem:reverse-path}.

\begin{lemma}
\label{lem:reverse-path}
For any $\horizon > 0$, $\point, \pointalt \in \vecspace$, $\curve \in \contfuncs([0, \horizon])$ such that $\curve_0 = \point$ and $\curve_\horizon = \pointalt$, under \cref{asm:noise-potential}, if $\curve$ is contained in $\noiseassumptionset$, it holds that,
\begin{equation}
    \action_{\horizon}(\curve)
    \geq
    \inf_{\state \in \noiseassumptionset} \frac{\vardown(\obj(\state)}{\varup(\obj(\state))} \times \parens*{
        \dquasipot(\point, \pointalt) + \potup(\pointalt) - \potup(\point)
    }\,,
\end{equation}
\end{lemma}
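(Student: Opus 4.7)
My plan is to combine the rescaling lemma (\cref{lem:pth_resc}) with a time reversal, bridging the lower bound on $\lagrangian$ (involving $\varup$, via \cref{lem:csq_subgaussian}) and the upper bound on $\lagrangianalt$ from \cref{asm:noise-potential} (involving $\vardown$). First, I would apply \cref{lem:pth_resc} to replace $\curve$ by a reparametrization $\widetilde{\curve} \in \contfuncs([0, \horizonalt])$ with the same endpoints, staying in $\noiseassumptionset$, satisfying $\norm{\dot{\widetilde{\curve}}_\timealt} = \norm{\grad \obj(\widetilde{\curve}_\timealt)}$ almost everywhere, together with
\begin{equation}
\action_{\horizon}(\curve) \geq \int_{0}^{\horizonalt} \frac{\norm{\dot{\widetilde{\curve}}_\timealt + \grad \obj(\widetilde{\curve}_\timealt)}^2}{2 \varup(\obj(\widetilde{\curve}_\timealt))} \dd \timealt.
\end{equation}
Factoring $1/\varup = (\vardown/\varup)\cdot(1/\vardown)$ and using nonnegativity of the integrand together with $\widetilde{\curve} \subset \noiseassumptionset$, I would extract the uniform factor $\inf_{\state \in \noiseassumptionset} \vardown(\obj(\state))/\varup(\obj(\state))$, reducing the task to a lower bound on the integral $\int_0^\horizonalt \norm{\dot{\widetilde{\curve}}_\timealt + \grad \obj(\widetilde{\curve}_\timealt)}^2 / (2 \vardown(\obj(\widetilde{\curve}_\timealt))) \dd \timealt$.

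Second, the elementary identity $\norm{\vel + \grad \obj}^2 - \norm{\vel - \grad \obj}^2 = 4\inner{\vel}{\grad \obj}$, together with $\grad \potup(\state) = 2 \grad \obj(\state)/\vardown(\obj(\state))$ (from \cref{def:potentials}), integrates the cross term into a potential difference:
\begin{equation}
\int_0^\horizonalt \frac{\norm{\dot{\widetilde{\curve}}_\timealt + \grad \obj(\widetilde{\curve}_\timealt)}^2}{2 \vardown(\obj(\widetilde{\curve}_\timealt))} \dd \timealt = \int_0^\horizonalt \frac{\norm{\dot{\widetilde{\curve}}_\timealt - \grad \obj(\widetilde{\curve}_\timealt)}^2}{2 \vardown(\obj(\widetilde{\curve}_\timealt))} \dd \timealt + \potup(\pointalt) - \potup(\point).
\end{equation}
I would then time-reverse by setting $\pthalt_s \defeq \widetilde{\curve}_{\horizonalt - s}$, yielding a curve from $\pointalt$ to $\point$ in $\noiseassumptionset$ that inherits the unit-gradient condition $\norm{\dot{\pthalt}_s} = \norm{\grad \obj(\pthalt_s)}$; a change of variable transforms the first integral on the right into $\int_0^\horizonalt \norm{\dot{\pthalt}_s + \grad \obj(\pthalt_s)}^2 / (2 \vardown(\obj(\pthalt_s))) \dd s$.

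The unit-gradient condition now activates the upper bound of \cref{asm:noise-potential}: applied with $\vel = \dot{\pthalt}_s + \grad \obj(\pthalt_s)$, the constraint $\norm{\vel - \grad \obj(\pthalt_s)} = \norm{\grad \obj(\pthalt_s)}$ holds with equality, so, via the identity $\lagrangian(\state, \vel) = \lagrangianalt(\state, \vel + \grad \obj(\state))$,
\begin{equation}
\frac{\norm{\dot{\pthalt}_s + \grad \obj(\pthalt_s)}^2}{2 \vardown(\obj(\pthalt_s))} \geq \lagrangian(\pthalt_s, \dot{\pthalt}_s).
\end{equation}
Integrating yields $\action_\horizonalt(\pthalt) \geq \dquasipot(\pointalt, \point)$ since $\pthalt$ runs from $\pointalt$ to $\point$, and assembling the three displayed estimates delivers the claimed bound (with the reversed-direction quasi-potential matching the one appearing in the statement).

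The hard part is ensuring that the upper bound of \cref{asm:noise-potential} can be activated on the time-reversed path: a generic curve need not obey $\norm{\dot{\curve}} \leq \norm{\grad \obj(\curve)}$, but \cref{lem:pth_resc} is tailored to produce a reparametrization on which this holds with equality, so the time-reversed curve sits exactly on the boundary of the ball where the upper bound becomes usable. A secondary subtlety is the mismatch between the $\varup$ appearing in the Lagrangian lower bound and the $\vardown$ used to define $\potup$; the factorization $1/\varup = (\vardown/\varup)\cdot(1/\vardown)$, combined with nonnegativity of the quadratic integrand and the infimum over $\noiseassumptionset$, is what bridges the two and produces the prefactor $\inf_{\state \in \noiseassumptionset} \vardown(\obj(\state))/\varup(\obj(\state))$ in the stated inequality.
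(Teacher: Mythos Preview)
Your proposal is correct and follows essentially the same route as the paper: apply \cref{lem:pth_resc}, extract the ratio $\vardown/\varup$ as a uniform factor, expand the quadratic via the polarization identity so the cross term integrates to $\potup(\pointalt)-\potup(\point)$, time-reverse, and use the upper bound of \cref{asm:noise-potential} (valid because the reparametrized curve satisfies $\norm{\dot\pthalt}=\norm{\grad\obj(\pthalt)}$) to dominate the remaining integral by $\action_{\horizonalt}(\pthalt)$.

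One remark on your parenthetical about ``the reversed-direction quasi-potential matching the one appearing in the statement'': your derivation yields $\dquasipot(\pointalt,\point)$ (since $\pthalt$ runs from $\pointalt$ to $\point$), whereas the statement as printed has $\dquasipot(\point,\pointalt)$. Your direction is the correct one---the paper's own proof ends with the same (mis)printed $\dquasipot(\point,\pointalt)$, but the downstream application in \cref{lem:reverse-path-graph} takes the infimum over curves from $\gencomp_\iComp$ to $\gencomp_\jComp$ and obtains $\fulldquasipot_{\iComp,\jComp}\geq(1-\varerror)(\fulldquasipot_{\jComp,\iComp}+\potup_\jComp-\potup_\iComp)$, i.e., exactly the reversed quasi-potential you derived. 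So the lemma statement carries a typo, and your argument delivers the version that is actually used.
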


\begin{proof}
\UsingNestedNamespace{lem:reverse-path}{proof}
Let us first invoke \cref{lem:pth_resc}: there exists $\widetilde \pth \in \contfuncs([0, \horizonalt])$ a reparametrization of $\pth$ such that, for any $\time \in [0, \horizonalt]$,
	\begin{equation}
		\norm{\dot{\widetilde \pth}_\timealt} = \norm{\grad \obj(\widetilde \pth_\timealt)}\,.
        \LocalLabel{eq:pth-resc-grad}
	\end{equation}
    and, 
	\begin{align}
        \action_\horizon(\pth) &\geq \int_{0}^{\horizonalt} \frac{\norm{\dot{\widetilde \pth}_\timealt + \grad \obj(\widetilde \pth_\timealt)}^2}{2 \varup(\obj(\widetilde \pth_\timealt))} \dd \timealt
        \notag\\
                               &\geq
                               \inf_{\state \in \noiseassumptionset} \frac{\vardown(\obj(\state)}{\varup(\obj(\state))} \times \parens*{
                                   \int_{0}^{\horizonalt} \frac{\norm{\dot{\widetilde \pth}_\timealt + \grad \obj(\widetilde \pth_\timealt)}^2}{2 \vardown(\obj(\widetilde \pth_\timealt))} \dd \timealt
        }\,.
        \LocalLabel{eq:csq-prev-lemma}
	\end{align}
    It now suffices to lower-bound the integral on the \ac{RHS} of \cref{\LocalName{eq:csq-prev-lemma}}.
    We have
    \begin{align}
  \int_{0}^{\horizonalt} \frac{\norm{\dot{\widetilde \pth}_\timealt + \grad \obj(\widetilde \pth_\timealt)}^2}{2 \vardown(\obj(\widetilde \pth_\timealt))} \dd \timealt
       &=
 \int_{0}^{\horizonalt} \frac{\norm{-\dot{\widetilde \pth}_\timealt + \grad \obj(\widetilde \pth_\timealt)}^2}{2 \vardown(\obj(\widetilde \pth_\timealt))} \dd \timealt
  +  \int_{0}^{\horizonalt} \frac{2\inner{\dot{\widetilde \pth}_\timealt}{\grad \obj(\widetilde \pth_\timealt)}}{\vardown(\obj(\widetilde \pth_\timealt))} \dd \timealt\,.
  \LocalLabel{eq:split-integral}
    \end{align}
    By definition of $\potup$ (\cref{def:potentials}) the second integral is equal to
    \begin{align}
\int_{0}^{\horizonalt} \frac{2\inner{\dot{\widetilde \pth}_\timealt}{\grad \obj(\widetilde \pth_\timealt)}}{\vardown(\obj(\widetilde \pth_\timealt))} \dd \timealt
&=
\int_{0}^{\horizonalt} \inner{\dot{\widetilde \pth}_\timealt}{\grad \potup(\widetilde \pth_\timealt)} \dd \timealt
\notag\\
&= 
\potup(\pointalt) - \potup(\point)\,.
\LocalLabel{eq:potup-integral}
    \end{align}
    For the first integral in \cref{\LocalName{eq:split-integral}}, define the path $(\pthalt_\timealt)_{\timealt \in[0, \horizonalt]}$ by $\pthalt_\timealt = \widetilde \pth_{\horizonalt - \timealt}$. We have that
    \begin{align}
        \int_{0}^{\horizonalt} \frac{\norm{-\dot{\widetilde \pth}_\timealt + \grad \obj(\widetilde \pth_\timealt)}^2}{2 \vardown(\obj(\widetilde \pth_\timealt))} \dd \timealt
        &=
        \int_{0}^{\horizonalt} \frac{\norm{\dot{\pthalt}_\timealt + \grad \obj(\pthalt_\timealt)}^2}{2 \vardown\obj(\pthalt_\timealt))} \dd \timealt
        \notag\\
        &\geq
 \int_{0}^{\horizonalt} \lagrangianalt(\pthalt_\timealt, \dot \pthalt_\timealt + \grad \obj(\pthalt_\timealt)) \dd \timealt
 \notag\\
        &= \action_{\horizonalt}(\pthalt)
        \notag\\
        &\geq \dquasipot(\point, \pointalt)\,,
        \LocalLabel{eq:potup-integral}
    \end{align}
    where we used \cref{asm:noise-potential} in the first inequality with $\vel \gets \dot \pthalt_\timealt + \grad \obj(\pthalt_\timealt)$ whose norm satisfies the condition of \cref{asm:noise-potential} by \cref{\LocalName{eq:pth-resc-grad}}.
    Plugging \cref{\LocalName{eq:potup-integral}} and \cref{\LocalName{eq:potup-integral}} into \cref{\LocalName{eq:split-integral}} and then into \cref{\LocalName{eq:csq-prev-lemma}} yields the desired result.
\end{proof}

\subsection{Graph representation}
Let us reuse the notation of \cref{app:finite-time}.
In \cref{app:finite-time}, we consider graphs over the set of vertices $\vertices = \gencompIndices$ and the set of edges $(\vertices \setminus \verticesalt) \times \vertices$ where $\verticesalt = \setof*{1, \dots, \nSinks}$.

Let us consider a set of particular edges $\restredges \subset (\vertices \setminus \verticesalt) \times \vertices$ such that
\begin{equation}
    \restredges \defeq \setdef*{ \vertexA \to \vertexB \in (\vertices \setminus \verticesalt) \times \vertices}{\vertexA \neq \vertexB,\, \cl \basin(\gencomp_\vertexA) \cap \cl \basin(\gencomp_\vertexB) \neq \emptyset}\,.
\end{equation}
Note that by \cref{lem:cv_flow}, we have that $\basin(\gencomp_\iComp)$, $\iComp \in \gencompIndices$ is a partition of $\vecspace$. Therefore, $(\vertices, \restredges)$ still satisfies that, from every $\vertexA \in \vertices$ there is a path that leads to $\verticesalt$.

\begin{lemma}
\label{lem:bd_cost_nbd_basin}
\UsingNamespace{lem:bd_cost_nbd_basin}
For any $\iComp \to \jComp \in \restredges$, let us define
\begin{equation}
    \point_{\iComp, \jComp} \in \argmin_{\point \in \cl \basin(\gencomp_\iComp) \cap \cl \basin(\gencomp_\jComp)} \obj(\point)\,.
\end{equation}
Then, under \cref{asm:noise-potential}, if $\noiseassumptionset$ is large enough so that it contains the bounded set
\begin{equation}
    \setdef*{\point \in \vecspace}
    {\obj(\point) \leq \max_{\iComp \in \vertices \setminus \verticesalt, \jComp \in \vertices, \iComp \neq \jComp} \obj(\point_{\iComp, \jComp}) + 1}\,,
    \LocalLabel{eq:bd-set-noiseassumptionset}
\end{equation}
for any $\iComp \to \jComp \in \restredges$, it holds that
\begin{equation}
    \fulldquasipot_{\iComp, \jComp} \leq \potup(\state_{\iComp, \jComp}) - \potup(\gencomp_\iComp)\,.
\end{equation}
\end{lemma}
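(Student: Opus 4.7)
The plan is to realize the bound explicitly through a three-phase ``mountain-pass'' construction from $\gencomp_\iComp$ to $\gencomp_\jComp$ passing through $\point_{\iComp,\jComp}$: an ascending leg obtained by time-reversing the gradient flow inside $\basin(\gencomp_\iComp)$, a short bridging leg across the common boundary of the two basins, and a descending leg inside $\basin(\gencomp_\jComp)$ provided by the ordinary gradient flow, which carries zero action by \cref{lem:basic_prop_flow}. By \cref{lem:alternate_fulldquasipot}, it suffices to bound $\fulldquasipot^\margin_{\iComp,\jComp}$ for every sufficiently small $\margin>0$ and then pass to the limit. Because $\obj$ is constant on each critical component (its gradient vanishes there and \cref{asm:crit} supplies piecewise absolutely continuous connectivity), $\potup=2\primvarup\circ\obj$ is also constant on $\gencomp_\iComp$, so $\potup(\gencomp_\iComp)$ is unambiguous. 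All segments of the construction will stay inside a sublevel set of the form $\{\obj\leq\obj(\point_{\iComp,\jComp})+\bigoh(\delta)\}$, which lies in $\noiseassumptionset$ by hypothesis, so \cref{asm:noise-potential} applies throughout.

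Fix $\precs>0$ and pick $\point^\iComp\in\basin(\gencomp_\iComp)$ close to $\point_{\iComp,\jComp}$. For the ascending phase I reverse time along the gradient flow and set $\widetilde\pth_s=\flowmap_{\horizon-s}(\point^\iComp)$ for $s\in[0,\horizon]$, choosing $\horizon$ large enough so that $\flowmap_\horizon(\point^\iComp)\in\U_\margin(\gencomp_\iComp)$. This reversed curve satisfies $\dot{\widetilde\pth}_s=\grad\obj(\widetilde\pth_s)$, so $\dot{\widetilde\pth}_s+\grad\obj(\widetilde\pth_s)=2\grad\obj(\widetilde\pth_s)$ sits at the boundary of the applicability region of the second line of \cref{asm:noise-potential}; the resulting bound gives $\lagrangian(\widetilde\pth_s,\dot{\widetilde\pth}_s)\leq 2\norm{\grad\obj(\widetilde\pth_s)}^2/\vardown(\obj(\widetilde\pth_s))$. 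Substituting $u=\horizon-s$ and using $(d/du)\obj(\flowmap_u(\point^\iComp))=-\norm{\grad\obj(\flowmap_u(\point^\iComp))}^2$ together with the primitive relation between $\primvarup$ and $1/\vardown$, the integrated action telescopes to $\potup(\point^\iComp)-\potup(\flowmap_\horizon(\point^\iComp))$, which by continuity of $\potup$ is at most $\potup(\point_{\iComp,\jComp})-\potup(\gencomp_\iComp)+\precs$ for $\point^\iComp$ close enough to $\point_{\iComp,\jComp}$ and $\horizon$ large enough. The descending leg from any $\point^\jComp\in\basin(\gencomp_\jComp)$ close to $\point_{\iComp,\jComp}$ reaches $\U_\margin(\gencomp_\jComp)$ in finite time with zero action.

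The main obstacle is ensuring that the bridging leg contributes only $\bigoh(\precs)$ to the action. When $\grad\obj(\point_{\iComp,\jComp})\neq 0$, I will traverse the straight segment from $\point^\iComp$ to $\point^\jComp$ at constant speed bounded by $\norm{\grad\obj(\point_{\iComp,\jComp})}/2$, so that the hypothesis $\norm{\dot\pth}\leq\norm{\grad\obj(\pth)}$ of \cref{asm:noise-potential} holds along the segment for $\point^\iComp,\point^\jComp$ close enough to $\point_{\iComp,\jComp}$; the upper bound of \cref{asm:noise-potential} then produces a Lagrangian density bounded uniformly near $\point_{\iComp,\jComp}$, and the total action is linear in $\norm{\point^\iComp-\point^\jComp}$, which can be made arbitrarily small since $\point_{\iComp,\jComp}\in\cl\basin(\gencomp_\iComp)\cap\cl\basin(\gencomp_\jComp)$. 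When $\grad\obj(\point_{\iComp,\jComp})=0$, the point $\point_{\iComp,\jComp}$ must belong to some critical component $\gencomp_\lComp$; it cannot itself be a local minimum, as otherwise a full neighborhood of $\point_{\iComp,\jComp}$ would lie in a single basin, contradicting $\iComp\neq\jComp$. In this case I will route the bridge through $\gencomp_\lComp$ using \cref{lem:equivalence_classes_dpth_stay_close}, which supplies arbitrarily cheap paths inside a critical component and lets me match the end of the ascending flow with the start of the descending flow at nearby points of $\gencomp_\lComp$. Concatenating the three legs yields, for every sufficiently small $\margin>0$, a continuous path from $\U_\margin(\gencomp_\iComp)$ to $\U_\margin(\gencomp_\jComp)$ of action at most $\potup(\point_{\iComp,\jComp})-\potup(\gencomp_\iComp)+\bigoh(\precs)$; letting $\precs\to 0$ and then $\margin\to 0$ via \cref{lem:alternate_fulldquasipot} concludes.
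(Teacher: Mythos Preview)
Your proposal is correct and follows essentially the same three-leg ``mountain-pass'' construction as the paper: a time-reversed gradient flow from $\U_\margin(\gencomp_\iComp)$ up to a point near $\point_{\iComp,\jComp}$ whose action telescopes to $\potup(\point^\iComp)-\potup(\flowmap_\horizon(\point^\iComp))$ via \cref{asm:noise-potential}, a short bridge (straight segment when $\grad\obj(\point_{\iComp,\jComp})\neq 0$, or via $\W_\precs(\gencomp_\lComp)$ and \cref{lem:equivalence_classes_dpth_stay_close} when $\point_{\iComp,\jComp}\in\gencomp_\lComp$), and a free descending flow into $\U_\margin(\gencomp_\jComp)$. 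The only cosmetic gap is that $\fulldquasipot^\margin_{\iComp,\jComp}$ requires $\horizon\in\nats$, which the paper handles by balancing the leg durations to sum to $2\horizon$; you can achieve the same by extending the final flow leg at zero cost.
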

\begin{proof}
    \UsingNestedNamespace{lem:bd_cost_nbd_basin}{proof}
    Note that the coercivity of $\obj$ (\cref{asm:obj-weak}) ensure that $\point_{\iComp, \jComp}$ is well-defined and the set of \cref{\MasterName{eq:bd-set-noiseassumptionset}} is bounded.

    Take $\iComp \in \vertices \setminus \verticesalt$, $\jComp \in \vertices$, $\iComp \neq \jComp$. We first consider the case where $\grad \obj(\point_{\iComp, \jComp}) \neq 0$.
    Fix $\precs > 0$.

    By \cref{lem:alternate_fulldquasipot}, we have that there exists $\margin_0 > 0$ such that, for any $\margin \in (0, \margin_0)$,
    \begin{equation}
        \fulldquasipot_{\iComp, \jComp} \leq \fulldquasipot_{\iComp, \jComp}^\margin + \precs\,.
        \LocalLabel{eq:bd-cost-dquasipotdown}
    \end{equation}
    At the potential cost of reducing $\margin_0 > 0$, assume that, for any $\point \in \opball(\point_{\iComp, \jComp}, \margin_0/2)$, it holds that 
    \begin{equation}
        \constA \leq  \norm{\grad \obj(\point)} \leq \constB \quad \text{ and } \quad \bdvarup(\obj(\point)) \geq \constC\,,
        \LocalLabel{eq:grad-obj-bd}
    \end{equation}
    for some $0 < \constA \leq  \constB$ and $\constC > 0$.
    Moreover, by continuity of $\obj$ and $\potup$, also assume that $\margin_0$ is small enough so that, for any $\point \in \opball(\point_{\iComp, \jComp}, \margin_0/2)$, it holds that
    \begin{equation}
        \potup(\point) \leq \potup(\point_{\iComp, \jComp}) + \precs
        \LocalLabel{eq:bd-pot-bd}
    \end{equation}
    and,
    \begin{equation}
        \obj(\point) \leq \obj(\point_{\iComp, \jComp}) + 1\,.
        \LocalLabel{eq:obj-bd}
    \end{equation}
    Finally, also assume that $\margin_0$ is small enough such that, for any $\point \in \U_\margin(\gencomp_\iComp)$, it holds that
    \begin{equation}
        \potup(\point) \geq \potup(\gencomp_\iComp) - \precs\,.
        \LocalLabel{eq:bd-pot-bd-gencomp}
    \end{equation}

    Fix $\margin \in (0, \margin_0)$ such that $\margin \leq \precs$.
    Now, since $\point_{\iComp, \jComp} \in \cl \basin(\gencomp_\iComp) \cap \cl \basin(\gencomp_\jComp)$, there exist $\point_\iComp \in \basin(\gencomp_\iComp) \cap \opball(\point_{\iComp, \jComp}, \margin/2)$ and $\point_\jComp \in \basin(\gencomp_\jComp) \cap \opball(\point_{\iComp, \jComp}, \margin/2)$.
    By construction, the gradient flows started at $\point_\iComp$ and $\point_\jComp$ converge respectively to $\gencomp_\iComp$ and $\gencomp_\jComp$.
    Hence, there exists $\horizon \geq 1$ integer such that $\flowmap_{\horizon - \alpha/2}(\point_\iComp) \in \U_\margin(\gencomp_\iComp)$ and $\flowmap_{\horizon - \alpha/2}(\point_\jComp) \in \U_\margin(\gencomp_\jComp)$ for all $\alpha \defeq \margin / \constA$.

    Let us now estimates the action costs of the paths $(\flowmap_{\horizon - \alpha/2 - \time}( \point_\iComp))_{\time \in [0, \horizon - \alpha/2]}$ and $(\flowmap_{\time}( \point_\jComp))_{\time \in [0, \horizon - \alpha/2]}$. 
    By \cref{\LocalName{eq:obj-bd}} and definition of the flow, the whole path $(\flowmap_{\horizon - \alpha/2 - \time}( \point_\iComp))_{\time \in [0, \horizon - \alpha/2]}$ is contained in \cref{\MasterName{eq:bd-set-noiseassumptionset}} and therefore in $\noiseassumptionset$.
    Therefore, by \cref{asm:noise-potential}, we have that
    \begin{align}
        \action_{\horizon - \alpha/2}(\flowmap_{\horizon - \alpha/2 - \cdot}( \point_\iComp)) 
        &=
        \int_{0}^{\horizon - \alpha/2} \lagrangian(\flowmap_{\horizon - \alpha/2 - \time}( \point_\iComp), \grad \obj(\flowmap_{\horizon - \alpha/2 - \time}( \point_\iComp))) \dd \time
        \notag\\
        &=
        \int_{0}^{\horizon - \alpha/2} \lagrangianalt(\flowmap_{\horizon - \alpha/2 - \time}( \point_\iComp), 2\grad \obj(\flowmap_{\horizon - \alpha/2 - \time}( \point_\iComp))) \dd \time
        \notag\\
        &\leq 
        \int_{0}^{\horizon - \alpha/2} \frac{4\norm{\grad \obj(\flowmap_{\horizon - \alpha/2 - \time}( \point_\iComp))}^2}{2 \varup(\obj(\flowmap_{\horizon - \alpha/2 - \time}( \point_\iComp)))} \dd \time\,,
    \end{align}
    where we used \cref{asm:noise-potential} in the last inequality.
    Rewriting this integral we obtain,
    \begin{align}
        \action_{\horizon - \alpha/2}(\flowmap_{\horizon - \alpha/2 - \cdot}( \point_\iComp))
        &\leq 
        \int_{0}^{\horizon - \alpha/2} \frac{2\inner{\grad \obj(\flowmap_{\horizon - \alpha/2 - \time}( \point_\iComp))}{\dot \flowmap_{\horizon - \alpha/2 - \time}( \point_\iComp)}}{\varup(\obj(\flowmap_{\horizon - \alpha/2 - \time}( \point_\iComp)))} \dd \time
        \notag\\
        &=
        \int_{0}^{\horizon - \alpha/2} 2\inner{\grad \potup(\flowmap_{\horizon - \alpha/2 - \time}( \point_\iComp))}{\dot \flowmap_{\horizon - \alpha/2 - \time}( \point_\iComp)} \dd \time
        \notag\\
        &= \potup(\point_{\iComp}) - \potup(\flowmap_{\horizon - \alpha/2}( \point_\iComp))
        \notag\\
        &\leq \potup(\point_{\iComp, \jComp}) - \potup(\gencomp_\iComp) + 2\precs\,,
        \LocalLabel{eq:bd-cost-flow-i}
    \end{align}
    by \cref{\LocalName{eq:bd-pot-bd},\LocalName{eq:bd-pot-bd-gencomp}}.

    For the path $(\flowmap_{\time}( \point_\jComp))_{\time \in [0, \horizon - \alpha/2]}$, since it is a trajectory of the gradient flow, we have that
    \begin{equation}
        \action_{\horizon - \alpha/2}(\flowmap_{\cdot}( \point_\jComp)) = 0\,.
        \LocalLabel{eq:bd-cost-flow-j}
    \end{equation}

    Finally, let us consider the path $(\pth_\time)_{\time \in [0, \alpha]}$ defined by $\pth_\time = \point_\iComp + \alpha^{-1} \time (\point_\jComp - \point_\iComp)$.
    We have $\dot \pth_\time = \alpha^{-1} (\point_\jComp - \point_\iComp)$ so that $\norm{\dot \pth_\time} \leq  \margin / \alpha = \constA \leq \norm{\grad \obj(\pth_\time)}$ by \cref{\LocalName{eq:grad-obj-bd}}.
    Moreover, \cref{\LocalName{eq:obj-bd}} ensures that $\pth_\time \in \noiseassumptionset$ for all $\time \in [0, \alpha]$.

    Therefore, by \cref{asm:noise-potential}, we have that
    \begin{align}
        \action_{\alpha}(\pth)
        &= \int_{0}^{\alpha} \lagrangian(\pth_\time, \dot \pth_\time) \dd \time
        \notag\\
        &= \int_{0}^{\alpha} \lagrangianalt(\pth_\time, \grad \obj(\pth_\time) \dot \pth_\time) \dd \time
        \notag\\
        &\leq \int_{0}^{\alpha} \frac{\norm{\dot \pth_\time + \grad \obj(\pth_\time)}^2}{2 \varup(\obj(\pth_\time))} \dd \time
        \notag\\
        &\leq \alpha \frac{(\constA + \constB)^2}{2 \constC}
        \notag\\
        &= \frac{\margin (\constA + \constB)^2}{2 \constA \constC}
        \notag\\
        &\leq \frac{\precs (\constA + \constB)^2}{2 \constA \constC}\,.
        \LocalLabel{eq:bd-cost-linear}
    \end{align}

    Putting the three paths $(\flowmap_{\horizon - \alpha/2 - \cdot}( \point_\iComp))_{\time \in [0, \horizon - \alpha/2]}$, $(\pth_\time)_{\time \in [0, \alpha]}$ and $(\flowmap_{\time}( \point_\jComp))_{\time \in [0, \horizon - \alpha/2]}$ together, we obtain a path that goes from $\U_\margin(\gencomp_\iComp)$ to $\U_\margin(\gencomp_\jComp)$ in integer time $2 \horizon$ and whose action cost is upper-bounded by
    \begin{equation}
        \potup(\point_{\iComp, \jComp}) - \potup(\gencomp_\iComp) + 2\precs + \frac{\precs (\constA + \constB)^2}{2 \constA \constC}\,.
    \end{equation}
    Combined with \cref{\LocalName{eq:bd-cost-dquasipotdown}}, this yields
    \begin{equation}
        \fulldquasipot_{\iComp, \jComp} \leq \potup(\point_{\iComp, \jComp}) - \potup(\gencomp_\iComp) + 3\precs + \frac{\precs (\constA + \constB)^2}{2 \constA \constC}\,,
    \end{equation}
    which yields the result.

    Let us now briefly discuss the case where $\grad \obj(\point_{\iComp, \jComp}) = 0$.
    In this case, $\point_{\iComp, \jComp}$ belongs to some $\gencomp_{\lComp}$, $\lComp \in \vertices$.
    To adapt the proof above, it suffices to require that $\point_\iComp$ and $\point_\jComp$ are close enough to $\point_{\iComp, \jComp}$ so that they belong to $\W_\precs(\gencomp_{\lComp})$ (give by \cref{lem:W_nbd}) and to use \cref{lem:equivalence_classes_dpth_stay_close} to build a path from $\point_\iComp$ to $\point_\jComp$ with cost at most $\precs$.
\end{proof}

We provide an iterated version of \cref{lem:bd_cost_nbd_basin} that will be convenient in the following.
\begin{corollary}
    \label{cor:bd_cost_nbd_basin_cor}
    For any $\iComp \in \vertices \setminus \verticesalt$, $\jComp \in \vertices$,
    define
    \begin{equation}
        \fulldquasipotup_{\iComp, \jComp} \defeq
        \min \setdef*{
            \sum_{\run = 0}^{\nRuns - 1} \potup(\point_{\jComp_{\run}, \jComp_{\run + 1}}) - \potup(\gencomp_{\jComp_{\run}})
        }
        {
            \jComp_0 = \iComp,\, \jComp_{\nRuns} = \jComp,\, \jComp_{\run} \to \jComp_{\run + 1} \in \restredges
        }\,.
    \end{equation}
    Then, under the same assumptions as \cref{lem:bd_cost_nbd_basin}, it holds that
\begin{equation}
    \fulldquasipot_{\iComp, \jComp} \leq \fulldquasipotup_{\iComp, \jComp}\,.
\end{equation}
\end{corollary}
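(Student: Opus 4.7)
The plan is to establish a triangle-like subadditivity for $\fulldquasipot$ and then iterate \cref{lem:bd_cost_nbd_basin} along any admissible walk from $\iComp$ to $\jComp$ in $(\vertices, \restredges)$.

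First, I would prove that for every $\iComp, \kComp, \jComp \in \gencompIndices$,
\begin{equation}
\fulldquasipot_{\iComp, \jComp} \leq \fulldquasipot_{\iComp, \kComp} + \fulldquasipot_{\kComp, \jComp}.
\end{equation}
Given $\precs > 0$, select near-optimal discrete paths $\dpth^{(1)} \in \restrdcurvesat{\nRuns_1}$ and $\dpth^{(2)} \in \restrdcurvesat{\nRuns_2}$ realizing $\fulldquasipot_{\iComp, \kComp}$ and $\fulldquasipot_{\kComp, \jComp}$ up to $\precs$, with $\dpth^{(1)}_{\nRuns_1 - 1}, \dpth^{(2)}_\start \in \gencomp_\kComp$. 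Since $\gencomp_\kComp$ is path-connected (\cref{asm:crit}), \cref{lem:equivalence_classes_dpth_stay_close} yields a short discrete bridge from $\dpth^{(1)}_{\nRuns_1 - 1}$ to $\dpth^{(2)}_\start$ lying close to $\gencomp_\kComp$ with action at most $\precs$. The concatenation of the three pieces is an admissible competitor for $\fulldquasipot_{\iComp, \jComp}$ in the sense of \cref{def:full-transition-cost}, and its total action cost is at most $\fulldquasipot_{\iComp, \kComp} + \fulldquasipot_{\kComp, \jComp} + 3\precs$; sending $\precs \to 0$ proves the inequality.

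Second, iterating this subadditivity along an arbitrary walk $\iComp = \jComp_0 \to \jComp_1 \to \dots \to \jComp_\nRuns = \jComp$ with edges in $\restredges$ gives
\begin{equation}
\fulldquasipot_{\iComp, \jComp} \leq \sum_{\run = 0}^{\nRuns - 1} \fulldquasipot_{\jComp_\run, \jComp_{\run + 1}}.
\end{equation}
For each edge $\jComp_\run \to \jComp_{\run + 1} \in \restredges$, \cref{lem:bd_cost_nbd_basin} applies directly (its largeness hypothesis on $\noiseassumptionset$ being inherited from the standing assumptions of the corollary), producing
\begin{equation}
\fulldquasipot_{\jComp_\run, \jComp_{\run + 1}} \leq \potup(\point_{\jComp_\run, \jComp_{\run + 1}}) - \potup(\gencomp_{\jComp_\run}).
\end{equation}
Substituting and minimizing over all such walks yields precisely $\fulldquasipot_{\iComp, \jComp} \leq \fulldquasipotup_{\iComp, \jComp}$, which is the statement.

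The main obstacle is the concatenation step in the subadditivity proof: the endpoints $\dpth^{(1)}_{\nRuns_1 - 1}$ and $\dpth^{(2)}_\start$ both lie in $\gencomp_\kComp$ but need not coincide, so stitching the two near-optimal paths together requires an arbitrarily cheap bridge confined to a neighborhood of $\gencomp_\kComp$. This is exactly what \cref{lem:equivalence_classes_dpth_stay_close} delivers under \cref{asm:crit}, and since \cref{def:full-transition-cost} imposes no constraints on the intermediate values of the path, the concatenation is automatically admissible. Beyond this gluing, the argument is largely bookkeeping in the integer-horizon convention carried by $\daction_\nRuns$.
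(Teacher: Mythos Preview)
Your proposal is correct and matches the paper's intent: the paper gives no proof, merely labeling the corollary ``an iterated version of \cref{lem:bd_cost_nbd_basin},'' and your argument fills in precisely that iteration. The one nontrivial step you correctly isolate---subadditivity of $\fulldquasipot$ via concatenation with a cheap bridge inside $\gencomp_\kComp$ from \cref{lem:equivalence_classes_dpth_stay_close}---is exactly what is needed, and since \cref{def:full-transition-cost} imposes no constraint on intermediate points, the glued path is automatically admissible.
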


Let us now specify how big enough we will need the set $\noiseassumptionset$ from \cref{asm:noise-potential} to be. 

\begin{lemma}
\label{lem:req-noiseassumptionset}
\UsingNamespace{lem:req-noiseassumptionset}

Define
\begin{equation}
    \bdquasipotupalt \defeq \max_{\iComp, \jComp \in \vertices \setminus \verticesalt} \fulldquasipot_{\iComp, \jComp} + 1 < +\infty\,.
\end{equation}
Then, if $\noiseassumptionset$ is large enough so that it contains the bounded set
\begin{equation}
    \setdef*{\point \in \vecspace}{\bdpot(\point) \leq \sup_{\iComp \in \vertices \setminus \verticesalt, \state \in \gencomp_\iComp} \potup(\state) + \bdquasipotupalt}\,,
\end{equation}
then, for any $\iComp, \jComp \in \vertices \setminus \verticesalt$, it holds that $\fulldquasipot_{\iComp, \jComp}$ is equal to
\begin{equation}
    \inf
    \setdef*{
        \action_\horizon(\curve)
    }{
        \curve \in \contfuncs([0, \horizon]),\, \horizon \in \nats, \, \curve_0 \in \gencomp_\iComp,\, \curve_\horizon \in \gencomp_\jComp,\,
        \setdef*{\curveat{\time}}{\time \in [0, \horizon]} \subset \noiseassumptionset
    }\,.
\end{equation}
    
\end{lemma}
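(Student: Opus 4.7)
The equality will be proved in two steps. One inequality is free: the constrained infimum on the right is taken over a subfamily of the paths defining $\fulldquasipot_{\iComp,\jComp}$ in \cref{def:full-transition-cost}, so it dominates $\fulldquasipot_{\iComp,\jComp}$. The substance is the reverse inequality, which I plan to prove by showing that any path with action close to $\fulldquasipot_{\iComp,\jComp}$ must already lie entirely inside $\noiseassumptionset$, so that constraining the minimization does not change its value.

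Preliminaries: I would first check $\bdquasipotupalt<+\infty$ by combining \cref{lem:fulldquasipot-finaldquasipot} (which gives $\fulldquasipot_{\iComp,\jComp}\leq\finaldquasipotup_{\iComp,\jComp}$) with \cref{asm:finite_cost} and the fact that the maximum ranges over finitely many index pairs. Boundedness of the sublevel set displayed in the statement then follows from \cref{lem:bdpot_coercive} together with the continuity of $\potup$ on the compact union of critical components, which also ensures that $\sup_{\iComp\in\vertices\setminus\verticesalt,\,\state\in\gencomp_\iComp}\potup(\state)$ is finite.

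For the main argument, fix $\precs\in(0,1)$ and pick $\curve\in\contfuncs([0,\horizon])$ with $\curveat{0}\in\gencomp_\iComp$, $\curveat{\horizon}\in\gencomp_\jComp$, and $\action_{0,\horizon}(\curve)\leq\fulldquasipot_{\iComp,\jComp}+\precs<\bdquasipotupalt$. Applying \cref{lem:basic_bdpot_ineq} along $\curve$ yields, for every $\time\in[0,\horizon]$,
\begin{equation*}
\bdpot(\curveat{\time})\leq\bdpot(\curveat{0})+\action_{0,\horizon}(\curve)\leq\bdpot(\curveat{0})+\bdquasipotupalt.
\end{equation*}
The remaining ingredient is the pointwise inequality $\bdpot\leq\potup$, which I would obtain from $\vardown\leq\bdvar$: combining the sub-Gaussian bound of \cref{assumption:subgaussian} with the upper bound on $\lagrangianalt$ in \cref{asm:noise-potential} produces this inequality on $\vardown,\bdvar$ via convex duality, and aligning the additive constants of the primitives $\bdprimvar$ and $\primvarup$ transfers it to the potentials. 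Consequently,
\begin{equation*}
\bdpot(\curveat{0})\leq\sup_{\state\in\gencomp_\iComp}\potup(\state)\leq\sup_{\iComp'\in\vertices\setminus\verticesalt,\,\state\in\gencomp_{\iComp'}}\potup(\state),
\end{equation*}
so the assumption on $\noiseassumptionset$ forces $\curveat{\time}\in\noiseassumptionset$ for every $\time\in[0,\horizon]$. Hence $\curve$ is admissible for the constrained infimum, which is therefore bounded by $\fulldquasipot_{\iComp,\jComp}+\precs$; letting $\precs\downarrow 0$ closes the argument.

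The main obstacle will be the comparison $\bdpot\leq\potup$: only the derivatives $1/\bdvar$ and $1/\vardown$ are intrinsic, while the primitives carry free additive constants, so an explicit (but innocuous) normalization is needed. A secondary subtlety is that \cref{asm:noise-potential} only guarantees the upper bound on $\lagrangianalt$ inside $\noiseassumptionset$ and within a velocity cone around $\grad\obj$, so the duality argument producing $\vardown\leq\bdvar$ has to be localized to that region; once this is done, everything else reduces to straightforward bookkeeping with the action functional.
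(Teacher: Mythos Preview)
Your proposal is correct and follows exactly the paper's route: the paper's proof consists of observing that $\bdquasipotupalt<+\infty$ via \cref{asm:finite_cost} and then declaring the rest ``a direct consequence of \cref{lem:cpt_paths_bd_action}'' (which your use of \cref{lem:basic_bdpot_ineq} simply unpacks). In fact you go further than the paper by flagging and attempting to resolve the mismatch between $\bdpot$ and $\potup$ in the displayed sublevel set; the paper's two-line proof does not address this point at all.
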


\begin{proof}
    Note that $\bdquasipotupalt$ is finite by \cref{asm:finite_cost}.
The rest of the result is then a direct consequence of \cref{lem:cpt_paths_bd_action}.
\end{proof}

We can now combine \cref{lem:reverse-path} with \cref{lem:req-noiseassumptionset} to obtain the following result.
\begin{lemma}
\label{lem:reverse-path-graph}
Under \cref{asm:noise-potential}, if $\noiseassumptionset$ is large enough so that it satisfies the requirement of \cref{lem:req-noiseassumptionset}, then for any $\iComp, \jComp \in \vertices \setminus \verticesalt$, $\iComp \neq \jComp$, it holds that
\begin{equation}
    \fulldquasipot_{\jComp,\iComp} - \fulldquasipot_{\iComp, \jComp} 
    \leq 
    \varerror \fulldquasipot_{\jComp,\iComp} + (1 - \varerror) (\potup_{\iComp} - \potup_{\jComp})\,,
\end{equation}
where $\varerror \in [0, 1]$ satisfies
\begin{equation}
    \varerror \geq \sup_{\state \in \noiseassumptionset} 1 - \frac{\vardown(\obj(\state))}{\varup(\obj(\state))}\,.
\end{equation}
\end{lemma}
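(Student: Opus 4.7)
The plan is to extract a near-optimal curve for the forward cost $\fulldquasipot_{\iComp,\jComp}$, apply the reverse-path inequality of \cref{lem:reverse-path} to it, and then rearrange to obtain a bound on the backward cost $\fulldquasipot_{\jComp,\iComp}$. The required ingredients are all on the table; the argument is essentially a direct chain of inequalities.

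First I would note two preliminary facts. (i) Since the connected components of $\crit(\obj)$ are path-connected by piecewise absolutely continuous paths (\cref{asm:crit}) and $\grad\obj \equiv 0$ on them, the function $\obj$ is constant on each $\gencomp_\iComp$ by the fundamental theorem of calculus applied along the path; consequently $\potup$ is constant on each $\gencomp_\iComp$, which justifies the notation $\potup_\iComp$. (ii) By \cref{lem:req-noiseassumptionset}, because $\noiseassumptionset$ is large enough, one may compute $\fulldquasipot_{\iComp,\jComp}$ as an infimum restricted to curves $\curve$ whose image is contained in $\noiseassumptionset$. This is exactly the regularity needed to invoke \cref{lem:reverse-path}.

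Next, fix $\precs > 0$ and pick $\curve \in \contfuncs([0,\horizon])$ with $\curveat{0} \eqdef \point \in \gencomp_\iComp$, $\curveat{\horizon} \eqdef \pointalt \in \gencomp_\jComp$, contained in $\noiseassumptionset$, and with $\action_\horizon(\curve) \leq \fulldquasipot_{\iComp,\jComp} + \precs$. Applying \cref{lem:reverse-path} to $\curve$ (after reading its conclusion with the correct orientation: the time-reversed path runs from $\pointalt$ to $\point$, so its action lower-bounds $\dquasipot(\pointalt,\point)$, and the potential difference contributed by the $2\inner{\dot{\widetilde\curve}}{\grad\obj}/\vardown$ term is $\potup(\pointalt)-\potup(\point) = \potup_\jComp - \potup_\iComp$), we obtain
\begin{equation}
\action_\horizon(\curve) \;\geq\; (1-\varerror)\bigl(\dquasipot(\pointalt,\point) + \potup_\jComp - \potup_\iComp\bigr),
\end{equation}
where $1-\varerror \leq \inf_\noiseassumptionset \vardown/\varup$. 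Since $\pointalt \in \gencomp_\jComp$ and $\point \in \gencomp_\iComp$, the definition of $\fulldquasipot_{\jComp,\iComp}$ gives $\dquasipot(\pointalt,\point) \geq \fulldquasipot_{\jComp,\iComp}$.

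Chaining these inequalities and letting $\precs \to 0$ yields $\fulldquasipot_{\iComp,\jComp} \geq (1-\varerror)\bigl(\fulldquasipot_{\jComp,\iComp} + \potup_\jComp - \potup_\iComp\bigr)$, which is equivalent (after rearrangement) to the claim
\begin{equation}
\fulldquasipot_{\jComp,\iComp} - \fulldquasipot_{\iComp,\jComp} \;\leq\; \varerror\,\fulldquasipot_{\jComp,\iComp} + (1-\varerror)(\potup_\iComp - \potup_\jComp).
\end{equation}
The only subtle point is bookkeeping around orientation: \cref{lem:reverse-path} must be parsed so that the action bound is phrased in terms of $\dquasipot$ from the endpoint back to the starting point, which is why the bound on $\fulldquasipot_{\jComp,\iComp}$ emerges naturally from a curve witnessing $\fulldquasipot_{\iComp,\jComp}$. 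Everything else is an application of already-proved lemmas and a single algebraic rearrangement.
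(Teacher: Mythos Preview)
Your proposal is correct and follows essentially the same approach as the paper: restrict to curves in $\noiseassumptionset$ via \cref{lem:req-noiseassumptionset}, apply \cref{lem:reverse-path} to a near-optimal curve for $\fulldquasipot_{\iComp,\jComp}$, take the infimum, and rearrange. Your explicit remark about the orientation of the reversed path in \cref{lem:reverse-path} (so that $\dquasipot(\pointalt,\point)$, not $\dquasipot(\point,\pointalt)$, appears) is a useful clarification of what the paper leaves implicit.
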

\begin{proof}
    \Cref{lem:req-noiseassumptionset} allows us to consider only paths that are contained in $\noiseassumptionset$ to compute $\fulldquasipot_{\iComp, \jComp}$.
    Taking the infimum over such paths in \cref{lem:reverse-path} yields:
    \begin{equation}
        \fulldquasipot_{\iComp, \jComp} 
        \geq
        (1 - \varerror) (\fulldquasipot_{\jComp,\iComp}  + (\potup_{\jComp} - \potup_{\iComp}))\,.
    \end{equation}
    Rearranging the terms, we obtain the result.
\end{proof}

To simplify the notation for the following lemma, let us define, for $\iComp \in \vertices \setminus \verticesalt$, $\jComp \in \vertices$, $\iComp \neq \jComp$,
\begin{equation}
    \potup_{\iComp, \jComp} \defeq \potup(\point_{\iComp, \jComp})\quad \text{ and } \quad \potup_{\jComp} \defeq \potup(\gencomp_\jComp)\,.
\end{equation}

\begin{lemma}
\label{lem:graph-bound}
\UsingNamespace{lem:graph-bound}
Assume that \cref{asm:noise-potential} holds with $\noiseassumptionset$ satisfying the assumptions of \cref{lem:bd_cost_nbd_basin,lem:req-noiseassumptionset}.

For any $\init[\iComp] \in \vertices \setminus \verticesalt$, $\graph \in \graphs(\init[\iComp] \nrightarrow \verticesalt)$, there exists $\iComp \in \vertices \setminus \verticesalt$ such that there is a path from $\init[\iComp]$ to $\iComp$ in $\graph$ and there is $\graphalt \in \graphs$ such that the cost difference between $\graph$ and $\graphalt$
\begin{equation}
    \sum_{\vertexC \to \vertexD \in \graphalt} \fulldquasipot_{\vertexC, \vertexD} - \sum_{\vertexC \to \vertexD \in \graph} \fulldquasipot_{\vertexC, \vertexD}
\end{equation}
is upper-bounded by
\begin{align}
    \min  \bigg\{
    \max_{l = 0,\dots,p-1}
    &\max\left(\potup_{\jComp_l,\jComp_{l+1}} - \potup_{\jComp_l}, \potup_{\jComp_l,\jComp_{l+1}} - \potup_{\iComp}\right)
    + 2 \varerror (\nGencomps^2 + 1) \sup_{\noiseassumptionset} \potup
    :
    \notag\\
    &
    \jComp_0 = \iComp,\, \jComp_p \in \verticesalt,\, \jComp_l \in \vertices \setminus \verticesalt,\, \jComp_l \to \jComp_{l+1} \in \restredges,\, l = 0, \dots, p-1
\bigg\}\,,
\LocalLabel{eq:graph-bound}
\end{align}
where $\varerror \in [0, 1]$ satisfies
\begin{equation}
    \varerror \geq \sup_{\state \in \noiseassumptionset} 1 - \frac{\vardown(\obj(\state))}{\varup(\obj(\state))}\,.
\end{equation}
\end{lemma}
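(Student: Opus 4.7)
The idea is to build $\graphalt \in \graphs$ by augmenting $\graph$ with a suitably chosen path in $\restredges$ that connects a ``dead-end'' vertex of $\graph$ to $\verticesalt$, and then to estimate the resulting cost difference by combining the potential-based upper bound of \cref{cor:bd_cost_nbd_basin_cor} for the added edges with the reverse-path asymmetry of \cref{lem:reverse-path-graph} for the removed ones.

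First I would identify the target vertex $\iComp$. Since $\graph \in \graphs(\init[\iComp] \not\joins \verticesalt)$ contains no directed path from $\init[\iComp]$ to $\verticesalt$ and is acyclic, iterating the (unique, where it exists) outgoing edge starting at $\init[\iComp]$ necessarily terminates at a vertex $\iComp \in \vertices \setminus \verticesalt$ with no outgoing edge in $\graph$; this $\iComp$ is in particular reachable from $\init[\iComp]$ in $\graph$. Then, given any admissible path $\iComp = \jComp_0 \to \jComp_1 \to \cdots \to \jComp_p$ in $\restredges$ with intermediate vertices in $\vertices \setminus \verticesalt$ and $\jComp_p \in \verticesalt$, I would define $\graphalt$ by adding the edges $\jComp_l \to \jComp_{l+1}$ for $l = 0,\dots, p-1$ and removing, for each $1 \leq l \leq p-1$, the outgoing edge of $\jComp_l$ in $\graph$ (if any). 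The resulting graph has exactly $\card(\vertices \setminus \verticesalt)$ edges, every non-target vertex has a unique outgoing edge, and acyclicity is preserved since the newly added path leaves $\iComp$ (which had no outgoing edge in $\graph$) and terminates in $\verticesalt$; hence $\graphalt \in \graphs$.

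The cost difference between $\graphalt$ and $\graph$ is then
\begin{equation}
\Delta
= \sum_{l=0}^{p-1} \fulldquasipot_{\jComp_l, \jComp_{l+1}}
    - \sum_{l \in A} \fulldquasipot_{\jComp_l, \graph(\jComp_l)}
\notag
\end{equation}
where $A \subset \setof{1, \ldots, p-1}$ indexes those intermediate vertices that had an outgoing edge in $\graph$. I would upper-bound each added cost $\fulldquasipot_{\jComp_l,\jComp_{l+1}}$ by $\potup_{\jComp_l,\jComp_{l+1}} - \potup_{\jComp_l}$ via \cref{cor:bd_cost_nbd_basin_cor}, and lower-bound each removed cost via \cref{lem:reverse-path-graph} by a potential difference up to a multiplicative $(1-\varerror)$ factor. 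Since at most $\nGencomps^2 + 1$ such bounds are applied and each discrepancy is controlled by $\varerror \sup_{\noiseassumptionset}\potup$, all the multiplicative corrections collapse into the additive term $2\varerror(\nGencomps^2 + 1)\sup_{\noiseassumptionset}\potup$. Telescoping the resulting sum of potential increments along the path condenses the main contribution into a single maximum term of the stated form, where the branch $\potup_{\jComp_l,\jComp_{l+1}} - \potup_{\iComp}$ handles the anchor $\jComp_0 = \iComp$ (for which no edge is removed) and those indices outside $A$, while $\potup_{\jComp_l,\jComp_{l+1}} - \potup_{\jComp_l}$ handles the generic case. Minimizing over the choice of path then gives the bound.

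The main obstacle is this last telescoping step: it must collapse a sum of potential differences into a single maximum-type quantity, which requires carefully matching the removed-edge lower bounds to the added-edge upper bounds and tracking when $\potup_{\jComp_l}$ versus $\potup_{\iComp}$ is the dominant reference level. The indices $l \notin A$ (where $\jComp_l$ had no outgoing edge in $\graph$ and no compensating removal is available) are the most delicate, and it is precisely to cover them that the $\potup_{\jComp_l,\jComp_{l+1}} - \potup_{\iComp}$ alternative appears inside the inner $\max$.
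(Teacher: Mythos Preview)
Your construction of $\graphalt$ is valid, but the cost analysis has a genuine gap. The removed edges $\jComp_l \to \graph(\jComp_l)$ point to \emph{arbitrary} vertices of $\graph$, unrelated to the path $\jComp_0 \to \cdots \to \jComp_p$. The reverse-path inequality of \cref{lem:reverse-path-graph} only compares $\fulldquasipot_{a,b}$ with $\fulldquasipot_{b,a}$; applied to $\fulldquasipot_{\jComp_l,\graph(\jComp_l)}$ it gives at best $\fulldquasipot_{\jComp_l,\graph(\jComp_l)} \geq (1-\varerror)(\potup_{\graph(\jComp_l)} - \potup_{\jComp_l})$, which involves $\potup_{\graph(\jComp_l)}$ and does not interact with the added costs $\potup_{\jComp_l,\jComp_{l+1}} - \potup_{\jComp_l}$. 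There is no telescoping, and you are left with a \emph{sum} of $p$ positive terms that cannot be collapsed into the single $\max_l$ the lemma requires.

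The paper's construction is different and this difference is essential. Instead of inserting the whole $\restredges$-path, one locates the \emph{first} index $k$ such that $\jComp_k$ already has a $\graph$-path to $\verticesalt$; then $\jComp_{k-1}$ lies in the $\iComp$-rooted tree of $\graph$, so there is a $\graph$-path $\jComp_{k-1}=\mComp_0 \to \cdots \to \mComp_q=\iComp$. One builds $\graphalt$ by \emph{reversing} this existing path and adding the single edge $\jComp_{k-1}\to\jComp_k$. Now \cref{lem:reverse-path-graph} applies directly to each reversed pair $(\mComp_l\to\mComp_{l+1}) \mapsto (\mComp_{l+1}\to\mComp_l)$, and the potential differences telescope exactly to $(1-\varerror)(\potup_{\jComp_{k-1}}-\potup_{\iComp})$. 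Combined with $\fulldquasipot_{\jComp_{k-1},\jComp_k}\leq \potup_{\jComp_{k-1},\jComp_k}-\potup_{\jComp_{k-1}}$ from \cref{lem:bd_cost_nbd_basin}, the total cost is $\potup_{\jComp_{k-1},\jComp_k}-\potup_{\iComp}$ plus the $\varerror$-error, which is one of the terms in the claimed $\max$. The key point is that only \emph{one} $\restredges$-edge is added, so the bound is automatically a single term rather than a sum.
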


\begin{proof}
\UsingNestedNamespace{lem:graph-bound}{proof}

By definition of $\graphs(\init[\iComp] \nrightarrow \verticesalt)$ (\cref{app:subsec:lemmas-mc}), there exists $\iComp \in \vertices \setminus \verticesalt$, $\jComp \in \vertices$, $\iComp \neq \jComp$ such that if we were to add the edge $\iComp \to \jComp$ to $\graph$, it would be a $\verticesalt$-graph.
In other words, $\iComp \to \jComp$ is not in $\graph$ and there are paths from $\init[\iComp] \to \iComp$ and $\jComp \to \verticesalt$ in $\graph$.

Now, take consider a path that goes from $\iComp$ to $\jComp$ with only edges from $\restredges$: $\jComp_0 = \iComp$, $\jComp_p = \jComp$ and $\jComp_l \in \vertices \setminus \verticesalt$, $\jComp_l \to \jComp_{l+1} \in \restredges$, $l = 0, \dots, p-1$.

Define $k \geq 0$ to be the smallest integer such that there is a path from $\jComp_k$ to $\verticesalt$ in $\graph$.
Such a $k$ exists since there is a path from $\jComp$ to $\verticesalt$ in $\graph$, and $k \geq 1$ since there is no path from $\iComp$ to $\verticesalt$ in $\graph$.
In particular, there is no path from $\jComp_{k-1}$ to $\verticesalt$ in $\graph$. From the definition of the graphs $\graphs(\init[\iComp] \nrightarrow \verticesalt)$, this implies either there is a path from $\iComp$ to $\jComp_{k-1}$ in $\graph$ or a path from $\jComp_{k-1}$ to $\iComp$ in $\graph$.
\begin{itemize}
    \item If there is a path from $\iComp$ to $\jComp_{k-1}$ in $\graph$, then we consider the graph $\graphalt \in \graphs(\verticesalt)$ obtained by adding the edge $\jComp_{k-1} \to \jComp_k$ to $\graph$.
        Since $\jComp_{k-1} \to \jComp_k$ is in $\restredges$, by \cref{lem:bd_cost_nbd_basin}, the cost of adding that edge is bounded as
        \begin{equation}
            \fulldquasipot_{\jComp_{k-1}, \jComp_k} \leq \potup_{\jComp_{k-1},\jComp_{k}} - \potup_{\jComp_{k-1}}\,,
        \end{equation}
        in which case \cref{\MasterName{eq:graph-bound}} is immediately satisfied.
    \item If there is a path from $\jComp_{k-1}$ to $\iComp$ in $\graph$, then we consider the graph $\graphalt \in \graphs(\verticesalt)$ obtained by reversing the path from $\jComp_{k-1}$ to $\iComp$ in $\graph$ and adding the edge $\jComp_{k-1} \to \jComp_k$.
        Let us denote by $\mComp_0 = \jComp_{k-1}$, $\mComp_q = \iComp$ and $\mComp_l \in \vertices \setminus \verticesalt$, $\mComp_l \to \mComp_{l+1} \in \graph$, $l = 0, \dots, q-1$ the path from $\jComp_{k-1}$ to $\iComp$.
        By \cref{lem:reverse-path-graph,lem:bd_cost_nbd_basin} the cost of reversing that path and adding the edge $\jComp_{k-1} \to \jComp_k$ is upper-bounded by
        \begin{align}
            \sum_{l = 0}^{q-1} \parens*{\fulldquasipot_{\mComp_{l+1}, \mComp_l} - \sum_{l = 0}^{p-1} \fulldquasipot_{\mComp_l, \mComp_{l+1}}} 
            + \fulldquasipot_{\jComp_{k-1}, \jComp_k} 
            &\leq 
            \sum_{l=0}^{q-1} \parens*{\varerror \fulldquasipot_{\mComp_{l+1}, \mComp_l} + (1 - \varerror) (\potup_{\mComp_{l}} - \potup_{\mComp_{l+1})}}
            + \potup_{\jComp_{k-1},\jComp_{k}} - \potup_{\jComp_{k-1}}
            \notag\\
            &=
            \sum_{l=0}^{q-1} {\varerror \fulldquasipot_{\mComp_{l+1}, \mComp_l} + (1 - \varerror) (\potup_{\jComp_{k-1}} - \potup_{\iComp})}
            + \potup_{\jComp_{k-1},\jComp_{k}} - \potup_{\jComp_{k-1}}\,.
            \LocalLabel{eq:graph-bound-reverse}
        \end{align}
        Now, \cref{cor:bd_cost_nbd_basin_cor} ensures that the sum $ \sum_{l=0}^{q-1} {\varerror \fulldquasipot_{\mComp_{l+1}, \mComp_l}}$ is upper-bounded by $2 \nGencomps^2 \varerror \sup_{\noiseassumptionset} \potup$ so we obtain that the \ac{RHS} of \cref{\LocalName{eq:graph-bound-reverse}} is upper-bounded by 
        \begin{align}
            2 \nGencomps^2 \varerror \sup_{\noiseassumptionset} \potup 
            + \potup_{\jComp_{k-1},\jComp_{k}} - \potup_{\iComp} + 2\varerror \sup_{\noiseassumptionset} \potup\,,
        \end{align}
        which concludes the proof.
\end{itemize}
\end{proof}

To obtain the main result, we will require the following lemma.
\begin{lemma}
\label{lem:interpretable-bound-lb}
\UsingNamespace{lem:interpretable-bound-lb}
Under \cref{asm:noise-potential}, if $\noiseassumptionset$ is large enough so that it satisfies the assumptions of \cref{lem:req-noiseassumptionset}, then for any $\iComp, \jComp \in \vertices \setminus \verticesalt$, $\iComp \neq \jComp$, it holds that
For any $\iComp \in \vertices \setminus \verticesalt$, $\jComp \in \vertices$, $\iComp \neq \jComp$, it holds that
\begin{equation}
    \fulldquasipot_{\iComp, \jComp} \geq \fulldquasipotdown_{\iComp, \jComp} \defeq 
    \inf \setdef*{
        \sup_{\timealt < \time} \parens*{\potdown(\pth_{\time}) - \potdown(\pth_{\timealt})}
    }{
        \pth \in \contfuncs([0, \horizon]),\, \horizon \in \nats,\, \pth_0 \in \gencomp_\iComp,\, \pth_\horizon \in \gencomp_\jComp
    }\,.
\end{equation}
\end{lemma}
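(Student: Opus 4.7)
The plan is to show that, for every absolutely continuous path $\pth$ from $\gencomp_\iComp$ to $\gencomp_\jComp$, the action $\action_\horizon(\pth)$ dominates every upward excursion $\potdown(\pth_\time) - \potdown(\pth_\timealt)$ with $\timealt<\time$ of the potential $\potdown$ along $\pth$, and then to pass to infima. First, I would invoke \cref{lem:req-noiseassumptionset} to restrict the computation of $\fulldquasipot_{\iComp,\jComp}$ to paths entirely contained in $\noiseassumptionset$, which is the domain on which \cref{asm:noise-potential} applies. For such a path $\pth \in \contfuncs([0,\horizon])$ (non-absolutely-continuous paths have infinite action and the inequality is trivial), I would apply \cref{lem:pth_resc} to produce a reparametrization $\widetilde\pth \in \contfuncs([0,\horizonalt])$ satisfying $\norm{\dot{\widetilde\pth}_s} = \norm{\grad\obj(\widetilde\pth_s)}$ almost everywhere, together with
$$\action_\horizon(\pth) \geq \int_0^{\horizonalt} \frac{\norm{\dot{\widetilde\pth}_s + \grad\obj(\widetilde\pth_s)}^2}{2\varup(\obj(\widetilde\pth_s))}\dd s.$$

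The second step is to estimate the $\potdown$-increment along $\widetilde\pth$. Since $\potdown = 2\primvardown\circ\obj$ with $\primvardown' = 1/\varup$, the chain rule yields $\frac{d}{ds}\potdown(\widetilde\pth_s) = 2\inner{\grad\obj(\widetilde\pth_s)}{\dot{\widetilde\pth}_s}/\varup(\obj(\widetilde\pth_s))$, and the elementary bound $2\inner{a}{b} \leq \tfrac{1}{2}\norm{a+b}^2$ (equivalent to $\norm{a-b}^2 \geq 0$) gives, for any $0 \leq \timealt \leq \time \leq \horizonalt$,
$$\potdown(\widetilde\pth_\time) - \potdown(\widetilde\pth_\timealt) \leq \int_{\timealt}^{\time} \frac{\norm{\dot{\widetilde\pth}_s + \grad\obj(\widetilde\pth_s)}^2}{2\varup(\obj(\widetilde\pth_s))}\dd s \leq \action_\horizon(\pth).$$
Taking the supremum over $\timealt < \time$ and noting that the monotone time change relating $\pth$ and $\widetilde\pth$ preserves the ordered image of the curve, so that the supremum of $\potdown$-increments is identical along $\pth$ and along $\widetilde\pth$, I obtain $\sup_{\timealt<\time}(\potdown(\pth_\time) - \potdown(\pth_\timealt)) \leq \action_\horizon(\pth)$.

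To conclude, I would take the infimum over admissible paths on both sides. The left-hand side becomes $\fulldquasipot_{\iComp,\jComp}$ by \cref{lem:req-noiseassumptionset}, while the right-hand side is an infimum over a subset of all continuous paths with the prescribed endpoints, hence no smaller than $\fulldquasipotdown_{\iComp,\jComp}$, which yields the claim. The main subtle point, already absorbed into \cref{lem:pth_resc}, is that \cref{asm:noise-potential} gives only a \emph{local} upper bound on $\hamiltalt$, valid for $\norm{\mom} \leq 2\norm{\grad\obj}/\varup$; the equal-speed reparametrization $\norm{\dot{\widetilde\pth}_s} = \norm{\grad\obj(\widetilde\pth_s)}$ is exactly what keeps the Fenchel-dual vector $\mom = (\dot{\widetilde\pth}_s + \grad\obj(\widetilde\pth_s))/\varup$ inside this validity region, so the above duality step is legitimate. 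With that observation in place, the rest of the argument is essentially bookkeeping.
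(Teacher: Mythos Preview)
Your proposal is correct and aligns with the paper's approach. The paper's own proof is a one-line reference—``follows the same proof as \cref{lem:basic_bdpot_ineq} with the additional use of \cref{lem:req-noiseassumptionset} to be able to leverage \cref{asm:noise-potential}''—and your write-up is a faithful and more detailed execution of that sketch: restrict to paths in $\noiseassumptionset$, then bound the $\potdown$-increment by the action via the elementary inequality $2\inner{a}{b}\le\tfrac12\norm{a+b}^2$. The one point where you are more explicit than the paper is in handling the \emph{local} nature of the Hamiltonian bound in \cref{asm:noise-potential} by routing through the equal-speed reparametrization of \cref{lem:pth_resc}; the paper's terse reference to \cref{lem:basic_bdpot_ineq} leaves this step implicit (an equivalent direct argument is to take $\mom=2\nabla\obj/\varup$ in the Fenchel dual, which lies exactly on the boundary of the validity region and yields $\lagrangian(\pth_s,\dot\pth_s)\ge 2\inner{\nabla\obj(\pth_s)}{\dot\pth_s}/\varup(\obj(\pth_s))$ without reparametrizing), but your route is entirely sound.
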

\begin{proof}
    This lemma follows the same proof as \cref{lem:basic_bdpot_ineq} with the additional use of \cref{lem:req-noiseassumptionset} to be able to leverage \cref{asm:noise-potential}.
\end{proof}

We are now finally ready to prove the main result.
\begin{theorem}
    \label{thm:interpretable-bound}
    \UsingNamespace{thm:interpretable-bound}
Assume that \cref{asm:noise-potential} holds with $\noiseassumptionset$ satisfying the assumptions of \cref{lem:bd_cost_nbd_basin,lem:req-noiseassumptionset}.

For any $\init[\iComp] \in \vertices \setminus \verticesalt$, 
define 
\begin{equation}
    r \defeq \min \setdef*{
    \sum_{\vertexC \to \vertexD \in \graph} \fulldquasipotup_{\vertexC, \vertexD}
}
{
    \graph \in \graphs(\init[\iComp] \not \joins \verticesalt)
}\,.
\end{equation}
there exists $\iComp \in \vertices \setminus \verticesalt$ such that $\fulldquasipotdown_{\init[\iComp],\iComp} \leq r$ such that $\timequasipotuprelativeto{\init[\iComp]}{\verticesalt}$ is upper-bounded by
\begin{align}
    \min  \bigg\{
    \max_{l = 0,\dots,p-1}
    &\max\left(\potup_{\jComp_l,\jComp_{l+1}} - \potup_{\jComp_l}, \potup_{\jComp_l,\jComp_{l+1}} - \potup_{\iComp}\right)
    + 2 \varerror (\nGencomps^2 + 1) \sup_{\noiseassumptionset} \potup
    :
    \notag\\
    &
    \jComp_0 = \iComp,\, \jComp_p \in \verticesalt,\, \jComp_l \in \vertices \setminus \verticesalt,\, \jComp_l \to \jComp_{l+1} \in \restredges,\, l = 0, \dots, p-1
\bigg\}\,,
\LocalLabel{eq:graph-bound}
\end{align}
where $\varerror \in [0, 1]$ satisfies
\begin{equation}
    \varerror \geq \sup_{\state \in \noiseassumptionset} 1 - \frac{\vardown(\obj(\state))}{\varup(\obj(\state))}\,.
\end{equation}
\end{theorem}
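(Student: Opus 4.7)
The plan is to combine the graph-exchange inequality of \cref{lem:graph-bound} with the bookkeeping provided by \cref{lem:fulldquasipotd-time}, and then extract the auxiliary bound $\fulldquasipotdown_{\init[\iComp], \iComp} \leq r$ by exploiting subadditivity of $\fulldquasipot$ under path concatenation together with \cref{lem:interpretable-bound-lb}.

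First, I would invoke \cref{lem:fulldquasipotd-time} to bound $\timequasipotuprelativeto{\init[\iComp]}{\verticesalt}$ by the difference $\min_{\graph \in \graphs(\verticesalt)} \sum \fulldquasipot - \min_{\graph \in \graphs(\init[\iComp] \not\joins \verticesalt)} \sum \fulldquasipot$. Next, I would pick $\graph_0$ to be a minimizer of the second term, so that $\sum_{\graph_0} \fulldquasipot$ equals that minimum. Applying \cref{lem:graph-bound} to $\graph_0$ produces a node $\iComp \in \vertices \setminus \verticesalt$ reachable from $\init[\iComp]$ through a directed path $\init[\iComp] = \vertexD_0 \to \dots \to \vertexD_m = \iComp$ inside $\graph_0$, together with a graph $\graphalt \in \graphs(\verticesalt)$ such that $\sum_\graphalt \fulldquasipot - \sum_{\graph_0} \fulldquasipot$ is bounded by the right-hand side of the displayed inequality in the theorem's statement. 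Since $\sum_\graphalt \fulldquasipot \geq \min_{\graph \in \graphs(\verticesalt)} \sum \fulldquasipot$ and $\sum_{\graph_0} \fulldquasipot$ equals the corresponding pruned minimum by construction, the desired bound on $\timequasipotuprelativeto{\init[\iComp]}{\verticesalt}$ follows immediately.

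For the companion statement $\fulldquasipotdown_{\init[\iComp], \iComp} \leq r$, I would first observe that $\fulldquasipot$ is subadditive under path concatenation, which is immediate from \cref{def:full-transition-cost}: concatenating minimizing paths $\gencomp_{\vertexD_l} \to \gencomp_{\vertexD_{l+1}}$ for each edge along the $\graph_0$-path yields an admissible path whose action is the sum of individual actions. Therefore $\fulldquasipot_{\init[\iComp], \iComp} \leq \sum_{l=0}^{m-1} \fulldquasipot_{\vertexD_l, \vertexD_{l+1}} \leq \sum_{\graph_0} \fulldquasipot$, where the last inequality uses non-negativity of the edge costs. By \cref{cor:bd_cost_nbd_basin_cor}, $\fulldquasipot \leq \fulldquasipotup$ edgewise, so by monotonicity of the minimum $\sum_{\graph_0} \fulldquasipot \leq \min_{\graph \in \graphs(\init[\iComp] \not\joins \verticesalt)} \sum \fulldquasipotup = r$, whence $\fulldquasipot_{\init[\iComp], \iComp} \leq r$. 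Finally, \cref{lem:interpretable-bound-lb} yields $\fulldquasipotdown_{\init[\iComp], \iComp} \leq \fulldquasipot_{\init[\iComp], \iComp} \leq r$.

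The only subtlety I anticipate is ensuring that one and the same $\iComp$ serves both conclusions simultaneously; this is automatic because \cref{lem:graph-bound} outputs a single $\iComp$ once $\graph_0$ is fixed. A minor bookkeeping point is that $r$ is defined with the cost $\fulldquasipotup$ whereas the time bound is most naturally derived with respect to $\fulldquasipot$; this discrepancy is absorbed by the edgewise comparison $\fulldquasipot \leq \fulldquasipotup$ and monotonicity of the minimum. Once these points are in place, the argument reduces entirely to chaining together previously established results, and I do not expect any further technical obstacles.
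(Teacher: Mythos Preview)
Your proposal is correct and follows essentially the same approach as the paper's proof: both invoke \cref{lem:fulldquasipotd-time}, select a minimizing pruning graph $\graph_0$, apply \cref{lem:graph-bound} to extract the vertex $\iComp$ and the companion $\verticesalt$-graph, and then derive $\fulldquasipotdown_{\init[\iComp],\iComp}\le r$ via the path in $\graph_0$, subadditivity of $\fulldquasipot$, the edgewise comparison $\fulldquasipot\le\fulldquasipotup$ from \cref{cor:bd_cost_nbd_basin_cor}, and \cref{lem:interpretable-bound-lb}. Your write-up is in fact slightly more explicit than the paper's about the subadditivity step, which the paper compresses into a single displayed chain of inequalities.
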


\begin{proof}
\UsingNestedNamespace{thm:interpretable-bound}{proof}
By \cref{lem:fulldquasipot-finaldquasipot},
    $\timequasipotuprelativeto{\init[\iComp]}{\verticesalt}$ is upper-bounded as
\begin{equation}
    \timequasipotuprelativeto{\init[\iComp]}{\verticesalt}
    \leq 
    \min \setdef*{\sum_{\vertexC \to \vertexD \in \graph} \fulldquasipot_{\vertexC,\vertexD}}{\graph \in \graphs(\verticesalt)}
    -
    \min \setdef*{ \sum_{\vertexC \to \vertexD \in \graph} \fulldquasipot_{\vertexC,\vertexD}}{\graph \in \graphs(\init[\iComp] \not\joins \verticesalt)}\,.
    \LocalLabel{eq:timequasipotuprelativeto}
\end{equation}
By \cref{cor:bd_cost_nbd_basin_cor}, we have that
\begin{equation}
    \min \setdef*{\sum_{\vertexC \to \vertexD \in \graph} \fulldquasipot_{\vertexC,\vertexD}}{\graph \in \graphs(\init[\iComp] \not\joins \verticesalt)}
    \leq 
    \min \setdef*{\sum_{\vertexC \to \vertexD \in \graph} \fulldquasipotup_{\vertexC,\vertexD}}{\graph \in \graphs(\init[\iComp] \not\joins \verticesalt)} = r\,.
\end{equation}

Hence, take $\graph \in \argmin \setdef*{\sum_{\vertexC \to \vertexD \in \graph} \fulldquasipot_{\vertexC,\vertexD}}{\graph \in \graphs(\verticesalt \nrightarrow \verticesalt)}$ and it must satisfy
\begin{equation}
    \sum_{\vertexC \to \vertexD \in \graph} \fulldquasipot_{\vertexC,\vertexD} \leq r\,.
    \LocalLabel{eq:graph-bound-r}
\end{equation}
Apply \cref{lem:graph-bound} to obtain that there exists $\iComp \in \vertices \setminus \verticesalt$ such that there is a path from $\init[\iComp]$ to $\iComp$ in $\graph$ and there is $\graphalt \in \graphs$ such that the cost difference between $\graph$ and $\graphalt$ is bounded by \cref{\MasterName{eq:graph-bound}}.
Combined with \cref{\LocalName{eq:timequasipotuprelativeto}}, this yields that $\timequasipotuprelativeto{\init[\iComp]}{\verticesalt}$ is upper-bounded by \cref{\MasterName{eq:graph-bound}}.

Finally, denote by $\jComp_0 = \init[\iComp]$, $\jComp_p = \iComp$ and $\jComp_l \in \vertices \setminus \verticesalt$, $\jComp_l \to \jComp_{l+1} \in \graph$, $l = 0, \dots, p-1$ the path from $\init[\iComp]$ to $\iComp$ in $\graph$.
\Cref{\LocalName{eq:graph-bound-r}} ensures that
\begin{equation}
    r \geq \sum_{l = 0}^{p-1} \fulldquasipot_{\jComp_l, \jComp_{l+1}} \geq \fulldquasipotdown_{\init[\iComp], \iComp}\,,
\end{equation}
which, combined with \cref{lem:interpretable-bound-lb}, implies that $\fulldquasipotdown_{\init[\iComp], \iComp} \leq r$.
\end{proof}


\section{Numerical validation}
\label{app:numerics}

In this last appendix, we provide some more numerical experiments as validation of the estimates provided by \cref{thm:globtime,thm:globtime-var}.
For concreteness, we consider the following multimodal test functions:
\begin{subequations}
\begin{enumerate}
[\upshape(\itshape a\hspace*{1pt}\upshape)]
\item
\emph{The Styblinski\textendash Tang function:}
\begin{equation}
\label{eq:StyblinskiTang}
\obj(\point_{1},\point_{2})
	= \frac{1}{2} \sum_{\coord=1}^{\nCoords}
		\parens{\point_{\coord}^{4} - 16\point_{\coord}^{2} + 5\point_{\coord}}
\end{equation}
\item
\emph{The Himmelblau function:}
\begin{equation}
\label{eq:Himmelblau}
\obj(\point_{1},\point_{2})
	= (\point_{1}^{2} + \point_{2} - 11)^{2}
		+ (\point_{1} + \point_{2}^{2} - 7)^{2}
\end{equation}
\end{enumerate}
\end{subequations}
For the Styblinski\textendash Tang function, we treat the $2$-dimensional case in order to facilitate comparisons with the other examples, and for ease of visualization.

In \cref{fig:numerics}, we plot the landscape of $\obj$ for each of the above cases, as well as the runtime statistics of \eqref{eq:SGD} in each case.
As in \cref{fig:exp}, we considered different values of the algorithm's learning rate $\step$, and for each value of $\step$, we performed $500$ runs of \eqref{eq:SGD} with Gaussian noise ($\sdev = 50$).
Then, for each ``particle'' of \eqref{eq:SGD}, we recorded the number of iterations required to reach the vicinity of $\critpoint_{1}$ (within $\margin = 10^{-2}$) from the same, fixed initialization.
The resulting boxplots are displayed in \cref{fig:numerics} in log-inverse scale, with a point density overlay to illustrate the full empirical distribution of $\globtime$ for every $\step$.


\begin{figure}[tbp]
\centering
\footnotesize
\begin{subfigure}[b]{\textwidth}
\centering
\includegraphics[height=38ex]{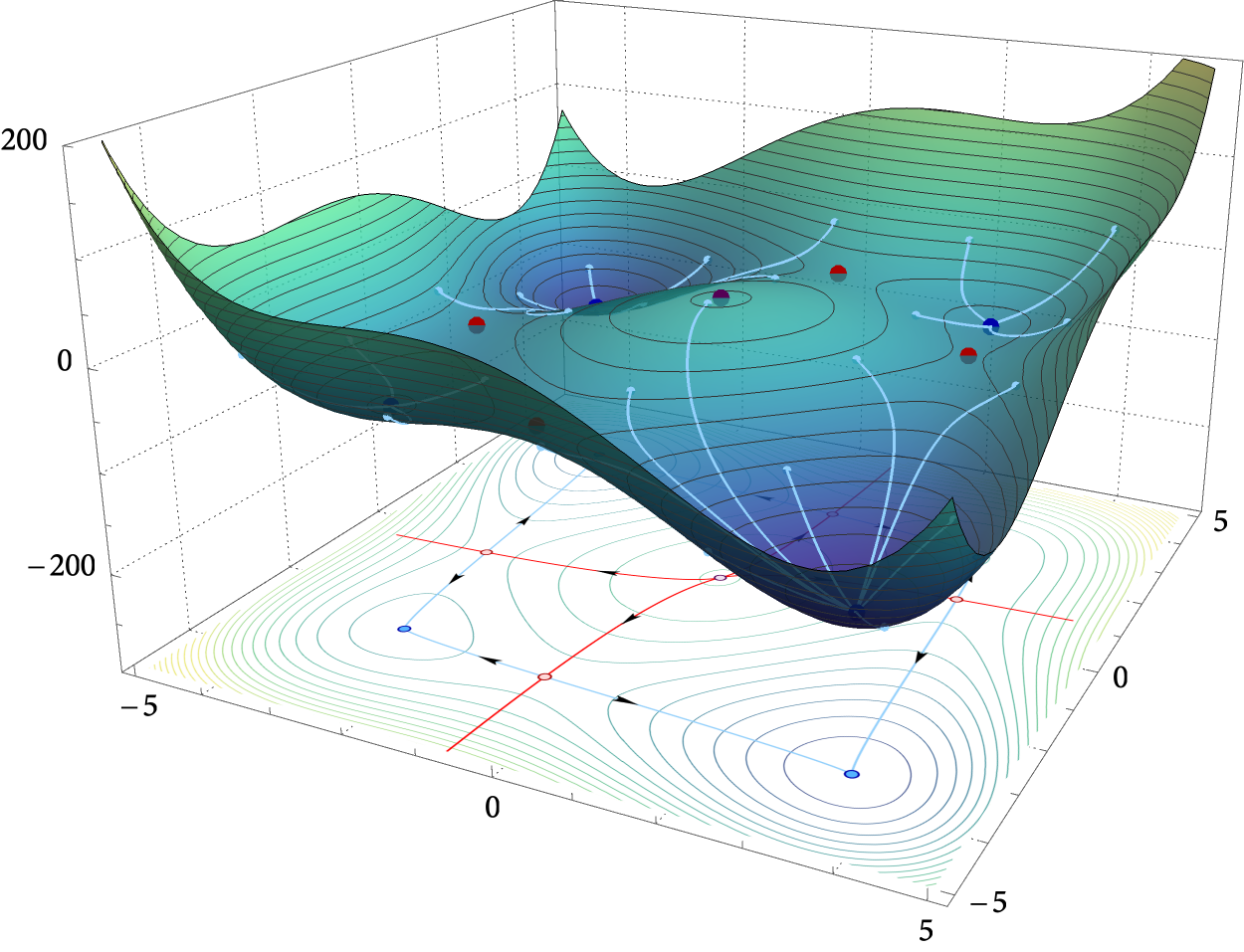}
\hfill
\includegraphics[height=38ex]{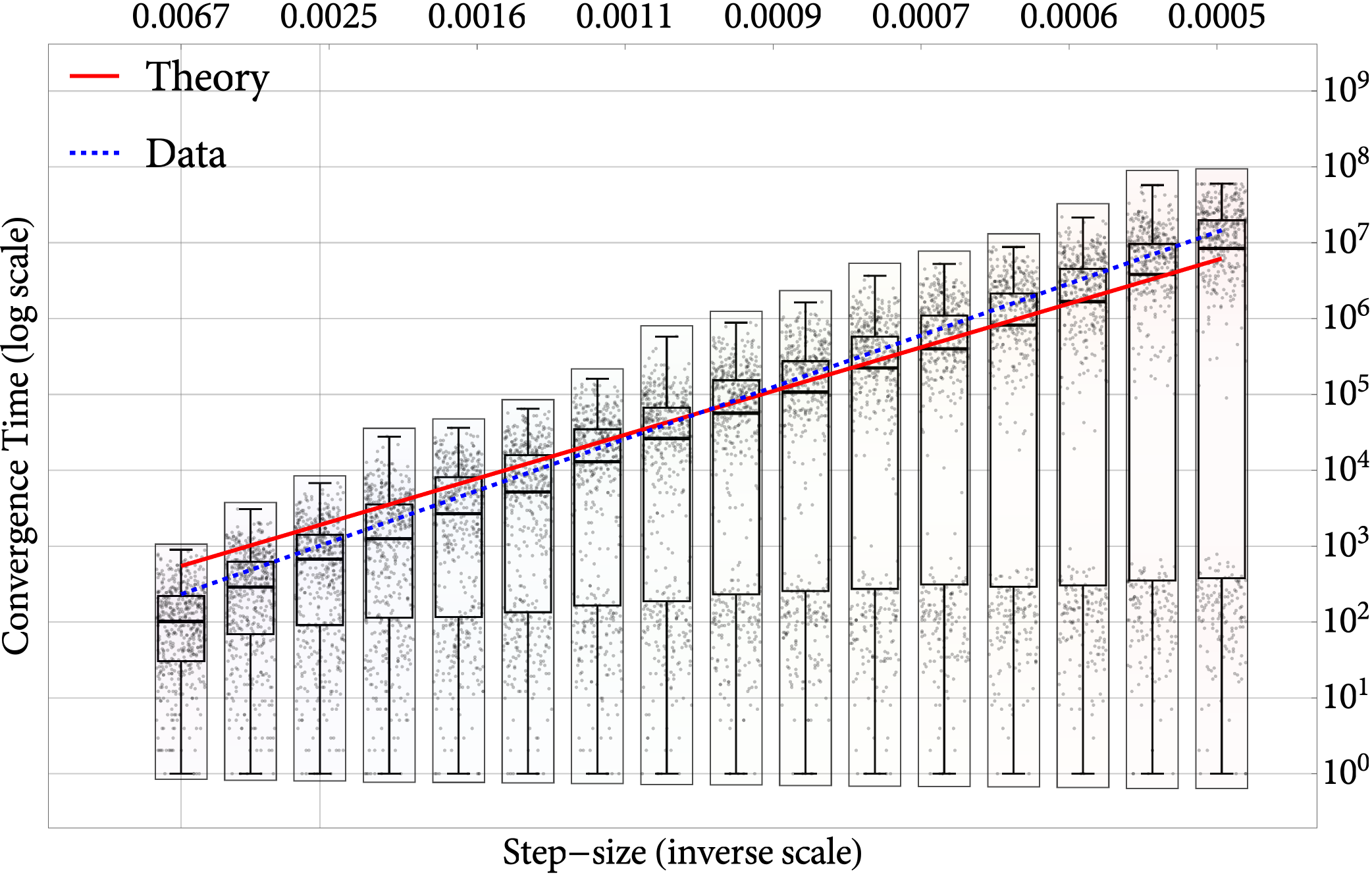}
\caption{Loss landscape and hitting time statistics for the Styblinski\textendash Tang function \eqref{eq:StyblinskiTang}.}
\medskip
\label{fig:bottomrow}
\end{subfigure}
\begin{subfigure}[b]{\textwidth}
\centering
\includegraphics[height=38ex]{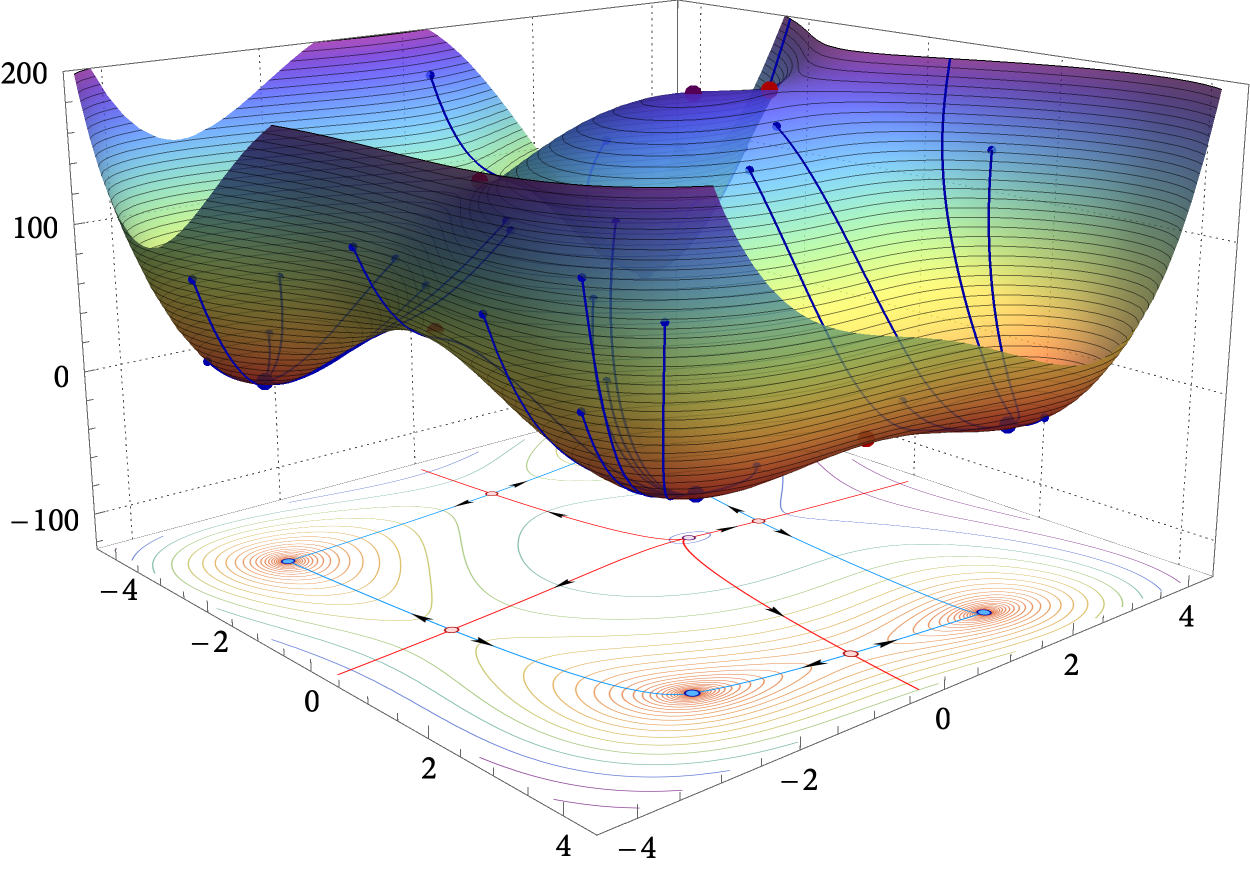}
\hfill
\includegraphics[height=38ex]{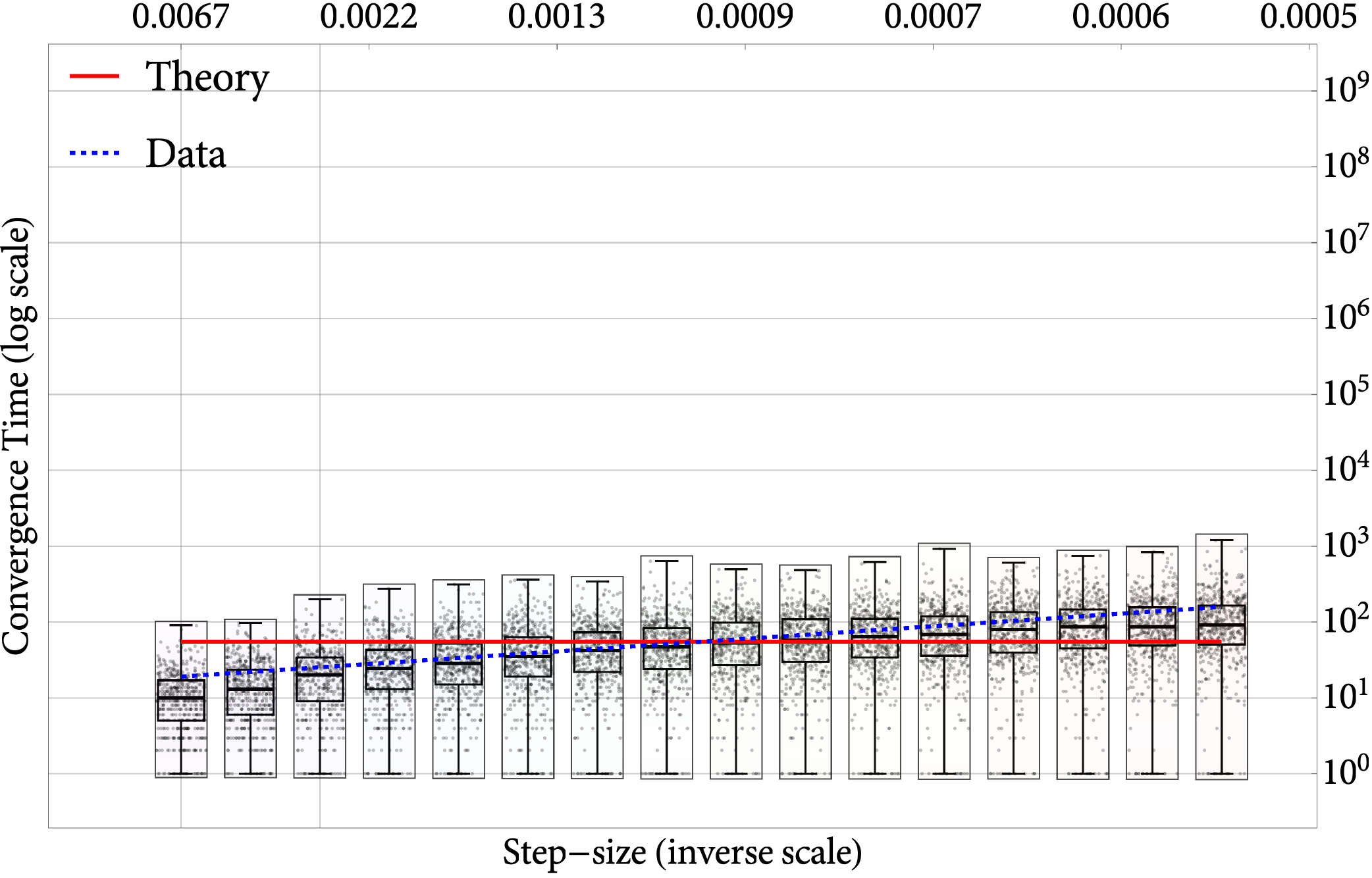}
\caption{Loss landscape and hitting time statistics for the Himmelblau function \eqref{eq:Himmelblau}.}
\label{fig:toprow}
\end{subfigure}
\caption{Visualization and hitting time statistics for the Styblinski\textendash Tang (top) and Himmelblau functions (bottom).
In both cases, the loss landscape is superimposed on the stable and unstable manifolds of the function's critical points, which define the edges of its transition graph (the arrows indicate the direction of the gradient flow, and hence the transitions which come at no cost).}
\label{fig:numerics}
\end{figure}


In both cases, we plotted the theoretical prediction of \cref{thm:globtime} (labeled ``Theory''), against a linear regression plot of the observed runtime data (labeled ``Data'').%
\footnote{In more detail, we plotted a linear fit with slope given by \cref{thm:globtime}.}
For the Styblinski\textendash Tang function, we observe a situation similar to the three-hump-camel example of \cref{fig:exp}:
Both theory and experiment indicate that the global convergence time $\globtime$ of \eqref{eq:SGD} scales exponentially in the inverse of the algorithm's step-size $\step$, and there is excellent agreement between theory and experiment ($R^{2} = 0.973$ and $R^{2} = 0.989$ respectively).
By contrast, in the Himmelblau case, runtimes are significantly slower, and the slope of the theoretical estimator is $0$:
This is because the Himmelblau function does not have any spurious minima, so $\globtime$ enters the subexponential regime;
this is confirmed by the fact that the algorithm's runtime is several orders of magnitude smaller, with strong agreeemnt between our theoretical estimate and the numerically measured values ($R^{2} = 0.913$ and $R^{2} = 0.976$ respectively).

\section{Auxiliary results}
\label{app:aux}
\subsection{Truncated Gaussian distribution}
\label{app:truncated_gaussian}

Let us restate two technical lemmas from \ourpaper on which we rely.
The first one provides bounds on the Hamiltonian and Lagrangian of a truncated Gaussian distribution.

\begin{lemma}[{\ourpaper[Lem.~F.2]}]
	\label{lem:truncated_gaussian:lagrangian}
	Consider $\rv \sim \gaussian(0, \variance \identity)$ a multivariate Gaussian distribution with $\variance > 0$. For $\Radius > 0$, define the truncated Gaussian \ac{rv} $\trv$ by conditioning $\rv$ to the ball $\clball(0, \Radius)$.
	Define
	\begin{align}
		\hamiltalt(\mom)     & \defeq \log \ex \bracks*{
			e^{\inner{\mom}{\trv}}
		}                                             \\
		\lagrangianalt(\mom) & \defeq \hamiltalt^*(\mom)\,,
	\end{align}
	and
	\begin{equation}
		\errorterm(\variance, \Radius) \defeq
		e^{-\tfrac{\Radius^2}{16 \variance}}2^{\dims + 3}(\dims + 1)
	\end{equation}
	and assume that $\Radius > 0$ satisfies
	\begin{equation}
		\Radius \geq 4 \sdev \sqrt{(\dims + 3) \log 2 + \log(\dims + 1)}\,.
	\end{equation}
	Then, for any $\mom \in \vecspace$ such that $\norm{\mom} \leq \frac{\Radius}{2 \variance}$, $\vel \in \vecspace$ such that $\norm{\vel} \leq \frac{\Radius}{4}$, it holds that
	\begin{align}
		\parens*{1 - \errorterm(\variance, \Radius)} \frac{\variance \norm{\mom}^2}{2}
		\leq & \hamiltalt(\mom) \leq
		\parens*{1 + \errorterm(\variance, \Radius)} \frac{\variance \norm{\mom}^2}{2} \\
		\parens*{1 - 2\errorterm(\variance, \Radius)} \frac{\norm{\vel}^2}{2 \variance}
		\leq & \lagrangianalt(\vel) \leq
		\parens*{1 + 2\errorterm(\variance, \Radius)} \frac{\norm{\vel}^2}{2 \variance}\,.
	\end{align}

\end{lemma}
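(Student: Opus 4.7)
The plan is to derive both estimates from a single reformulation of the truncated MGF via the Cameron–Martin identity. Completing the square in the Gaussian density yields
\begin{equation*}
    \ex\bracks{e^{\inner{\mom}{\rv}} \one\{\norm{\rv} \leq \Radius\}} = e^{\variance \norm{\mom}^2/2}\,\prob\parens{\norm{\rv + \variance \mom} \leq \Radius},
\end{equation*}
so dividing by $\prob(\norm{\rv} \leq \Radius)$ and taking logs gives
\begin{equation*}
    \hamiltalt(\mom) = \frac{\variance\norm{\mom}^2}{2} + \log \frac{1 - q(\mom)}{1 - q(0)},\qquad q(\mom) \defeq \prob\parens{\norm{\rv + \variance\mom} > \Radius}.
\end{equation*}
The entire task then reduces to bounding the logarithmic correction in terms of $\errorterm\,\variance\norm{\mom}^2/2$.

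Next I would estimate the tail $q(\mom)$ via Chernoff applied to $\norm{\rv}^2/\variance$ at the parameter $\lambda = 1/(4\variance)$: the Laplace identity $\ex[e^{\lambda\norm{\rv}^2}] = (1-2\lambda\variance)^{-\dims/2}$ yields $\prob(\norm{\rv} > \Radius/2) \leq 2^{\dims/2} e^{-\Radius^2/(16\variance)}$. The constraint $\norm{\mom} \leq \Radius/(2\variance)$ forces $\norm{\variance\mom} \leq \Radius/2$, so by the triangle inequality $\norm{\rv + \variance\mom} > \Radius \Rightarrow \norm{\rv} > \Radius/2$, and consequently $q(\mom) \leq 2^{\dims/2} e^{-\Radius^2/(16\variance)}$; the same bound holds a fortiori for $q(0)$. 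The hypothesis $\Radius \geq 4\sdev\sqrt{(\dims+3)\log 2 + \log(\dims+1)}$ is calibrated precisely so that $2^{\dims/2} e^{-\Radius^2/(16\variance)} \leq 1/(2^{\dims/2+3}(\dims+1))$, bringing both $q(\mom)$ and $q(0)$ safely into the regime $[0,1/2]$ where the elementary estimate $\abs{\log(1-a) - \log(1-b)} \leq 2\abs{a-b}/(1-\max(a,b))$ applies.

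The crucial step is upgrading the pointwise tail bound into a \emph{multiplicative} error. Here I would use the symmetry $\rv \stackrel{d}{=} -\rv$, which makes $q$ an even function of $\mom$, so that $\nabla q(0) = 0$ and the Taylor remainder is $q(\mom) - q(0) = \tfrac{1}{2}\mom^\top \nabla^2 q(\xi)\mom$ for some $\xi$ on the segment from $0$ to $\mom$. Differentiating under the integral sign gives $\nabla^2 q(\xi)$ as an expectation of $\rv\rv^\top - \variance I$ against the indicator $\one\{\norm{\rv + \variance\xi} > \Radius\}$; applying Cauchy–Schwarz with the fourth moment of $\rv$ and the tail bound from the previous paragraph yields $\abs{q(\mom) - q(0)} \leq C_\dims\,\variance\norm{\mom}^2 \cdot e^{-\Radius^2/(16\variance)}$, with the combinatorial prefactors absorbed into the explicit constant $2^{\dims+3}(\dims+1)$ appearing in $\errorterm$. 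Substituting produces $\abs{\hamiltalt(\mom) - \variance\norm{\mom}^2/2} \leq \errorterm\,\variance\norm{\mom}^2/2$ as claimed. The Lagrangian bounds then follow by Legendre duality: the maximizer $\mom^\star$ of $\inner{\mom}{\vel} - \hamiltalt(\mom)$ satisfies $\nabla\hamiltalt(\mom^\star) = \vel$ and the Hamiltonian bound forces $\mom^\star \approx \vel/\variance$; the constraint $\norm{\vel} \leq \Radius/4$ ensures $\norm{\mom^\star} \leq \Radius/(2\variance)$, so the Hamiltonian estimate is valid at $\mom^\star$, and plugging back gives $\lagrangianalt(\vel) = \norm{\vel}^2/(2\variance)$ up to relative error $2\errorterm$, the factor $2$ accounting for the combined contributions from locating $\mom^\star$ and from evaluating $\hamiltalt(\mom^\star)$.

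The main obstacle will be exactly the multiplicative-versus-additive upgrade in the third paragraph: the naïve Gaussian tail bound is additive and uniform in $\mom$, which is inadequate near $\mom = 0$ where the target RHS $\errorterm\,\variance\norm{\mom}^2/2$ vanishes quadratically. Exploiting the evenness of $q$ to kill the linear term and force quadratic scaling is essential, and tracking the dimension-dependent constants through the Taylor remainder so that they match the explicit form of $\errorterm$ is the delicate bookkeeping that requires the most care.
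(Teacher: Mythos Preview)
The paper does not actually prove this lemma: it is explicitly restated from the companion work \cite{AIMM24-ICML} without proof (see the opening sentence of \cref{app:truncated_gaussian}, ``Let us restate two technical lemmas from \ourpaper\ on which we rely''). There is therefore no proof in the present paper to compare your proposal against.

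On the substance of your plan, the Cameron--Martin decomposition $\hamiltalt(\mom)=\tfrac{\variance\norm{\mom}^2}{2}+\log\frac{1-q(\mom)}{1-q(0)}$ together with the evenness of $q$ to kill the linear Taylor term is the correct skeleton, and would succeed. Two points deserve tightening. First, Cauchy--Schwarz against the tail indicator would produce $\sqrt{q(\xi)}\sim e^{-\Radius^2/(32\variance)}$, not $e^{-\Radius^2/(16\variance)}$, so the exponent in $\errorterm$ would be off by a factor of two; instead bound $\ex[\norm{\rv\rv^\top-\variance I}\one\{\norm{\rv}>\Radius/2\}]$ directly via the same Chernoff parameter $\lambda=1/(4\variance)$ used for the tail (differentiating the chi-square MGF once), which recovers $e^{-\Radius^2/(16\variance)}$ with an extra polynomial factor in $\dims$ already accounted for in $\errorterm$. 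Second, for the \emph{upper} bound on $\lagrangianalt(\vel)$ you need a lower bound on $\hamiltalt$ valid at the maximiser $\mom^\star$, and your claim that $\norm{\vel}\leq\Radius/4$ forces $\norm{\mom^\star}\leq\Radius/(2\variance)$ is not automatic: the supremum ranges over all $\mom$, and outside the ball you have no Hamiltonian lower bound. One clean way to close this is to note that $\nabla\hamiltalt(\mom)$ is the mean of $\gaussian(\variance\mom,\variance I)$ conditioned on $\clball(0,\Radius)$, and to show from the mild-truncation estimate that $\norm{\nabla\hamiltalt(\mom)}>\Radius/4$ whenever $\norm{\mom}=\Radius/(2\variance)$, so the first-order condition $\nabla\hamiltalt(\mom^\star)=\vel$ cannot be met on or beyond that sphere.
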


We will also require the following technical lemma.
\begin{lemma}[{\ourpaper[Lem.~F.3]}]
    \label{lemma:almost_sq_norm_opt_rescale}
	Consider $\vel, \velalt \in \vecspace$ such that $0 < \norm{\velalt} \leq \tfrac{\mu \Radius}{2}$ for some $\Radius, \mu > 0$.
	Define,
	\begin{equation}
		\fn(u) = \sup_{\mom \in \vecspace: \norm{\mom} \leq \Radius} \inner{\mom, u} - \half[\mu] \norm{\mom}^2\,,
	\end{equation}
	then, with $\lambda = \frac{\norm{\vel}}{\norm{\velalt}}$,
	\begin{equation}
		\lambda \fn \parens*{\frac{\vel}{\lambda} + \velalt} \leq \fn(\vel + \velalt)\,.
	\end{equation}
\end{lemma}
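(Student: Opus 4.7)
The plan is to exploit the explicit form of $\fn$: by direct optimization of the concave function $\mom \mapsto \inner{\mom}{u} - \half[\mu]\norm{\mom}^2$ subject to $\norm{\mom} \leq \Radius$, one sees $\fn$ is the Huber-type function equal to $\norm{u}^2/(2\mu)$ when $\norm{u} \leq \mu\Radius$, and to $\Radius\norm{u} - \mu\Radius^2/2$ otherwise. Since $\lambda = \norm{\vel}/\norm{\velalt}$, one has $\norm{\vel/\lambda} = \norm{\velalt}$, so the triangle inequality and the hypothesis $\norm{\velalt} \leq \mu\Radius/2$ give
\begin{equation*}
    \norm{\vel/\lambda + \velalt} \leq 2\norm{\velalt} \leq \mu\Radius,
\end{equation*}
which means the left-hand side always sits in the quadratic regime of $\fn$. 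A direct expansion of $\lambda\norm{\vel/\lambda + \velalt}^2/(2\mu)$, using the cancellations $\lambda\norm{\vel/\lambda}^2 = \norm{\vel}\norm{\velalt} = \lambda\norm{\velalt}^2$, then yields
\begin{equation*}
    \lambda \fn(\vel/\lambda + \velalt) = \frac{\norm{\vel}\norm{\velalt} + \inner{\vel}{\velalt}}{\mu}.
\end{equation*}

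Next, I would rewrite the numerator via the polarization-like identity
\begin{equation*}
    \norm{\vel}\norm{\velalt} + \inner{\vel}{\velalt} = \tfrac{1}{2}\bigl(\norm{\vel+\velalt}^2 - (\norm{\vel}-\norm{\velalt})^2\bigr),
\end{equation*}
and split on whether $\norm{\vel+\velalt}$ lies below or above the threshold $\mu\Radius$. If $\norm{\vel+\velalt} \leq \mu\Radius$, then $\fn(\vel+\velalt) = \norm{\vel+\velalt}^2/(2\mu)$, and the target inequality reduces immediately to $(\norm{\vel}-\norm{\velalt})^2 \geq 0$, which is automatic.

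The delicate case is $\norm{\vel+\velalt} > \mu\Radius$, where $\fn(\vel+\velalt) = \Radius\norm{\vel+\velalt} - \mu\Radius^2/2$. Here a direct rearrangement shows the inequality to prove is equivalent to
\begin{equation*}
    (\norm{\vel+\velalt} - \mu\Radius)^2 \leq (\norm{\vel}-\norm{\velalt})^2,
\end{equation*}
and the main obstacle is verifying this bound with the correct signs. The key observation is that the hypothesis $\norm{\velalt} \leq \mu\Radius/2$ forces $\norm{\vel} \geq \norm{\velalt}$ in this regime: otherwise one would have $\norm{\vel+\velalt} \leq \norm{\vel}+\norm{\velalt} < 2\norm{\velalt} \leq \mu\Radius$, contradicting the case assumption. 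Both sides of the squared inequality are then nonnegative after removing the absolute values, and the triangle inequality combined with $2\norm{\velalt} \leq \mu\Radius$ gives
\begin{equation*}
    \norm{\vel+\velalt} - \mu\Radius \leq \norm{\vel}+\norm{\velalt} - \mu\Radius \leq \norm{\vel} - \norm{\velalt},
\end{equation*}
which closes the argument.
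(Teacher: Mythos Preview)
Your proof is correct. The paper does not supply its own proof of this lemma; it is restated verbatim from \ourpaper[Lem.~F.3], so there is nothing here to compare against. Your approach via the explicit Huber form of $\fn$, the observation that $\norm{\vel/\lambda + \velalt} \leq 2\norm{\velalt} \leq \mu\Radius$ forces the left-hand side into the quadratic regime, and the case split on $\norm{\vel+\velalt}$ is clean and complete.
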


\section*{Acknowledgments}
\begingroup
\small
%
%
This research was supported in part by
the French National Research Agency (ANR) in the framework of
the PEPR IA FOUNDRY project (ANR-23-PEIA-0003),
the ``Investissements d'avenir'' program (ANR-15-IDEX-02),
the LabEx PERSYVAL (ANR-11-LABX-0025-01),
MIAI@Grenoble Alpes (ANR-19-P3IA-0003)
and SPICE (G7H-IRG24E90).
PM is also a member of the Archimedes Research Unit/Athena Research Center, and was partially supported by project MIS 5154714 of the National Recovery and Resilience Plan Greece 2.0 funded by the European Union under the NextGenerationEU Program.
\endgroup

\bibliographystyle{icml2025}
\bibliography{bibtex/merged.bib}

\end{document}